\newtheorem{Th}{Theorem}[section]
\newtheorem{Def}[Th]{Definition}
\newtheorem{Cor}[Th]{Corollary}
\newtheorem{Le}[Th]{Lemma}
\newtheorem{St}[Th]{Proposition}
\theoremstyle{remark}
\newtheorem{Rem}[Th]{Remark}
\DeclareMathOperator{\clos}{clos}
\DeclareMathOperator{\Ker}{Ker}
\DeclareMathOperator{\supp}{supp}
\DeclareMathOperator{\HDR}{HDR}
\DeclareMathOperator{\Ann}{Ann}
\DeclareMathOperator{\Ext}{Ext}
\DeclareMathOperator{\id}{id}
\DeclareMathOperator{\loc}{loc}
\DeclareMathOperator{\SI}{SI}
\newcommand{\eps}{\varepsilon}
\newcommand{\scalprod}[2]{\langle{#1},{#2}\rangle}
\newcommand{\Scalprod}[2]{\Big\langle{#1},{#2}\Big\rangle}
\newcommand{\Si}{_{_\Sigma}\!}
\newcommand{\R}{\mathcal{R}}
\newcommand{\J}{\mathrm{J}}
\newcommand{\Rw}{\mathcal{R}_{\mathrm{w}}}
\newcommand{\normal}{\mathrm{n}}
\newcommand{\HDRL}{\HDR_{\mathrm{loc}}}
\newcommand{\B}{\boldsymbol{B}}
\newcommand{\K}{\boldsymbol{K}}
\newcommand{\Conv}{\mathcal{C}}
\newcommand{\CONV}{\boldsymbol{C}}
\newcommand{\T}{\mathcal{T}}
\begin{document}
\title{Restrictions of higher derivatives of the Fourier transform}
\author{Michael Goldberg \and Dmitriy Stolyarov}
\date{September 10, 2018}
\address{Mathematical Sciences Department, University of Cincinnati, Cincinnati, OH 45221-0025, USA}
\email{goldbeml@ucmail.uc.edu}
\address{Chebyshev Lab, St. Petersburg State Univeristy, 14th line 29b, Vasilyevsky Island, St. Petersburg 199178, Russia}
\address{St. Petersburg Department of Steklov Mathematical Institute, Fontanka 27, St. Petersburg 191023, Russia}
\email{d.m.stolyarov@spbu.ru}
\thanks{The authors would like to thank Tony Carbery for introducing them to Y. Domar's papers.}
\thanks{M.~G.~received support from Simons Foundation grant \#281057, D.~S.~received support from Russian Foundation for Basic Research grant \#17-01-00607.}

\begin{abstract}
We consider several problems related to the restriction of $(\nabla^k) \hat{f}$ to a surface $\Sigma \subset \mathbb R^d$ with  nonvanishing Gauss curvature.  While such restrictions clearly exist if $f$ is a Schwartz function, there are few bounds available that enable one to take limits with respect to the $L_p(\mathbb R^d)$ norm of $f$.  We establish three scenarios where it is possible to do so:
\begin{itemize}
\item When the restriction is measured according to a Sobolev space $H^{-s}(\Sigma)$ of negative index.  We determine the complete range of indices $(k, s, p)$ for which such a bound exists.
\item Among functions where $\hat{f}$ vanishes on $\Sigma$ to order $k-1$, the restriction of $(\nabla^k) \hat{f}$ defines a bounded operator from (this subspace of) $L_p(\mathbb R^d)$ to $L_2(\Sigma)$ provided $1 \leq p \leq \frac{2d+2}{d+3+4k}$.
\item When there is {\it a priori} control of $\hat{f}|_\Sigma$ in a space $H^{\ell}(\Sigma)$, $\ell > 0$, this implies improved regularity for the restrictions of $(\nabla^k)\hat{f}$.  If $\ell$ is large enough then even $\|\nabla \hat{f}\|_{L_2(\Sigma)}$ can be controlled in terms of $\|\hat{f}\|_{H^\ell(\Sigma)}$ and $\|f\|_{L_p(\mathbb R^d)}$ alone.
\end{itemize}
The techniques underlying these results are inspired by the spectral synthesis work of Y.\;Domar, which provides a mechanism for $L_p$ approximation by ``convolving along surfaces", and the Stein--Tomas restriction theorem.  Our main inequality is a bilinear form bound with similar structure to the Stein--Tomas $T^*T$ operator, generalized to accommodate smoothing along $\Sigma$ and derivatives transverse to it. It is used both to establish basic $H^{-s}(\Sigma)$ bounds for derivatives of $\hat{f}$ and to bootstrap from surface regularity of $\hat{f}$ to regularity of its higher derivatives.
\end{abstract}

\maketitle

\tableofcontents

\section{Introduction}\label{S1}

\subsection{Overview of the Derivative Restriction Problem}\label{s11}

Questions regarding the fine properties of the Fourier transform of a function in $L_p(\mathbb{R}^d)$ have long played a central role in the development of classical harmonic analysis.  While the Hausdorff--Young theorem guarantees that for $1 \leq p \leq 2$, the Fourier transform of $f \in L_p$ belongs to its dual space $L_{p/(p-1)}$, it does not provide guidance on whether $\hat{f}$ may be defined on a given measure-zero subset $\Sigma \subset \mathbb{R}^d$.  The canonical question of this type, originating in the work of Stein circa 1967, is to find the complete range of pairs~$(p,q)$ for which the inequality
\begin{equation}\label{Restriction_Conjecture}
\|\hat{f}|_{S^{d-1}}\|_{L_q(S^{d-1})} \lesssim \|f\|_{L_p(\mathbb{R}^d)}
\end{equation}
holds true.   The problem was solved in the case~$d=2$ in~\cite{Fefferman} and remains an active subject of research in higher dimensions (e.g.~\cite{BourgainGuth, Guth1, Guth2}).

In this paper we investigate the possibility of defining the surface trace 
of higher order gradients of the Fourier transform of an~$L_p$ function, with a focus on uniform estimates in the style of~\eqref{Restriction_Conjecture}.  
Let~$\Sigma$ be a closed smooth embedded~$(d-1)$-dimensional submanifold of~$\mathbb{R}^d$. Assume that the principal curvatures of~$\Sigma$ are non-zero at any point. Let~$K$ be a compact subset of~$\Sigma$ and~$k$ be a natural number. We consider as a model problem the inequality
\begin{equation}\label{First_restriction_for_higher_derivatives}
\big\|(\nabla^k\hat{f})\big|_{\Sigma}\big\|_{L_2(K)} \lesssim_K \|f\|_{L_p(\mathbb{R}^d)}.
\end{equation}
Here and in what follows the Fourier transform has priority over differentiation: we first compute the Fourier transform and then differentiate it. We choose the standard Hausdorff measure $d\sigma$ on~$\Sigma$ to define the~$L_2$-space on the left hand-side. The notation~``$\lesssim_K$'' signifies that the constant in the inequality may depend on the choice of~$K$, but should not depend on~$f$. We restrict our study to the case of~$L_2$ instead of~$L_q$ with arbitrary~$q$ on the left hand-side, because the Hilbert space properties of $L_2$ make this case more tractable. In fact, the range of all possible~$p$ in~\eqref{Restriction_Conjecture} when~$q=2$ is described by the classical Stein--Tomas Theorem (established in~\cite{Tomas} and~\cite{Stein}).

Unfortunately, inequality~\eqref{First_restriction_for_higher_derivatives} cannot hold true unless $k=0$. To see that, consider the  shifts of a function~$f$, in other words~$f_N(x) = f(x+Ny)$, where~$y\ne 0$ is a fixed point in~$\mathbb{R}^d$. If we plug~$f_N$ into~\eqref{First_restriction_for_higher_derivatives} instead of~$f$, the norm on the left hand-side will be of the order~$N^k$, whereas the quantity on right will not depend on~$N$. 

The next question along these lines is: what modifications can be made so that~\eqref{First_restriction_for_higher_derivatives} becomes a true statement for $k \geq 1$?
Since the original inequality~\eqref{Restriction_Conjecture} is shift-invariant, we seek translation invariant conditions for $f$. This rules out natural candidates such as requiring $(1+|x|)^k f \in L_p$.

One possibility is to relax the desired local regularity from $L_2(K)$ to a Sobolev space of negative order.  Consider the inequality
\begin{equation}\label{RestrictionWithSobolev}
\big\|\phi (\nabla^k\hat{f})\big|_{\Sigma}\big\|_{H^{-s}(\Sigma)} \lesssim_\phi \|f\|_{L_p(\mathbb{R}^d)}.
\end{equation}
Here~$\phi \in C_0^{\infty}(\Sigma)$ is an arbitrary compactly supported smooth function (the constant in the inequality may depend on it). The parameter~$s$ is a non-negative real, and~$H^{-s}$ is the~$L_2$-based Bessel potential space.
Whenever~\eqref{RestrictionWithSobolev} holds, there is a trace value for $\nabla^k \hat{f}$ in $H^{-s}_{\loc}(\Sigma)$ for all $f \in L_p(\mathbb {R}^d)$.

One might guess that the inequality~\eqref{RestrictionWithSobolev} gets weaker as we increase~$s$, opening the way to define the trace of~$\nabla^k \hat{f}$ on~$\Sigma$ with an increasingly large range of~$p$. This is indeed the case. The case~$k=0$ in~\eqref{RestrictionWithSobolev} was considered by Cho, Guo, and Lee in~\cite{ChoGuoLee}.  They observed Sobolev-space trace values of $\hat{f}$ for $f \in L_p$ with $p$ going up to the sharp exponent dictated by the Fourier transform of a surface measure.  
As we will see below, the general case $k \geq 0 $ of~\eqref{RestrictionWithSobolev} requires only one more large-$k$ endpoint estimate and a routine interpolation argument.

There are two parameters that appear frequently as bounds in our arguments:
\begin{align}
\sigma_p &= \frac{d}{p} - \frac{d+1}{2};\label{Sigma_p}\\
\kappa_p &= \frac{d+1}{p} - \frac{d+3}{2}.\label{Kappa_p}
\end{align}
Where it occurs later on, we also use the standard notation $p' = \frac{p}{p-1}$ for the dual exponent to $L_p$.
\begin{Th}[Corollary of Theorem $1.1$ in~\cite{ChoGuoLee}]\label{CorollaryOfChoGuoLee}
Let~$p > 1$. The inequality~\eqref{RestrictionWithSobolev} is true if and only if
\begin{align}
k &\leq s;\label{RestrictionWithSobolevShift}\\
k &< \sigma_p;\label{RestrictionWithSobolevSurface}\\ 
2k - s &\leq \kappa_p\label{RestrictionWithSobolevKnapp}.
\end{align}
For fixed $k$ and $p$ with $k < \sigma_p$, that means $s \geq \max(k, 2k-\kappa_p)$.  In the case~$p=1$, the case~$k=\sigma_1$ is also permitted if~$s > k$.
\end{Th}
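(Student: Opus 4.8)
The plan is to pass to a dual ``extension'' inequality, reduce the claimed range to its lower boundary, read off necessity from three explicit examples, and obtain sufficiency by interpolating the $k=0$ result of~\cite{ChoGuoLee} against one new endpoint. Since $\partial^\alpha\hat f=\widehat{(-ix)^\alpha f}$, pairing a component $\phi\,\partial^\alpha\hat f|_\Sigma$ with $|\alpha|=k$ against a test function $g$ on $\Sigma$, using $\int_\Sigma\widehat h\,d\nu=\int_{\mathbb R^d}h(x)\,\widehat\nu(x)\,dx$ with $d\nu=\bar g\,\phi\,d\sigma$, and then H\"older, will identify \eqref{RestrictionWithSobolev} (with $\bigl(\sum_{|\alpha|=k}|x^\alpha|^2\bigr)^{1/2}\simeq|x|^k$) with the weighted extension estimate
\begin{equation}\label{DualExtension}
\bigl\||x|^{k}\,\widehat{\phi g\,d\sigma}\bigr\|_{L_{p'}(\mathbb R^d)}\ \lesssim_\phi\ \|g\|_{H^{s}(\Sigma)},\qquad g\in H^{s}(\Sigma).
\end{equation}
Because $\|u\|_{H^{-s}(\Sigma)}\le\|u\|_{H^{-s'}(\Sigma)}$ for $s'\le s$, it is enough to prove \eqref{RestrictionWithSobolev} on the lower boundary $s=\max(k,2k-\kappa_p)$; the rest is monotonicity. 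Moreover, for fixed $k$ the set of admissible $(1/p,s)$ is a polygon cut out by inequalities affine in $1/p$ (namely $s\ge k$, $s\ge 2k-\kappa_p$, $\sigma_p>k$) with the edge $\{\sigma_p=k\}$ deleted, so it suffices to verify \eqref{DualExtension} at the finitely many vertices of its closure and interpolate (approaching the deleted edge from inside).

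\textbf{Necessity.} Replacing $f$ by $f(\cdot+Ny)$ in \eqref{RestrictionWithSobolev} multiplies the leading term of $\nabla^k\hat f$ by $N^k$ and modulates $\hat f|_\Sigma$ by an oscillation of frequency $\sim N$ along $\Sigma$; the left side therefore grows like $N^{k-s}$ while the right side is constant, and $N\to\infty$ forces \eqref{RestrictionWithSobolevShift}. A Knapp example---$\hat f$ an oscillating bump adapted to a $\delta\times\cdots\times\delta\times\delta^2$ plate tangent to $\Sigma$, with internal frequency $\delta^{-2}$ in the normal direction---has $\|f\|_{L_p}\simeq\delta^{(d+1)/p'}$ and produces a left side of size $\simeq\delta^{\,s-2k+(d-1)/2}$, which gives \eqref{RestrictionWithSobolevKnapp}. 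For \eqref{RestrictionWithSobolevSurface} I would test \eqref{DualExtension} on a fixed $g$ that does not vanish near $\supp\phi$: the nonvanishing curvature and stationary phase give $|\widehat{\phi g\,d\sigma}(x)|\gtrsim|x|^{-(d-1)/2}$ on a positive-measure cone of directions, so $|x|^k\widehat{\phi g\,d\sigma}\notin L_{p'}$ unless $\bigl(\tfrac{d-1}2-k\bigr)p'>d$, i.e.\ $k<\sigma_p$; when $p>1$ the borderline $k=\sigma_p$ fails logarithmically, while when $p=1$ only boundedness of $|x|^{(d-1)/2}\widehat{\phi g\,d\sigma}$ is at issue, and that survives at the cost of the embedding $H^{s}(\Sigma)\hookrightarrow C(\Sigma)$ --- which is precisely why $s>k$ is needed in the borderline $p=1$ case.

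\textbf{Sufficiency.} The case $k=0$ of \eqref{RestrictionWithSobolev}, equivalently of \eqref{DualExtension} with no weight for $s\ge\max(0,-\kappa_p)$ and $\sigma_p>0$, is Theorem~$1.1$ of~\cite{ChoGuoLee}, which I take as a black box. The only extra ingredient is the weighted $L_\infty$ endpoint
\begin{equation}\label{NewEndpoint}
\bigl\||x|^{z}\,\widehat{\phi g\,d\sigma}\bigr\|_{L_{\infty}(\mathbb R^d)}\ \lesssim_{\phi}\ \|g\|_{H^{\operatorname{Re}z}(\Sigma)},\qquad 0\le\operatorname{Re}z<\tfrac{d-1}{2},
\end{equation}
which is the $p=1$, $s=\operatorname{Re}z$ case; I would prove it by a wave-packet (Littlewood--Paley plus spatial) decomposition of $g$ on $\Sigma$ combined with stationary phase --- the strictly positive surplus $\tfrac{d-1}2-\operatorname{Re}z$ providing exactly the room needed to sum the pieces --- and the obvious worst case, a cap on $\Sigma$ of diameter $|x|^{-1/2}$, shows that the exponent and the threshold $\tfrac{d-1}2$ are sharp. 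Regarding \eqref{NewEndpoint} as a Stein analytic family in $z$ and interpolating it against the $\operatorname{Re}z=0$ estimates of~\cite{ChoGuoLee}, together with the usual complex interpolation of the pair $(1/p,s)$ between Lebesgue and Sobolev spaces, will recover the weight exponent $z=k$ over the whole asserted region: the interpolation parameter comes out to the constant $\theta_k=\tfrac{2k}{d-1}\in(0,1)$ (hence $k<\tfrac{d-1}2$ is needed), the flat portion $s_0=0$ of the \cite{ChoGuoLee} boundary maps onto the segment $\{s=k,\ 1\le p\le p_1\}$ with $p_1=\tfrac{2d+2}{d+3+2k}$, and its sloped portion $s_0=-\kappa_q$ maps onto $\{s=2k-\kappa_p,\ p_1\le p<\tfrac{2d}{d+1+2k}\}$. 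The $p=1$ edge is handled directly by \eqref{NewEndpoint}, and the borderline $k=\sigma_1$ by its variant carrying the genuine loss $H^{(d-1)/2+\eps}$, which accounts for the strict $s>k$ there.

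\textbf{Main obstacle.} The one substantive step is \eqref{NewEndpoint}: pulling exactly $k\le\tfrac{d-1}2$ powers of decay out of $\int_\Sigma e^{-ix\cdot\xi}\phi g\,d\sigma$ while paying only with the regularity $H^k(\Sigma)$ of $g$. Since $g$ need not be continuous, pointwise stationary phase is unavailable; the Littlewood--Paley and spatial scales must be balanced so that the losses in the regimes where the phase is not yet stationary (tangential integration by parts away from, and $|x|^{-1/2}$-scale localisation near, the stationary point) are exactly absorbed by $\|P_\ell g\|_{L_2(\Sigma)}\lesssim 2^{-\ell k}\|g\|_{H^k}$. Everything else --- the interpolation bookkeeping and the three examples --- is routine; the only remaining delicacy is the $p=1$, $k=\sigma_1$ corner, where the failure of the embedding $H^{k}(\Sigma)\hookrightarrow C(\Sigma)$ forces the strict inequality.
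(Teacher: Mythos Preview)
Your overall strategy --- dualize to an extension estimate, use the $k=0$ case of \cite{ChoGuoLee} as one endpoint and a $p=1$ weighted estimate as the other, then interpolate --- is exactly the route the paper takes in Subsection~\ref{s63}. The necessity arguments also match the paper's Section~\ref{S5}. But there is a genuine gap at the interpolation endpoint.

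You state \eqref{NewEndpoint} only for $\operatorname{Re}z<\tfrac{d-1}{2}$, yet you compute the interpolation parameter as $\theta_k=\tfrac{2k}{d-1}$. These are inconsistent: if the second endpoint sits at $z_1$, then $\theta=k/z_1$, so $\theta_k=\tfrac{2k}{d-1}$ forces $z_1=\tfrac{d-1}{2}$. And at $z_1=\tfrac{d-1}{2}$ the bound \eqref{NewEndpoint} is \emph{false} with $H^{(d-1)/2}$ on the right: stationary phase gives $|x|^{(d-1)/2}|\widehat{\phi g\,d\sigma}(x)|\sim|(\phi g)(\xi_x)|$, and $H^{(d-1)/2}(\mathbb{R}^{d-1})\not\hookrightarrow L_\infty$. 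If instead you keep $z_1<\tfrac{d-1}{2}$, a short computation (interpolate $(z_1,z_1,1)$ with $(0,-\kappa_{p_0},1/p_0)$) shows the resulting $s$ satisfies
\[
s-(2k-\kappa_p)=\theta\cdot\tfrac{d-1}{2}-k=k\cdot\tfrac{d-1-2z_1}{2z_1}>0,
\]
so you never reach the boundary $2k-s=\kappa_p$; and the constant in \eqref{NewEndpoint} blows up as $z_1\to\tfrac{d-1}{2}$, so you cannot pass to the limit either.

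The paper's fix is to replace the Sobolev endpoint by the Besov substitute $B_2^{-(d-1)/2,\infty}$ on the restriction side (Proposition~\ref{WeightedBesov}), which real-interpolates correctly with $H^{-s_+}$ to give $H^{-s}$. That endpoint is proved in two lines by testing on delta measures and invoking Van der Corput --- no wave-packet decomposition is needed. If you rewrite your \eqref{NewEndpoint} at $z=\tfrac{d-1}{2}$ with $B_2^{(d-1)/2,1}(\Sigma)$ (the dual of $B_2^{-(d-1)/2,\infty}$) in place of $H^{(d-1)/2}$, your argument goes through and coincides with the paper's.
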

The parameter~$\sigma_p$  is related to the ``surface measure extremizer.'' When condition~\eqref{RestrictionWithSobolevSurface} does not hold, Theorem~\ref{CorollaryOfChoGuoLee} fails by testing its dual statement against a surface measure on $\Sigma$. The parameter~$\kappa_p$, and its role in condition~\eqref{RestrictionWithSobolevKnapp} are similarly associated with Knapp examples.

In odd dimensions there is an endpoint case $p=1$, $k= \sigma_1 = \frac{d-1}{2} \in \mathbb{N}$ where inequality~\eqref{RestrictionWithSobolev} is true for $s > \frac{d-1}{2}$. This is stated more precisely in Corollary~\ref{EndpointCaseWithoutWeight} below. The proof of that bound is more direct than most of our other arguments (in fact it is nearly equivalent to the dispersive bound for the Schr\"odinger equation) and it is completely independent.  

The paper contains two proofs of Theorem~\ref{CorollaryOfChoGuoLee}.  First, it is a special case of Theorem~\ref{StubbornTheorem}, whose proof is presented as Section~\ref{S3}.  Then we show in Subsection~\ref{s63} how to derive Theorem~\ref{CorollaryOfChoGuoLee} from the results of~\cite{ChoGuoLee}. To be more specific, one can interpolate between the results of~\cite{ChoGuoLee} for $k=0$ and the Besov-space bound in Proposition~\ref{WeightedBesov} for $p=1$, $k=\frac{d-1}{2}$ to obtain the full range of Theorem~\ref{CorollaryOfChoGuoLee}. 

If one is determined not to weaken the $L_2(K)$ norm in~\eqref{First_restriction_for_higher_derivatives}, it is necessary to consider $f$ belonging to an {\em a priori} narrower space than $L_p(\mathbb{R}^d)$.   We introduce the main character.

\begin{Def}\label{LSigma}
Let~$\Sigma$ be a closed smooth embedded~$(d-1)$-dimensional submanifold of~$\mathbb{R}^d$,~$p \in [1,\infty)$, and~$k \in \mathbb{N}$. Define the space~$\Si L_p^k$ by the formula
\begin{equation*}
\Si L_p^k = \clos_{L_p}\Big(\Big\{f \in \mathcal{S}(\mathbb{R}^d)\,\Big|\; \forall l =0,1,2,\ldots,k-1  \quad \nabla^l \hat{f} = 0 \quad \hbox{\textup{on}} \quad \Sigma\Big\}\Big).
\end{equation*}
Define~$\Si L_p^0$ to be simply~$L_p(\mathbb{R}^d)$. The first non-trivial space~$\Si L_p^1$ will often be denoted by~$\Si L_p$.
\end{Def}
The symbol~$\mathcal{S}$ denotes the Schwartz class of test functions. We note that in the definition above, we do not need any information about~$\Sigma$. In fact,~$\Sigma$ may be an arbitrary closed set. The restriction~$p < \infty$ is taken so that the Schwartz class is dense in $L_p$, though one could replace closure with weak closure in the case~$p=\infty$ if needed. 
These generalities will not arise in the present paper. \emph{From now on we assume that~$\Sigma$ is a closed smooth embedded~$(d-1)$-dimensional submanifold of~$\mathbb{R}^d$ with non-vanishing principal curvatures}.

It will turn out (See Theorem~\ref{CoincidenceTheorem} below) that for a certain range of $p$ and $k$, the space $\Si L_p^k$ contains precisely the functions $f \in L_p$ whose Fourier transform vanishes on $\Sigma$ to order $k-1$.  We take advantage of the additional structure of the domain to formulate a second adaptation of inequality~\eqref{First_restriction_for_higher_derivatives}, this time with the trace of $\nabla^k\hat{f}$ still belonging to $L^2_{\loc}(\Sigma)$.
\begin{equation}\label{Restriction_for_higher_derivatives}
\big\| (\nabla^k\hat{f})\big|_{\Sigma}\big\|_{L_2(K)} \lesssim_K \|f\|_{L_p} \text{ for all } f \in \!\,\Si L_p^k.
\end{equation}

One might expect that a similar statement with the $L_2$ norm replaced by a weaker Sobolev norm will admit a larger range of $p$, that is:
\begin{equation}\label{Sobolev_Restriction_for_higher_derivatives}
\big\|(\phi \nabla^k\hat{f}) \big|_{\Sigma}\big\|_{H^{-s}(\Sigma)} \lesssim_\phi \|f\|_{L_p(\mathbb{R}^d)} \text{ for all } f \in \!\,\Si L_p^k.
\end{equation}

However at this point in the discussion it is not clear why~\eqref{Sobolev_Restriction_for_higher_derivatives} should be true outside the range established in Theorem~\ref{CorollaryOfChoGuoLee}, or why~\eqref{Restriction_for_higher_derivatives} should be true at all.  Given a generic function $f \in L_p(\mathbb R^d)$, its Fourier transform $\hat{f}$ is not differentiable even to fractional order.  We have reduced the obstruction somewhat by seeking derivatives of $\hat{f}$ only at the points $\xi \in \Sigma$, and by specifying a substantial number of its partial derivatives
via the assumption $f \in \!\, \Si L_p^k$.  Never the less, values of $\hat{f}|_\Sigma$ alone do not uniquely determine $f \in L^p$, nor are they known to shed much light on the behavior of $\hat{f}$ in a neighborhood of $\Sigma$.

 Theorem~\ref{First_restriction_for_higher_derivatives_Theorem} below finds the complete range of $p$ for which an $L_2$ gradient restriction~\eqref{Restriction_for_higher_derivatives} is true.  In particular the range is nonempty when $d \geq 4k+1$.  This result follows a clear pattern from the Stein--Tomas restriction theorem, which is the $k=0$ case.  The range of $p$ permitted in~\eqref{Sobolev_Restriction_for_higher_derivatives} is also sharp in the same way as Theorem~\ref{CorollaryOfChoGuoLee} and the results in~\cite{ChoGuoLee}.  The range of $s$ we obtain here is much larger than what is true in the context of Theorem~\ref{CorollaryOfChoGuoLee}, but most likely not optimal due to some complications with linear programming over the integers.

The $k=1$ case of Theorem~\ref{First_restriction_for_higher_derivatives_Theorem} shows that an {\em a priori} assumption $\hat{f}|_\Sigma = 0$ leads to nontrivial bounds on $\nabla \hat{f} |_\Sigma$.  In fact there is a larger family of bounds for trace values of $\nabla^k \hat{f}$, and one can begin the bootstrapping process with a much milder assumption $\hat{f}|_\Sigma \in H^\ell(\Sigma)$ instead of requiring it to vanish.  We explore these generalizations in Proposition~\ref{SufficientConditionsHDR}, Theorem~\ref{HDRExample}, and the related discussion.  The inequality which takes the place of~\eqref{Sobolev_Restriction_for_higher_derivatives} has the form
\begin{equation}
\big\|(\phi \nabla^k \hat{f})\big|_{\Sigma}\big\|_{H^{-s}(\Sigma)} \lesssim_{\phi}\Big(\|f\|_{L_p(\mathbb{R}^d)} + \|\phi\hat{f}\|_{H^\ell(\Sigma)}\Big) \text{ for all } f \in L_p
\end{equation}
(the right hand-side may be infinite).
Remarkably, there are cases where this statement holds with only an $L_2(\Sigma)$ norm on the left side. In Corollary~\ref{HDRExample_s=0} we find a sizable range of indices $(d, p, \ell)$ that admit a local-$L_2$ bound on the gradient of $\hat{f}$,
\begin{equation}
\big\| \phi \nabla \hat{f} \big\|_{L_2(\Sigma)} \lesssim_\phi \Big(\|f\|_{L_p(\mathbb{R}^d)} + \| \phi \hat{f}\|_{H^\ell(\Sigma)}\Big) \text{ for all } f \in L_p.
\end{equation}

The spaces~$\Si L_p^k$ that arise in Definition~\ref{LSigma} are not a new construction. They appeared in~\cite{Goldberg_Schlag} (see Proposition $12$ in that paper) and~\cite{Goldberg} where the authors investigated the action of Bochner--Riesz operators of negative order on these spaces. They arose in~\cite{Stolyarov} in connection with Sobolev type embedding theorems. We describe this development in Subsection~\ref{s14}. 

In fact, the spaces~$\Si L_1^k$ played the central role in the study of the spectral synthesis problem in 60s and 70s. We stress the work of Domar here (e.g.~\cite{Domar}) and will rely upon it in Section~\ref{S2}.

It is worth noting that the main inequality used to derive~\eqref{Restriction_for_higher_derivatives} and~\eqref{Sobolev_Restriction_for_higher_derivatives} is valid for all functions in $L_p$, not just those whose Fourier restriction vanishes on $\Sigma$.  Essentially it is the Stein--Tomas bilinear $T^*T$ bound modified by a smoothing operator within the surface~$\Sigma$ and partial derivatives transverse to it.  The formulation of this inequality, which may be of independent interest, is given in~\eqref{StubbornInequality} below and the sharp range of $p$ for which it holds is found in Theorem~\ref{StubbornTheorem}.

\subsection{Statement of results}\label{s12}
It follows from Definition~\ref{LSigma} that the spaces~$\Si L_p^k$ get more narrow as we increase~$k$: 
\begin{equation*}
L_p \supseteq\,  \Si L_p^1 \supseteq\,  \Si L_p^2 \supseteq \ldots \supseteq\, \Si L_p^k \supseteq \ldots \supseteq\, \Si L_p^{\infty}.
\end{equation*}
The final space can be defined as the closure in~$L_p$ of the set of Schwartz functions whose Fourier transform vanishes in a neighborhood of~$\Sigma$. We claim that~$\Si L_p^k =\, \Si L_p^{\infty}$ when~$k$ is sufficiently large (i.e. the chain of spaces stabilizes). Here is the precise formulation.
\begin{Th}\label{Stability}
We have~$\Si L_p^k =\, \Si L_p^{k+1} = \,\!\Si L_p^{\infty}$ provided~$k \geq \sigma_p = \frac{d}{p} - \frac{d+1}{2}$ and~$p > 1$. If $p=1$, this is true provided $k > \frac{d-1}{2} = \sigma_1$.
\end{Th}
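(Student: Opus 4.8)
The plan is to argue by duality. Both $\Si L_p^{k}$ and $\Si L_p^{\infty}$ are closed subspaces of $L_p$ and $\Si L_p^{\infty}\subseteq\Si L_p^{k+1}\subseteq\Si L_p^{k}$, so by Hahn--Banach it is enough to show that their annihilators inside $L_{p'}$ (inside $L_\infty$ when $p=1$) coincide. By Fourier duality, a functional $g$ in that dual space annihilates $\Si L_p^{\infty}$ precisely when $\hat g$ is a distribution supported on $\Sigma$, and it annihilates $\Si L_p^{k}$ precisely when, in addition, $\hat g$ has \emph{transverse order} at most $k-1$ along $\Sigma$: in coordinates $(\xi',t)$ adapted to $\Sigma$, with $t$ the signed distance, one may write $\hat g=\sum_{j=0}^{k-1}\partial_t^j\big(\nu_j(\xi')\otimes\delta(t)\big)$ with $\nu_j$ distributions on $\Sigma$. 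The structure theorem for distributions supported on a submanifold applies here (finiteness of the order being ensured by compactness of $\Sigma$), and the two ``precisely when'' assertions are obtained through a Borel-type construction of a Schwartz function with prescribed normal jet along $\Sigma$. Thus the theorem reduces to the following claim: if $g\in L_{p'}$ and $\supp\hat g\subseteq\Sigma$, then the transverse order $N$ of $\hat g$ obeys $N<\sigma_p$ when $p>1$ and $N\leq\sigma_1$ when $p=1$; granting this, the integrality of $N$ and $k$ together with $k\geq\sigma_p$ (resp.\ $k>\sigma_1$) forces $N\leq k-1$, which is exactly what we want.

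To establish the claim I would test $g$ against wave packets adapted to $\Sigma$. Localizing $\hat g$ near a point $\xi_0\in\Sigma$ at which $\nu_N\neq0$, by multiplying by a cutoff $\chi\in C_0^\infty$, is harmless because $\mathcal F^{-1}[\chi\hat g]=\check\chi*g\in L_{p'}$. Take $h_\lambda$ whose Fourier transform is supported in the $\tfrac1\lambda$-neighborhood of $\Sigma$ and is designed so that the normal jet of $\hat h_\lambda$ on $\Sigma$ vanishes to order $N-1$ and equals a fixed bump $\theta$ at order $N$ --- concretely $\hat h_\lambda(\xi',t)=\tfrac{t^N}{N!}\theta(\xi')\psi(\lambda t)$ in the adapted coordinates, with $\psi\in C_0^\infty$ and $\psi(0)=1$. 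Pairing against the structure representation of $\hat g$ annihilates every term but the $N$-th, so $\langle g,h_\lambda\rangle=\pm\langle\nu_N,\theta\rangle=:c$, a nonzero constant independent of $\lambda$ once $\theta$ is chosen with $\langle\nu_N,\theta\rangle\neq0$. On the other hand, stationary phase exploiting the non-vanishing curvature of $\Sigma$ gives $|h_\lambda(x)|\lesssim\lambda^{-(N+1)}(1+|x|)^{-(d-1)/2}$ for $|x|\lesssim\lambda$, with rapid decay beyond; hence $\|h_\lambda\|_{L_p}\lesssim\lambda^{\sigma_p-N}$. If $N>\sigma_p$ this tends to $0$, so $|c|\leq\|g\|_{L_{p'}}\|h_\lambda\|_{L_p}\to0$ forces $c=0$ --- a contradiction.

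The borderline $N=\sigma_p\in\mathbb Z$, which occurs only for $p>1$, will require a lacunary superposition, since there $\|h_\lambda\|_{L_p}$ is only bounded above and below. Because the $\hat h_{2^m}$ live at separated scales, $h_{2^m}$ is essentially supported in $\{|x|\lesssim 2^m\}$ with its $L_p$-mass concentrated near $|x|\sim 2^m$, and at this exponent the relevant dyadic sum diverges logarithmically, so $\big\|\sum_{m=1}^M h_{2^m}\big\|_{L_p}\lesssim M^{1/p}$ while $\big\langle g,\sum_{m=1}^M h_{2^m}\big\rangle=Mc$; therefore $\|g\|_{L_{p'}}\gtrsim M^{1-1/p}\to\infty$, which is impossible when $p>1$. (The same computation for $p=1$ only yields $\|g\|_{L_\infty}\gtrsim M^0$, no contradiction; correspondingly $N=\sigma_1$ genuinely occurs --- take $\mathcal F^{-1}[\partial_\normal^{(d-1)/2}d\sigma]$, a bounded function whose Fourier transform has transverse order $\sigma_1$ --- and this is exactly why the hypothesis for $p=1$ must be the strict inequality $k>\sigma_1$.) Combining the two cases proves the claim, hence the theorem.

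I expect the claim itself to be the main obstacle: quantifying how membership of $g$ in $L_{p'}$ constrains the transverse order of a distribution carried by a curved hypersurface. This is where the non-vanishing-curvature hypothesis is used decisively, through the $(1+|x|)^{-(d-1)/2}$ decay, and the integer-borderline refinement via the lacunary sum is the most delicate point. The remaining work is bookkeeping around the structure theorem: its finite-order form, localization by smooth cutoffs, the Borel extension of normal jets, and the harmless $\Sigma$-versus-$-\Sigma$ ambiguity coming from the Fourier-inversion convention.
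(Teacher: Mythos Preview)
Your argument is correct and runs along a route that is dual to the paper's. Both proofs identify the annihilator of $\Si L_p^k$ with $L_{p'}$ functions whose Fourier transform is carried by $\Sigma$ with transverse order at most $k-1$, and both use the non-vanishing curvature through stationary phase. The difference is in how the key constraint ``$N<\sigma_p$'' is extracted. The paper first \emph{smooths} the densities $\nu_j$ to lie in $C_0^\infty(\Sigma)$ by Domar's convolution-along-the-surface operator (Lemma~\ref{Smoothing_Lemma}), and then computes the pointwise asymptotics of $g$ directly: $|g(x)|\asymp |x|^{N-(d-1)/2}$ on an open cone of normal directions. Membership in $L_{p'}$ then forces $p'\big(N-\tfrac{d-1}{2}\big)<-d$, i.e.\ $N<\sigma_p$; note that this disposes of the borderline $N=\sigma_p$ (for $p>1$) in one stroke, since the cone integral $\int |x|^{-d}\,dx$ diverges. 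You instead keep the $\nu_j$ as distributions and test $g$ against wave packets $h_\lambda$, transferring the stationary-phase work to the estimate $\|h_\lambda\|_{L_p}\asymp\lambda^{\sigma_p-N}$; this cleanly avoids the Domar smoothing lemma, but the price is the separate lacunary-sum argument at the endpoint $N=\sigma_p$. In short: the paper's route needs a nontrivial approximation lemma but gets the endpoint for free; your route is more self-contained on the distributional side but has to manufacture the endpoint by hand. Either trade is legitimate.
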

For the case~$p=1$, this theorem was proved in~\cite{Domar}, and the proof works for arbitrary~$p$ (except for, possibly,~$p=\infty$, which we do not consider here). The theorem is sharp in the sense that~$\Si L_p^k \ne\, \Si L_p^{k+1}$ provided~$k < \sigma_p$. 

\begin{Th}\label{First_restriction_for_higher_derivatives_Theorem}
The inequality~\eqref{Restriction_for_higher_derivatives} is true if and only if~$p \in [1, \frac{2d+2}{d+3+4k}]$, or equivalently $2k \leq \kappa_p$.

More generally, inequality~\eqref{Sobolev_Restriction_for_higher_derivatives} is true for $p \in [1, \frac{2d}{d+1+2k})$ and
$s \geq \max(0, k +1- \lceil \sigma_p-k\rceil, 2k-\kappa_p)$, where the notation $\lceil \cdot \rceil$ indicates the smallest integer greater than or equal to the enclosed value.  This covers the entire range $k < \sigma_p$.  When $p=1$ and $\sigma_1 = \kappa_1 = \frac{d-1}{2} \in \mathbb{N}$ the value $s = \max(0, 2k - \frac{d-1}{2})$ is also permitted.
\end{Th}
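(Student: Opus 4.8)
The plan is to deduce the theorem from the bilinear "stubborn" inequality \eqref{StubbornInequality} (Theorem~\ref{StubbornTheorem}), which is the natural generalization of the Stein--Tomas $T^*T$ bound to the present setting, together with the stability and coincidence results already quoted. First I would treat the necessity of $2k\le\kappa_p$ for \eqref{Restriction_for_higher_derivatives} (respectively $p<\frac{2d}{d+1+2k}$, i.e. $k<\sigma_p$, for \eqref{Sobolev_Restriction_for_higher_derivatives}): the Knapp example gives the bound $2k\le\kappa_p$, and the surface-measure extremizer — tested against the dual statement exactly as in the discussion following Theorem~\ref{CorollaryOfChoGuoLee} — gives $k<\sigma_p$, which is needed even to have the trace make sense as an $H^{-s}$-valued quantity. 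These are the same examples that sharpen Theorem~\ref{CorollaryOfChoGuoLee}; for $\Si L_p^k$ one must check the extremizing sequences can be arranged inside the subspace, which is where the coincidence theorem (functions in $\Si L_p^k$ are exactly those whose Fourier transform vanishes to order $k-1$ on $\Sigma$) does the bookkeeping.

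For sufficiency, I would set up the standard duality: \eqref{Restriction_for_higher_derivatives} is equivalent to an estimate on $\|E_k g\|_{L_{p'}}$ for the adjoint "extension with transverse derivatives" operator $E_k$ acting on $L_2(K)$-data, and then $\|E_k g\|_{L_{p'}}^2$ is estimated by $\langle E_k^* E_k g, g\rangle$. The operator $E_k^* E_k$ is precisely (a component of) the bilinear form controlled by \eqref{StubbornInequality}; plugging in $s=0$ on the right scale and $2k\le\kappa_p$ closes the argument. For the Sobolev version \eqref{Sobolev_Restriction_for_higher_derivatives} I would run the same duality but with $H^{-s}(\Sigma)$ in place of $L_2(K)$, so the surface-smoothing operator $(1-\Delta_\Sigma)^{-s/2}$ enters; the stubborn inequality with its surface-smoothing parameter tuned to $s$ then yields the claimed range. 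The endpoint $p=1$ with $\sigma_1=\kappa_1=\frac{d-1}{2}\in\mathbb N$ is handled by the independent dispersive-type bound (Proposition~\ref{WeightedBesov} / Corollary~\ref{EndpointCaseWithoutWeight}) and interpolation, exactly as in the second proof of Theorem~\ref{CorollaryOfChoGuoLee} sketched in the introduction.

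The one genuinely delicate point is the lower bound on $s$, namely $s\ge\max(0,\,k+1-\lceil\sigma_p-k\rceil,\,2k-\kappa_p)$. The terms $0$ and $2k-\kappa_p$ come directly from the stubborn inequality and the Knapp example. The middle term $k+1-\lceil\sigma_p-k\rceil$ arises because, to use the subspace structure, one wants $\hat f$ to vanish on $\Sigma$ to some integer order $m\le k-1$ and one must choose $m$ as an integer — this is the "linear programming over the integers" complication flagged in the text. The strategy here is: given $f\in\Si L_p^k$, pick the largest integer $j$ with $j\le\sigma_p$ (so the chain of spaces has not yet stabilized below level $j$ but controls derivatives up to that point via Theorem~\ref{CorollaryOfChoGuoLee}), differentiate the remaining $k-j$ times "for free" losing one unit of Sobolev regularity per derivative transverse to $\Sigma$, and optimize. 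Carrying out this optimization and matching it to $\lceil\sigma_p-k\rceil$ is the main obstacle; I expect it to require a careful case split according to whether $\sigma_p-k$ is close to an integer, and to be the source of the admitted non-optimality of the $s$-range. The claim that this covers the entire range $k<\sigma_p$ then follows because for such $p$ the first term $k+1-\lceil\sigma_p-k\rceil$ is finite, so some admissible $s$ always exists.
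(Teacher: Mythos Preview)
Your proposal has a genuine gap in the sufficiency argument. You write that the $T^*T$ operator $E_k^*E_k$ ``is precisely (a component of) the bilinear form controlled by \eqref{StubbornInequality}'', and that plugging in $s=0$ with $2k\le\kappa_p$ closes the argument. But the stubborn inequality $\SI(h,\alpha,\beta,\gamma,p)$ with $\alpha=k$, $\beta=0$, $\gamma=s$ --- which is what bounds $\big\|\psi\,\partial_{\xi_d}^k\hat f\big\|_{\dot H^{-s}(\Sigma)}^2$ directly --- requires $\alpha\le\gamma$, i.e.\ $k\le s$. That is precisely condition~\eqref{RestrictionWithSobolevShift}, and it recovers only Theorem~\ref{CorollaryOfChoGuoLee}, not the improvement you are trying to prove. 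Your duality setup never invokes the hypothesis $f\in\Si L_p^k$; the adjoint extension operator $E_k$ does not see the vanishing of lower-order traces, so there is nothing in your argument that distinguishes $\Si L_p^k$ from all of $L_p$.

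What the paper actually does is an algebraic trick that you are missing. One sets $\Theta(r)=\big\|\psi\,\hat f(\cdot,h(\cdot)+r)\big\|_{L_2}^2$ and expands $\partial_r^{2k}\Theta(0)$ by the product rule: every term is a pairing $\langle\psi\,\partial_{\xi_d}^i\hat f,\psi\,\partial_{\xi_d}^{2k-i}\hat f\rangle$, and for $i\ne k$ one of the two factors vanishes on $\Sigma$ because $f\in\Si L_p^k$. Thus $\partial_r^{2k}\Theta(0)=C_{2k}^k\big\|\psi\,\partial_{\xi_d}^k\hat f\big\|_{L_2}^2$, while the lower derivatives $\partial_r^j\Theta(0)$, $j<2k$, all vanish for the same reason. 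Now one applies $\SI(h,0,2k,0,p)$ --- note $\alpha=0$, $\beta=2k$ --- which needs only $2k\le\kappa_p$, and Taylor's formula gives the bound. The $H^{-s}$ version follows the same pattern with $2\alpha+\beta=2k$ and $\alpha=[2k-\sigma_p]+1$ chosen as small as possible so that \eqref{SIGaussian} holds; this is where the integer ceiling $k+1-\lceil\sigma_p-k\rceil=\alpha$ comes from, not from the ``pick the largest $j\le\sigma_p$ and differentiate for free'' heuristic you describe.
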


\begin{Rem}
 The $p=1$, $k = \sigma_1 = \frac{d-1}{2} \in \mathbb{N}$ endpoint case is handled in Corollary~\ref{EndpointCaseWithoutWeight} below, with inequality~\eqref{Sobolev_Restriction_for_higher_derivatives} holding for all $s > k$.
\end{Rem}
The first claim in the theorem above is an ``iff'' statement. Usually, the ``if'' part is much more involved than the ``only if'' one. In fact, the ``only if'' part of Theorem~\ref{First_restriction_for_higher_derivatives_Theorem} is proved with the standard Knapp example. Some of other theorems in the paper will have richer collection of ``extremizers''. Moreover, one and the same ``extremizer'' may prove sharpness of several related estimates. We collect the descriptions of such type ``extremizers'' (and thus, the proofs of the ``only if'' parts) in Section~\ref{S5}.

Theorem~\ref{First_restriction_for_higher_derivatives_Theorem} says that the operator
\begin{equation*}
\R^k_K\colon f\mapsto (\nabla^k f)\big|_{\Sigma}
\end{equation*}
acts continuously from the space~$\Si L_p^k$ to~$L_2(K)$ when~$p \in [1,\frac{2d+2}{d+3+4k}]$, or from $\Si L_p^k$ to $H^{-s}(K)$ for some combinations of $(p,s)$ with $p \in [1, \frac{2d}{d+1+2k})$.   This allows us to define a new space
\begin{equation}\label{IntersectionOfKernels}
\Ker \R^{k} = \bigcap_{K \subset \Sigma} \Ker \R^{k}_K,
\end{equation}
which consists of all~$L_p$ functions for which the~($L_2$ or $ H^{-s}$) traces of all partial derivatives of order~$k$ vanish on~$\Sigma$. Note that~$\R^{k-1}$ is formally defined on~$\Si L^{k-1}_p \supseteq \!\,\Si L^{k}_p$, and so on, thus we have vanishing of lower order derivatives as well. We also note that in the case when~$\Sigma$ is compact, one does not need to use the intersection in~\eqref{IntersectionOfKernels} and may simply write~$\Ker \R^{k} = \Ker \R^{k}_{\Sigma}$. It follows from definitions that~$\Si L_p^{k+1} \subset \Ker \R^{k}$. In fact, the two spaces must coincide. This looks like a trivial approximation statement, however we do not know a straightforward proof. 
\begin{Th}\label{CoincidenceTheorem}
For any~$p \in [1,\frac{2d+2}{d+3+4k}]$, the spaces~$\Si L_p^{k+1}$ and~$\Ker \R^{k}$ coincide with $\R^k$ being regarded as a map from $\Si L_p^k$ to $L^2_{\loc}(\Sigma)$. This occurs when $2k \leq \kappa_p$.

 For any~$p \in [1, \frac{2d}{d+1+2k})$, the spaces~$\Si L_p^{k+1}$ and~$\Ker \R^{k}$ coincide with $\R^k$ being regarded as a map from $\Si L_p^k$ to $H^{-s}_{\loc}(\Sigma)$ for the same range of $s$ as in Theorem~\ref{First_restriction_for_higher_derivatives_Theorem}.  This occurs when $k < \sigma_p$, or $k \leq \sigma_1$ when $p=1$.

Since $\R^k$ acts nontrivially on the Schwartz functions contained in $\Si L_p^k$, it follows that $\Si L_p^k \supsetneq \!\, \Si L_p^{k+1}$ in this range of~$k$.
\end{Th}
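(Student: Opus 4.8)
The plan is to prove the two inclusions $\Si L_p^{k+1}\subseteq\Ker\R^{k}$ and $\Ker\R^{k}\subseteq\Si L_p^{k+1}$ separately; only the second carries content. The first is routine: every Schwartz $f$ with $\nabla^{l}\hat f=0$ on $\Sigma$ for $l=0,\dots,k$ lies in $\Si L_p^{k}$ and has $\R^{k}f=0$, so it belongs to $\Ker\R^{k}_{K}$ for every compact $K$. Since $\R^{k}$ is bounded on $\Si L_p^{k}$ by Theorem~\ref{First_restriction_for_higher_derivatives_Theorem} (or Theorem~\ref{CorollaryOfChoGuoLee} when $k=0$), each $\Ker\R^{k}_{K}$ is closed in $L_p$, hence so is $\Ker\R^{k}$, and it therefore contains the $L_p$-closure of those Schwartz functions, which is exactly $\Si L_p^{k+1}$.

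For the reverse inclusion I would argue by duality. As $\Si L_p^{k+1}$ is a closed subspace of $L_p$, it suffices to show that every $g\in L_{p'}$ annihilating $\Si L_p^{k+1}$ also annihilates $\Ker\R^{k}$. Such a $g$ kills all Schwartz functions whose Fourier transform vanishes near $\Sigma$, so $\hat g$ is a tempered distribution supported on $\Sigma$; and since it kills every Schwartz function whose Fourier transform vanishes to order $k$ on $\Sigma$, it must be a sum of normal derivatives of orders $0,1,\dots,k$ of distributions on $\Sigma$, so that in a tubular neighborhood of $\Sigma$ with normal coordinate $\normal$ we may write $\hat g=\sum_{j=0}^{k}\partial_{\normal}^{j}\mu_j$ with $\mu_j\in\mathcal D'(\Sigma)$. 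The crux is that these $\mu_j$ are in fact regular: testing $g$ against Schwartz functions $\psi$ whose Fourier transforms are built as $Q^{j}$ (with $Q$ a local defining function of $\Sigma$) times an extension of prescribed Cauchy data along $\Sigma$ times a transverse cutoff — so that $\hat\psi$ vanishes to the prescribed order with prescribed top normal derivative — and isolating the coefficients from $j=k$ downward, one bounds each $\mu_j$ against arbitrary data by $\|g\|_{p'}\|\psi\|_{L_p}$. Here $\|\psi\|_{L_p}$ is controlled by a Sobolev norm on $\Sigma$ of that data precisely by the extension ($T^{*}$) estimate dual to Theorem~\ref{First_restriction_for_higher_derivatives_Theorem}, equivalently by a special case of the bilinear bound~\eqref{StubbornInequality}; this forces $\mu_j\in H^{s_j}_{\loc}(\Sigma)$, where $H^{-s_j}_{\loc}(\Sigma)$ is the target space of $\R^{j}$ in Theorem~\ref{First_restriction_for_higher_derivatives_Theorem} (with $s_j=0$ in the $L_2$ case). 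This step is the heart of the matter — it is where the sharp ranges of $p$ and $s$ are forced — and the genuinely delicate point is that $\mu_0,\dots,\mu_k$ must all be admissible for one and the same $p$.

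Granting the regularity of the $\mu_j$, take $f\in\Ker\R^{k}$ and Schwartz $f_n\to f$ in $L_p$ with $\nabla^{l}\hat f_n=0$ on $\Sigma$ for $l<k$. For Schwartz functions the integration by parts is literal: $\langle f_n,g\rangle=\langle\hat f_n,\hat g\rangle=\sum_{j=0}^{k}(-1)^{j}\langle\R^{j}f_n,\mu_j\rangle$. Now $\R^{j}f_n\to\R^{j}f$ in $H^{-s_j}_{\loc}(\Sigma)$ for each $j\le k$, since $\R^{j}$ is bounded on $\Si L_p^{j}$ (which contains all the $f_n$ and $f$); and $\R^{j}f=0$ for every $j\le k$ — for $j<k$ because $f\in\Si L_p^{k}$, and for $j=k$ because $f\in\Ker\R^{k}$. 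Since $\mu_j$ pairs continuously with this convergence, $\langle f,g\rangle=\lim_n\langle f_n,g\rangle=0$, as wanted. When $\Sigma$ is non-compact one first localizes in frequency: a smooth compactly supported Fourier multiplier, bounded on $L_p$ for every $p\in[1,\infty)$, reduces matters to $\hat g$ supported in a small ball around a point of $\Sigma$ (the complementary piece of $f$ lying in $\Si L_p^{\infty}\subseteq\Si L_p^{k+1}$), after which $\mu_j$ is compactly supported and the pairing above is unambiguous; we suppress the routine details of this reduction.

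Finally, the strict inclusion $\Si L_p^{k}\supsetneq\Si L_p^{k+1}$ in this range of $k$ follows from the coincidence just proved together with the existence of Schwartz functions in $\Si L_p^{k}$ on which $\R^{k}$ does not vanish: for instance, take $\hat f$ to equal $Q^{k}$ times a smooth bump that is non-zero somewhere on $\Sigma$, $Q$ a local defining function, so that $\nabla^{k}\hat f|_{\Sigma}\not\equiv0$; such an $f$ lies outside $\Ker\R^{k}=\Si L_p^{k+1}$. To summarize, the whole difficulty of the theorem is the regularity claim for the coefficients $\mu_j$, which is equivalent to the sharp restriction/extension inequality; the need to run that argument over all $j=0,\dots,k$ simultaneously and to localize near $\Sigma$ is exactly what prevents this ``trivial-looking'' approximation statement from having a short elementary proof.
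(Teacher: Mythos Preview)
Your duality strategy is the right instinct, and once corrected it gives a proof that is arguably more direct than the paper's route through the auxiliary spaces~$\Si\tilde L_p^{k+1}$ and the translated-surface function~$\Theta(r)$. But the step you flag as ``the heart of the matter'' --- the regularity $\mu_j\in H^{s_j}_{\loc}(\Sigma)$ --- is not justified by the argument you give, and is in fact false.

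The issue is the direction of the extension estimate. To bound $|\langle\mu_k,\phi\rangle|\leq\|g\|_{p'}\|\psi\|_{L_p}$ usefully you need, for each test datum~$\phi$, a Schwartz~$\psi\in\Si L_p^k$ with $\R^k\psi=\phi$ and $\|\psi\|_{L_p}\lesssim\|\phi\|_{H^{-s_k}}$. That is a lifting from $H^{-s_k}(\Sigma)$ into $L_p(\mathbb R^d)$. The adjoint of Theorem~\ref{First_restriction_for_higher_derivatives_Theorem}, however, is an extension $H^{s_k}(\Sigma)\to L_{p'}(\mathbb R^d)$ --- wrong Sobolev index, wrong Lebesgue exponent --- so it does not provide this lifting. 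A concrete counterexample already appears at $k=0$, $p=1$: for $\Sigma=S^{d-1}$ and a point $\theta_0\in\Sigma$, the function $g(x)=e^{2\pi i\langle x,\theta_0\rangle}$ lies in $L_\infty=\Ann(\Si L_1^1)$, yet its surface density is $\mu_0=\delta_{\theta_0}$, which is not in $L_2(\Sigma)=H^{s_0}(\Sigma)$.

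The correct substitute is not regularity of the individual $\mu_j$ but density of \emph{smooth} $\mu_j$: by the paper's Lemma~\ref{Smoothing_Lemma} (Domar's convolution along~$\Sigma$), every $g\in\Ann_{L_{p'}}(\Si L_p^{k+1})$ is an $L_{p'}$-limit (weak-$*$ if $p=1$) of $g_m$ whose coefficients $\Lambda^{(m)}\in\bigoplus_{0\le j\le k}C_0^\infty(\Sigma)$. For such $g_m$ your pairing computation is legitimate: with Schwartz $f_n\to f$ in $\Si L_p^k$ one has $\langle f_n,g_m\rangle=(-1)^k\langle\R^k f_n,\Lambda_k^{(m)}\rangle$, and since $\R^k f_n\to\R^k f=0$ in $H^{-s_k}_{\loc}$ while $\Lambda_k^{(m)}\in C_0^\infty\subset H^{s_k}$, this tends to~$0$. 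Hence $\langle f,g_m\rangle=0$ for every~$m$, and then $\langle f,g\rangle=0$. This repaired argument bypasses the paper's $\Si\tilde L_p^{k+1}$ machinery entirely; the paper instead proves $\Ker\R^k\subset\Si\tilde L_p^{k+1}$ via continuity of $\partial_r^{2k}\Theta$ and only invokes Domar's smoothing inside Lemma~\ref{CoincidenceLemma}.
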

\begin{Rem}
Theorems~\ref{Stability} and~\ref{CoincidenceTheorem} completely classify the spaces $\Si L_p^k$, modulo some details about the optimal target space for $\R^k$.  
\end{Rem}
\begin{Rem}
In the papers~\cite{Goldberg} and~\cite{Goldberg_Schlag}, the condition ``$\hat{f} = 0$ on the unit sphere'' was understood in the sense of~$L_2$ traces.
\end{Rem}

Theorem~\ref{First_restriction_for_higher_derivatives_Theorem} covers many combinations $k > s \geq 0$ that are forbidden  in Theorem~\ref{CorollaryOfChoGuoLee} by demanding that $\hat{f}$ vanishes to order $k-1$ on $\Sigma$.  We now introduce a family of statements which assume only smoothness of $\hat{f}|_\Sigma$ instead of vanishing.  Bessel spaces already appear on the left hand-side of inequality~\eqref{RestrictionWithSobolev}, so it is reasonable to use the same scale to describe the smoothness of $\hat{f}|_\Sigma$.

In effect this is a bootstrapping claim, that regularity of $\hat{f}$ in the $d-1$ directions tangent to $\Sigma$ implies a certain regularity in the transverse direction as well.  It is notable that the inequalities hold even though $f \in L_p$ (and hence $\hat{f}$) is not uniquely determined by $\hat{f}|_\Sigma$.

\begin{Def}\label{HDR}
Let~$k$ be a natural number, let~$\ell$ and~$s$ be non-negative reals, let~$p \in [1,\infty)$. We say that the higher derivative restriction property~$\HDR(\Sigma,k,s,\ell,p)$ holds true if for any smooth compactly supported function~$\phi$ in~$d$ variables, the estimate
\begin{equation}\label{HDRinequality}
\big\|(\phi \nabla^k \hat{f})\big|_{\Sigma}\big\|_{H^{-s}(\Sigma)} \lesssim_{\phi}\Big(\|f\|_{L_p(\mathbb{R}^d)} + \|\phi\hat{f}\|_{H^\ell(\Sigma)}\Big)
\end{equation}
holds true for any Schwartz function~$f$.  
\end{Def}

\begin{Rem}
A complete generalization of Theorem~\ref{First_restriction_for_higher_derivatives_Theorem} would include {\em a priori} estimates on $\|\phi \nabla^j \hat{f}\|_{H^{\ell_j}(\Sigma)}$  for $j = 0, 1, \ldots , J \leq k-1$.  We consider only the $J=0$ case above for relative simplicity of notation.
\end{Rem}

\begin{St}\label{SufficientConditionsHDR}
If~$\HDR(\Sigma,k,s,\ell,p)$ holds true and~$p > 1$, then
\begin{align}
\label{HDRShiftCondition} k &\leq s + \ell;\\
\label{HDRSigmaLpShiftCondition} k&\leq s + 1;\\
\label{HDRSurfaceCondition} k &< \sigma_p;\\
\label{HDRShiftedKnapp} \frac{k\ell}{s + \ell - k} &\leq \kappa_p\quad \hbox{when } k> s;\\
\label{HDRKnapp} 2k -s &\leq \kappa_p,
\end{align}
where the numbers~$\sigma_p$ and~$\kappa_p$ are defined by~\eqref{Sigma_p} and~\eqref{Kappa_p} respectively. In the case~$p=1$, equality in~\eqref{HDRSurfaceCondition} may also occur.
\end{St}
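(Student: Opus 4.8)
\emph{Strategy.} As the statement is a list of necessary conditions, I would prove each by exhibiting a one- or two-parameter family of Schwartz functions that defeats~\eqref{HDRinequality} whenever the corresponding inequality fails. Three building blocks suffice — translations of $f$, normal concentration near $\Sigma$ (a ``surface measure'' family), and the Knapp cap — together with a scaled combination of the last two. Two facts recur: the identity $(d+1)/p' = \tfrac{d-1}{2}-\kappa_p$, and the stationary-phase principle that restricting $e^{iN\langle\normal(\xi_0),\xi\rangle}$ to $\Sigma$ near a point $\xi_0$ of nonzero curvature yields a chirp whose linear phase has a nondegenerate critical point of ``curvature'' $\sim N$, so that for a fixed smooth bump $\chi$ on $\Sigma$ one has $\|e^{iN\langle\normal,\xi\rangle}\chi\|_{H^\ell(\Sigma)}\sim N^\ell$ and $\|e^{iN\langle\normal,\xi\rangle}\chi\|_{H^{-s}(\Sigma)}\sim N^{-\min(s,(d-1)/2)}$, with the analogous estimates for a chirp localized to a shrinking cap.

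\emph{Translation families, yielding~\eqref{HDRShiftCondition} and~\eqref{HDRSigmaLpShiftCondition}.} Fix a Schwartz $f_0$ with $\hat f_0$ supported near $\xi_0\in\Sigma$ and set $f_N(x)=f_0(x-N\normal(\xi_0))$, so $\hat f_N=e^{iN\langle\normal,\xi\rangle}\hat f_0$ and $\|f_N\|_{L_p}=\|f_0\|_{L_p}$. Since $\langle\normal,\xi\rangle$ is linear, $\nabla^k\hat f_N=e^{iN\langle\normal,\xi\rangle}\sum_j\binom{k}{j}(iN)^j\,\normal^{\otimes j}\otimes\nabla^{k-j}\hat f_0$. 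If $\hat f_0(\xi_0)\neq0$ the $j=k$ term dominates on $\Sigma$, so the left side of~\eqref{HDRinequality} is $\sim N^{k-\min(s,(d-1)/2)}$ and the right side is $\sim\|f_0\|_{L_p}+N^\ell\sim N^\ell$; letting $N\to\infty$ yields $k-\min(s,(d-1)/2)\le\ell$, which in either case $s\le\tfrac{d-1}{2}$ or $s>\tfrac{d-1}{2}$ gives~\eqref{HDRShiftCondition}. If instead $\hat f_0|_\Sigma\equiv0$ but $\nabla\hat f_0|_\Sigma\not\equiv0$ (take $\hat f_0$ a Schwartz bump times a defining function of $\Sigma$), the $j=k$ term dies on $\Sigma$ and the $j=k-1$ term dominates, so the left side is $\sim N^{k-1-\min(s,(d-1)/2)}$; moreover — the essential point — $\|\phi\hat f_N\|_{H^\ell(\Sigma)}=0$, because that norm depends only on the vanishing restriction of $\hat f_N$ to $\Sigma$; hence~\eqref{HDRinequality} forces $k-1\le s$, i.e.~\eqref{HDRSigmaLpShiftCondition} (vacuous when $k=1$).

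\emph{Surface-measure, Knapp, and shifted-Knapp families, yielding~\eqref{HDRSurfaceCondition},~\eqref{HDRKnapp}, and~\eqref{HDRShiftedKnapp}.} Let $\hat f_\varepsilon$ be a height-$1$ bump on the $\varepsilon$-neighborhood of a compact arc of $\Sigma$; then $\|f_\varepsilon\|_{L_p}\sim\varepsilon^{-\sigma_p}$ (bounded when $\sigma_p\le0$), $\|\phi\hat f_\varepsilon\|_{H^\ell(\Sigma)}\sim1$, and $\nabla^k\hat f_\varepsilon|_\Sigma$ is an $O(1)$-scale function of size $\varepsilon^{-k}$, so its $H^{-s}$ norm is $\sim\varepsilon^{-k}$; letting $\varepsilon\to0$ forces $\sigma_p>0$ and $k\le\sigma_p$ (upgrading to the strict inequality for $p>1$ in~\eqref{HDRSurfaceCondition} is the customary endpoint refinement, and equality genuinely persists at $p=1$). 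For~\eqref{HDRKnapp}, use the Knapp cap $C_\delta=\{|\xi'-\xi_0'|<\delta,\ |\xi_d-g(\xi')|<\delta^2\}$ adapted to $\Sigma=\{\xi_d=g(\xi')\}$ near $\xi_0$, and let $\hat f_\delta$ be a bump on $C_\delta$ vanishing to first order on $\Sigma$; then $\|f_\delta\|_{L_p}\sim|C_\delta|^{1/p'}=\delta^{(d+1)/p'}$, $\|\phi\hat f_\delta\|_{H^\ell(\Sigma)}=0$, and $\nabla^k\hat f_\delta|_\Sigma\sim\delta^{-2k}$ on a $\delta$-cap, with $H^{-s}(\Sigma)$ norm $\sim\delta^{-2k+(d-1)/2+\min(s,(d-1)/2)}$; comparing powers as $\delta\to0$ and using $(d+1)/p'=\tfrac{d-1}{2}-\kappa_p$ gives~\eqref{HDRKnapp}. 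Finally, for~\eqref{HDRShiftedKnapp} take the non-vanishing Knapp bump $\hat f_\delta$ on $C_\delta$ and translate it by $N=\delta^{-2a}$ along $\normal$, with $a>1$ a free parameter, so $e^{iN\langle\normal,\xi\rangle}$ restricted to the cap is a chirp of phase swing $N\delta^2=\delta^{-2(a-1)}\gg1$ whose Fourier transform on $\Sigma$ spreads to $|\eta|\lesssim N\delta$ with amplitude $\sim N^{-(d-1)/2}$. One computes $\|g\|_{L_p}\sim\delta^{(d+1)/p'}$, $\|\phi\hat g|_\Sigma\|_{H^\ell(\Sigma)}\sim N^\ell\delta^{(d-1)/2+\ell}$, and (since $N\delta^2\gg1$ forces all $k$ derivatives onto the exponential) $\|\phi\nabla^k\hat g|_\Sigma\|_{H^{-s}}\sim N^{k-s}\delta^{(d-1)/2-s}$, the last being a growing factor only for $k>s$, which is why~\eqref{HDRShiftedKnapp} is asserted only then — and for $k>s$ the already-established~\eqref{HDRSurfaceCondition} gives $s<k\le\sigma_p\le\tfrac{d-1}{2}$, so the chirp estimates are used in the regime $2s<d-1$. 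Substituting $N=\delta^{-2a}$ turns~\eqref{HDRinequality} into a comparison of powers of $\delta$ required for every admissible $a$; optimizing $a$ — the balancing value is $a^*=(\ell+\kappa_p)/(2\ell)$ — reduces, after clearing denominators, exactly to $\ell k\le\kappa_p(s+\ell-k)$, i.e.~\eqref{HDRShiftedKnapp}, whenever $a^*>1$ (equivalently $\kappa_p>\ell$); in the complementary range $\kappa_p\le\ell$ the example at $a\to1^+$ yields instead $s+\ell\ge2k$, which together with~\eqref{HDRKnapp} already forces~\eqref{HDRShiftedKnapp}.

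\emph{Main obstacle.} I expect the shifted-Knapp step to be the crux: it requires stationary phase for a chirp that is simultaneously oscillating (from the translation) and localized to a curved $\delta$-cap, a careful identification of which Leibniz term in $\nabla^k\hat g|_\Sigma$ dominates as the two scales move, and a correct optimization over $a$ through all subcases. The remaining technical points — the $p=1$ endpoints, the strictness upgrade in~\eqref{HDRSurfaceCondition} for $p>1$, and the uniformity in $\delta$ of the cap-localized Sobolev estimates — are routine. All of these extremizers are collected in Section~\ref{S5}.
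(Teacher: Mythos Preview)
Your proposal is correct and follows essentially the same route as the paper's proof in Section~\ref{S5}: pure shifts for~\eqref{HDRShiftCondition} and~\eqref{HDRSigmaLpShiftCondition}, the surface-measure family for~\eqref{HDRSurfaceCondition}, the Knapp cap with first-order vanishing on~$\Sigma$ for~\eqref{HDRKnapp}, and the translated Knapp cap with an optimized shift for~\eqref{HDRShiftedKnapp}. Your parametrization $N=\delta^{-2a}$ and optimization over $a$ is equivalent to the paper's direct choice $D_n = n^{(\kappa_p-s)/(k-s)}\log n$ (your balancing value $a^* = (\ell+\kappa_p)/(2\ell)$ corresponds to the paper's exponent, and your observation that the case $\kappa_p \le \ell$ is subsumed by~\eqref{HDRKnapp} is exactly the remark in Subsection~\ref{s41}). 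The one place the paper is more explicit than your sketch is the strict inequality in~\eqref{HDRSurfaceCondition} for $p>1$: rather than a single $f_\varepsilon$, the paper sums $f_{\le N} = \sum_{n\le N}(1+n)^{-1}2^{-nk}f_n$ so that the left side grows like $\log N$ while the $L_p$ norm stays bounded when $k=\sigma_p$; this is the ``customary endpoint refinement'' you allude to, and it would be worth spelling out.
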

The sufficient conditions we are able to provide for the~$\HDR$ inequalities do not always coincide with the necessary ones listed above. In fact, they get close to necessary conditions when~$\ell$ is relatively small and there is a gap if~$\ell$ is large. By ``getting close for small $\ell$'' we mean that the non-sharpness comes only from our inability to work with non-integer~$k$. The sufficient conditions we are able to obtain are rather bulky (this is again due to ``integer arithmetic''). They are formulated in terms of certain convex hulls of finite collections of points in the plane. Since we need to introduce more notation before formulating the strongest available statement, we refer the reader to Theorem~\ref{BigTableTheorem} in Section~\ref{S4} for the details and state a representative subset of the results here.
\begin{Th}\label{HDRExample}
Let~$p > 1$ and~$\kappa_p \in\mathbb{N}$. If
\begin{equation}\label{UglyCondition}
2\Big\lceil\frac{\ell-1}{\ell}\kappa_p\Big\rceil \leq \kappa_p,
\end{equation}
then~$\HDR(\Sigma,k,s,\ell,p)$ holds true provided \eqref{HDRShiftCondition},~\eqref{HDRSigmaLpShiftCondition},~\eqref{HDRSurfaceCondition},~\eqref{HDRShiftedKnapp}, and~\eqref{HDRKnapp} are satisfied.  If~\eqref{UglyCondition} does not hold, then~$\HDR(\Sigma,k,s,\ell,p)$ holds true provided \eqref{HDRShiftCondition} -- \eqref{HDRKnapp} are satisfied as well as the inequality  
\begin{equation} \label{UglyCondition2}
s \geq k - \frac{\kappa_p-k}{\kappa_p - \big[\frac{\kappa_p}{2}\big]}.
\end{equation}
\end{Th}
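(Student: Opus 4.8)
The plan is to deduce Theorem~\ref{HDRExample} from the more technical Theorem~\ref{BigTableTheorem} (whose statement lies beyond this excerpt, but whose shape is clear from the discussion): the sufficient conditions for $\HDR(\Sigma,k,s,\ell,p)$ are expressed as membership of a point depending on $(k,s,\ell,p)$ in a convex hull of finitely many lattice points, each lattice point corresponding to an endpoint estimate obtained from the main bilinear inequality~\eqref{StubbornInequality} (equivalently Theorem~\ref{StubbornTheorem}) together with the Stein--Tomas-type $T^*T$ bound and a Sobolev embedding along $\Sigma$. The first step is therefore to recall that machinery: the building-block inequalities let one trade a derivative transverse to $\Sigma$ against a fixed loss of integrability in $p$ and a fixed amount of smoothing inside $\Sigma$, and by iterating $k$ times (respectively, by starting the bootstrap from the {\it a priori} datum $\phi\hat f\in H^\ell(\Sigma)$ rather than from vanishing) one produces a family of valid triples $(p,s,\ell)$ whose convex hull is what Theorem~\ref{BigTableTheorem} describes.

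The second step is the linear-programming computation that is the actual content of Theorem~\ref{HDRExample}. The necessary conditions~\eqref{HDRShiftCondition}--\eqref{HDRKnapp} cut out a polygon in the relevant parameter space; the sufficient region from Theorem~\ref{BigTableTheorem} is a convex hull of lattice points that, over the reals, would fill the same polygon, but over the integers may fall short near the vertex where~\eqref{HDRShiftedKnapp} and~\eqref{HDRKnapp} are both tight. One checks that the key geometric obstruction is exactly whether the lattice point realizing the optimal split of the $k$ derivatives — roughly $\lceil \frac{\ell-1}{\ell}\kappa_p\rceil$ of the ``budget'' $\kappa_p$ spent one way and the remainder the other — still satisfies a Knapp-type constraint, which is precisely the inequality~\eqref{UglyCondition}. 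When~\eqref{UglyCondition} holds, I would verify that the nearest available lattice point already lies in the hull for every $(k,s)$ satisfying~\eqref{HDRShiftCondition}--\eqref{HDRKnapp}, so no extra hypothesis is needed. When~\eqref{UglyCondition} fails, the hull misses a sliver adjacent to that vertex; the extra hypothesis~\eqref{UglyCondition2} is exactly the half-space that removes the sliver, i.e.\ it forces $s$ to be large enough that $(k,s,\ell,p)$ lies in the convex hull of the lattice endpoints $\big(k,\,k-\frac{\kappa_p-k}{\kappa_p-[\kappa_p/2]}\big)$-type corner and the surface endpoint. So the computation amounts to: (i) identify the finitely many lattice vertices of the sufficient region for the given $\kappa_p\in\mathbb N$; (ii) write the target point as an explicit convex combination of them, with the combination coefficients being ratios of the quantities $\kappa_p-k$, $\kappa_p-[\kappa_p/2]$, $s+\ell-k$, etc.; (iii) read off that nonnegativity of all coefficients is equivalent to~\eqref{HDRShiftCondition}--\eqref{HDRKnapp} (and, in the bad case, \eqref{UglyCondition2}).

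The main obstacle I expect is purely this integer-arithmetic bookkeeping: the endpoint estimates only exist for integer numbers of transverse derivatives, so the ``convexification'' of the real-variable region is genuinely lossy, and one must be careful that the ceiling and floor functions in~\eqref{UglyCondition} and~\eqref{UglyCondition2} are placed correctly so that the two cases exhaust all $(\ell,\kappa_p)$. A secondary nuisance is the $p=1$ boundary behavior of~\eqref{HDRSurfaceCondition}: since equality in~\eqref{HDRSurfaceCondition} is allowed only for $p=1$ and this case is excluded by the hypothesis $p>1$ here, I can simply restrict attention to strict inequality $k<\sigma_p$ throughout and invoke Theorem~\ref{StubbornTheorem} in its open range, which keeps all the building-block estimates available with room to spare for the interpolation. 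Finally, once the convex-combination identity is in hand, the $\HDR$ bound itself follows by interpolating the corresponding endpoint inequalities — this last step is routine and I would not belabor it.
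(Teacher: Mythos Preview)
Your proposal is correct and follows essentially the same route as the paper: derive Theorem~\ref{HDRExample} from Theorem~\ref{BigTableTheorem} (and Theorem~\ref{ellSmallTheorem} for small $\ell$) by decoding which convex hull of endpoint points $P_j,Q_j$ applies, with the case split on~\eqref{UglyCondition} governing whether the full necessary polygon is reached or only the smaller region cut by~\eqref{UglyCondition2}. The one concrete simplification you should make explicit is that the hypothesis $\kappa_p\in\mathbb{N}$ forces $P=Q=K=(\kappa_p,\kappa_p)$ and $\lceil\kappa_p\rceil=\kappa_p<\sigma_p$, so all the $P_j,Q_j$ collapse onto the two lines $LK$ and $s=k-1$; this is what makes the convex-hull bookkeeping reduce to the clean dichotomy you describe, with the extremal accessible point being $\big([\tfrac{\kappa_p}{2}],[\tfrac{\kappa_p}{2}]-1\big)$ in the bad case, whose segment to $K$ is exactly the boundary line in~\eqref{UglyCondition2}.
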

Here and in what follows,~$\lceil \cdot\rceil$ is the upper integer part of a number, i.e. the smallest integer that is greater or equal to the number; the notation~$[\cdot]$ denotes the lower integer part of a number, i.e. the largest integer that does not exceed the number:
\begin{equation}\label{IntegerPart}
[x] = \sup\{z \in \mathbb{Z}\mid z\leq x\};\quad \lceil x\rceil = \inf \{z\in\mathbb{Z}\mid z \geq x\}.
\end{equation}

The $s=0$, $k=1$ cases of Theorem~\ref{HDRExample} illustrate its ability to extract derivatives of $\hat{f}$ in all directions when  only regularity along $\Sigma$ is assumed.
\begin{Cor} \label{HDRExample_s=0}
Suppose $p = \frac{2d+2}{d+3+2m}$ for some integer $2 \leq m < \frac{d-1}{2}$, and let $\ell \geq \frac{2d+2 - p(d+3)}{2d+2 - p(d+5)} = \frac{m}{m-1}$.  Then
\begin{equation} \label{HDRInequality_s=0}
\big\|  \nabla \hat{f} \big\|_{L_2(K)} \lesssim_{K, \phi} \Big(\|f\|_{L_p(\mathbb{R}^d)} + \| \phi \hat{f}\|_{H^\ell(\Sigma)}\Big).
\end{equation}
for any Schwartz function $f$, compact subset $K \subset \Sigma$, and smooth cutoff $\phi$ that is identically 1 on $K$. 
\end{Cor}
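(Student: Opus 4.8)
The plan is to deduce this corollary from Theorem~\ref{HDRExample} applied with $k=1$ and $s=0$. First I would verify that the hypotheses of Theorem~\ref{HDRExample} are met for these parameters. With $p=\frac{2d+2}{d+3+2m}$ one computes directly $\kappa_p = \frac{d+1}{p}-\frac{d+3}{2} = \frac{d+3+2m}{2}-\frac{d+3}{2} = m$, which is an integer and is $\geq 2$ by hypothesis, so the ``$\kappa_p \in \mathbb N$'' assumption holds; moreover $m < \frac{d-1}{2}$ translates to $m < \sigma_p$ (since $\sigma_p - \kappa_p = \frac{d-1}{2}-\frac{d+3}{2}+\cdots$, or more simply $\sigma_p = \frac dp - \frac{d+1}{2} = \frac{d(d+3+2m)}{2(d+1)} - \frac{d+1}{2}$; one checks $\sigma_p > \kappa_p = m$ exactly when $m < \frac{d-1}{2}$). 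Then I would run through conditions~\eqref{HDRShiftCondition}--\eqref{HDRKnapp} with $k=1$, $s=0$: \eqref{HDRShiftCondition} reads $1 \leq \ell$; \eqref{HDRSigmaLpShiftCondition} reads $1 \leq 1$; \eqref{HDRSurfaceCondition} reads $1 < \sigma_p$, which follows from $m \geq 2$ and $m < \sigma_p$; \eqref{HDRShiftedKnapp} (active since $k=1>0=s$) reads $\frac{\ell}{\ell-1} \leq \kappa_p = m$, i.e. $\ell \geq \frac{m}{m-1}$, precisely the stated lower bound on $\ell$; and \eqref{HDRKnapp} reads $2 \leq \kappa_p = m$, again the hypothesis $m \geq 2$. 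I should also double-check that the displayed expression $\frac{2d+2-p(d+3)}{2d+2-p(d+5)}$ equals $\frac{m}{m-1}$: substituting $p = \frac{2d+2}{d+3+2m}$ gives numerator $2d+2 - \frac{(2d+2)(d+3)}{d+3+2m} = (2d+2)\frac{2m}{d+3+2m}$ and denominator $(2d+2)\frac{2m-2}{d+3+2m}$, whose ratio is $\frac{m}{m-1}$, as claimed.

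Next I would dispose of the side condition~\eqref{UglyCondition}, which with $\kappa_p = m$ reads $2\lceil \frac{\ell-1}{\ell}m\rceil \leq m$. The point is that $\frac{\ell-1}{\ell} = 1 - \frac{1}{\ell}$, and the constraint $\ell \geq \frac{m}{m-1}$ forces $\frac{1}{\ell} \leq \frac{m-1}{m}$, hence $\frac{\ell-1}{\ell}m \leq m - \frac{m}{\ell}$; combined with $\frac m\ell \geq 1$ (again from $\ell \leq \cdots$? — here I need $\ell \leq m$, which is \emph{not} assumed, so I must be careful). If \eqref{UglyCondition} happens to fail, Theorem~\ref{HDRExample} instead requires the extra inequality~\eqref{UglyCondition2}, namely $s \geq 1 - \frac{m-1}{m - [m/2]}$. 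Since $m \geq 2$ we have $m - [m/2] = \lceil m/2 \rceil \geq 1$ and $m-1 \geq 1$, so the right-hand side is $1 - \frac{m-1}{\lceil m/2\rceil} \leq 1 - \frac{m-1}{m/2 + 1}$; for $m \geq 2$ one checks $\frac{m-1}{\lceil m/2\rceil} \geq 0$, but to conclude I need this quantity to be $\leq 0$ so that $s=0$ satisfies it. Indeed $\frac{m-1}{\lceil m/2 \rceil} \geq 1$ is equivalent to $m - 1 \geq \lceil m/2 \rceil$, which holds for all $m \geq 3$, while for $m=2$ we get $\frac{1}{1} = 1$, so $1 - 1 = 0 \leq s$ holds with equality. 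Thus in every case $s=0$ is admissible, and Theorem~\ref{HDRExample} yields $\HDR(\Sigma, 1, 0, \ell, p)$.

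Finally I would unwind what $\HDR(\Sigma,1,0,\ell,p)$ says: for every smooth compactly supported $\phi$ in $d$ variables,
\[
\big\|(\phi \nabla \hat f)\big|_\Sigma\big\|_{H^0(\Sigma)} \lesssim_\phi \big(\|f\|_{L_p(\mathbb R^d)} + \|\phi \hat f\|_{H^\ell(\Sigma)}\big)
\]
for all Schwartz $f$, and $H^0(\Sigma) = L_2(\Sigma)$. Choosing $\phi \equiv 1$ on a neighborhood of the compact set $K$, the left side dominates $\|\nabla \hat f\|_{L_2(K)}$ (on $K$ the cutoff $\phi$ is $1$, so $\phi \nabla \hat f = \nabla \hat f$ there), which is inequality~\eqref{HDRInequality_s=0} with the constant depending on $K$ and $\phi$. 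The main obstacle in writing this up cleanly is purely the ``integer arithmetic'' bookkeeping around conditions~\eqref{UglyCondition} and~\eqref{UglyCondition2}: one must confirm that the lower bound $\ell \geq \frac{m}{m-1}$, the only place $\ell$ enters, is exactly strong enough to force $s=0$ into the admissible range in both branches of Theorem~\ref{HDRExample}, and the small values $m=2,3$ should be checked by hand to make sure no off-by-one issue spoils the endpoint.
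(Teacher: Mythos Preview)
Your proposal is correct and follows exactly the approach the paper intends: Corollary~\ref{HDRExample_s=0} is simply the $k=1$, $s=0$ specialization of Theorem~\ref{HDRExample}, and you have verified all of~\eqref{HDRShiftCondition}--\eqref{HDRKnapp} together with the fact that~\eqref{UglyCondition2} is automatically satisfied at $s=0$ for every integer $m\geq 2$ (since $[m/2]\geq 1$), so the branching on~\eqref{UglyCondition} is harmless. The only cosmetic cleanup is that your parenthetical computation of $\sigma_p-\kappa_p$ is garbled; the clean identity is $\sigma_p-\kappa_p = 1-\tfrac{1}{p}>0$, which immediately gives $\sigma_p>m\geq 2>1$.
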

In Section~\ref{s54} we construct a translated Knapp example to show that the lower bound for $\ell$ is sharp.

The property $\HDR(\Sigma, k, s, \ell, p)$ has a dual formulation in terms of the Fourier extension operator. We denote the Lebesgue measure on~$\Sigma$ by~$d \sigma$.
\begin{Cor} \label{HDRDual}
Suppose $\HDR(\Sigma, k, s, \ell, p)$ holds true and $p < \frac{2d}{d+1}$ (e.g. if the conditions of Theorem~\ref{HDRExample} or Theorem~\ref{BigTableTheorem} are satisfied).  Then for each $g \in H^s(\Sigma)$, multi-index $\alpha$ with $|\alpha| \leq k$, and smooth compactly supported $\phi$, there exist $F_\alpha \in L_{p'}(\mathbb{R}^d)$ and $g_\alpha \in H^{-\ell}(\Sigma)$ such that
\begin{equation}\label{DualEquation}
F_\alpha + (\phi g_\alpha\, d\sigma)\check{\phantom{i}} = x^\alpha(\phi g\, d\sigma)\check{\phantom{i}},
\end{equation}
and furthermore
\begin{equation}\label{DualEstimate}
\|F_\alpha\|_{L_{p'}(\mathbb{R}^d)} + \|g_\alpha\|_{H^{-\ell}(\Sigma)} \lesssim_\phi \|g\|_{H^s(\Sigma)}.
\end{equation}
Conversly, if for any compactly supported smooth function~$\phi$, for any~$g$, and for any~$\alpha$ there exist~$F_\alpha$ and~$g_\alpha$ such that~\eqref{DualEquation} and~\eqref{DualEstimate}, then $\HDR(\Sigma, k, s, \ell, p)$ holds true (we still assume~$p < \frac{2d}{d+1}$).
\end{Cor}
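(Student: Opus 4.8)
The plan is to recognize that $\HDR(\Sigma,k,s,\ell,p)$ is exactly the assertion that a certain linear operator is bounded, and then to read off~\eqref{DualEquation}--\eqref{DualEstimate} as the statement that its adjoint has a bounded right inverse on each coordinate factor. Fix a compactly supported smooth $\phi$. Consider the operator $T$ sending a Schwartz function $f$ to the tuple $\big((\phi\nabla^k\hat f)|_\Sigma;\ (\phi\hat f)|_\Sigma\big)$, viewed as a map into the product space $H^{-s}(\Sigma)^{(\text{number of multi-indices }|\alpha|\le k)}\times H^{\ell}(\Sigma)$ but with the \emph{sum} norm as in~\eqref{HDRinequality}; actually it is cleaner to work with the single operator $S\colon f\mapsto (\phi\nabla^k\hat f)|_\Sigma$ from $L_p(\mathbb R^d)$ into $H^{-s}(\Sigma)$ \emph{modulo} the image of $H^{\ell}(\Sigma)$ under $g\mapsto (\phi\nabla^k(\phi g\,d\sigma)\check{\phantom{i}})|_\Sigma$ — but since quotient norms are awkward, I will instead phrase $\HDR$ as the statement that $S$ extends boundedly from the subspace $\{f : (\phi\hat f)|_\Sigma = 0\}$ with $L_p$ norm, plus a correction. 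The honest formulation: $\HDR(\Sigma,k,s,\ell,p)$ holds iff for every $\phi$ and every multi-index $\alpha$, $|\alpha|\le k$, the functional $f\mapsto \scalprod{(\phi\,\partial^\alpha\hat f)|_\Sigma}{g}$, defined on Schwartz $f$, is bounded by $\lesssim_\phi (\|f\|_{L_p}+\|\phi\hat f\|_{H^\ell})\|g\|_{H^s}$ for every $g\in H^s(\Sigma)$. Summing over $\alpha$ only changes constants, so we may treat one $\alpha$ at a time.

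Next I would compute that dual pairing by moving the derivatives and the cutoff onto the test side. For Schwartz $f$ and $g\in H^s(\Sigma)$ we have, by the definition of the surface restriction and integration by parts,
\begin{equation*}
\Scalprod{(\phi\,\partial^\alpha\hat f)\big|_\Sigma}{g}_{\Sigma} = \int_\Sigma \phi(\xi)\,(\partial^\alpha\hat f)(\xi)\,\overline{g(\xi)}\,d\sigma(\xi) = \big(2\pi i\big)^{|\alpha|}\,\scalprod{f}{\,x^\alpha\,\big(\phi\,g\,d\sigma\big)\check{\phantom{i}}\,}_{\mathbb R^d},
\end{equation*}
up to a harmless conjugation and an absolute constant which I will absorb. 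Thus the $\HDR$ inequality is equivalent to the estimate
\begin{equation*}
\big|\scalprod{f}{\,x^\alpha(\phi g\,d\sigma)\check{\phantom{i}}}\big| \;\lesssim_\phi\; \big(\|f\|_{L_p(\mathbb R^d)} + \|\phi\hat f\|_{H^\ell(\Sigma)}\big)\,\|g\|_{H^s(\Sigma)}
\end{equation*}
for all Schwartz $f$, all $g\in H^s(\Sigma)$, and all $|\alpha|\le k$. The functional $f\mapsto \|f\|_{L_p}+\|\phi\hat f\|_{H^\ell}$ is the norm of the image of $f$ under the injection $L_p(\mathbb R^d)\ni f\mapsto (f,\ (\phi\hat f)|_\Sigma)\in L_p(\mathbb R^d)\oplus H^\ell(\Sigma)$ — here the hypothesis $p<\frac{2d}{d+1}$ guarantees, via the Stein--Tomas range, that $(\phi\hat f)|_\Sigma\in H^\ell$ makes sense densely and that this really is (the restriction of) a norm on a subspace of the direct sum. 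So the displayed bound says precisely that the linear functional $\Lambda_{\alpha,g}\colon (F,h)\mapsto \scalprod{F}{x^\alpha(\phi g\,d\sigma)\check{\phantom{i}}}$, a priori defined only on the graph subspace $\mathcal G_\phi=\{(f,(\phi\hat f)|_\Sigma)\}$, is bounded there with norm $\lesssim_\phi\|g\|_{H^s}$. By the Hahn--Banach theorem it extends to a bounded functional on all of $L_p(\mathbb R^d)\oplus H^\ell(\Sigma)$ of the same norm, and by the duality $\big(L_p\oplus H^\ell\big)^* = L_{p'}\oplus H^{-\ell}$ that extension is represented by a pair $(F_\alpha, g_\alpha)$ with $\|F_\alpha\|_{L_{p'}}+\|g_\alpha\|_{H^{-\ell}}\lesssim_\phi\|g\|_{H^s}$, which is~\eqref{DualEstimate}. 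Testing the extended functional against $(f, (\phi\hat f)|_\Sigma)$ and comparing with the original pairing on $\mathcal G_\phi$ gives $\scalprod{f}{F_\alpha} + \scalprod{(\phi\hat f)|_\Sigma}{g_\alpha} = \scalprod{f}{x^\alpha(\phi g\,d\sigma)\check{\phantom{i}}}$ for all Schwartz $f$; rewriting the middle term as $\scalprod{f}{\overline\phi\,(g_\alpha\,d\sigma)\check{\phantom{i}}}$ and using density of Schwartz functions in $L_p$ yields the distributional identity $F_\alpha + (\phi g_\alpha\,d\sigma)\check{\phantom{i}} = x^\alpha(\phi g\,d\sigma)\check{\phantom{i}}$, i.e.~\eqref{DualEquation}, after replacing $\phi$ by $\overline\phi$ (harmless, as $\phi$ ranges over all smooth compactly supported functions anyway). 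The converse direction is the easy one: given the decomposition~\eqref{DualEquation} with the bound~\eqref{DualEstimate}, pair both sides against a Schwartz $f$, run the integration-by-parts identity backwards to recognize the right side as $\big(2\pi i\big)^{-|\alpha|}\scalprod{(\phi\partial^\alpha\hat f)|_\Sigma}{g}$, and estimate the left side by Hölder in the first summand and the $H^\ell$--$H^{-\ell}$ pairing in the second, obtaining the $\HDR$ inequality after taking the supremum over $g$ in the unit ball of $H^s(\Sigma)$ and summing over $|\alpha|\le k$.

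The main obstacle is not any single estimate but getting the functional-analytic bookkeeping exactly right: one must check that the "graph norm" $f\mapsto\|f\|_{L_p}+\|\phi\hat f\|_{H^\ell}$ genuinely corresponds to an isometric embedding $\mathcal G_\phi\hookrightarrow L_p\oplus H^\ell$ whose closure is complemented enough that Hahn--Banach applies cleanly, that the dual of a direct sum with the sum norm is the direct sum of duals with the max norm (and that the constants track correctly through this), and — more delicately — that the restriction map $f\mapsto(\phi\hat f)|_\Sigma$ is well-defined on a dense subclass so that $\mathcal G_\phi$ is an honest subspace rather than a multivalued relation; this is exactly where $p<\frac{2d}{d+1}$ (Stein--Tomas, or Theorem~\ref{CorollaryOfChoGuoLee} with $k=0$) enters. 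Once those points are in place, both implications are routine duality. I would also remark that the asymmetry between $H^\ell$ on the hypothesis side and $H^{-\ell}$ on the conclusion side, and between $H^{-s}$ and $H^s$, is forced by this duality and needs no separate argument.
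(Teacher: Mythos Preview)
Your proposal is correct and follows essentially the same route as the paper: embed $f$ into $L_p(\mathbb{R}^d)\oplus H^\ell(\Sigma)$ via the graph map $f\mapsto (f,(\phi\hat f)|_\Sigma)$, invoke Hahn--Banach to extend the bounded functional represented by $x^\alpha(\phi g\,d\sigma)\check{\phantom{i}}$, and read off the pair $(F_\alpha,g_\alpha)$ from the dual $L_{p'}\oplus H^{-\ell}$. The paper packages this slightly more cleanly by first defining the Banach space $X=\{f\in L_p:\phi\hat f\in H^\ell(\Sigma)\}$, using Lemma~\ref{ApproximationForHDR} for density of Schwartz functions and the $k=0$ case of Theorem~\ref{CorollaryOfChoGuoLee} for completeness (so that the graph image is closed), but the substance is identical; your treatment of the converse direction is also correct and slightly more explicit than the paper's.
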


Finally, we present the main analytic tool used in our proofs of $\HDR$ inequalities. We will formulate it in local form: now~$\Sigma$ is a graph of a function on~$\mathbb{R}^{d-1}$ rather than an arbitrary submanifold. 

Let~$U$ be a neighborhood of the origin in~$\mathbb{R}^{d-1}$. Let~$h$ be a~$C^{\infty}$-smooth function on~$U$ such that~$h(0) = 0$ and~$\nabla h(0) = 0$. We also assume that the Hessian of~$h$ at zero does not vanish,
\begin{equation*}
\det \frac{\partial^2 h}{\partial \zeta^2}(0) \ne 0.
\end{equation*}
Moreover, we assume that the gradient of~$h$ is sufficiently close to zero and the second differential is sufficiently close to~$\frac{\partial^2 h}{\partial \zeta^2}(0)$:
\begin{equation}\label{HessianSmoothness}
\forall \zeta \in U\quad \bigg\|\frac{\partial h}{\partial \zeta}(\zeta)\bigg\| \leq \frac{1}{10d},\quad \bigg\|\frac{\partial^2 h}{\partial \zeta^2}(\zeta) - \frac{\partial^2h}{\partial \zeta^2}(0)\bigg\| \leq \frac{1}{10}\Big|\det \frac{\partial^2 h}{\partial \zeta^2}(0)\Big|.
\end{equation}
The function~$h$ naturally defines the family of surfaces
\begin{equation*}
\Sigma_r = \big\{(\zeta,h(\zeta)+r)\,\big|\; \zeta \in U\big\},\qquad r \in (-\infty,\infty).
\end{equation*}  
We also take some small number~$\eps > 0$ and consider the set~$V = U \times (-\eps,\eps)$.

We will be using Bessel potential spaces adjusted to these surfaces. Now we will need the precise quantity defining the Bessel norm. It is convenient to parametrize everything with~$U$. For~$\gamma \in \mathbb{R}$ and a compactly supported function~$\phi$ on~$\Sigma_r$ (for some fixed~$r$), define its~$H^{-\gamma}$-norm by the formula
\begin{equation}\label{SobolevDef}
\|\phi\|_{H^{-\gamma}(\Sigma_r)}^2 = \int\limits_{\mathbb{R}^{d-1}}\Big|\mathcal{F}_{\zeta\to z}\big[\phi(\zeta,h_r(\zeta))\big](z)\Big|^2\;\big(1+|z|\big)^{-2\gamma}\,dz.
\end{equation}
The symbol~$\mathcal{F}$ denotes the Fourier transform in~$(d-1)$ variables, and we have used the notation~$h_r(\zeta) = h(\zeta) + r$. We will also use the homogeneous norm
\begin{equation*}\label{HomogeneousSobolevDef}
\|\phi\|_{\dot{H}^{-\gamma}(\Sigma_r)}^2 = \int\limits_{\mathbb{R}^{d-1}}\Big|\mathcal{F}_{\zeta\to z}\big[\phi(\zeta,h_r(\zeta))\big](z)\Big|^2|z|^{-2\gamma}\,dz, \quad \gamma\in \Big(0,\frac{d-1}{2}\Big).
\end{equation*}
Since all our functions are supported on~$U$, this norm is equivalent to the inhomogeneous norm~\eqref{SobolevDef} when~$\gamma \in (0,\frac{d-1}{2})$. 
We will often use another formula for the homogeneous norm:
\begin{equation}\label{OurSobolev}
\|\phi\|_{\dot{H}^{-\gamma}(\Sigma_r)}^2 = C_{d,\gamma}\iint\limits_{U\times U}\phi(\zeta,h_r(\zeta))\overline{\phi(\eta,h_r(\eta))}|\zeta - \eta|^{2\gamma-d+1}\,d\zeta\,d\eta, \quad \gamma\in \Big(0,\frac{d-1}{2}\Big).
\end{equation} 
The constant~$C_{d,\gamma}$ may be computed explicitly, however, we do not need the sharp expression for it.

Let~$\alpha$ and~$\beta$ be integers between~$0$ and~$\frac{d-1}{2}$, let~$\gamma \in [0,\frac{d-1}{2})$ be real, let~$p \in [1,\infty]$. Let also~$\psi$ be an arbitrary~$C_0^{\infty}$ function supported in~$U$. We are interested in the ``surface inequality"
\begin{equation}\label{StubbornInequality}
\bigg|\Big(\frac{\partial}{\partial r}\Big)^{\beta}\Big\|\frac{\partial^{\alpha} \hat{f}}{\partial \xi_d^{\alpha}}\psi\Big\|^2_{\dot{H}^{-\gamma}(\Sigma_r)}\bigg|_{r=0}\bigg| \lesssim \|f\|_{L_p(\mathbb{R}^d)}^2.
\end{equation}
So we compute the Fourier transform of an~$L_p$ function, calculate its derivative with respect to~$d$-th coordinate, compute the~$\dot{H}^{-\gamma}$-norms of traces of this derivative on the surfaces~$\Sigma_r$, and then differentiate~$\beta$ times with respect to~$r$. We use the variable~$\xi$ for points in~$\mathbb{R}^d$ on the spectral side and~$\zeta$ for points in~$\mathbb{R}^{d-1}$ decoding points  on~$\Sigma_r$ (for example,~$\xi$ is quite often equal to~$(\zeta,h(\zeta))$).
\begin{Def}
Let~$h$ satisfy the assumptions imposed on it above. We say that the statement~$\SI(h,\alpha,\beta,\gamma,p)$ holds true if~\eqref{StubbornInequality} is true.
\end{Def}
\begin{Th}\label{StubbornTheorem}
Let~$\gamma\in [0,\frac{d-1}{2})$. The statement~$\SI(h,\alpha,\beta,\gamma,p)$ is true if and only if
\begin{align}
\label{SISpectralShift} \alpha &\leq \gamma,\\
\label{SIGaussian} \alpha + \beta &\leq \sigma_p,\\
\label{SIKnapp} 2\alpha - \gamma +\beta &\leq \kappa_p,
\end{align}
and the inequality~\eqref{SIGaussian} is strict if~$p > 1$.
\end{Th}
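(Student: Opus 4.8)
The plan is to prove the "only if" direction by testing against the standard family of extremizers (these are collected separately in Section~\ref{S5}, so here I would merely explain which examples force which inequality): a Schwartz $f$ whose Fourier transform is a bump adapted to a Knapp cap near a point of $\Sigma_0$ produces the condition \eqref{SIKnapp} after tracking the $\alpha$ transverse derivatives, the loss of $\gamma$ from the negative Sobolev norm, and the $\beta$ factors of $N$ coming from $\partial_r$; a bump adapted to the full curved piece of the surface (the "Gaussian"/surface-measure extremizer) gives \eqref{SIGaussian}, with strictness when $p>1$ coming from the same logarithmic obstruction as in the Stein--Tomas endpoint; and a wave-packet concentrated transverse to $\Sigma$ forces $\alpha \leq \gamma$ in \eqref{SISpectralShift}, since $\partial_{\xi_d}^\alpha$ acting on such a packet is only controlled in $\dot H^{-\gamma}$ when $\gamma$ dominates $\alpha$. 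I expect this half to be routine scaling bookkeeping.

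For the "if" direction the core is a $T^*T$-type estimate. Using \eqref{OurSobolev}, the left-hand side of \eqref{StubbornInequality}, after applying $\partial_r^\beta$ and setting $r=0$, is a bilinear form in $\hat f$ of the schematic shape
\begin{equation*}
C_{d,\gamma}\,\partial_r^\beta\bigg|_{r=0}\iint_{U\times U} \big(\partial_{\xi_d}^\alpha\hat f\big)(\zeta,h_r(\zeta))\,\overline{\big(\partial_{\xi_d}^\alpha\hat f\big)(\eta,h_r(\eta))}\,\psi(\zeta)\overline{\psi(\eta)}\,|\zeta-\eta|^{2\gamma-d+1}\,d\zeta\,d\eta.
\end{equation*}
Writing $\hat f$ and its derivatives via the Fourier inversion formula $\hat f = (f^{\vee})^{\wedge\wedge}$, i.e.\ expressing everything back in terms of $f$ on the physical side, this becomes $\scalprod{T f}{T f}$ where $T$ is an extension-type operator: $Tf(\zeta,r)$ is a weighted restriction of $\hat f$ to $\Sigma_r$ with a factor of $(\text{monomial in the last coordinate})^\alpha$ inserted to account for $\partial_{\xi_d}^\alpha$. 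Then $\|Tf\|^2 = \scalprod{T^*Tf}{f}$, and $T^*T$ is convolution with a kernel that is, up to the weights and the $\partial_r^\beta$ differentiation, the Fourier transform of a surface-carried measure on $\Sigma$ — precisely the Stein--Tomas kernel modified by (i) the Riesz-potential weight $|\zeta-\eta|^{2\gamma-d+1}$ smoothing along the surface, (ii) polynomial weights from the transverse derivatives, and (iii) $\beta$ derivatives in the normal parameter. The goal is the kernel bound: the $(1,\infty)$ estimate $\|K\|_{L_\infty}\lesssim 1$ together with the $(2,2)$ estimate (trivial) and then real interpolation à la Stein--Tomas, which yields $T^*T\colon L_p\to L_{p'}$ exactly in the stated range of $p$.

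The main obstacle, and where I would spend the most care, is the stationary-phase analysis of this modified kernel. The unmodified Stein--Tomas kernel decays like $|x|^{-(d-1)/2}$ by nonvanishing curvature; here each normal derivative $\partial_r$ costs one power of $|x|$ (it brings down a factor $x_d$), each transverse derivative $\partial_{\xi_d}$ contributes a similar growth, while the Riesz smoothing $|\zeta-\eta|^{2\gamma-d+1}$ along $\Sigma$ gains $|x|^{-\gamma}$ worth of decay (this is where $\gamma<\tfrac{d-1}{2}$ is needed for the weight to be locally integrable and for the homogeneous norm to make sense). Balancing these, the kernel should decay like $|x|^{-(d-1)/2-\gamma+2\alpha+\beta}$ near infinity — but one must also check it is bounded near the origin, which forces $\alpha\le\gamma$ (condition \eqref{SISpectralShift}), since the transverse-derivative monomial raised to the power $\alpha$ must be absorbable by the Riesz weight at small scales. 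The exponent arithmetic then reproduces \eqref{SIGaussian} (integrability of the kernel against $f\in L_1$ near infinity, with the endpoint $p>1$ strict inequality handled by the usual Stein--Tomas dyadic decomposition and summation) and \eqref{SIKnapp} (the sharp decay rate controlling the interpolation exponent). One technical subtlety worth flagging: because $h$ is only assumed smooth with the Hessian bounds \eqref{HessianSmoothness} rather than being exactly quadratic, the stationary phase must be carried out with uniform constants depending only on those bounds, and the $\partial_r^\beta$ differentiation must be justified by differentiating under the integral sign, which is legitimate since $f$ is Schwartz and all integrals converge absolutely after the kernel estimate is in hand.
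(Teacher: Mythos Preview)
Your plan for the ``only if'' direction is essentially correct and matches the paper: surface-measure functions give \eqref{SIGaussian}, Knapp examples give \eqref{SIKnapp}, and shifts in the $x_d$ direction (not transverse wave-packets) give \eqref{SISpectralShift}.

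The ``if'' direction has a genuine gap. When $\alpha>0$, the bilinear form $\B(f,g)$ is \emph{not} given by a convolution operator. Writing out the kernel explicitly (formula~\eqref{FormulaForKernel} in the paper) one finds
\[
\K(x,y)=C\,x_d^{\alpha}\,y_d^{\alpha}\,(x_d-y_d)^{\beta}\iint e^{2\pi i(\langle\zeta,x_{\bar d}\rangle-\langle\eta,y_{\bar d}\rangle+x_d h(\zeta)-y_d h(\eta))}\psi(\zeta)\overline{\psi(\eta)}|\zeta-\eta|^{2\gamma-d+1}\,d\zeta\,d\eta,
\]
and the factor $x_d^{\alpha}y_d^{\alpha}$ destroys translation invariance. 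So your ``$(2,2)$ estimate (trivial)'' is not available via Plancherel, the kernel does not decay isotropically like $|x-y|^{-(d-1)/2-\gamma+2\alpha+\beta}$, and the classical Stein--Tomas interpolation does not apply as stated. Relatedly, the condition $\alpha\le\gamma$ is not a small-scale issue: it is exactly what is needed for the factor $(1+|x_d|)^{\alpha-\gamma}$ to stay bounded as $|x_d|\to\infty$ along the diagonal $x_d\approx y_d$.

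What the paper does instead is slice: for each fixed $(x_d,y_d)$ the sliced form $\B_{x_d,y_d}$ \emph{is} a convolution in the $(d-1)$ remaining variables, and one proves
\[
\|\B_{x_d,y_d}\|_{L_p\times L_p}\lesssim (1+|x_d|)^{\alpha-\gamma}(1+|y_d|)^{\alpha-\gamma}(1+|x_d-y_d|)^{\beta+\gamma-\frac{d-1}{p}+\frac{d-1}{2}}
\]
by interpolating a pointwise kernel bound (Proposition~\ref{Pointwise}, obtained via a dyadic decomposition of the Riesz weight and two separate oscillatory-integral estimates) against an $L_{\frac{2d-2}{d-1+2\gamma}}$ endpoint (Lemma~\ref{TheOtherEndpoint}). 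The remaining one-dimensional integration in $(x_d,y_d)$ is then handled by a tailored Stein--Weiss-type inequality (Theorem~\ref{SteinWeissModified}), and it is precisely the three exponents $\alpha-\gamma$, $\alpha-\gamma$, and $\beta+\gamma-\frac{d-1}{p}+\frac{d-1}{2}$ appearing there that translate into the three conditions \eqref{SISpectralShift}, \eqref{SIGaussian}, \eqref{SIKnapp}. This slicing-plus-weighted-integration step is the missing idea in your outline.
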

Note that Theorem~\ref{CorollaryOfChoGuoLee}, except the endpoint case~$p=1$,~$k=\frac{d-1}{2}$, follows from Theorem~\ref{StubbornTheorem} (pick~$\beta = 0$) modulo a localization argument (see Subsection~\ref{s61} below). We also show in Subsection~\ref{s63} that  Theorem~\ref{CorollaryOfChoGuoLee} has a direct proof by interpolating between~\cite{ChoGuoLee} and the $p=1$,~$k=\frac{d-1}{2}$ endpoint.  It is not clear to the authors whether one can derive the full statement of Theorem~\ref{StubbornTheorem} from the results of~\cite{ChoGuoLee} or from Theorem~\ref{CorollaryOfChoGuoLee} (which correspond to the cases~$\alpha = \beta = 0$ and $\beta=0$ respectively). Our proof seems to use a completely different strategy than the one in~\cite{ChoGuoLee}. Our method also allows us to work with Strichartz estimates, i.e. consider the larger scale of mixed-norm Lebesgue spaces on the right hand-side of~\eqref{StubbornInequality}.

At last, we want to emphasize that the passage from Theorem~\ref{StubbornTheorem} to~$\HDR$ inequalities is not immediate. The reader is invited to read a preview of this argument in the next section, where we give an overview of the paper.

\subsection{Overview of proofs}\label{s13}
We will now sketch the proof Theorem~\ref{First_restriction_for_higher_derivatives_Theorem} in the simplest non-trivial case~$k=1$. It will show the main ideas behind the proofs of Theorems~\ref{First_restriction_for_higher_derivatives_Theorem} and~\ref{StubbornTheorem}, and also give some hints to the proof of Theorem~\ref{HDRExample}.

\begin{proof}[Sketch of the proof of Theorem~\ref{First_restriction_for_higher_derivatives_Theorem},~$k=1$,~$s=0$.]
We start with localization. We use the notation introduced before Theorem~\ref{StubbornTheorem} (the sets~$U,V$, the functions~$h, \psi$, etc.). 

\begin{St}\label{StubbornBaby}
The inequality
\begin{equation*}
\int\limits_{U}\Big|\frac{\partial \hat{f}}{\partial \xi_d}(\zeta,h(\zeta))\psi(\zeta)\Big|^2 \,d\zeta \lesssim_{\psi} \|f\|_{\Si L_p}^2
\end{equation*}
holds true provided~$p\in [1,\frac{2d+2}{d+7}]$ and~$\{(\zeta,h(\zeta))\mid \zeta \in U\}\subset \Sigma$.
\end{St}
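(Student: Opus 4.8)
\emph{Proof plan.} The idea is to derive this local $L_2$ gradient estimate from the surface inequality of Theorem~\ref{StubbornTheorem} (in the case $\alpha=0$, $\beta=2$, $\gamma=0$) by a Taylor expansion in the transverse variable $r$, using that $\hat f$ vanishes on $\Sigma=\Sigma_0$ whenever $f\in\,\Si L_p$. Since the left-hand side is linear in $f$ and Schwartz functions whose Fourier transform vanishes identically on $\Sigma$ are, by Definition~\ref{LSigma}, dense in $\Si L_p$ (where the $\Si L_p$-norm coincides with the $L_p$-norm), it suffices to prove the bound, with $\|f\|_{L_p}$ on the right, for such $f$; the trace operator $f\mapsto(\partial_{\xi_d}\hat f)|_{\Sigma}\,\psi$ then extends by continuity to all of $\Si L_p$.

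So fix a Schwartz $f$ with $\hat f=0$ on $\Sigma$, put $w(\zeta,r)=\hat f(\zeta,h(\zeta)+r)\,\psi(\zeta)$, and set
\[
G(r)=\big\|\hat f\,\psi\big\|^2_{\dot H^{0}(\Sigma_r)}=\int_U|w(\zeta,r)|^2\,d\zeta,
\]
which is exactly the quantity inside \eqref{StubbornInequality} for $\alpha=0$, $\gamma=0$ (so that $\dot H^{0}=L_2$), before the $\beta$ derivatives in $r$ are applied. As $\hat f$ is smooth and $\psi$ compactly supported, $G$ is smooth near $r=0$ and may be differentiated under the integral sign; note $\partial_r w(\zeta,r)=(\partial_{\xi_d}\hat f)(\zeta,h(\zeta)+r)\,\psi(\zeta)$. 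The product rule gives
\[
G''(0)=2\int_U|\partial_r w(\zeta,0)|^2\,d\zeta+2\,\mathrm{Re}\!\int_U\partial_r^2 w(\zeta,0)\,\overline{w(\zeta,0)}\,d\zeta .
\]
Because $w(\zeta,0)=\hat f(\zeta,h(\zeta))\,\psi(\zeta)=0$ (here the hypothesis $f\in\,\Si L_p$ enters, and it also yields $G(0)=G'(0)=0$), the cross term vanishes and
\[
G''(0)=2\int_U\Big|\tfrac{\partial\hat f}{\partial\xi_d}(\zeta,h(\zeta))\,\psi(\zeta)\Big|^2\,d\zeta ,
\]
which is twice the left-hand side of the inequality we want.

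It remains to bound $|G''(0)|=\big|\big(\tfrac{\partial}{\partial r}\big)^2\big\|\hat f\,\psi\big\|^2_{\dot H^{0}(\Sigma_r)}\big|_{r=0}\big|$, and this is precisely the statement $\SI(h,0,2,0,p)$. By Theorem~\ref{StubbornTheorem} it holds provided $0\le 0$ (trivial), $2\le\kappa_p$, and $2<\sigma_p$ (with $2\le\sigma_1$ allowed at $p=1$). The Knapp condition $2\le\kappa_p=\frac{d+1}{p}-\frac{d+3}{2}$ is equivalent to $p\le\frac{2d+2}{d+7}$, exactly the hypothesis. On this range the Gaussian condition is automatic, since $\sigma_p$ decreases in $p$ and at the endpoint $p=\frac{2d+2}{d+7}$ equals $\frac{5d-1}{2(d+1)}$, which is $\geq 2$ precisely when $d\ge 5$ — that is, precisely when the interval $[1,\frac{2d+2}{d+7}]$ is nonempty — and is strict once $p>1$. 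Hence $|G''(0)|\lesssim_\psi\|f\|_{L_p}^2$, and dividing by $2$ finishes the proof.

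The analytic work is entirely carried by Theorem~\ref{StubbornTheorem}; the only points requiring care here are the differentiation under the integral and the exact cancellation of the cross term — both immediate from $\hat f\in C^\infty$ together with $\hat f|_{\Sigma_0}=0$ — and checking that all three conditions of Theorem~\ref{StubbornTheorem} survive up to the endpoint $p=\frac{2d+2}{d+7}$. I expect the genuine difficulty to appear only in the general-$k$ version underlying Theorem~\ref{First_restriction_for_higher_derivatives_Theorem}: there one Taylor-expands $\big\|\hat f\,\psi\big\|^2_{\dot H^{-\gamma}(\Sigma_r)}$ to order $2k$, uses the vanishing of $\nabla^j\hat f|_\Sigma$ for $j<k$ to annihilate all Taylor coefficients below order $2k$, and must then invoke a whole family of surface inequalities $\SI(h,\alpha,\beta,\gamma,p)$, at which stage the integer-arithmetic subtleties flagged in the introduction become the real obstacle.
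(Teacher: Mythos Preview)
Your proof is correct and follows essentially the same approach as the paper. The paper's sketch in Section~\ref{s13} presents the identical product-rule identity $|\partial_{\xi_d}\hat f|^2=\tfrac12\partial_r^2|\hat f|^2|_{r=0}$ (using $\hat f|_\Sigma=0$) and then unfolds the Stein--Tomas argument that underlies $\SI(h,0,2,0,p)$; you instead invoke Theorem~\ref{StubbornTheorem} as a black box, which is exactly what the paper does in its formal proof (Lemma~\ref{R2} with $k=1$), including the observation that \eqref{SIGaussian} follows automatically from \eqref{SIKnapp} on this range of $p$.
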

One can reduce the case~$k=1$,~$s=0$ in Theorem~\ref{First_restriction_for_higher_derivatives_Theorem} to Proposition~\ref{StubbornBaby} via a standard partition of unity argument (a small part of~$\Sigma$ might be represented as the graph~$\{(\zeta,h(\zeta))\mid \zeta \in U\}$). There is a technicality that the full gradient may be replaced with the partial derivative in the direction of the normal at zero. We will present the argument in Subsection~\ref{s61} below.

\paragraph{\it Proof of Proposition~\ref{StubbornBaby}}
We may assume that~$f$ is Schwartz by definition of the space~$\Si L_p^k$. We use the condition~$\hat{f}(\zeta,h(\zeta)) = 0$ to express the integrand in another form:
\begin{equation*}
\Big|\frac{\partial \hat{f}}{\partial \xi_d}(\zeta,h(\zeta))\Big|^2 = \frac12\bigg( \frac{\partial^2}{\partial r^2} \Big|\hat{f}(\zeta,h(\zeta) + r)\Big|^2\bigg)\bigg|_{r=0}
\end{equation*}
(simply apply the product rule to the right hand-side and note that all but one summands are zeros).
Now, it suffices to prove the inequality
\begin{equation*}
\bigg|\int\limits_{U}\bigg(\frac{\partial^2}{\partial r^2} \Big|\hat{f}(\zeta,h(\zeta) + r)\Big|^2\bigg)\bigg|_{r=0}\psi(\zeta)\,d\zeta\bigg|  \lesssim_{\psi} \|f\|_{L_p}^2.
\end{equation*}
It is convenient to bilinearize it:
\begin{equation}\label{BilinearizedBaby}
\bigg|\int\limits_{U}\bigg(\frac{\partial^2}{\partial r^2} \Big[\hat{f}(\zeta,h(\zeta) + r)\hat{\bar{g}}(\zeta,h(\zeta) + r)\Big]\bigg)\bigg|_{r=0}\psi(\zeta)\,d\zeta\bigg|  \lesssim_{\psi} \|f\|_{L_p}\|g\|_{L_p}.
\end{equation}
We consider the complex measure~$\sigma$ on~$\mathbb{R}^d$ supported on~$\Sigma$:
\begin{equation*}
\int F d\sigma = \int\limits_{U} F(\zeta, h(\zeta))\psi(\zeta)\,d\zeta \quad \hbox{for any} \quad F\in C(U\times (-\eps,\eps)).
\end{equation*}
We may use the notion of a distributional derivative to express the left hand-side of~\eqref{BilinearizedBaby} as
\begin{equation*}
\Big|\Scalprod{\frac{\partial^2 \sigma}{\partial \xi_d^2}}{\ \hat{f}\hat{\bar{g}}}\Big|,
\end{equation*}
which, by the Plancherel theorem, is equal to
\begin{equation*}
\bigg|\int\limits_{\mathbb{R}^2} \Big(f*\mathcal{F}^{-1}\Big[\frac{\partial^2 \sigma}{\partial \xi_d^2}\Big]\Big) \bar{g}\bigg|,
\end{equation*}
where~$\mathcal{F}$ is the Fourier transform in~$d$ variables. So, the problem has reduced to the question whether convolution with~$\mathcal{F}^{-1}\Big[\frac{\partial^2 \sigma}{\partial \xi_d^2}\Big]$ is~$L_p\to L_{p'}$ bounded:
\begin{equation*}
\Big\|f*\mathcal{F}^{-1}\Big[\frac{\partial^2 \sigma}{\partial \xi_d^2}\Big]\Big\|_{L_{p'}}\lesssim \|f\|_{L_p}, \quad p\in \Big[1,\frac{2d+2}{d+7}\Big].
\end{equation*}

This inequality may be proved with the help of the standard Stein--Tomas method. In our further arguments, it is more convenient to use the fractional integration approach (see \cite{MuscaluSchlag}, $11.2.2$). 
\end{proof}
So, the proof is naturally split into three parts: localization, algebraic tricks that allow to use the vanishing condition, and the estimate of an operator between Lebesgue spaces. The proofs of the ``if'' parts of Theorems~\ref{First_restriction_for_higher_derivatives_Theorem} and~\ref{HDRExample} follow similar scheme. The localization argument is universal, and we place it in Subsection~\ref{s61}.

Theorem~\ref{StubbornTheorem} plays the role of the estimate on Lebesgue spaces. Its proof is situated in Section~\ref{S3}. It follows the general strategy of the fractional integration approach to the Stein--Tomas Theorem. First, we prove Theorem~\ref{StubbornTheorem} in the case~$p=1$ and also provide sharp estimates on the absolute value of the kernel. This step, which is a simple consequence of the Van der Corput Lemma in the classical setting, becomes technically involved in our case. It is presented in Subsection~\ref{s31}. Then, we need to interpolate it with~$L_2$ estimates. This is done with the help of a weighted version of the Stein--Weiss inequality. Though similar generalizations of the Stein--Weiss inequality appear in the literature (see, e.g.,~\cite{Kerman}), we did not manage to find the specific version we need. We present the proof in Subsection~\ref{s62} and prove Theorem~\ref{StubbornTheorem} in Subsection~\ref{s32}. Subsection~\ref{s33} is devoted to generalizations of Theorem~\ref{StubbornTheorem} in the spirit of Strichartz estimates.

The ``algebraic'' part needed to prove Theorem~\ref{First_restriction_for_higher_derivatives_Theorem} is a direct generalization of what was presented above. However, the~$\HDR$ inequalities require additional effort. The derivation of Theorem~\ref{HDRExample} from Theorem~\ref{StubbornTheorem} occupies Section~\ref{S4}. We will have to consider the quantity on the left of~\eqref{HDRinequality} as a function of~$k$ and~$s$ and study its convexity properties. The sufficient conditions listed in Proposition~\ref{SufficientConditionsHDR} then provide the domain of the function, and the condition~$\hat{f} \in H^{\ell}(\Sigma)$ defines some boundary behavior. The difficulty that we cannot overcome in the case of large~$\ell$ comes from lack of convexity of the domain.

Section~\ref{S5} collects the  ``only if'' parts of Theorems~\ref{First_restriction_for_higher_derivatives_Theorem} and~\ref{StubbornTheorem} as well as the proof of Proposition~\ref{SufficientConditionsHDR}. We split the ``extremizers'' into three groups: those which originate from the surface measure conditions, those which are related to Knapp examples, and those which correspond to shifts in the real space. For example, conditions~\eqref{HDRShiftCondition} and~\eqref{HDRSigmaLpShiftCondition} come from shifting functions,~\eqref{HDRSurfaceCondition} comes from plugging the surface measure into~\eqref{HDRinequality}, and conditions~\eqref{HDRKnapp} and~\eqref{HDRShiftedKnapp} come from Knapp examples. Condition~\eqref{HDRShiftedKnapp}  is dictated by a specific shift of a Knapp-type function in the real space. This condition and its sharp numerology are still quite surprising to the authors.

\subsection{Fredholm conditions and Sobolev embeddings}\label{s14}
\paragraph{\bf Fredholm conditions.}
Functions whose Fourier transform vanish on a compact surface in $\mathbb{R}^d$, and in particular on a sphere, arise in the study of spectral theory of Schr\"odinger operators $H = -\Delta + V$.  It is well known that the Laplacian has absolutely continuous spectrum on the positive halfline $[0,\infty)$, and no eigenvalues or singular continuous spectrum.  If $V(x)$ can be approximated by bounded, compactly supported functions in a suitable norm (for example $V \in L_{d/2}(\mathbb{R}^d)$ suffices when $d \geq 3$), then $H$ is a relatively compact perturbation of the Laplacian and may have countably many eigenvalues with a possible accumulation point at zero.  If $V$ is real-valued, then $H$ is a self-adjoint operator whose eigenvalues must all be real numbers as well.

It is not immediately obvious how the continuous spectrum of $H$ relates to that of the Laplacian, and whether any eigenvalues are embedded within it.  An argument due to Agmon~\cite{Agmon} proceeds as follows: Suppose $\psi$ is a formal solution of the eigenvalue equation $(-\Delta - \lambda)\psi = -V\psi$ for some $\lambda > 0$.  Then
\begin{equation} \label{AgmonBootstrap}
\psi = - \lim_{\epsilon \to 0^+}(-\Delta - (\lambda+i\epsilon))^{-1} V \psi
\end{equation}
from which it follows that the imaginary parts of $\scalprod{V\psi}{\psi}$ and $-\lim\limits_{\epsilon \to 0}\scalprod{V\psi}{(-\Delta - (\lambda+i\epsilon))^{-1} V \psi}$ must agree.  The former is clearly zero since $V(x)$ is real-valued.  The latter turns out to be a multiple of $\| (V\psi)\!\hat{\phantom{i}}|_\Sigma \|_{L_2(\Sigma)}^2$, where $\Sigma$ is the sphere $\{\xi \in \mathbb{R}^d\mid |\xi|^2 = \lambda\}$.  Hence $(V\psi)\!\hat{\phantom{i}}$ vanishes on the sphere of radius $\sqrt{\lambda}$.

It is not surprising that the Fourier multiplication operator $m_\epsilon(\xi) = (|\xi|^2 - (\lambda + i\epsilon))^{-1}$ might have favorable mapping properties when applied specifically to $V\psi$, whose Fourier transform vanishes where $m_\epsilon(\xi)$ is greatest.  Bootstrapping arguments using~\eqref{AgmonBootstrap} show that $\psi \in L_2(\mathbb{R}^d)$, even if it was not assumed {\it a priori} to belong to that space.

Viewed another way, the eigenvalue problem $(-\Delta - \lambda)\psi = -V\psi$ is an inhomogeneous partial differential equation were the principal symbol is elliptic.  The Fredholm condition for existence of solutions is that $-V\psi$ should be orthogonal to the null-space of the adjoint operator $(-\Delta - \lambda)^*$.  As we will argue later in Subsection~\ref{s21}, this nullspace consists of all distributions whose Fourier transform acts as a linear functional on $C^\infty(\Sigma)$.  Thus $V\psi$ satisfies the Fredholm condition precisely if $(V\psi)\!\hat{\phantom{i}} |_{\Sigma} = 0$.

The analysis in~\cite{Agmon} is carried out in polynomially weighted $L_2(\mathbb{R}^d)$ and applies to a wide family of elliptic differential operators $H = P(i\nabla) + V$.  The main non-degeneracy condition is that the gradient of $P$ does not vanish on the level set $\{\xi\mid P(\xi) = \lambda\}$.  Similar arguments in~\cite{Goldberg_Schlag}, \cite{Ionescu_Schlag} and~\cite{Goldberg} are carried out (for $P(\xi) = |\xi|^2$) in $L_q(\mathbb{R}^d)$ and related Sobolev spaces for various ranges of $q$. Curvature of the level sets of $P$ is a crucial feature in these works, as it is in the present paper.

\paragraph{\bf Sobolev type inequalities.}
We start with the classical Sobolev Embedding Theorem
\begin{equation*}
\|f\|_{L_{q}} \lesssim \|\nabla f\|_{L_p}, \quad f \in C_0^{\infty}(\mathbb{R}^d),\quad q = \frac{dp}{d-p},\quad 1 \leq p < d, d \geq 2.
\end{equation*}
For~$p > 1$, it follows from the Hardy--Littlewood--Sobolev inequality. In the limiting case~$p=1$, the Hardy--Littlewood--Sobolev inequality fails, however, as it was proved by Gagliardo and Nirenberg, the Sobolev Embedding holds. This happens because the space
\begin{equation*}
\dot{W}_1^1(\mathbb{R}^d) = \clos_{L_1}\Big(\Big\{\nabla f\;\Big|\, f\in C_0^{\infty}(\mathbb{R}^d)\Big\}\Big)
\end{equation*}
is strictly narrower than~$L_1$ (they are even non-isomorphic as Banach spaces). Later, it was observed that there are many similar inequalities where~$\nabla f$ may be replaced with a more complicated differential vector-valued expression (see~\cite{BourgainBrezis},~\cite{vanSchaftingen}, and the survey~\cite{vanSchaftingen2}). 

In~\cite{Stolyarov}, the second named author studied the anisotropic bilinear inequality 
\begin{equation}\label{BilinearEmbedding}
\left|\scalprod{f}{g}_{\dot{W}_2^{\alpha,\beta}(\mathbb{R}^2)}\right| \lesssim \left\|(\partial_1^k -\tau\partial_2^l)f\right\|_{L_1(\mathbb{R}^2)}\left\|(\partial_1^k -\sigma\partial_2^l)g\right\|_{L_1(\mathbb{R}^2)}, \quad f,g \in C_0^{\infty}(\mathbb{R}^2)
\end{equation}
(such type inequalities were used in~\cite{KislyakovMaximovStolyarov} for purposes of Banach space theory). Here~$\dot{W}_2^{\alpha,\beta}(\mathbb{R}^2)$ is the anisotropic Bessel potential space equipped with the norm
\begin{equation*}
\|f\|_{\dot{W}_q^{\alpha,\beta}} = \Big\|\mathcal{F}^{-1}\Big(\hat{f}(\xi,\eta)|\xi|^{\alpha}|\eta|^{\beta}\Big)\Big\|_{L_q}, 
\end{equation*}
the symbols~$\sigma$ and~$\tau$ denote complex scalars, and~$\partial_1$ and~$\partial_2$ are partial derivatives with respect to the first and the second coordinates correspondingly. It appeared that~\eqref{BilinearEmbedding} holds even in the cases where the differential polynomials on the right hand-side are not elliptic, however, this may happen only in the anisotropic case~$k\ne l$. This lead to natural conjecture that the inequality
\begin{equation}\label{SobolevBochnerRiesz}
\|f\|_{\dot{W}_q^{\alpha,\beta}} \lesssim \|(\partial_1^k - \sigma \partial_2^l)f\|_{L_p}, \quad \frac{\alpha}{k} + \frac{\beta}{l} = 1 - \Big(\frac{1}{p} - \frac{1}{q}\Big)\Big(\frac{1}{k} + \frac{1}{l}\Big), k\ne l, p > 1, q< \infty,
\end{equation}
might hold true. We are especially interested in the case where the operator on the right hand-side is non-elliptic, that is~$i^{l-k}\sigma \in \mathbb{R}$. Assume this is so. Similar to the classical proof of the Sobolev Embedding Theorem, one may express~$f$ in terms of~$(\partial_1^k - \sigma \partial_2^l)f$ using a certain integral operator. This will be a Bochner--Riesz type operator of order~$-1$ with the singularity on the curve
\begin{equation*}
\Gamma_{k,l} = \Big\{(\xi,\eta)\in\mathbb{R}^2\;\Big|\,(2\pi i \xi)^k = \sigma (2\pi i \eta)^l\Big\}.
\end{equation*}
Note that this curve is convex outside the origin. Application of the Littlewood--Paley inequality and homogeneity considerations (see~\cite{Stolyarov}) reduce~\eqref{SobolevBochnerRiesz} to the case where the spectrum of~$f$ lies in a small neighborhood of a point on~$\Gamma_{k,l}$. So, by the results of~\cite{Bak}, the inequality~\eqref{SobolevBochnerRiesz} is true if~$\frac{1}{p} - \frac{1}{q} \geq \frac23$,~$p < \frac43$, and~$q > 4$. Moreover, one may construct examples to show that the conditions~$\frac{1}{p} - \frac{1}{q}\geq \frac23$ and~$p < \frac43$ are necessary.

Note that the Fourier transform of the function~$(\partial_1^k - \sigma \partial_2^l)f$ vanishes on~$\Gamma_{k,l}$,
which is a smooth convex curve in the plane (with, possibly, a singularity at zero). Thus, we need to analyze the action of Bochner--Riesz-type operator on the space~$\Si L_p$ with~$\Sigma = \Gamma_{k,l}$. It appears that passing to a narrower space allows to get rid of the condition~$q > 4$. This work was done half year later in~\cite{Goldberg}. 
\begin{Th}[\cite{Stolyarov}+\cite{Goldberg}]
The inequality~\eqref{SobolevBochnerRiesz} holds true if~$\frac{1}{p} - \frac{1}{q} \geq \frac23$,~$1<p< \frac43$, and~$q<\infty$.
\end{Th}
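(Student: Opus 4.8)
The plan is to reduce inequality~\eqref{SobolevBochnerRiesz} to a restriction-type estimate on the space~$\Si L_p$ with~$\Sigma = \Gamma_{k,l}$ and then to invoke the improvement obtained in~\cite{Goldberg}. First I would dispose of the elliptic case (where $i^{l-k}\sigma \notin \mathbb{R}$): there $\Gamma_{k,l}$ does not pass through real points other than the origin, so the associated multiplier $|\xi|^\alpha|\eta|^\beta / (({2\pi i \xi})^k - \sigma(2\pi i \eta)^l)$ is, after a Littlewood--Paley decomposition, a bounded symbol on each dyadic block, and the inequality follows from the Hardy--Littlewood--Sobolev inequality exactly as in the classical Sobolev embedding theorem. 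So the real content is the non-elliptic case $i^{l-k}\sigma \in \mathbb{R}$, which is the one quoted from~\cite{Stolyarov}.

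In the non-elliptic case I would follow the scheme already described in the excerpt. Writing $g = (\partial_1^k - \sigma \partial_2^l)f$, one has $\hat g$ vanishing on the convex curve $\Gamma_{k,l}$, and $f$ is recovered from $g$ by a Fourier multiplier that is a Bochner--Riesz-type kernel of order $-1$ with its singularity exactly on $\Gamma_{k,l}$. A Littlewood--Paley decomposition together with the anisotropic scaling $(\xi,\eta) \mapsto (\lambda \xi, \mu \eta)$ that preserves $\Gamma_{k,l}$ reduces matters to the local model: $f$ with spectrum in a fixed small neighborhood of a single non-origin point of $\Gamma_{k,l}$, where the curve looks like a smooth strictly convex arc. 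In that localized regime the estimate~\eqref{SobolevBochnerRiesz} becomes precisely an assertion that a negative-order Bochner--Riesz operator maps $\Si L_p$ (with $\Sigma$ the arc) boundedly into the relevant Bessel-potential space; this is the statement proved in~\cite{Goldberg}, whose point is that restricting to the subspace where $\hat g$ vanishes on $\Sigma$ removes the condition $q > 4$ that the results of~\cite{Bak} would otherwise impose. Summing the dyadic pieces back (the gain in each block is uniform because the localized estimate is scale-invariant under the anisotropic dilations) yields~\eqref{SobolevBochnerRiesz} for $\frac1p - \frac1q \ge \frac23$, $1 < p < \frac43$, $q < \infty$.

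The main obstacle is the passage from the global inequality to the single-arc local model and back: one must check that the Littlewood--Paley projections interact well with the $\dot W_q^{\alpha,\beta}$ norm (a vector-valued / mixed-norm Littlewood--Paley inequality in the anisotropic setting), that the scaling is genuinely a symmetry of both the curve and the homogeneity relation $\frac\alpha k + \frac\beta l = 1 - (\frac1p-\frac1q)(\frac1k+\frac1l)$, and that the spectral neighborhood of a point of $\Gamma_{k,l}$ can be flattened to fit the hypotheses under which the $\Si L_p$-bound of~\cite{Goldberg} is stated. These are the steps carried out in~\cite{Stolyarov} for the reduction and in~\cite{Goldberg} for the local estimate; combining them gives the theorem, and since both ingredients are already in the literature the proof here is essentially a matter of citing and assembling them, which is why the statement is attributed jointly to~\cite{Stolyarov} and~\cite{Goldberg}.
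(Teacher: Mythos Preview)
Your proposal is correct and follows essentially the same route as the paper: the paper does not give a standalone proof of this theorem but rather explains, in the discussion preceding it, that the Littlewood--Paley and homogeneity reduction to a local arc is carried out in~\cite{Stolyarov}, and that the resulting local Bochner--Riesz estimate on the space~$\Si L_p$ (which removes the constraint~$q>4$ coming from~\cite{Bak}) is supplied by~\cite{Goldberg}. Your outline of the reduction, the role of the vanishing of~$\hat g$ on~$\Gamma_{k,l}$, and the assembly of the two cited ingredients matches the paper's account; your brief treatment of the elliptic case is a harmless addition.
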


\section{Study of the spaces $\Si L_p^k$}\label{S2}

\subsection{Description of the annihilator and Domar's theory}\label{s21}
Let~$\partial_{\Sigma}^l$ denote the operator of normal derivative of order~$l$ with respect to~$\Sigma$,~$\partial_{\Sigma}\colon \mathcal{S}(\mathbb{R}^d) \to C^{\infty}(\Sigma)$,
\begin{equation*}
\partial_{\Sigma}^l[\Phi](\xi) = \frac{\partial^l \Phi}{\partial \mathrm{n}_{\Sigma}^l(\xi)}(\xi),\quad \Phi \in \mathcal{S}(\mathbb{R}^d), \xi \in \Sigma. 
\end{equation*}
The symbol~$\mathrm{n}_{\Sigma}(\xi)$ denotes the normal vector to~$\Sigma$ at the point~$\xi$. In particular,~$\partial_{\Sigma}^0[\Phi]$ is simply the restriction of~$\Phi$ to~$\Sigma$. 

There are conjugate operators~$(\partial_{\Sigma}^l)^* \colon (C^{\infty}(\Sigma))^{\prime} \to \mathcal{S}^{\prime}(\mathbb{R}^d)$. We can also form an operator~$\J_{\Sigma}^l\colon \mathcal{S}(\mathbb{R}^d) \to \bigoplus_{0 \leq s \leq l} C^{\infty}(\Sigma)$ composed of pure normal derivatives:
\begin{equation*}
\mathcal{S}(\mathbb{R}^d) \ni \Phi \mapsto \J_{\Sigma}^l[\Phi] = \Big(\partial_{\Sigma}^0[\Phi],\partial_{\Sigma}^{1}[\Phi],\ldots,\partial_{\Sigma}^{l}[\Phi]\Big) \in \bigoplus_{0 \leq s \leq l} C^{\infty}(\Sigma).
\end{equation*}
This operator also has an adjoint, which maps a vector-valued distribution~$\Lambda = \{\Lambda_{s}\}_{0\leq s\leq l}$ with compact support on~$\Sigma$ to a Schwartz distribution on~$\mathbb{R}^d$.

\begin{Le}\label{Annihilator}
Let~$p \in [1,\infty)$. The annihilator of~$\Si L_p^k$ in~$L_{p'}$ can be described as
\begin{equation}\label{FormulaForAnnihilator}
\Ann_{L_{p'}}(\Si L_p^k) = \clos_{L_{p'}}\bigg(\Big\{g \in L_{p'}\,\Big|\; \exists \Lambda\in \!\!\!\!\bigoplus_{0\leq s\leq k-1}\!\!\!\!\big(C^{\infty}(\Sigma)\big)^{\prime} \quad \hbox{such that} \quad \hat{g} = \big(\J_{\Sigma}^{k-1}\big)^*[\Lambda]\Big\}\bigg).
\end{equation}
\end{Le}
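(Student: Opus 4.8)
The plan is to prove the two inclusions separately, using the natural duality between $\Si L_p^k$ and its annihilator. First I would set up the easy inclusion. Suppose $g \in L_{p'}$ and $\hat g = (\J_\Sigma^{k-1})^*[\Lambda]$ for some compactly supported $\Lambda = \{\Lambda_s\}_{0 \le s \le k-1}$. Then for any Schwartz $f$ with $\nabla^l \hat f = 0$ on $\Sigma$ for $l = 0, \ldots, k-1$, Plancherel gives $\scalprod{f}{g} = \scalprod{\hat f}{\hat g}$ (up to conjugation conventions), and the right side equals $\scalprod{\J_\Sigma^{k-1}[\hat f]}{\Lambda} = \sum_{s=0}^{k-1}\scalprod{\partial_\Sigma^s[\hat f]}{\Lambda_s}$. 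Each $\partial_\Sigma^s[\hat f]$ is a linear combination of restrictions to $\Sigma$ of partial derivatives of $\hat f$ of order exactly $s$ (with smooth coefficients built from the unit normal field), hence vanishes by hypothesis; so $\scalprod{f}{g} = 0$. Since such $f$ are dense in $\Si L_p^k$ by Definition~\ref{LSigma} and the pairing is continuous, $g$ annihilates $\Si L_p^k$. Taking the $L_{p'}$-closure, the right-hand side of~\eqref{FormulaForAnnihilator} is contained in $\Ann_{L_{p'}}(\Si L_p^k)$.

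For the reverse inclusion I would argue by contradiction via Hahn--Banach. Call $W$ the right-hand side of~\eqref{FormulaForAnnihilator}; it is a closed subspace of $L_{p'}$. If $\Ann_{L_{p'}}(\Si L_p^k) \not\subset W$, pick $g_0 \in \Ann_{L_{p'}}(\Si L_p^k) \setminus W$. Since $p' \in (1,\infty]$ and we are in the reflexive range for $p > 1$ (the case $p=1$, $p'=\infty$ needs the weak-$*$ version of Hahn--Banach, which is available since $W$ is weak-$*$ closed when it is defined as a norm closure of a weak-$*$ closed set — this is a point to check carefully), there is $F \in (L_{p'})^* = L_p$ with $F \perp W$ but $\scalprod{F}{g_0} \ne 0$. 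The key claim is then: $F \perp W$ forces $F \in \Si L_p^k$. Granting this, $F \in \Si L_p^k$ while $g_0 \in \Ann_{L_{p'}}(\Si L_p^k)$ yields $\scalprod{F}{g_0} = 0$, a contradiction; hence $\Ann_{L_{p'}}(\Si L_p^k) \subset W$.

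The crux is therefore the claim that $F \perp W$ implies $F \in \Si L_p^k$, equivalently $(\Si L_p^k)^{\perp\perp} \subseteq W^\perp{}^\perp$ reduces to showing $W^\perp \subseteq \Si L_p^k$ inside $L_p$ (with $\perp$ taken in the appropriate direction). Unwinding: $F \perp W$ means $\scalprod{F}{g} = 0$ for all Schwartz $g$ with $\hat g = (\J_\Sigma^{k-1})^*[\Lambda]$, i.e. $\scalprod{\hat F}{(\J_\Sigma^{k-1})^*[\Lambda]} = 0$, i.e. $\scalprod{\J_\Sigma^{k-1}[\hat F]}{\Lambda} = 0$ for every compactly supported $\Lambda$. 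Taking $\Lambda$ supported at a single point with only its $s$-th component nonzero shows $\partial_\Sigma^s[\hat F] = 0$ on $\Sigma$ for each $s = 0, \ldots, k-1$; combined with an induction on $s$ converting pure normal derivatives into all partials of order $\le k-1$ (the transverse derivative being recoverable once all lower-order and all tangential derivatives vanish), this gives $\nabla^l \hat F = 0$ on $\Sigma$ for $l < k$. But $F$ lives in $L_p$, not necessarily Schwartz, so "$\hat F$" and its derivatives on $\Sigma$ are a priori meaningless — this is exactly where Domar's theory is needed: one must either approximate $F$ in $L_p$ by Schwartz functions whose transforms vanish to order $k-1$ near $\Sigma$, or invoke the description (to be developed in Subsection~\ref{s21}, referenced in the text) of the weak-$*$ closed ideal structure that identifies such $F$ with elements of $\Si L_p^k$. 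I expect this last step — making rigorous the passage from "$\hat F$ formally vanishes on $\Sigma$ to order $k-1$" to "$F \in \Si L_p^k = \clos_{L_p}(\text{Schwartz functions vanishing to order } k-1)$" — to be the main obstacle, and it is presumably where the cited results of Domar enter, together with a regularization/mollification argument along $\Sigma$ of the type used to prove Theorem~\ref{Stability}.
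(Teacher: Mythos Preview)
Your easy inclusion is fine. The Hahn--Banach reduction for the hard inclusion, however, is circular. By the bipolar theorem (for $1<p<\infty$), the statement $W^\perp \subseteq \Si L_p^k$ is \emph{equivalent} to $\Ann_{L_{p'}}(\Si L_p^k) = (\Si L_p^k)^\perp \subseteq (W^\perp)^\perp = W$, which is precisely the inclusion you set out to prove. You correctly flag the sticking point --- passing from ``$\hat F$ formally vanishes to order $k-1$ on $\Sigma$'' for a mere $L_p$ function $F$ to the membership $F \in \Si L_p^k$ --- but this is not a technicality to be patched later; it is the full content of the lemma, restated on the predual side. (Your mollification idea does not close the gap: convolving $F$ with a band-limited bump keeps $F\perp W$ but yields an entire $L_p$ function, not a Schwartz one, and a subsequent spatial cutoff destroys the vanishing on $\Sigma$ via convolution on the Fourier side.) Your appeal to Domar's machinery is also problematic in this paper's logical order: the smoothing lemma (Lemma~\ref{Smoothing_Lemma}) is proved \emph{using} Lemma~\ref{Annihilator}, so invoking it here would be circular.

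The paper's argument is direct and uses neither Hahn--Banach nor Domar. Since $\Si L_p^k$ is translation invariant, functions $g$ with compact spectrum are dense in its annihilator; fix such a $g$ with $\supp\hat g\subset\Omega$ bounded. The key observation is that $g\perp\Si L_p^k$ says exactly that the distribution $\hat g$ kills every Schwartz $\Phi$ with $\J_\Sigma^{k-1}[\Phi]=0$, i.e.\ $\Ker\J_\Sigma^{k-1}\subset\Ker\hat g$. Hence $\hat g$ factors through the jet data. Concretely, using the right inverse $\Ext_{\Omega,k-1}$ of Lemma~\ref{ExtensionOperator}, one \emph{defines} $\Lambda[\varphi]=\hat g\big[\Ext_{\Omega,k-1}[\psi\varphi]\big]$ for a suitable cutoff $\psi$, and then checks $\hat g=(\J_\Sigma^{k-1})^*[\Lambda]$ by noting that $(\id-\Ext_{\Omega,k-1}\circ\J_\Sigma^{k-1})[\Psi\Phi]$ lies in $\Ker\J_\Sigma^{k-1}\subset\Ker\hat g$. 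No approximation of an $L_p$ function by Schwartz functions with prescribed vanishing is required.
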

\begin{Rem}
Since the distribution~$\Lambda$ has compact support,~$g$ has bounded spectrum. Clearly, if~$\Sigma$ is not compact, one may construct a function~$g$ in the annihilator of~$\Si L_p$ whose spectrum is not bounded. That is why we need to add closure on the right hand-side of~\eqref{FormulaForAnnihilator}. In the case where~$\Sigma$ is compact, this is not needed\textup:
\begin{equation*}%\label{FormulaForAnnihilator}
\Ann_{L_{p'}}(\Si L_p^k) = \Big\{g \in L_{p'}\,\Big|\; \exists \Lambda\in \!\!\!\!\bigoplus_{0\leq s\leq k-1}\!\!\!\!\big(C^{\infty}(\Sigma)\big)^{\prime} \quad \hbox{such that} \quad \hat{g} = \big(\J_{\Sigma}^{k-1}\big)^*[\Lambda]\Big\}, \quad \Sigma\ \hbox{is compact}.
\end{equation*}
The proof of Lemma~\ref{Annihilator} presented below also simplifies in the case where~$\Sigma$ is compact. The functions~$\psi$ and~$\Psi$ may be omitted in this case.
\end{Rem}
We will need a technical fact to prove Lemma~\ref{Annihilator}. It is standard, so we omit its proof.
\begin{Le}\label{ExtensionOperator}
For any bounded domain~$\Omega$, consider the subspace~$C^{\infty}(\Sigma,\Omega,l)$ of vector-valued functions in~$\bigoplus_{0 \leq s \leq l} C^{\infty}(\Sigma)$ supported in~$\Omega\cap \Sigma$. There exists a linear operator~$\Ext_{\Omega,l}\colon C^{\infty}(\Sigma,\Omega,l) \to \mathcal{S}(\mathbb{R}^d)$, which is inverse to~$\J_{\Sigma}^l$ in the sense
\begin{equation*}
\forall \varphi \in C^{\infty}(\Sigma,\Omega,l) \quad \J_{\Sigma}^l\big[\Ext_{\Omega,l}[\varphi]\big] = \varphi.
\end{equation*}
\end{Le}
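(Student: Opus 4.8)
The plan is to build $\Ext_{\Omega,l}$ by hand inside a tubular neighbourhood of $\Sigma$, prescribing a finite Taylor expansion in the normal direction whose coefficients are the components $\varphi_0,\ldots,\varphi_l$ of $\varphi$, and then cutting off. First I would set up the geometry. Because $\Omega$ is bounded, $\overline{\Omega}\cap\Sigma$ is compact, and since $\Sigma$ is a smooth closed embedded hypersurface there is a $\delta>0$ for which the map
\[
\Theta(\xi',t) = \xi' + t\,\mathrm{n}_{\Sigma}(\xi'),\qquad \xi'\in\Sigma,\ |t|<\delta,
\]
becomes, after restriction to a neighbourhood of $\overline{\Omega}\cap\Sigma$ in $\Sigma$, a diffeomorphism onto an open tube $N\subset\mathbb{R}^d$ around $\overline{\Omega}\cap\Sigma$; here $t$ is the signed distance to $\Sigma$. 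I would also fix a cutoff $\chi\in C_0^{\infty}\big((-\tfrac{\delta}{2},\tfrac{\delta}{2})\big)$ with $\chi\equiv1$ near $0$.

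Then I would define, for $\varphi=(\varphi_0,\ldots,\varphi_l)\in C^{\infty}(\Sigma,\Omega,l)$,
\[
\big(\Ext_{\Omega,l}[\varphi]\big)\!\big(\Theta(\xi',t)\big) = \chi(t)\sum_{s=0}^{l}\frac{t^{s}}{s!}\,\varphi_s(\xi'),
\]
extended by $0$ off $N$. Since every $\varphi_s$ is supported in the compact set $\overline{\Omega}\cap\Sigma$ and $\chi$ has compact support in $t$, the displayed function is supported in a compact subset of $N$ and vanishes to infinite order where it leaves the tube; hence it is $C^{\infty}$ on all of $\mathbb{R}^d$, so $\Ext_{\Omega,l}[\varphi]\in C_0^{\infty}(\mathbb{R}^d)\subset\mathcal{S}(\mathbb{R}^d)$. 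Linearity of $\Ext_{\Omega,l}$ is immediate, as $\varphi$ enters the formula linearly.

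Finally I would verify the jet identity. In the coordinates given by $\Theta$ the curve $t\mapsto\xi'+t\,\mathrm{n}_{\Sigma}(\xi')$ is exactly the normal ray through $\xi'$, so by the definition of $\partial_{\Sigma}^{s}$ one has $\partial_{\Sigma}^{s}[\Phi](\xi')=\partial_t^{s}(\Phi\circ\Theta)(\xi',0)$. For $\Phi=\Ext_{\Omega,l}[\varphi]$ and $t$ near $0$ we have $\chi(t)=1$, hence $(\Phi\circ\Theta)(\xi',t)=\sum_{m=0}^{l}\frac{t^{m}}{m!}\varphi_m(\xi')$ near $t=0$; differentiating $s$ times and setting $t=0$ gives $\varphi_s(\xi')$ for each $0\le s\le l$, i.e. $\J_{\Sigma}^{l}[\Ext_{\Omega,l}[\varphi]]=\varphi$.

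I do not expect a genuine obstacle here: this is routine differential topology, which is why the paper flags it as standard. The only steps deserving attention are the existence of a tubular neighbourhood of fixed radius over the compact piece $\overline{\Omega}\cap\Sigma$ (this is where boundedness of $\Omega$ is used, since over a non-compact $\Sigma$ the normal injectivity radius need not be bounded below), the smoothness of the extension across $\partial N$ (ensured by the cutoff $\chi$ together with the compact support of the $\varphi_s$ inside $\Sigma$), and the bookkeeping check that the ``straight-line'' normal derivative defining $\partial_{\Sigma}^{s}$ coincides with $\partial_t^{s}$ at $t=0$ in the normal coordinates supplied by $\Theta$.
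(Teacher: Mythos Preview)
Your construction is correct and is exactly the standard tubular--neighbourhood/Taylor--jet argument one would give here. The paper itself does not provide a proof of this lemma at all (it explicitly says ``It is standard, so we omit its proof''), so there is nothing to compare against; your write-up could serve as the omitted proof.
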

\begin{proof}[Proof of Lemma~\ref{Annihilator}]
First, we note that since~$\Si L_p^k$ is a translation invariant space, the set of functions~$g$ with compact spectrum is dense in~$\Ann_{L_{p'}}(\Si L_p^k)$. Consider such a function~$g$. It suffices to construct
\begin{equation*}
\Lambda\in\bigoplus_{0\leq s\leq k-1}\big(C^{\infty}(\Sigma)\big)^{\prime}
\end{equation*}
such that~$\hat{g}=\big(\J_{\Sigma}^{k-1}\big)^*[\Lambda]$. 

Let~$\Omega$ be a bounded domain containing the spectrum of~$g$. Consider the operator~$\Ext_{\Omega,k-1}$ constructed in Lemma~\ref{ExtensionOperator} and define~$\Lambda$ (as a functional on~$\bigoplus_{0\leq s\leq k-1}C^{\infty}(\Sigma)$) by the formula
\begin{equation*}
\Lambda[\varphi] = \hat{g}\big[\Ext_{\Omega,k-1}[\psi\varphi]\Big],\quad \varphi\in \bigoplus_{0\leq s\leq k-1}C^{\infty}(\Sigma),
\end{equation*}
where~$\psi$ is a smooth function on~$\Sigma$ supported in~$\Omega$ that is equal to one in a neighborhood of~$\supp\hat{g}\cap \Sigma$.
We are required to show that~$\hat{g} = \big(\J_{\Sigma}^{k-1}\big)^*[\Lambda]$, which becomes
\begin{equation*}
\scalprod{\hat{g}}{\Phi} = \scalprod{\hat{g}}{\Ext_{\Omega,k-1}[\psi\J_{\Sigma}^{k-1}[\Phi]]}
\end{equation*}
for every~$\Phi \in \mathcal{S}(\mathbb{R}^d)$. Since~$\psi=1$ in a neighborhood of the support of~$\hat{g}$, we may write
\begin{equation*}
\scalprod{\hat{g}}{\Ext_{\Omega,k-1}[\psi\J_{\Sigma}^{k-1}[\Phi]]}= \scalprod{\hat{g}}{\Ext_{\Omega,k-1}[\J_{\Sigma}^{k-1}[\Psi\Phi]]}\quad \hbox{and} \quad \scalprod{\hat{g}}{\Phi}=\scalprod{\hat{g}}{\Psi\Phi},
\end{equation*}
where~$\Psi$ is smooth function supported in~$\Omega$ that equals one in a neighborhood of~$\supp\hat{g}$. It remains to prove
\begin{equation*}
\Scalprod{ \hat{g}}{ \Big(\id - \Ext_{\Omega,k-1}\circ \J_{\Sigma}^{k-1}\Big)[\Psi\Phi]} = 0.
\end{equation*}
Since~$\Ker \J_{\Sigma}^{k-1} \subset \Ker \hat{g}$ (recall that~$g$ annihilates~$\Si L_p^k$), it suffices to show that
\begin{equation*}
\J_{\Sigma}^{k-1}\circ\big(\id - \Ext_{\Omega,k-1}\circ\J_{\Sigma}^{k-1}\big) = 0,
\end{equation*}
which holds true since~$\J_{\Sigma}^{k-1}\circ \Ext_{\Omega,k-1} = \id$ by construction of~$\Ext_{\Omega,k-1}$.
\end{proof}
\begin{Le}\label{Smoothing_Lemma}
The set
\begin{equation*}
\Big\{g \in L_{p'}\,\Big|\; \exists \Lambda\in \!\!\!\!\bigoplus_{0\leq s\leq k-1}\!\!\!\!C_0^{\infty}(\Sigma) \quad \hbox{such that} \quad \hat{g} = \big(\J_{\Sigma}^{k-1}\big)^*[\Lambda]\Big\}
\end{equation*}
is dense in~$\Ann_{L_{p'}}(\Si L_p^k)$ if~$p > 1$. In the case~$p=1$, this set is weakly dense.
\end{Le}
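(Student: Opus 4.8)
The plan is to start from the description of $\Ann_{L_{p'}}(\Si L_p^k)$ provided by Lemma~\ref{Annihilator} and smooth out the distributional coefficients $\Lambda = \{\Lambda_s\}_{0 \le s \le k-1}$ by convolving along $\Sigma$. Fix $g$ in the (non-closed) set appearing inside $\clos_{L_{p'}}(\cdot)$ in~\eqref{FormulaForAnnihilator}, so $\hat g = (\J_\Sigma^{k-1})^*[\Lambda]$ for some compactly supported vector-valued distribution $\Lambda$ on $\Sigma$; since such $g$ are dense in the annihilator, it suffices to approximate each of them. Because $\Lambda$ has compact support, we may work in a single coordinate chart (or a finite partition of unity subordinate to charts), identifying a neighborhood of $\supp\Lambda$ in $\Sigma$ with an open set in $\mathbb{R}^{d-1}$. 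There, pick a standard approximate identity $\rho_\eps(\zeta) = \eps^{-(d-1)}\rho(\zeta/\eps)$, $\rho \in C_0^\infty(\mathbb{R}^{d-1})$, $\int\rho = 1$, and set $\Lambda_s^\eps = \Lambda_s * \rho_\eps$ in the chart coordinates, transported back to $\Sigma$ and multiplied by a fixed cutoff that is $1$ near $\supp\Lambda$; then each $\Lambda_s^\eps \in C_0^\infty(\Sigma)$, and $\Lambda^\eps \to \Lambda$ in $(C^\infty(\Sigma))'$ (indeed in any reasonable distributional sense) as $\eps \to 0$.

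Define $g_\eps$ by $\hat g_\eps = (\J_\Sigma^{k-1})^*[\Lambda^\eps]$. Two things must be checked: first, that $g_\eps \in L_{p'}(\mathbb{R}^d)$, and second, that $g_\eps \to g$ in $L_{p'}$ (for $p > 1$) or weakly-$*$ (for $p = 1$). For the membership, note that on the Fourier side the passage $\Lambda \mapsto \Lambda^\eps$ corresponds, up to the chart change of variables, to multiplication of $\hat g$ by a fixed smooth compactly supported function times a factor $\widehat{\rho_\eps}$ — more precisely, convolution along the surface by $\rho_\eps$ becomes multiplication by a symbol on the Fourier side — so that $g_\eps$ equals $g$ multiplied (roughly) by a Schwartz-class bump composed with the chart map, hence $g_\eps \in L_{p'}$; the transversal derivative structure encoded in $\J_\Sigma^{k-1}$ is unaffected because the convolution acts only in the tangential variables. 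For the convergence, observe that $\widehat{\rho_\eps}(z) \to 1$ locally uniformly and $|\widehat{\rho_\eps}| \le 1$, so the multiplier converges to a fixed cutoff that is $1$ on $\supp\hat g$, and the corresponding convolution operators on $L_{p'}$ converge strongly when $p' < \infty$ (i.e. $p > 1$) by a density argument plus uniform boundedness; when $p = 1$ we only get weak-$*$ convergence in $L_\infty$, which is exactly the weaker conclusion stated.

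The step I expect to be the main technical obstacle is making precise the claim that ``convolution along $\Sigma$ in the $\zeta$-variables'' acts on $\hat g$ as a bounded, uniformly-in-$\eps$ Fourier multiplier on $L_{p'}(\mathbb{R}^d)$ that also respects the normal-derivative decomposition — i.e. tracking how $\Ext_{\Omega,k-1}$, the change of variables from the chart to $\Sigma$, and the adjoint $(\J_\Sigma^{k-1})^*$ interact with the mollification. One clean way to sidestep delicate multiplier bounds is to go through the \emph{dual} (predual) picture: convolution along $\Sigma$ on the distribution side corresponds, after conjugating by the Fourier transform, to testing $g$ against translated copies of Schwartz functions and integrating against $\rho_\eps$, which is manifestly an average of translates of $g$ composed with a fixed smooth localization; such averages converge in $L_{p'}$ for $p' < \infty$ and weakly-$*$ for $p' = \infty$ by Minkowski's integral inequality and dominated convergence. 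This reduces everything to the elementary fact that translation is strongly continuous on $L_{p'}$ for $p < \infty$ and weak-$*$ continuous on $L_\infty$, together with the observation that the construction keeps $g_\eps$ inside the annihilator (its Fourier transform is still of the form $(\J_\Sigma^{k-1})^*[\,\cdot\,]$ with \emph{smooth} data), completing the proof.
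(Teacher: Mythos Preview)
Your overall strategy---mollify the distributional coefficients $\Lambda$ tangentially along $\Sigma$ and show the resulting $g_\eps$ approximate $g$ in $L_{p'}$---is exactly what the paper does. The gap is in your justification of the uniform $L_{p'}$ bound. You claim that tangential convolution on the Fourier side corresponds, on the physical side, to ``an average of translates of $g$ composed with a fixed smooth localization,'' and then invoke Minkowski plus strong continuity of translation. This is false when $\Sigma$ is curved. In chart coordinates with $\Sigma=\{(\zeta,h(\zeta))\}$, shifting the $\zeta$-argument of $\Lambda$ by $\eta$ does \emph{not} translate the point $(\zeta,h(\zeta))$ in $\mathbb{R}^d$; it moves it to $(\zeta+\eta,h(\zeta+\eta))$, and the discrepancy $h(\zeta+\eta)-h(\zeta)$ introduces an $x_d$-dependent oscillatory factor inside the inverse Fourier integral. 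Concretely, already for $k=1$ one computes
\[
g_\eps(x)=\int \rho_\eps(\eta)\,e^{2\pi i x_{\bar d}\cdot\eta}\!\int e^{2\pi i x_d[h(\zeta+\eta)-h(\zeta)]}\,e^{2\pi i(x_{\bar d}\cdot\zeta+x_dh(\zeta))}\Lambda_0(\zeta)\,d\zeta\,d\eta,
\]
and the inner integral is not a translate of $g$ unless $h$ is affine. So neither the ``Fourier multiplier'' description nor the ``average of translates'' description is available, and the Minkowski argument does not apply.

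The paper handles exactly this point by first \emph{flattening} via the operator $S$ of~\eqref{Exponent}, then convolving in the first $d-1$ variables, then un-flattening: this is the operator $D_n$ in~\eqref{DomarOperator}. In the flat picture convolution in $\zeta$ does commute with the normal-jet map $\J^{k-1}_{\mathbb{R}^{d-1}}$, which is why $D_n$ sends distributional $\Lambda$ to smooth $\Lambda$. But the composite $\mathcal{F}^{-1}S^{-1}(\,\cdot*\psi_n\,)S\Psi\mathcal{F}$ is a Fourier integral operator, and its uniform boundedness on $L_1$ (hence on $L_\infty$ by duality, hence on $L_{p'}$ by interpolation with the trivial $L_2$ bound) is a genuine theorem---Lemma~4.2 of Domar~\cite{Domar}---not a consequence of translation invariance. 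Note also that the paper defines $D_n$ as a bounded operator on all of $L_{p'}$, which lets one run the standard ``uniformly bounded plus convergent on a dense set'' argument; your $g\mapsto g_\eps$ is defined only through $\Lambda$ and so does not immediately extend to a globally bounded operator. To repair your proof you need either to invoke Domar's lemma (after introducing the flattening) or to supply an independent proof of the uniform $L_1\to L_1$ bound for $D_n$.
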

\begin{proof}
The case~$p=1$ had been considered in~\cite{Domar}. We repeat the argument for the general case here. Let~$g$ be a function in the said annihilator. After applying a partition of unity, we may assume that the corresponding vector-valued distribution~$\Lambda$ provided by Lemma~\ref{Annihilator} is supported in a chart neighborhood~$V$ of a point~$\xi \in \Sigma$, as it will only be necessary to sum a finite number of such pieces. We may also suppose that~$\xi = 0$ and
\begin{equation}\label{LocalForm}
\Sigma \cap V = \{(\zeta,h(\zeta))\,|\; \zeta \in U\},
\end{equation} 
here~$U$ is a neighborhood of the origin in~$\mathbb{R}^{d-1}$ and~$h\colon \mathbb{R}^{d-1} \to \mathbb{R}$ is a smooth function such that~$h(0)=0$,~$\nabla h(0) = 0$ (see Subsection~\ref{s61} for details). By our assumptions on the principal curvatures of~$\Sigma$, the second differential~$\frac{\partial^2 h}{\partial \zeta^2}$ is non-degenerate on~$U$. Consider the operator~$S$ that makes~$\Sigma\cap V$ flat:
\begin{equation}\label{Exponent}
S[\Phi](\xi) = \Phi\big(\xi_{\bar{d}},\xi_d + h(\xi_{\bar{d}})\big), \quad \Phi \in C_0^{\infty}(V),\quad \xi \in \mathbb{R}^{d}.
\end{equation}
We use the notation~$\xi=(\xi_{\bar{d}},\xi_d)$, so~$\xi_d$ is the last coordinate of~$\xi$ and~$\xi_{\bar{d}}\in\mathbb{R}^{d-1}$ is the vector consisting of first~$d-1$ coordinates.

Let~$\psi$ be a compactly supported smooth function on~$\mathbb{R}^{d-1}$ with unit integral, let~$\psi_n(\zeta) = n^{d-1}\psi(n\zeta)$ be its dilations. The function~$\xi \mapsto \psi_n(\xi_{\bar{d}})$ is also denoted by~$\psi_n$. Consider the family of operators~$D_n$, $n \geq n_0$, $n_0$ is sufficiently large, given by the rule
\begin{equation}\label{DomarOperator}
D_n[\Phi] = \mathcal{F}^{-1}\Big[S^{-1}\big[S[\Psi\hat{\Phi}]*\psi_n\big]\Big], \quad \Phi \in\mathcal{S}(\mathbb{R}^d).
\end{equation}
Here~$\Psi \in C_0^{\infty}(V)$ is a function that equals one on the support of~$\Lambda$. It is clear that the~$D_n$ are uniformly bounded as operators on~$L_2(\mathbb{R}^d)$. Lemma~$4.2$ in~\cite{Domar} says that the~$D_n$ are also (uniformly in~$n$) bounded as operators on~$L_1$, and since the dual operators have an identical structure they are also bounded on $L_\infty$.  By interpolation, the~$D_n$ are uniformly bounded on~$L_{p'}$. Also, since~$\{\psi_n\}_{n\in\mathbb{N}}$ is an approximate identity,
\begin{equation*}
\|D_n[\Phi] - \Phi\|_{L_{p'}}\to 0 \quad  \hbox{when} \quad \Phi \in \mathcal{S}(\mathbb{R}^d).
\end{equation*}
Thus, for every~$g \in L_{p'}$,~$p' < \infty$, we have~$\|D_n[g] - g\|_{L_{p'}} \to 0$. It remains to notice that~$D_n$ maps compactly supported distributions of the form
\begin{equation*}
\mathcal{F}^{-1}\Big[\big(\J_{\Sigma}^{k-1}\big)^*[\Lambda]\Big], \quad \Lambda \in\!\!\!\bigoplus_{0\leq s\leq k-1}\!\!\!\!\big(C^{\infty}(\Sigma)\big)^{\prime},
\end{equation*}
to the ones for which~$\Lambda \in \bigoplus_{0\leq s\leq k-1} C^{\infty}_0(\Sigma)$. Thus, if~$g \in \Ann_{L_{p'}}(\,\! \Si L_p^k)$ is a function with bounded spectrum, then~$D_n[g] \in \Ann_{L_{p'}}(\,\! \Si L_p^k)$,~$D_n[g]$ is generated by smooth~$\Lambda$, and~$D_n[g] \to g$ in~$L_{p'}$.
\end{proof}
\begin{proof}[Proof of Theorem~\ref{Stability}.]
By Lemma~\ref{Annihilator}, it suffices to show that any function~$g \in \Ann_{L_{p'}}(\,\!\Si L_p^{k+1})$ can be approximated by functions in~$\Ann_{L_{p'}}(\,\!\Si L_p^{k})$ when~$k \geq \frac{d}{p} - \frac{d+1}{2}$, and~$k > \frac{d-1}{2}$ when~$p=1$. By Lemma~\ref{Smoothing_Lemma}, we may assume that
\begin{equation}\label{CanonicForm}
\hat{g} = \big(\J_{\Sigma}^{k}\big)^*[\Lambda], \qquad \Lambda\in \!\!\bigoplus_{0\leq s\leq k}\!\!C^{\infty}_0(\Sigma).
\end{equation}
It suffices to prove that~$\Lambda_{k} = 0$, where~$\Lambda = (\Lambda_0,\Lambda_1,\ldots,\Lambda_{k})$. We may suppose that~$\Lambda$ is supported in a neighborhood~$V$ of a point on~$\Sigma$. We may also assume~\eqref{LocalForm} and replace normal derivatives by derivatives with respect to~$\xi_d$:
\begin{equation*}
\hat{g} = \sum\limits_{s=0}^{k} \frac{\partial^s\tilde{\Lambda}_s}{\partial \xi_d^s},
\end{equation*}  
where~$\tilde{\Lambda}_s$ are distributions generated by complex measures on~$\Sigma\cap V$ whose densities with respect to the Lebesgue measure on~$\Sigma$ are smooth functions. Note that~$\Lambda_{k} =0$ whenever~$\tilde{\Lambda}_{k} = 0$.  Since each function~$\tilde{\Lambda}_s$ has smooth density with respect to the Lebesgue measure on~$\Sigma$, one may use the stationary phase method to compute the asymptotics of~$\mathcal{F}^{-1}[\tilde{\Lambda}_s]$ at infinity (see, e.g.~\cite{SteinBook}):
\begin{equation*}
\mathcal{F}^{-1}[\tilde{\Lambda}_s] = e(x)|x|^{-\frac{d-1}{2}} + O(|x|^{-\frac{d+1}{2}})
\end{equation*}
for all~$x$ such that~$x\!\parallel\! \mathrm{n}_{\Sigma}(\xi)$ for some~$\xi \in \Sigma$ with~$\tilde{\Lambda}_s(\xi) \ne 0$. Here~$e(x)$ is a non-zero oscillating factor with constant amplitude that depends on~$h$ and the density of~$\tilde{\Lambda}_s$. This shows that
\begin{equation*}
|\hat{g}(x)| \asymp |x|^{k - \frac{d-1}{2}}, \quad x \to \infty, \quad x\parallel \mathrm{n}_{\Sigma}(\xi), \tilde{\Lambda}_{k}(\xi) \ne 0.
\end{equation*}
On the other hand,~$\hat{g} \in L_{p'}$, which for $p>1$ requires~$p'(k - \frac{d-1}{2}) < -d$, equivalently $k < \frac{d}{p} - \frac{d+1}{2}$, contradicting our assumptions. Therefore,~$\Lambda_{k} = 0$ and, thus,~$\Ann_{L_{p'}}(\,\!\Si L_p^{k+1}) = \Ann_{L_{p'}}(\,\!\Si L_p^{k})$.  If $p = 1$ the contradiction is reached provided $k - \frac{d-1}{2} > 0$.

To show that~$\Si L_p^k = \,\!\Si L_p^{\infty}$, we note that the annihilator of the latter space consists of all~$L_{p'}$ functions whose Fourier transform is supported on~$\Sigma$, recall the Schwartz theorem that any distribution supported on~$\Sigma$ may be represented in the form~$\big(\J_{\Sigma}^{l}\big)^*[\zeta]$ for some~$l$, and use the reasoning above.
\end{proof}

\subsection{Coincedence of $\Si L_p^k$ and the spaces defined as kernels of restriction operators}\label{s23}
We relate the~$\Si L_p^k$ spaces with restriction operators. Consider a neighborhood~$V$ such that~\eqref{LocalForm} holds true. We may redefine~$V$ in such a way that
\begin{equation*}
V = \{\xi \in\mathbb{R}^d\mid |\xi_d-h(\xi_{\bar{d}})| < \delta, \ \xi_{\bar{d}}\in U\}.
\end{equation*}
This gives a natural parametrization of~$V$ by~$U\times (-\delta,\delta)$. We will need the translated copies of~$\Sigma$:
\begin{equation*}
\Sigma_r = \Sigma + (0,0,\ldots,0,r),\quad r\in (-\delta,\delta).
\end{equation*}
Note that this definition depends on the choice of~$U$.

Consider the restriction operators
\begin{equation}\label{Restriction}
\R_{\Sigma,r}[f] = \hat{f}|_{V\cap \Sigma_r}, \quad f \in \mathcal{S}(\mathbb{R}^d).
\end{equation}
\begin{Def}\label{StrongRestriction}
We say that the statement~$\R(\Sigma,p,s)$ holds true if the~$\R_{\Sigma,r}$ admit continuous extensions as~$L_p(\mathbb{R}^d) \to H^{-s}(V\cap\Sigma_r)$ operators for any choice of~$U$, and the norms of these extensions are uniform in~$r$ \textup(however, we do not require any uniformity with respect to~$U$\textup).

We say that~$\R^k(\Sigma,p,s)$ is true if~$\R^{k-1}(\Sigma,p,s)$ is true and for any choice of~$U$ the operators~$\R_{\Sigma,r}$ extend continuously from the domain
\begin{equation*}
\{f\in \mathcal{S}(\mathbb{R}^d)\mid \forall l < k \quad \nabla^l\hat{f} = 0\; \hbox{on}\;\Sigma\}
\end{equation*}
to a family of mappings~$\Si L_p^k \to H^{-s}(V\cap \Sigma_r)$ whose norms are  bounded uniformly by~$C|r|^k$.
\end{Def}  
\begin{Rem}
In the definitions above it is important to be consistent with regard to the construction of local Sobolev norms on $\Sigma_r$. When we discuss~$\R(\Sigma,p,s)$, we will define the Sobolev norm by the rule~\eqref{OurSobolev} for each particular choice of~$U$,~$h$,~and $r$.
\end{Rem}
\begin{Def}\label{TildeSpaces}
For a fixed $s \geq 0$, define the set~$\Si \tilde{L}_p^k$ by the formula
\begin{equation*}
\Si \tilde{L}_p^k = \Big\{f \in L_p(\mathbb{R}^d)\,\Big|\;\forall U \quad \big\|\R_{\Sigma,r}[f]\big\|_{H^{-s}(V\cap\Sigma_r)} = o(|r|^{k-1})\Big\}.
\end{equation*}
\end{Def}
Note that it is unclear whether~$\Si \tilde{L}_p^k$ is closed in $L_p$ or not. 

\begin{Le}\label{CoincidenceLemma}
Suppose that~$\Sigma$ has non-vanishing curvature,~$p \in [1,\infty)$, and~$\R^{k-1}(\Sigma,p,s)$ holds. Then,~$\Si \tilde{L}_p^k =\,\! \Si L_p^k$.
\end{Le}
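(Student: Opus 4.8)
The plan is to prove both inclusions $\Si L_p^k \subseteq \Si\tilde L_p^k$ and $\Si\tilde L_p^k \subseteq \Si L_p^k$. The first inclusion is the easy direction: by the very definition of $\R^{k-1}(\Sigma,p,s)$, restricted to the domain where $\nabla^l\hat f = 0$ on $\Sigma$ for all $l<k$, the operators $\R_{\Sigma,r}$ extend from that Schwartz subspace to $\Si L_p^k$ with norms $O(|r|^k)$. Since $\Si L_p^k$ is by Definition~\ref{LSigma} the $L_p$-closure of exactly that Schwartz subspace, every $f\in\Si L_p^k$ satisfies $\|\R_{\Sigma,r}[f]\|_{H^{-s}(V\cap\Sigma_r)} \lesssim |r|^k = o(|r|^{k-1})$, uniformly in $r$ and for every choice of $U$. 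Hence $f\in\Si\tilde L_p^k$.

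For the reverse inclusion, take $f\in\Si\tilde L_p^k$. We want to approximate $f$ in $L_p$ by Schwartz functions whose $\hat f$ vanishes to order $k-1$ on $\Sigma$. The natural approach is duality: it suffices to show that every $g\in\Ann_{L_{p'}}(\Si L_p^k)$ annihilates $f$. By Lemma~\ref{Annihilator} and Lemma~\ref{Smoothing_Lemma} (using weak density when $p=1$), we may assume $g$ has compact spectrum and $\hat g = (\J_\Sigma^{k-1})^*[\Lambda]$ with $\Lambda=(\Lambda_0,\ldots,\Lambda_{k-1})$ a tuple of smooth compactly supported densities on $\Sigma$, and after a partition of unity that $\Lambda$ is supported in a chart neighborhood $V$ with $\Sigma\cap V$ given as a graph $\{(\zeta,h(\zeta))\}$. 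The pairing $\scalprod{f}{\bar g}$ then unfolds, via Plancherel and the structure of $(\J_\Sigma^{k-1})^*$, into a finite sum of terms of the form $\int_U \partial_{\xi_d}^{j}\hat f(\zeta,h(\zeta))\,\overline{\lambda_j(\zeta)}\,d\zeta$ for $0\le j\le k-1$ with $\lambda_j$ smooth and compactly supported. The plan is to express each normal derivative $\partial_{\xi_d}^{j}\hat f(\zeta,h(\zeta))$ as a limit of finite-difference quotients in $r$ of the traces $\hat f(\zeta,h(\zeta)+r)$ on the surfaces $\Sigma_r$ — that is, as $r\to 0$ of a $j$-th order difference $\Delta_r^j$ divided by $r^j$ — and then to bound each such difference quotient, tested against $\lambda_j$, by the $H^{-s}(\Sigma_r)$ norm of $\R_{\Sigma,r}[f]$ (since $\lambda_j\in H^s$ with the dual norm, using the definition~\eqref{OurSobolev} of the local Sobolev norm adjusted to $\Sigma_r$). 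The hypothesis $f\in\Si\tilde L_p^k$ gives $\|\R_{\Sigma,r}[f]\|_{H^{-s}(\Sigma_r)} = o(|r|^{k-1})$, so the $j$-th difference quotient, which involves dividing by $r^j$ with $j\le k-1$, tends to zero as $r\to 0$. Therefore each term vanishes, $\scalprod{f}{\bar g}=0$ for all such $g$, and $f\in\Si L_p^k$.

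A few technical points need care. First, one must justify that $\hat f$, a priori only an $L_{p'}$-type distribution, has well-defined traces on each $\Sigma_r$ in $H^{-s}$: this is exactly what $\R^{k-1}(\Sigma,p,s)$ supplies (it asserts $\R_{\Sigma,r}$ extends continuously to all of $L_p$, not merely the subspace). Second, the finite-difference representation of normal derivatives should be set up so that only traces on surfaces $\Sigma_r$ (translates in the fixed $\xi_d$-direction) appear, which matches the definition of $\Sigma_r$ used here; replacing true normal derivatives by $\partial_{\xi_d}$-derivatives is harmless since the two differ by lower-order terms with smooth coefficients, exactly as in the proof of Theorem~\ref{Stability}. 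Third, for $p=1$ one works with weak-$*$ density, so one argues that $f$ is annihilated by a weak-$*$ dense subset of the annihilator, which still forces $f\in\Si L_1^k$. I expect the main obstacle to be the second step: carefully organizing the finite-difference calculus so that the error terms from approximating $\partial_{\xi_d}^j\hat f$ are themselves controlled by $o(|r|^{k-1})$-type bounds on the intermediate surfaces, uniformly, and reconciling the various choices of $U$ (the definition of $\Si\tilde L_p^k$ quantifies over all $U$, and one only needs a single convenient choice adapted to the chart where $\Lambda$ lives).
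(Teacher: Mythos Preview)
Your outline follows the same duality strategy as the paper: reduce to showing $\langle f,g\rangle=0$ for every $g$ in the annihilator, use Lemmas~\ref{Annihilator} and~\ref{Smoothing_Lemma} to assume $\hat g=(\J_\Sigma^{k-1})^*[\Lambda]$ with $\Lambda$ smooth and supported in a chart, and then exploit the $o(|r|^{k-1})$ decay of the traces. The difference is in how you make the pairing accessible through traces on $\Sigma_r$.

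The paper does \emph{not} take finite differences of $\hat f$. Instead it mollifies $\hat g$ in the $\xi_d$-direction: with $\psi_n(\xi_d)=n\psi(n\xi_d)$ one sets $\hat g_n=\hat g(\cdot,\cdot)\ast\psi_n$, so that on the physical side $g_n(x)=g(x)\,\check\psi(x_d/n)$. This makes $g_n\to g$ in $L_{p'}$ (weak-$*$ when $p'=\infty$) immediate by dominated convergence, and $\hat g_n$ is a genuine $C_0^\infty$ function supported in $\{|r|\lesssim 1/n\}$ with $\|\R_{\Sigma,r}[g_n]\|_{H^s}\lesssim n^k$. Then
\[
|\langle f,g_n\rangle|\ \le\ \int_{|r|\lesssim 1/n}\|\R_{\Sigma,r}[f]\|_{H^{-s}}\,\|\R_{\Sigma,r}[g_n]\|_{H^{s}}\,dr\ \lesssim\ \tfrac1n\cdot n^k\cdot o(n^{1-k})=o(1),
\]
which forces $\langle f,g\rangle=0$.

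Your finite-difference version is the discrete analogue of this mollification, and it can be made to work, but the point you flag as the ``main obstacle'' is not the real one. Once traces exist (which $\R^{k-1}(\Sigma,p,s)$ gives), bounding the $j$-th difference quotient by $o(|r|^{k-1-j})$ is immediate. The genuine issue is the step you take for granted: that for a general $f\in L_p$ the finite-difference pairing actually converges to $\langle f,g\rangle$. Your expression ``$\partial_{\xi_d}^j\hat f(\zeta,h(\zeta))$'' has no a~priori meaning for $f\in L_p$; what you really need is $g_r\to g$ in $L_{p'}$, where $g_r$ is obtained by replacing each $\partial_{\xi_d}^j$ in $\hat g$ by the $j$-th difference quotient. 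On the physical side this amounts to multiplying the $j$-th component of $g$ by $\big(\tfrac{e^{2\pi i r x_d}-1}{2\pi i r x_d}\big)^j$, which is bounded by $1$ and tends to $1$ pointwise --- so dominated convergence does the job provided each component $(-2\pi i x_d)^j\check{\tilde\Lambda}_j$ lies in $L_{p'}$ individually. The paper's smooth mollification sidesteps this issue entirely, since there the multiplier $\check\psi(x_d/n)$ acts on $g$ as a whole.

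In short: same architecture, and your plan is salvageable, but you have mislocated the delicate step. The convergence of the approximation to $\langle f,g\rangle$ is where the argument lives; the $o$-bound is the easy part.
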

\begin{proof}
It is clear that~$\Si L_p^k\subset  \clos(\,\! \Si \tilde{L}_p^k)$, as $\Si \tilde{L}_p^k$ contains all Schwartz functions whose Fourier transform vanishes to order $k-1$ on $\Sigma$. In fact, thanks to the uniform convergence implied by condition $\R^{k-1}(\Sigma, p, s)$ it is even true that $\Si L_p^k\subset \,\! \Si \tilde{L}_p^k$. So it suffices to show that~$\Si \tilde{L}_p^k \subset \,\!  \Si L_p^k$. Assume the contrary. By the Hahn--Banach theorem,
\begin{equation*}
\exists f \in \,\!\Si \tilde{L}_p^k, \ g \in \Ann_{L_{p'}}\Big(\,\!\Si L_p^k\Big),\qquad \scalprod{f}{g} \ne 0.
\end{equation*}
By Lemma~\ref{Smoothing_Lemma}, we may assume that~$g$ is of the form
\begin{equation}\label{FormForg}
\hat{g} = (\J_{\Sigma}^{k-1})^*[\zeta], \qquad \zeta \in \!\!\!\bigoplus_{0\leq s \leq k-1}\!\!\!C_0^{\infty}(\Sigma). 
\end{equation}
Applying the same reasoning as in the proof of Lemma~\ref{Smoothing_Lemma}, we may also assume that $\zeta$ has compact support within a chart neighborhood $V$ where $\Sigma \cap V$ is the graph of a smooth function $h:U \subset \mathbb R^{d-1} \to \mathbb R$, $h(0) = 0$, and $\nabla h(0) = 0$.  

Recall the ``flattening" operator $S$ defined in~\eqref{Exponent}.  There exists another set of functions $\tilde{\zeta} \in \bigoplus_{0 \leq s \leq k-1} C_0^\infty(U)$ such that $(S^{-1})^*(\J_\Sigma^{k-1})^*[\zeta] = (\J_{\mathbb R^{d-1}}^{k-1})^*[\tilde{\zeta}]$.  If one considers each component of $\zeta$ as an element of $C^\infty_0(U)$ via the parametrization of $\Sigma \cap V$, then the components of $\tilde{\zeta}$ are constructed from $\zeta$, its gradients (in $\mathbb R^{d-1}$) up to order $k-1$, and the partial derivatives of $h$.  

Let~$\psi$ be a compactly supported function on the unit interval whose integral equals one. Consider its dilations~$\psi_n(\xi_d) = n\psi(n\xi_d)$  and the formal convolution  in the~$d$-th variable
\begin{equation*}
(\J_{\mathbb R^{d-1}}^{k-1})^*[\tilde{\zeta}](\xi_{\bar{d}}, \,\cdot\,) *\psi_n (\xi_d)= \sum\limits_{s=0}^{k-1}\tilde{\zeta}_s(\xi_{\bar{d}})\frac{\partial^s \psi_n}{\partial \xi_d^s}(\xi_d),
\end{equation*}
which is a function in $C^\infty_0(U \times [-\frac{1}{n}, \frac{1}{n}])$.  This function is bounded pointwise by the maximum size of $|\psi_n^{(k-1)}(\xi_d)|$, which is approximately $n^k$.  All of its partial derivatives in the $\xi_{\bar{d}}$ directions are bounded by~$n^k$ as well, because those derivatives act on $\tilde{\zeta}$, not on $\psi_n$.

Now define~$g_n \in \mathcal{S}(\mathbb{R}^d)$ by the formula
\begin{align*}
g_n(\xi_{\bar{d}}, \xi_d) = S^*[(\J_{\mathbb R^{d-1}}^{k-1})^*[\tilde{\zeta}](\xi_{\bar{d}}, \,\cdot\,) *\psi_n]
&= S^*[(S^{-1})^*(\J_\Sigma^{k-1})^*[\zeta](\xi_{\bar{d}}, \,\cdot\,) * \psi_n] \\
&= S^*[(S^{-1})^*[\hat{g}](\xi_{\bar{d}},\,\cdot\,)* \psi_n ] .
\end{align*}
It should be clear that convolution in the $\xi_d$ direction commutes with operators $S^*$ and its inverse, thus the construction simplifies to
\begin{equation*}
\hat{g}_n(\xi_{\bar{d}}, \xi_d) = \hat{g}(\xi_{\bar{d}}, \,\cdot\,)*\psi_n (\xi_d), \quad \text{or} \quad g_n(x) = g(x)\check{\psi}(n x_d).
\end{equation*}  
It follows that~$g_n \to g$ in~$L_{p'}$ (in the case~$p'=\infty$ we have weak-* convergence relative to $L_1$ instead), which means that,~$\scalprod{f}{g_n} \rightarrow \scalprod{f}{g}$.
On the other hand,
\begin{equation*}
\|\R_{\Sigma, r}[g_n] \|_{H^s(U)} \lesssim \begin{cases}
n^{k},\quad |r| \lesssim n^{-1};\\
0,\quad |r| \gtrsim n^{-1}. 
\end{cases} 
\end{equation*}
That gives a bound
\begin{equation*}
|\scalprod{f}{g_n}| = |\scalprod{\hat{f}}{\hat{g}_n}| \lesssim \int_{-\frac{1}{n}}^{\frac{1}{n}} \int\limits_{\Sigma_r}|\scalprod{\hat{f}}{\hat{g}_n}|\,d\sigma dr \lesssim
\frac{1}{n}\cdot n^{k} \cdot o(n^{1-k}) = o(1), 
\end{equation*}
forcing $\scalprod{f}{g} = \lim_{n\to\infty} \scalprod{f}{g_n} = 0$.  This contradicts the original assertion that $\scalprod{f}{g} \not= 0$.
\end{proof}

\begin{Def}\label{WeakRestriction}
We say that the statement~$\Rw^k(\Sigma,p,s)$ holds true if the mapping
\begin{equation*}
\mathcal{S}(\mathbb{R}^d)\ni f \mapsto \nabla^k f|_{\Sigma} \in C^{\infty}(\Sigma)
\end{equation*}
extends to a bounded linear operator between the spaces~$\Si\,\! L_p^{k}$ and~$H^{-s}(K \cap\Sigma)$ for any compact set~$K$.
\end{Def}
It is explained in the Remark~\ref{GeneralLocalization} below that~$\R^k(\Sigma,p,s)$ leads to~$\Rw^k(\Sigma,p,s)$. 

We end this subsection with an analog of Lemma~\ref{CoincidenceLemma} for~$\HDR$ inequalities. The proof is direct, i.e. does not use duality.
\begin{Le}\label{ApproximationForHDR}
For any function~$f \in L_p$ such that~$\phi\hat{f}|_{\Sigma} \in H^{\ell}(\Sigma)$, where~$\phi \in C_0^\infty(\mathbb{R}^d)$, there exists a sequence~$\{f_n\}_n$ of Schwartz functions such that
\begin{equation*}
\|f-f_n\|_{L_p} + \big\|\phi(\hat{f}-\hat{f}_n)\big|_{\Sigma}\big\|_{H^\ell(\Sigma)} \to 0.
\end{equation*}
\end{Le}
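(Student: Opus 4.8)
Here is how I would attack Lemma~\ref{ApproximationForHDR}.

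The plan is to build the approximants by a $\Sigma$-adapted regularization. The obstacle to the naive approach is that truncating $f$ in physical space convolves $\hat f$ transversally to $\Sigma$, whereas we have no control on the traces of $\hat f$ on the surfaces parallel to $\Sigma$ --- only the hypothesis on $\Sigma$ itself. So I would first rearrange $\hat f$ in a \emph{fixed} thin neighborhood of $\Sigma$ to make it constant in the transverse direction there (equal to a smooth approximation of its trace), so that later transverse smoothing becomes harmless, and only then regularize and truncate to reach the Schwartz class.

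First I would reduce, by a partition of unity over a finite cover of $\supp\phi\cap\Sigma$ by charts together with the flattening operator $S$ of~\eqref{Exponent}, to a single chart in which $\Sigma\cap V=\{(\zeta,h(\zeta)):\zeta\in U\}$ as in Subsection~\ref{s61}; it is enough to describe the construction in one chart and sum the pieces. Fix $\phi_1\in C_0^\infty(V)$ with $\phi_1\equiv1$ on $\supp\phi$ (chosen, as is possible in the intended applications, so that $\phi_1\hat f|_\Sigma\in H^\ell$) and choose $v^{(n)}\in C_0^\infty(U)$ with $v^{(n)}\to\phi_1\hat f|_\Sigma$ in $H^\ell$, for instance a tangential mollification. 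For small $\delta>0$ set
$$\widehat{\tilde f_{\delta,n}}(\xi):=\hat f(\xi)+\phi_1(\xi)\,\chi\!\Big(\tfrac{\xi_d-h(\xi_{\bar d})}{\delta}\Big)\bigl(v^{(n)}(\xi_{\bar d})-\hat f(\xi)\bigr),\qquad\chi\in C_0^\infty(\mathbb R),\ \chi\equiv1\text{ near }0 .$$
On a transverse neighborhood of $\supp\phi\cap\Sigma$ of a fixed width comparable to $\delta$ the right-hand side equals $v^{(n)}(\xi_{\bar d})$, so $\widehat{\tilde f_{\delta,n}}$ does not depend on the transverse variable there; and $\phi\,\widehat{\tilde f_{\delta,n}}\big|_\Sigma=\phi v^{(n)}$, which tends to $\phi\hat f|_\Sigma$ in $H^\ell(\Sigma)$ as $n\to\infty$.

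Next I would check, for fixed $n$, that $\|\tilde f_{\delta,n}-f\|_{L_p}\to0$ as $\delta\to0$: the correction is $\mathcal F^{-1}$ of a function supported in a transverse slab of width $\delta$; splitting it as $f*k_\delta-\mathcal F^{-1}[\phi_1\chi((\xi_d-h)/\delta)\,v^{(n)}(\xi_{\bar d})]$ with $k_\delta=\mathcal F^{-1}[\phi_1\chi((\xi_d-h)/\delta)]$, one shows that $\|k_\delta\|_{L_1}$ stays bounded while $g\mapsto g*k_\delta$ kills Schwartz functions in $L_p$ (a thin-slab Hausdorff--Young bound, using $p>1$) and hence kills all of $L_p$ by density, and that the second term is a tensor-type expression whose transverse factor has $L_p(\mathbb R)$-norm $O(\delta^{1-1/p})\to0$ because $v^{(n)}\in C_0^\infty\subset L_{p'}$. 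Then, to reach the Schwartz class, I would convolve $\tilde f_{\delta,n}$ with a physical-space approximate identity whose Fourier transform is a compactly supported bump that is $1$ at the origin --- this makes the function band-limited and $C^\infty$, it converges in $L_p$, and it only multiplies the trace on $\Sigma$ by a factor tending to $1$ --- and finally multiply by $\chi(x/R)$ with $1/R\ll\delta$; the product is compactly supported and smooth, hence Schwartz, converges in $L_p$, and on the Fourier side is smoothed at scale $1/R$ in every direction, but since $1/R\ll\delta$ the transverse part of that smoothing stays inside the transverse-constant slab and leaves the trace unchanged, while the tangential part is one more tangential mollification and converges back in $H^\ell(\Sigma)$. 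A diagonal choice of parameters ($n\to\infty$, then $\delta\to0$, then the two truncation scales $\to\infty$ with $1/R\ll\delta$) produces the required sequence.

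The step I expect to be the main obstacle is reconciling the last two operations: one has to commit to making $\hat f$ transverse-constant on a \emph{fixed} neighborhood of $\Sigma$ \emph{before} truncating, precisely so that the transverse Fourier-smoothing that the truncation inevitably introduces is confined to that neighborhood and does not spoil the $H^\ell$-regularity of the trace --- so the order of limits is essential, and the argument closes only because the width $\delta$ can be held fixed while the truncation scale tends to infinity. A secondary technical point is the thin-slab $L_p$-estimate; its $p=1$ endpoint seems to need a separate argument (or a weak-$*$ formulation, as with the companion statements in Section~\ref{S2}).
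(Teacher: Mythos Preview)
There is a genuine gap: your claim that $\|k_\delta\|_{L_1}$ stays bounded fails once $\Sigma$ is curved. Take $h(\zeta)=|\zeta|^2$ and, to isolate the mechanism, $\phi_1(\xi)=a(\xi_{\bar d})$; the substitution $\xi_d=|\xi_{\bar d}|^2+\delta r$ gives
\[
k_\delta(x)=\delta\,\check\chi(\delta x_d)\,E(x),\qquad E(x)=\int a(\zeta)\,e^{2\pi i(x_{\bar d}\cdot\zeta+x_d|\zeta|^2)}\,d\zeta,
\]
and the dispersive spreading of the Schr\"odinger propagator forces $\|E(\cdot,x_d)\|_{L_1(\mathbb R^{d-1})}\asymp(1+|x_d|)^{(d-1)/2}$, whence $\|k_\delta\|_{L_1}\asymp\delta^{-(d-1)/2}$. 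The same computation shows that your ``tensor-type'' term $\mathcal F^{-1}\bigl[\phi_1\chi((\xi_d-h)/\delta)\,v^{(n)}\bigr]$ has $L_p$-norm of order $\delta^{(d+1)/2-d/p}$, which \emph{blows up} precisely in the regime $\sigma_p>0$ (that is, $p<\tfrac{2d}{d+1}$) where the lemma has content. More conceptually, your transverse cutoff is a Bochner--Riesz-type localization to a $\delta$-slab around $\Sigma$; such operators are not uniformly $L_p$-bounded for $p\ne2$, and indeed if $\|f*k_\delta\|_{L_p}\to0$ held for every $f\in L_p$ one would deduce (locally) $L_p=\,\!\Si L_p^\infty$, contradicting Theorem~\ref{Stability}. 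The two diverging pieces do not cancel in general either, since for $f\in L_p$ you have no transverse regularity of $\hat f$ to exploit.

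The paper's proof sidesteps this by never cutting off transversally. It first truncates in physical space, $F_n=f\,\Psi(\cdot/a_n)$, which is harmless in $L_p$; the trace $\hat F_n|_\Sigma$ is then close to $\hat f|_\Sigma$ only in a weak norm $H^{-s}$, but that control is supplied for free by the $k=0$ case of Theorem~\ref{First_restriction_for_higher_derivatives_Theorem}. Next it applies the Domar operator $D_n$ of~\eqref{DomarOperator}, which mollifies \emph{tangentially} along $\Sigma$: Domar's Lemma~4.2 (invoked already in Lemma~\ref{Smoothing_Lemma}) is exactly the statement that tangential mollification --- unlike your transverse cutoff --- is uniformly $L_1$-bounded. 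On the trace, $D_n$ acts as convolution with $\psi_n$, upgrading $H^{-s}$ to $H^\ell$ at a cost of $n^{\ell+s}$, which the $2^{-n}$ from the first step absorbs. If you want to rescue your construction, replace the transverse modification of $\hat f$ by this tangential regularization.
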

\begin{proof}
As usual, we may assume that~$\phi$ and~$\hat{f}$ are supported in a chart neighborhood~$V$ of a point~$\xi \in \Sigma$. We may also suppose that~$\xi = 0$ and~\eqref{LocalForm},
where~$U$ is a neighborhood of the origin in~$\mathbb{R}^{d-1}$ and~$h\colon \mathbb{R}^{d-1} \to \mathbb{R}$ is a smooth function such that~$h=0$,~$\nabla h = 0$ (see Subsection~\ref{s61} for details). By Theorem~\ref{Stability}, in the regime~$\sigma_p \leq 0$, the set of Schwartz functions whose Fourier transform vanishes on~$\Sigma$, is dense in~$L_p$, and there is nothing to prove. Let us assume~$\sigma_p > 0$.

We construct the functions~$F_n$ by the rule
\begin{equation*}
F_n(x) = f(x)\Psi\Big(\frac{x}{a_n}\Big),
\end{equation*}
where~$\Psi$ is a fixed Schwartz function with~$\Psi(0) = 1$ and bounded spectrum, and~$a_n$ is a large number such that~$\|f-F_n\|_{L_p} \leq 2^{-n}$. The functions~$F_n$ approximate~$f$ in~$L_p$ norm, however, their Fourier transforms may have infinite~$H^{\ell}(\Sigma)$ norms. There is a control on a weaker quantity, namely, Theorem~\ref{First_restriction_for_higher_derivatives_Theorem} (in the case~$k=0$) says that 
\begin{equation}\label{VeryWeakEstimate}
\|\phi(\hat{f}-\hat{F}_n)\|_{H^{-s}(\Sigma)} \lesssim 2^{-n}
\end{equation}
for sufficiently large~$s$.

Let now~$f_n = D_n[F_n]$, where~$D_n$ is the Domar operator~\eqref{DomarOperator}. We need to prove two limit identities
\begin{equation*}
\|f-f_n\|_{L_p}\to 0,\quad \hbox{and}\quad \|\hat{f} - \hat{f}_n\|_{H^\ell(\Sigma)} \to 0.
\end{equation*}
The first identity is simple since
\begin{equation*}
\|f-f_n\|_{L_p} \leq \|f - D_n f\|_{L_p} + \|D_n [f- F_n]\|_{L_p} \lesssim o(1) + 2^{-n}
\end{equation*}
by the properties of the operators~$D_n$ (see the proof of Lemma~\ref{Smoothing_Lemma}). For the second identity, we write
\begin{equation*}
\|\phi(\hat{f} - \hat{f}_n)\|_{H^\ell(\Sigma)} \leq \|\phi \mathcal{F}[f- D_n f]\|_{H^{\ell}(\Sigma)} + \big\|\phi \mathcal{F}\big[D_n[f-F_n]\big]\big\|_{H^{\ell}(\Sigma)}.
\end{equation*}
Note that~$D_n$ convolves the restriction to~$\Sigma$ of the Fourier transform of the function with~$\psi_n$ (see~\eqref{DomarOperator}). Thus, the first summand tends to zero by the approximation of identity properties (and since~$\hat{f}|_{\Sigma}\in H^\ell$), and the second summand is bounded by~$O(n^{\ell+s}2^{-n})$ by formula~\eqref{VeryWeakEstimate}.
\end{proof}

\subsection{Proofs of ``if'' part in Theorem~\ref{First_restriction_for_higher_derivatives_Theorem} and Theorem~\ref{CoincidenceTheorem}}\label{s24}
Since~$\R^k(\Sigma,p,s)$ leads to~$\Rw^k(\Sigma,p,s)$, Theorem~\ref{First_restriction_for_higher_derivatives_Theorem} follows from the lemma below.
\begin{Le}\label{R2}
The statement~$\R^k(\Sigma,p,0)$ holds true if~$p \in [1, \frac{2d+2}{d+3+4k}]$.  For every $p \in [1, \frac{2d}{d+1+2k})$ there exists $s \leq \max(0, [2k-\sigma_p] +1, 2k-\kappa_p)$ such that $\R^k(\Sigma, p, s)$ is true.  When $p=1$ and $\sigma_1 = \frac{d-1}{2} \in \mathbb{N}$, the value $s = \max(0, 2k-\frac{d-1}{2})$ suffices.   Finally, in odd dimensions $\R^{\frac{d-1}{2}}(\Sigma, 1, s)$ holds for $s > \frac{d-1}{2}$.
\end{Le}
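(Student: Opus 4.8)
The strategy is to combine the localization of Subsection~\ref{s61}, the surface inequality of Theorem~\ref{StubbornTheorem}, and the vanishing of $\hat f$ on $\Sigma$, the latter exploited through a transverse Taylor expansion together with the $r$-differentiation trick already seen for $k=1$ in Subsection~\ref{s13}. After localizing we may assume $\Sigma\cap V=\{(\zeta,h(\zeta))\mid\zeta\in U\}$ with $h$ as in~\eqref{HessianSmoothness} (a rotation makes the normal at the base point equal to $\xi_d$), and the Sobolev norms on the $\Sigma_r$ are given by~\eqref{OurSobolev}. The statement $\R^{k-1}(\Sigma,p,s)$ required by Definition~\ref{StrongRestriction} comes from an induction on $k$: the base case $\R^0(\Sigma,p,s)$ holds for $\sigma_p>0$ and $s\geq\max(0,-\kappa_p)$, being the $\alpha=\beta=0$ instance of Theorem~\ref{StubbornTheorem} (equivalently Theorem~\ref{CorollaryOfChoGuoLee} with $k=0$), and the set of admissible $s$ for $\R^{k}$ is contained in that for $\R^{k-1}$, so the induction closes. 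Hence it suffices to prove, for a Schwartz $f$ with $\nabla^l\hat f=0$ on $\Sigma$ for all $l<k$ (such $f$ being dense in $\Si L_p^k$), the uniform bound $\|\hat f\,\psi\|^2_{\dot H^{-s}(\Sigma_r)}\lesssim|r|^{2k}\|f\|_{L_p}^2$ for small $|r|$; completeness of $H^{-s}$ then yields the extension to all of $\Si L_p^k$.

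First I would note that Theorem~\ref{StubbornTheorem} holds on every translate $\Sigma_{r_0}$ with the same constant: if $g(x)=e^{-2\pi i r_0 x_d}f(x)$ then $\hat g=\hat f(\,\cdot+r_0 e_d)$ and $\|g\|_{L_p}=\|f\|_{L_p}$, so $\SI(h,\alpha,\beta,\gamma,p)$ for $g$ is exactly~\eqref{StubbornInequality} for $f$ with the $r$-derivatives evaluated at $r=r_0$. Next fix the integer $\alpha=\max(0,[2k-\sigma_p]+1)$ when $p>1$, and $\alpha=\max(0,2k-\tfrac{d-1}{2})$ in the exceptional case $p=1$, $\sigma_1=\kappa_1=\tfrac{d-1}{2}\in\mathbb N$; since $\sigma_p>k$ (which is what $p<\frac{2d}{d+1+2k}$ means) one has $0\leq\alpha\leq k$. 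Because $\hat f$ vanishes on $\Sigma$ to order $k-1\geq\alpha-1$, Taylor's formula in the transverse direction gives
\begin{equation*}
\hat f(\zeta,h(\zeta)+r)=\frac{r^\alpha}{(\alpha-1)!}\int_0^1(1-u)^{\alpha-1}(\partial_{\xi_d}^\alpha\hat f)(\zeta,h(\zeta)+ur)\,du,
\end{equation*}
and Minkowski's integral inequality for the $\dot H^{-s}$ norm yields $\|\hat f\,\psi\|_{\dot H^{-s}(\Sigma_r)}\leq\frac{|r|^\alpha}{\alpha!}\sup_{|t|\leq|r|}\|(\partial_{\xi_d}^\alpha\hat f)\,\psi\|_{\dot H^{-s}(\Sigma_t)}$ (when $\alpha=0$ this step is vacuous).

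Now put $\Phi_\alpha(t)=\|(\partial_{\xi_d}^\alpha\hat f)\,\psi\|^2_{\dot H^{-s}(\Sigma_t)}$. Expanding the $t$-derivatives of this inner product by the Leibniz rule, each term is a pairing $\langle(\partial_{\xi_d}^{\alpha+i}\hat f)\psi,(\partial_{\xi_d}^{\alpha+j}\hat f)\psi\rangle_{\dot H^{-s}(\Sigma)}$ with $i+j=n$, and it vanishes unless $\alpha+i\geq k$ and $\alpha+j\geq k$; hence $\Phi_\alpha^{(n)}(0)=0$ for $n<2(k-\alpha)$, i.e. $\Phi_\alpha$ vanishes to order $2(k-\alpha)$ at the origin. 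On the other hand the translated form of $\SI(h,\alpha,2(k-\alpha),s,p)$ gives $|\Phi_\alpha^{(2(k-\alpha))}(t)|\lesssim\|f\|_{L_p}^2$ uniformly for small $|t|$, provided $\alpha\leq s$, $2k-\alpha\leq\sigma_p$ (strictly if $p>1$), and $2k-s\leq\kappa_p$. The middle condition is forced by the choice of $\alpha$; the other two read $s\geq\max(\alpha,2k-\kappa_p)$, and $\max(\alpha,2k-\kappa_p)$ is precisely $\max(0,[2k-\sigma_p]+1,2k-\kappa_p)$ (resp.\ $\max(0,2k-\tfrac{d-1}{2})$ in the $p=1$ exceptional case). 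Taylor's formula with integral remainder then gives $\Phi_\alpha(t)\lesssim|t|^{2(k-\alpha)}\|f\|_{L_p}^2$, which combined with the previous display produces $\|\hat f\,\psi\|_{\dot H^{-s}(\Sigma_r)}\lesssim|r|^{\alpha}|r|^{k-\alpha}\|f\|_{L_p}=|r|^{k}\|f\|_{L_p}$, as required. The first assertion is the case $\alpha=0$: then $s=0$ needs $2k\leq\kappa_p$, equivalently $p\leq\frac{2d+2}{d+3+4k}$ (a nonempty interval only for $d\geq4k+1$), and since $\sigma_p=\kappa_p+1-\tfrac1p\geq\kappa_p$, the requirement $2k\leq\sigma_p$ follows automatically, strictly unless $p=1$. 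The last assertion, $\R^{(d-1)/2}(\Sigma,1,s)$ for $s>\tfrac{d-1}{2}$, is independent and is proved in Corollary~\ref{EndpointCaseWithoutWeight} directly from the dispersive estimate for the Schr\"odinger propagator.

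The routine parts are the localization (including the identification of the local Sobolev norms on the $\Sigma_r$) and the bookkeeping matching the constraints on $(\alpha,s)$ to the stated range, where the only non-sharpness comes from $\alpha,\beta$ being integers. The crux — and the main obstacle — is the choice of the intermediate Taylor order: expanding all the way to order $k$ gives only the Cho--Guo--Lee range $s\geq k$, while taking $2k$ transverse $r$-derivatives of $\|\hat f\psi\|^2_{\dot H^{-s}(\Sigma_r)}$ at $r=0$ requires $\sigma_p\geq2k$; splitting off $\alpha=[2k-\sigma_p]+1$ transverse integrations first and then differentiating $2(k-\alpha)$ times interpolates between the two and reaches the full range $k<\sigma_p$ with $s$ as small as $[2k-\sigma_p]+1$.
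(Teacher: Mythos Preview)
Your proof is correct and follows essentially the same approach as the paper's: localize, choose the transverse order $\alpha=\max(0,[2k-\sigma_p]+1)$ (or $\alpha=\max(0,2k-\tfrac{d-1}{2})$ when $p=1$, $\sigma_1\in\mathbb N$), use the Leibniz expansion to see that the squared $\dot H^{-s}$-norm of $\partial_{\xi_d}^\alpha\hat f$ vanishes to order $2(k-\alpha)$ at $r=0$, invoke $\SI(h,\alpha,2(k-\alpha),s,p)$ from Theorem~\ref{StubbornTheorem} to bound the top $r$-derivative uniformly on $\Sigma_r$, and finish with two Taylor/Minkowski steps. The only cosmetic difference is the order of the two Taylor steps: the paper first bounds $\Theta_1(r)=\|\psi\,\partial_{\xi_d}^\alpha\hat f\|_{H^{-s}(\Sigma_r)}^2$ by $|r|^{2(k-\alpha)}\|f\|_{L_p}^2$ and then integrates $\alpha$ times, whereas you first integrate $\alpha$ times and then bound $\Phi_\alpha$; the content is identical.
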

\begin{proof}
Consider the case $s=0$.  It suffices to prove the bound
\begin{equation*}
\big\|\psi(\cdot)\hat{f}(\cdot,h(\cdot)+r)\big\|_{L^2(U)}^2 \lesssim_{\psi} |r|^{2k}\|f\|_{\Si L_p^k}^2, \quad \psi \in C_0^{\infty}(U).
\end{equation*}
By definition of~$\Si L_p^k$, we may assume that~$f$ is a Schwartz function. Then, the function~$\Theta$ given by the rule
\begin{equation}\label{FunctionTheta}
\Theta(r) = \big\|\psi(\cdot)\hat{f}(\cdot,h(\cdot)+r)\big\|_{L^2(U)}^2
\end{equation}
is smooth. We need to prove~$|\Theta(r)| \lesssim |r|^{2k}\|f\|_{\Si L_p^k}^2$, and for that, it suffices to show an inequality and several equalities. 

The inequality is
\begin{equation}\label{InequalityForTheta}
\forall r \in (-\delta,\delta)\quad \Big|\frac{\partial^{2k} \Theta}{\partial r^{2k}}(r)\Big| \lesssim \|f\|_{L_p}^2,
\end{equation}
which follows from Theorem~\ref{StubbornTheorem} (take~$\alpha = 0, \beta = 2k,\gamma = 0$,~$\Sigma_r$ in the role of~$\Sigma$, and notice that~\eqref{SISpectralShift} is satisfied automatically,~\eqref{SIKnapp} is equivalent to~$p \in [1, \frac{2d+2}{d+3+4k}]$, and~\eqref{SIGaussian} follows from~\eqref{SIKnapp} in this case).

The equalities are
\begin{equation}\label{EqualityForTheta}
\forall j \in [0..2k-1] \quad \frac{\partial^{j} \Theta}{\partial r^{j}}(0) = 0.
\end{equation}
Indeed, we use the product rule:
\begin{equation}\label{ProductRule}
\frac{\partial^{j} }{\partial r^{j}}\bigg[\big\|\psi(\cdot)\hat{f}(\cdot,h(\cdot)+r)\big\|_{L^2(U)}^2\bigg]\bigg|_{r=0} = \sum\limits_{i\leq j}C_j^i\Scalprod{\psi(\cdot)\frac{\partial^{i} \hat{f}}{\partial \xi_d^{i}}(\cdot,h(\cdot))}{\ \psi(\cdot)\frac{\partial^{j-i} \hat{f}}{\partial \xi_d^{j-i}}(\cdot,h(\cdot))}_{L^2(U)}
\end{equation}
and notice that in each scalar product on the right hand-side, one of the functions is identically zero since either~$i < k$ or~$j-i<k$.

It remains to combine~\eqref{InequalityForTheta},~\eqref{EqualityForTheta}, and the Taylor integral remainder formula to complete the proof in the case~$s=0$.

When $2k > \kappa_p$, the choice of $\alpha = 0$, $\beta = 2k$, and $\gamma=0$ is no longer available in Theorem~\ref{StubbornTheorem}. Suppose~$p > 1$. In order to use the product-rule argument above, one must set~$2\alpha + \beta = 2k$, and it is desirable to keep $\alpha$ as small as possible since $\gamma \geq \alpha$ is a prominent lower bound for $s$.  We can apply Theorem~\ref{StubbornTheorem} with
$\alpha = [2k - \sigma_p] + 1$, $\beta = 2k -2\alpha$ (note that~$\beta \geq 0$ and~$\alpha \geq 0$ here), and $s = \gamma = \max(\alpha, 2k-\kappa_p)$, then follow the above steps for
\begin{equation*}
\Theta_1(r) = \Big\|\psi(\cdot)\Big(\frac{\partial}{\partial r}\Big)^\alpha \hat{f}(\cdot,h(\cdot)+r)\Big\|_{H^{-s}(U)}^2
\end{equation*}
to conclude that $\frac{\partial^{\beta} \Theta_1}{\partial r^{\beta}}(r) \lesssim \|f\|_{L_p}^2$ for all  $r \in (-\delta, \delta)$, and every lower order derivative vanishes at $r=0$ because $f \in \!\, \Si L_p^k$.  Thus $\|\psi(\cdot) (\frac{\partial}{\partial r})^\alpha \hat{f}(\cdot, h(\cdot)+r)\|_{H^{-s}(U)} \lesssim |r|^{\beta/2} \|f\|_{\Si L_p^k}$.  Furthermore, $(\frac{\partial}{\partial r})^j \hat{f}(\cdot, h(\cdot) + r)$ is assumed to vanish at $r=0$ for each $0 \leq j < \alpha \leq k$.  The Taylor remainder formula and the Minkowski inequality show that $\|\hat{f}(\cdot, h(\cdot)+r)\| \lesssim |r|^{\alpha + \beta/2} \|f\|_{\Si L_p^k}$, and we previously set $\alpha + \frac{\beta}{2} = k$.

When $p=1$ and $k < \sigma_1 = \frac{d-1}{2} \in \mathbb{N}$, it is also permissible to apply Theorem~\ref{StubbornTheorem} with $\alpha = 2k - \frac{d-1}{2}$, $\beta = 2k-2\alpha$, and $s = \gamma = 2k - \frac{d-1}{2}$.  In the endpoint case $p=1, k=\frac{d-1}{2} \in \mathbb{N}$, Corollary~\ref{EndpointCaseWithoutWeight} below directly states that $|\Theta_1(r)| \lesssim \|f\|_{L_1}^2$ for $s > \frac{d-1}{2}$ and $\alpha = k = \frac{d-1}{2}$.

\end{proof}
\begin{proof}[Proof of Theorem~\ref{CoincidenceTheorem}.]
Clearly,~$\Si L_p^{k+1} \subset \Ker\R^{k}$ whenever $\R^k$ is suitably defined as a map from~$\Si L_p^{k+1}$ into $H^{-s}_{\loc}(\Sigma)$. To prove the reverse embedding, it suffices to show that
$\Ker\R^{k}\subset\ \!\, \Si \tilde{L}_p^{k+1}$,
since by Lemmas~\ref{R2} (with the value of $s$ specified there) and~\ref{CoincidenceLemma}, we have~$\Si L_p^{k+1} =\!\, \Si \tilde{L}_p^{k+1}$ for these choices of~$p$ and~$s$. 

We first consider the case $s=0$, $p \in [1, \frac{2d+2}{d+3+4k}]$.  By Definition~\ref{TildeSpaces}, we need to prove
\begin{equation*}
\Theta(r) = o(|r|^{2k}), \quad r\to 0, \quad f\in \Ker\R^{k}
\end{equation*}
where the function~$\Theta$ is defined by~\eqref{FunctionTheta}. By Taylor integral remainder formula and~\eqref{EqualityForTheta}, we simply need to show a slight refinement of~\eqref{InequalityForTheta}:
\begin{equation}\label{TildaProperty}
\lim_{r\to 0} \Big|\frac{\partial^{2k} \Theta}{\partial r^{2k}}(r)\Big| = 0,\quad f\in \Ker\R^{k}.
\end{equation}
Note that~\eqref{InequalityForTheta} holds for all $f \in L_p$.  By approximating any such $f$ by Schwartz functions, we see that $\frac{\partial^{2k}\Theta}{\partial r^{2k}}(r)$ is also continuous in $r$.  If $f \in \!\, \Si L_p^k$, the computation in~\eqref{ProductRule} with $j = 2k$ shows that $\frac{ \partial^{2k} \Theta}{\partial r^{2k}}(0) =  C^k_{2k} \big\|\psi \frac{\partial^k\hat{f}}{\partial \xi_d^k}\big\|_{L^2(U)}^2$, and for $f \in \Ker\R^k \subset \!\, \Si L_p^k$, the norm on the right hand-side is zero.

The remaining case is essentially the same. This time $\frac{\partial^\beta \Theta_1}{\partial r^\beta}(r)$ is continuous for all $f \in L_p$, the lower order derivatives vanish at $r=0$ for $f \in \!\,\Si L_p^k$, and finally $\frac{\partial^\beta \Theta_1}{\partial r^\beta}(0) = 0$ if $f \in \Ker \R^k$.  Thus $\frac{\partial^\beta \Theta_1}{\partial r^\beta}(r) = o(|r|^{\beta})$ and the rest of the integrations are the same as in Lemma~\ref{R2}.
\end{proof}

\subsection{Proof of Corollary~\ref{HDRDual}}	
Let $X$ be the vector space of functions $\{f \in L_p(\mathbb{R}^d)\mid \phi\hat{f} \in H^{\ell}(\Sigma)\}$ equipped with the norm $\|f\|_X = \|f\|_{L_p} + \|\phi\hat{f}\|_{H^\ell(\Sigma)}$.  This space contains all functions in the Schwartz class, and convergence with respect to the Schwartz class topology implies convergence in the norm of $X$.  Thus every bounded linear functional on $X$ belongs to the class of distributions $\mathcal{S}'(\mathbb{R}^d)$.  

Lemma~\ref{ApproximationForHDR} asserts that the Schwartz class is dense in $X$. To show completeness of $X$, observe that by the $k=0$ case of Theorem~\ref{CorollaryOfChoGuoLee} (i.e. by~\cite{ChoGuoLee}), if $f_n \to f$ in $L_p$, then there exists $s \geq \max(0, 2k-\kappa_p)$ so that $\phi\hat{f}_n \to \phi \hat{f}$ in $H^{-s}(\Sigma)$.  Every Cauchy sequence in $X$ has $\phi\hat{f}_n$ convergent to a limit in the stronger topology of $H^{\ell}(\Sigma)$, and the limit must be $\phi \hat{f}$ as well.

We may identify $f \in X$ with the ordered pair $(f, \phi\hat{f})$.  This gives an isometric embedding of $X$ into $L_p(\mathbb{R}^d) \times H^{\ell}(\Sigma)$.  Its image is closed, so the Hahn--Banach theorem implies that every linear functional $\rho \in X'$ extends to a functional on $L_p(\mathbb{R}^d) \times H^{\ell}(\Sigma)$.  Using Parseval's identity there exists $F_\rho \in L_{p'}(\mathbb{R}^d)$ and $g_\rho \in H^{-\ell}(\Sigma)$ with norms bounded by that of $\rho$ and which satisfy
\begin{equation*}
\rho(f) = \int_{\mathbb{R}^d} F_{\rho} f\, dx + \int_{\mathbb{R}^d} (\phi g_{\rho}\,d\sigma)\check{\phantom{i}}  f \, dx.
\end{equation*}

The defining property of $\HDR(\Sigma, k, s, \ell, p)$ expressed in~\eqref{HDRinequality} is that the linear map $f \mapsto \phi D^\alpha \hat{f}|_\Sigma$ is continuous from $X$ to $H^{-s}(\Sigma)$.  The dual map, taking $g \mapsto (ix)^{\alpha}(\phi g\,d\sigma)\check{\phantom{i}}$ therefore is bounded from $H^s(\Sigma)$ to $X'$, with elements of $X'$ described as above.

\begin{Rem}
Due to the use of the Hahn--Banach theorem in this argument, we do not have a construction for $F_\alpha$ and $g_\alpha$ in Corollary~\ref{HDRDual}.  In fact, it is not proved here that these two functions can be chosen to depend linearly on $g \in H^{s}(\Sigma)$. 
\end{Rem}

\section{Proof of Theorem~\ref{StubbornTheorem}}\label{S3}

\subsection{Pointwise estimates of the kernel}\label{s31}
The quadratic inequality~\eqref{StubbornInequality} is equivalent to its bilinear version
\begin{equation}\label{BilinearStubborn}
\bigg|\Big(\frac{\partial}{\partial r}\Big)^{\beta}\Scalprod{ \frac{\partial^{\alpha}\hat{f}}{\partial \xi_d^{\alpha}}\psi}{ \frac{\partial^{\alpha}\hat{g}}{\partial \xi_d^{\alpha}}\psi}_{\dot{H}^{-\gamma}(\Sigma_r)}\bigg|_{r=0}\bigg| \lesssim \|f\|_{L_p}\|g\|_{L_p}.
\end{equation}
We denote the bilinear form we estimate by~$\B$ and its kernel by~$\K$:
\begin{equation}\label{Kernel}
\B(f,g) =\Big(\frac{\partial}{\partial r}\Big)^{\beta}\Scalprod{ \frac{\partial^{\alpha}\hat{f}}{\partial \xi_d^{\alpha}}\psi}{ \frac{\partial^{\alpha}\hat{g}}{\partial \xi_d^{\alpha}}\psi}_{\dot{H}^{-\gamma}(\Sigma_r)}\bigg|_{r=0} = \iint\limits_{\mathbb{R}^{2d}} f(x)g(y)\K(x,y)\,dx\,dy.
\end{equation}

We also recall the notation
\begin{equation*}
x = (x_{\bar{d}},x_d),\quad \hbox{where}\quad x_{\bar{d}} = (x_1,x_2,\ldots,x_{d-1}), 
\end{equation*}
for~$x\in \mathbb{R}^d$.
\begin{St}\label{Pointwise}
The kernel~$\K$ defined in~\eqref{Kernel} satisfies the bound
\begin{equation}\label{PointwiseBound}
|\K(x,y)| \lesssim (1+|x_d|)^{\alpha - \gamma}(1+|y_d|)^{\alpha - \gamma}(1+|x_d-y_d|)^{\beta + \gamma - \frac{d-1}{2}}, \quad \gamma\in \Big[0,\frac{d-1}{2}\Big).
\end{equation}
\end{St}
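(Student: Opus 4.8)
The plan is to write $\K$ as an explicit oscillatory integral and estimate it by the stationary phase/van der Corput method, tracking the three scales $|x_d|$, $|y_d|$, and $|x_d-y_d|$ separately. First I would combine the formula~\eqref{OurSobolev} for the homogeneous $\dot H^{-\gamma}$ norm with~\eqref{Kernel}. Unwinding all definitions, $\K(x,y)$ is (up to a constant $C_{d,\gamma}$) an expression of the form
\begin{equation*}
\Big(\frac{\partial}{\partial r}\Big)^{\beta}\Big|_{r=0}\iint\limits_{U\times U} e^{2\pi i[\langle x_{\bar d},\zeta\rangle + x_d(h(\zeta)+r)]}\,(2\pi i (h(\zeta)+r))^{\alpha}\psi(\zeta)\;\overline{e^{2\pi i[\langle y_{\bar d},\eta\rangle + y_d(h(\eta)+r)]}(2\pi i(h(\eta)+r))^{\alpha}\psi(\eta)}\;|\zeta-\eta|^{2\gamma-d+1}\,d\zeta\,d\eta,
\end{equation*}
where the factor $(h(\zeta)+r)^\alpha$ comes from differentiating $\hat f(\zeta,h(\zeta)+r)$ $\alpha$ times in $\xi_d$ — indeed $\partial_{\xi_d}^\alpha \hat f = \mathcal F^{-1}[(2\pi i x_d)^\alpha f]$, so restricting to the surface introduces the scalar multiplier $x_d^\alpha$ and, after writing $\hat f$ as an integral of $f$, the phase $x_d(h(\zeta)+r)$. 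Carrying out the $\partial_r^\beta|_{r=0}$ by Leibniz, each term produces factors bounded by powers of $|x_d|$, $|y_d|$, and polynomial expressions in $h(\zeta),h(\eta)$, times the oscillatory integral with phase $\Phi(\zeta,\eta)=\langle x_{\bar d},\zeta\rangle + x_d h(\zeta) - \langle y_{\bar d},\eta\rangle - y_d h(\eta)$. The prefactors account precisely for the $(1+|x_d|)^{\alpha}$, $(1+|y_d|)^{\alpha}$ and part of the $(1+|x_d-y_d|)^{\beta}$ growth (the worst term distributes all $\beta$ derivatives onto the two $e^{2\pi i x_d r}$, $e^{-2\pi i y_d r}$ exponentials, giving $(x_d-y_d)^\beta$ after regrouping); what remains is to produce the decay $(1+|x_d-y_d|)^{\gamma - \frac{d-1}{2}}$ together with turning the $(1+|x_d|)^\alpha(1+|y_d|)^\alpha$ into $(1+|x_d|)^{\alpha-\gamma}(1+|y_d|)^{\alpha-\gamma}$.

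The key mechanism is the $|\zeta-\eta|^{2\gamma-d+1}$ kernel, which I would handle with the identity $|\zeta-\eta|^{2\gamma-d+1} = c\int_{\mathbb R^{d-1}} |z|^{-2\gamma} e^{2\pi i\langle z,\zeta-\eta\rangle}\,dz$ (valid for $\gamma\in(0,\frac{d-1}{2})$; the case $\gamma=0$ is just Plancherel and is easier), converting $\K$ into
\begin{equation*}
\K(x,y) \sim \int_{\mathbb R^{d-1}} |z|^{-2\gamma}\, I(x_{\bar d}+z,x_d)\,\overline{I(y_{\bar d}+z,y_d)}\,dz,
\end{equation*}
where $I(w,t) = \int_U e^{2\pi i[\langle w,\zeta\rangle + t h(\zeta)]}\,a(\zeta)\,d\zeta$ with $a$ a fixed smooth bump times a bounded power of $h$. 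Because $h$ has non-degenerate Hessian on $U$ (by~\eqref{HessianSmoothness} it is a small perturbation of $\frac{\partial^2 h}{\partial\zeta^2}(0)$), the standard stationary phase / non-stationary phase analysis of $I$ gives $|I(w,t)| \lesssim (1+|t|)^{-\frac{d-1}{2}}$ uniformly, with rapid decay in $w$ outside the range $|w|\lesssim |t|$ (the critical point $\zeta_c$ solving $w = -t\nabla h(\zeta_c)$ exists only there, and $\|\nabla h\|\le\frac{1}{10d}$ keeps it bounded). One then splits the $z$-integral: on the region where both $|x_{\bar d}+z|\lesssim|x_d|$ and $|y_{\bar d}+z|\lesssim|y_d|$ — which forces $|x_d|,|y_d|\gtrsim|z|/C$ up to the $O(1)$-sized supports of $x_{\bar d},y_{\bar d}$ — we bound $|I||\bar I| \lesssim (1+|x_d|)^{-\frac{d-1}{2}}(1+|y_d|)^{-\frac{d-1}{2}}$ and integrate $|z|^{-2\gamma}$ over $|z|\lesssim \min(|x_d|,|y_d|)+O(1)$, producing a factor $\lesssim (1+\min(|x_d|,|y_d|))^{d-1-2\gamma}$; combined with the two decay factors and the elementary inequality $\min(|x_d|,|y_d|)^{d-1-2\gamma}(1+|x_d|)^{-\frac{d-1}{2}}(1+|y_d|)^{-\frac{d-1}{2}} \lesssim (1+|x_d|)^{-\gamma}(1+|y_d|)^{-\gamma}(1+|x_d-y_d|)^{\gamma-\frac{d-1}{2}}$ this yields exactly the claimed bound. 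The complementary regions, where $z$ is far from $-x_{\bar d}$ or from $-y_{\bar d}$, are controlled by the rapid ($O(|z|^{-N})$) decay of $I$ and contribute a harmless term.

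The main obstacle I anticipate is the bookkeeping in the stationary phase analysis of $I(w,t)$ when $t$ is small or when $\gamma$ is close to $\frac{d-1}{2}$: the estimate must be uniform down to $t=0$ (where $I$ is just $\hat a$, with no oscillatory decay but also no growth), and the $z$-integral is only conditionally under control near $z=0$, so one has to organize the splitting so that the singular weight $|z|^{-2\gamma}$ is always integrated against something decaying fast enough. There is also a subtlety that Leibniz expansion of $\partial_r^\beta$ creates cross terms where some derivatives land on the amplitude $(h(\zeta)+r)^\alpha$ rather than on the exponentials; these are strictly better (they lower the power of $|x_d-y_d|$) so I would just note that the extreme term dominates. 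Finally, I should check the degenerate endpoint $\beta+\gamma-\frac{d-1}{2}\ge 0$, where~\eqref{PointwiseBound} claims growth in $|x_d-y_d|$ rather than decay — here the bound is weaker and follows from the same computation by crudely estimating $(1+|x_d-y_d|)$ against $(1+|x_d|)(1+|y_d|)$ if needed. Once the $t$-uniform bound on $I$ and the region decomposition are in place, the remaining steps are elementary calculus inequalities.
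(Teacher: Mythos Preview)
Your proposal has a genuine gap in the main estimate, not just in the bookkeeping.

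First, two small corrections. In your displayed formula the factor $(2\pi i(h(\zeta)+r))^{\alpha}$ should be $(2\pi i x_d)^{\alpha}$ (and similarly $(-2\pi i y_d)^\alpha$ for the conjugate term): differentiating $\hat f$ in $\xi_d$ pulls out powers of $x_d$, not of the evaluation point. Once this is fixed the only $r$-dependence is in $e^{2\pi i(x_d-y_d)r}$, so $\partial_r^\beta|_{r=0}$ produces exactly $(2\pi i(x_d-y_d))^\beta$ and there are no Leibniz cross terms to worry about. Also, $x_{\bar d}$ and $y_{\bar d}$ are arbitrary points of $\mathbb{R}^{d-1}$; they are not $O(1)$.

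The real problem is your treatment of the $z$-integral. You bound $|I(x_{\bar d}+z,x_d)\,\overline{I(y_{\bar d}+z,y_d)}|$ by $(1+|x_d|)^{-\frac{d-1}{2}}(1+|y_d|)^{-\frac{d-1}{2}}$ and then invoke the ``elementary inequality''
\[
\min(|x_d|,|y_d|)^{d-1-2\gamma}(1+|x_d|)^{-\frac{d-1}{2}}(1+|y_d|)^{-\frac{d-1}{2}} \lesssim (1+|x_d|)^{-\gamma}(1+|y_d|)^{-\gamma}(1+|x_d-y_d|)^{\gamma-\frac{d-1}{2}}.
\]
This inequality is \emph{false}: take $x_d=2N$, $y_d=N$, so $|x_d-y_d|=N$. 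The left side is $\asymp N^{-2\gamma}$ while the right side is $\asymp N^{-\gamma-\frac{d-1}{2}}$, which is much smaller since $\gamma<\frac{d-1}{2}$. The underlying reason is that once you replace the product $I\bar I$ by its absolute value, you have thrown away the oscillation that is responsible for the decay in $|x_d-y_d|$. In the $z$-representation the two factors $I(\cdot,x_d)$ and $\overline{I(\cdot,y_d)}$ are each highly oscillatory on the scale where they are supported; it is their \emph{interference}, not their individual sizes, that encodes the $|x_d-y_d|^{-\frac{d-1}{2}}$ behavior. Your scheme cannot recover it.

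The paper proceeds differently. Instead of passing to the Fourier side in $\zeta-\eta$, it decomposes the Riesz kernel dyadically, $|\zeta-\eta|^{2\gamma-d+1}=\sum_{k\ge0}2^{k(d-1-2\gamma)}\varphi(2^k|\zeta-\eta|)$, and estimates each piece $I_k$ in two ways. For small $k$ (Lemma~\ref{FirstEstimate}) one treats the $(2d-2)$-dimensional integral as a product of two separate $(d-1)$-dimensional Van der Corput integrals in $\zeta$ and $\eta$, yielding $(1+|x_d|)^{-\frac{d-1}{2}}(1+|y_d|)^{-\frac{d-1}{2}}$. For large $k$ (Lemma~\ref{SecondEstimate}) one changes variables to $\theta=2^k(\zeta-\eta)$ so that $|\theta|\sim1$; the remaining $\eta$-integral has phase $x_d h(\eta+2^{-k}\theta)-y_d h(\eta)$, which for $k$ beyond the crossover scale $2^{2k_0}=\frac{(1+|x_d|)(1+|y_d|)}{1+|x_d-y_d|}$ is a small perturbation of $(x_d-y_d)h(\eta)$, and Littman's lemma gives $(1+|x_d-y_d|)^{-\frac{d-1}{2}}$. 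Summing the two bounds at the crossover $k_0$ produces~\eqref{PointwiseBound}. The essential idea you are missing is this second lemma: at small $|\zeta-\eta|$ the two phases nearly coincide and their \emph{difference} drives the oscillation.
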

\begin{Rem}
One can track the \textup{``}numerology\textup{''} of conditions~\eqref{SISpectralShift},~\eqref{SIGaussian}, and~\eqref{SIKnapp} from this proposition. The boundedness of~$\B$ on~$L_1\times L_1$ is equivalent to the uniform boundedness of~$\K$. The right hand-side of~\eqref{PointwiseBound} is uniformly bounded exactly when these three conditions hold for~$p=1$ \textup(they reflect the behavior of the kernel along the directions~$x_d = y_d$,~$x_d=1$, and~$x_d = -y_d$ respectively\textup).
\end{Rem}
In the case~$\gamma=0$, the inequality~\eqref{PointwiseBound} follows from the standard Van der Corput lemma, because in this case
\begin{equation*}
\K(x,y) = (-1)^{\alpha}(2\pi i)^{2\alpha + \beta}x_d^{\alpha}y_d^{\alpha}(x_d - y_d)^{\beta}\int_{U}e^{2\pi i(\scalprod{x_{\bar{d}}-y_{\bar{d}}}{\zeta} + (x_d -y_d)h(\zeta))}\big|\psi(\zeta)\big|^2\,d\zeta,
\end{equation*} 
the angular brackets denote the standard scalar product in~$\mathbb{R}^{d-1}$.

So, we assume~$\gamma > 0$ in what follows. We start with explicit formulas for the kernel~$\K$:
\begin{multline}\label{FormulaForKernel}
\K(x,y) = (2\pi i x_d)^{\alpha}(-2\pi i y_d)^{\alpha}\Big(\frac{\partial}{\partial r}\Big)^{\beta}\Scalprod{ \hat{\delta}_x \psi}{ \hat{\delta}_y \psi}_{\dot{H}^{-\gamma}(\Sigma_r)}\bigg|_{r=0} \stackrel{\scriptscriptstyle{\eqref{OurSobolev}}}{=}\\ C_{d,\gamma}(2\pi i x_d)^{\alpha}(-2\pi i y_d)^{\alpha}\Big(\frac{\partial}{\partial r}\Big)^{\beta}\bigg[\iint\limits_{U\times U}e^{2\pi i(\scalprod{\zeta}{x_{\bar{d}}}-\scalprod{\eta}{y_{\bar{d}}} + x_d (h(\zeta)+r)- y_d (h(\eta)+r))}\psi(\zeta)\overline{\psi(\eta)}|\zeta-\eta|^{2\gamma - d-1}\,d\zeta\,d\eta\bigg]\bigg|_{r=0}=\\ 
C_{d,\gamma}(-1)^{\alpha}(2\pi i)^{2\alpha + \beta}x_d^{\alpha}y_d^{\alpha}(x_d - y_d)^{\beta}\iint\limits_{U\times U}e^{2\pi i(\scalprod{\zeta}{x_{\bar{d}}}-\scalprod{\eta}{y_{\bar{d}}} + x_d h(\zeta)- y_d h(\eta))}\psi(\zeta)\overline{\psi(\eta)}|\zeta-\eta|^{2\gamma - d-1}\,d\zeta\,d\eta.
\end{multline}

We want to pass to the dyadic version of the Bessel semi-norm, namely,
\begin{equation*}
\|f\|_{\dot{H}^{-\gamma}(\Sigma_r)}^2 = C_{d,\gamma}\sum\limits_{k \geq 0}2^{k(d-1-2\gamma)}\iint\limits_{U\times U}f(\zeta,h_r(\zeta))\overline{f(\eta,h_r(\eta))}\varphi(2^{k}|\zeta - \eta|)\,d\zeta\,d\eta, \quad \gamma\in \Big(0,\frac{d-1}{2}\Big),
\end{equation*} 
based on the formula
\begin{equation}\label{DyadicPotential}
|\zeta - \eta|^{2\gamma-d+1} = \sum\limits_{k \geq 0}2^{k(d-1-2\gamma)}\varphi(2^{k}|\zeta - \eta|).
\end{equation}
The function~$\varphi \in C_0^{\infty}(\mathbb{R})$ is supported outside zero and non-negative.

We substitute formula~\eqref{DyadicPotential} into~\eqref{FormulaForKernel} and split~$\K$ into a dyadic sum:
\begin{equation*}
\K(x,y) = C_{d,\gamma}(-1)^{\alpha}(2\pi i)^{2\alpha + \beta}\sum\limits_{k \geq 0} I_k(x,y),
\end{equation*}
where
\begin{equation*}
I_k(x,y) = 2^{k(d-1-2\gamma)}x_d^{\alpha}y_d^{\alpha}(x_d - y_d)^{\beta}\iint\limits_{U\times U}e^{2\pi i(\scalprod{\zeta}{x_{\bar{d}}}-\scalprod{\eta}{y_{\bar{d}}} + x_d h(\zeta)- y_d h(\eta))}\psi(\zeta)\overline{\psi(\eta)}\varphi(2^k|\zeta - \eta|)\,d\zeta\,d\eta.
\end{equation*}

\begin{Le}\label{FirstEstimate}
For any~$x$,~$y$ and any~$k \geq 0$,
\begin{equation*}
|I_k(x,y)| \lesssim 2^{k(d-1-2\gamma)}(1+|x_d|)^{\alpha-\frac{d-1}{2}}(1+|y_d|)^{\alpha-\frac{d-1}{2}}|x_d-y_d|^{\beta}.
\end{equation*}  
\end{Le}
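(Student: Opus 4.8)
The plan is to estimate the oscillatory integral
\[
J_k(x,y) = \iint\limits_{U\times U}e^{2\pi i(\scalprod{\zeta}{x_{\bar{d}}}-\scalprod{\eta}{y_{\bar{d}}} + x_d h(\zeta)- y_d h(\eta))}\psi(\zeta)\overline{\psi(\eta)}\varphi(2^k|\zeta - \eta|)\,d\zeta\,d\eta
\]
so that $|I_k| = 2^{k(d-1-2\gamma)}|x_d|^\alpha|y_d|^\alpha|x_d-y_d|^\beta\,|J_k|$. Since $J_k$ is trivially bounded (the integrand is compactly supported) and $|x_d|^\alpha(1+|x_d|)^{-\alpha} \le 1$, it suffices to prove the decaying bound $|J_k(x,y)| \lesssim (1+|x_d|)^{-\frac{d-1}{2}}(1+|y_d|)^{-\frac{d-1}{2}}$, and then absorb the polynomial prefactors. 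The first step is to change variables to $\zeta$ and $w = \zeta - \eta$, so that the $\varphi(2^k|w|)$ factor localizes $w$ to an annulus of size $\sim 2^{-k}$; this variable $w$ will only matter for the final summation in $k$ (done in the proof of Theorem~\ref{StubbornTheorem}), not here, where I keep a bound uniform in $k$.

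The heart of the matter is a stationary phase / Van der Corput estimate in the $\zeta$-variable. Writing the phase as $\Phi(\zeta) = \scalprod{\zeta}{x_{\bar d}} - \scalprod{\zeta - w}{y_{\bar d}} + x_d h(\zeta) - y_d h(\zeta - w)$, its Hessian in $\zeta$ is $x_d\,\frac{\partial^2 h}{\partial\zeta^2}(\zeta) - y_d\,\frac{\partial^2 h}{\partial\zeta^2}(\zeta-w)$. Because of the smallness hypothesis~\eqref{HessianSmoothness}, this matrix is comparable to $(x_d - y_d)\frac{\partial^2 h}{\partial\zeta^2}(0)$ up to an error controlled by $|x_d|+|y_d|$ times the oscillation of the Hessian, so one does not directly get a clean lower bound on the full Hessian when $x_d$ and $y_d$ have the same sign and $x_d - y_d$ is small. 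The cleanest route is therefore to treat the $\zeta$-integral and the $\eta$-integral \emph{separately}: first integrate in $\eta$ with $\zeta$ frozen, where the phase is $-\scalprod{\eta}{y_{\bar d}} - y_d h(\eta)$ against the amplitude $\overline{\psi(\eta)}\varphi(2^k|\zeta-\eta|)$, whose Hessian is $-y_d\frac{\partial^2 h}{\partial\zeta^2}(\eta)$, nondegenerate of size $\sim |y_d|$ by~\eqref{HessianSmoothness}; stationary phase in $d-1$ variables then gains a factor $(1+|y_d|)^{-\frac{d-1}{2}}$. The amplitude now depends on $2^k$ through $\varphi(2^k|\zeta-\eta|)$, but all its $\eta$-derivatives up to the needed order are bounded by powers of $2^k$ that are absorbed into the nonstationary tail and do not affect the leading $(1+|y_d|)^{-\frac{d-1}2}$ gain, \emph{uniformly in $k$}, because the support of $\varphi(2^k|\cdot|)$ shrinks. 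Then integrate in $\zeta$ against the phase $\scalprod{\zeta}{x_{\bar d}} + x_d h(\zeta)$, Hessian $x_d\frac{\partial^2 h}{\partial\zeta^2}(\zeta) \sim |x_d|$, gaining $(1+|x_d|)^{-\frac{d-1}{2}}$.

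The main obstacle is bookkeeping the $k$-dependence of the amplitude: $\varphi(2^k|\zeta-\eta|)$ and its derivatives grow like $2^{kj}$, and naively applying stationary phase with such an amplitude would spoil the uniformity in $k$. The resolution is that the standard stationary-phase expansion with the cutoff function of a rescaled variable produces, after integrating, a contribution of size $(1+|y_d|)^{-\frac{d-1}2}$ times $\int \varphi(2^k|\zeta - \eta|)\,d\eta \sim 2^{-k(d-1)}$ near the stationary point (which is what one wants, since it will later be summed against $2^{k(d-1-2\gamma)}$), plus nonstationary remainders where each integration by parts in $\eta$ costs a derivative on $\varphi(2^k|\cdot|)$, i.e.\ a factor $2^k$, but also saves a factor $|y_d|^{-1}$ from the gradient of the phase being bounded below — and these never conspire to destroy the bound because on the support of the cutoff $|x_d - y_d| \gtrsim$ (something) is not actually needed for the present \emph{pointwise in $k$} lemma. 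For this lemma it is enough to take the crude bound $|J_k| \lesssim (1+|x_d|)^{-\frac{d-1}{2}}(1+|y_d|)^{-\frac{d-1}{2}}$ with an implied constant independent of $k$, which the two successive stationary-phase estimates deliver after a routine (if tedious) tracking of the $2^k$ powers; the sharper $k$-decay is deferred to the subsequent lemmas. The second potential obstacle is low-regularity of $h$ and the uniformity of stationary-phase constants over the family $\Sigma_r$, but this is exactly guaranteed by hypothesis~\eqref{HessianSmoothness}, so it causes no trouble.
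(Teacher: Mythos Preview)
There is a genuine gap in the uniformity in $k$. Your target bound $|J_k|\lesssim (1+|x_d|)^{-\frac{d-1}{2}}(1+|y_d|)^{-\frac{d-1}{2}}$ is exactly right, but the iterated stationary-phase scheme does not deliver it with a constant independent of $k$. When you integrate in $\eta$ with amplitude $\overline{\psi(\eta)}\varphi(2^k|\zeta-\eta|)$, the standard stationary-phase estimate produces a bound of the form $|y_d|^{-\frac{d-1}{2}}\sum_{j\le N}\sup|\partial^j_\eta(\text{amplitude})|\sim |y_d|^{-\frac{d-1}{2}}2^{kN}$, or, if you use the full expansion, terms of size $|y_d|^{-\frac{d-1}{2}-j}2^{2kj}$; either way this is only controlled when $2^{2k}\lesssim |y_d|$. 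Your nonstationary remark (gain $|y_d|^{-1}$, lose $2^k$) is only valid away from the critical point, not near it, and the shrinking support does \emph{not} cancel the derivative losses in the stationary contribution. So ``routine tracking of the $2^k$ powers'' does not close the argument.

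The paper's proof sidesteps the issue with a clean Fourier factorization rather than iterated stationary phase. One views $J_k$ as the $(2d-2)$-dimensional Fourier transform, evaluated at $(x_{\bar d},-y_{\bar d})$, of the product
\[
\Big[e^{2\pi i(x_d h(\zeta)-y_d h(\eta))}\psi(\zeta)\overline{\psi(\eta)}\Big]\cdot\Big[\varphi(2^k|\zeta-\eta|)\Big].
\]
The phase in the first factor \emph{separates} in $\zeta$ and $\eta$, so Van der Corput in each variable gives the $L_\infty$ bound $(1+|x_d|)^{-\frac{d-1}{2}}(1+|y_d|)^{-\frac{d-1}{2}}$ on its Fourier transform. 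After the linear change $(\zeta,\eta)\to(\zeta,\zeta-\eta)$, the second factor depends only on the difference variable, and its Fourier transform is a measure of total variation $\|\widehat{\varphi(|\cdot|)}\|_{L_1}$, \emph{independent of $k$} by scaling. Convolution of an $L_\infty$ function with a measure of bounded variation stays in $L_\infty$ with the same bound, which is the lemma. The repair to your argument is therefore structural: peel the $k$-dependent cutoff off \emph{before} doing any oscillatory-integral analysis, rather than trying to push it through stationary phase.
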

\begin{proof}
The integral in the formula for~$I_k$ may be thought of as the $(2d-2)$-dimensional Fourier integral:
\begin{equation*}
I_k(x,y) = 2^{k(d-1-2\gamma)}x_d^{\alpha}y_d^{\alpha}(x_d - y_d)^{\beta}\mathcal{F}_{(\zeta,\eta)\mapsto (x_{\bar{d}},y_{\bar{d}})}\bigg[e^{2\pi i(x_d h(\zeta)- y_d h(\eta))}\psi(\zeta)\overline{\psi(\eta)}\varphi(2^k|\zeta - \eta|)\bigg](x_{\bar{d}},-y_{\bar{d}}).
\end{equation*}
It suffices to prove the inequality
\begin{equation}\label{UniformEst}
\bigg\|\mathcal{F}_{(\zeta,\eta)\mapsto (x_{\bar{d}},y_{\bar{d}})}\bigg[e^{2\pi i(x_d h(\zeta)- y_d h(\eta))}\psi(\zeta)\overline{\psi(\eta)}\varphi(2^k|\zeta - \eta|)\bigg]\bigg\|_{L_{\infty}} \lesssim (1+|x_d|)^{-\frac{d-1}{2}}(1+|y_{d}|)^{-\frac{d-1}{2}}.
\end{equation} 
We represent the function we apply the Fourier transform to as a product of two functions
\begin{equation*}
(\zeta,\eta)\mapsto e^{2\pi i(x_d h(\zeta)- y_d h(\eta))}\psi(\zeta)\overline{\psi(\eta)} \quad \hbox{and}\quad (\zeta,\eta)\mapsto \varphi(2^k|\zeta - \eta|).
\end{equation*}
By the Van der Corput lemma, the Fourier transform of the first function is uniformly (in $(x_{\bar{d}},y_{\bar{d}})$) bounded by the right hand-side of~\eqref{UniformEst}. It remains to notice that the Fourier transform of the second function is a complex measure whose total variation is bounded uniformly in~$k$. This is easiest to see by making a
linear change of variables from~$(\zeta,\eta)$ to~$(\zeta, \zeta - \eta)$.
\end{proof}

Define the number~$k_0 \geq 0$ by the rule
\begin{equation}\label{k_0}
2^{2k_0} = \frac{(1+|x_d|)(1+|y_d|)}{1+|x_d-y_d|}.
\end{equation}
\begin{Le}
\label{SecondEstimate}
For any~$x$,~$y$ and any~$k \geq k_0$,
\begin{equation*}
|I_k(x,y)| \lesssim 2^{-2k\gamma}(1+|x_d|)^{\alpha}(1+|y_d|)^{\alpha}(1+|x_d-y_d|)^{\beta-\frac{d-1}{2}}.
\end{equation*}
\end{Le}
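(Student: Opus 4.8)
\emph{Plan.} I would prove Lemma~\ref{SecondEstimate} by a change of variables that isolates the small ``difference'' variable $w=\zeta-\eta$, which the cutoff $\varphi(2^k|\zeta-\eta|)$ forces to have size $\asymp 2^{-k}$, followed by a stationary--phase estimate in $\zeta$. The content of the hypothesis $k\ge k_0$ is precisely that it makes the $\zeta$-Hessian nondegenerate of the right size.

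\emph{Reduction.} Write $I_k=2^{k(d-1-2\gamma)}x_d^{\alpha}y_d^{\alpha}(x_d-y_d)^{\beta}J_k$, isolating the oscillatory integral
\[
J_k(x,y)=\iint\limits_{U\times U}e^{2\pi i(\scalprod{\zeta}{x_{\bar{d}}}-\scalprod{\eta}{y_{\bar{d}}}+x_dh(\zeta)-y_dh(\eta))}\psi(\zeta)\overline{\psi(\eta)}\varphi(2^k|\zeta-\eta|)\,d\zeta\,d\eta.
\]
Since $\alpha,\beta\ge 0$ and $2^{k(d-1-2\gamma)}2^{-k(d-1)}=2^{-2k\gamma}$, it suffices to prove the single bound $|J_k(x,y)|\lesssim 2^{-k(d-1)}(1+|x_d-y_d|)^{-\frac{d-1}{2}}$ for $k\ge k_0$. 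Passing to the variables $(\zeta,w)$ with $\eta=\zeta-w$, the cutoff $\varphi(2^k|w|)$ confines $w$ to an annulus of measure $\lesssim 2^{-k(d-1)}$, and
\[
J_k=\iint e^{2\pi i\Phi_w(\zeta)}\psi(\zeta)\overline{\psi(\zeta-w)}\varphi(2^k|w|)\,d\zeta\,dw,\qquad \Phi_w(\zeta)=\scalprod{\zeta}{x_{\bar{d}}-y_{\bar{d}}}+\scalprod{w}{y_{\bar{d}}}+x_dh(\zeta)-y_dh(\zeta-w).
\]
So everything reduces to the estimate $\big|\int_U e^{2\pi i\Phi_w(\zeta)}\psi(\zeta)\overline{\psi(\zeta-w)}\,d\zeta\big|\lesssim(1+|x_d-y_d|)^{-\frac{d-1}{2}}$, uniformly over $|w|\asymp 2^{-k}$ and $k\ge k_0$.

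\emph{Nondegeneracy.} The $\zeta$-Hessian of $\Phi_w$ is $H(\zeta,w)=x_dA(\zeta)-y_dA(\zeta-w)$, where $A=\partial^2h/\partial\zeta^2$ is, by~\eqref{HessianSmoothness}, Lipschitz and uniformly nondegenerate on $U$ (so $\|A\|,\|A^{-1}\|\lesssim 1$, $|\det A|\gtrsim 1$ there). Writing $H=(x_d-y_d)A(\zeta)+y_d\big(A(\zeta)-A(\zeta-w)\big)$ when $|y_d|\le|x_d|$, and symmetrically with base point $A(\zeta-w)$ and error $x_d\big(A(\zeta)-A(\zeta-w)\big)$ when $|x_d|\le|y_d|$, the error term has norm $\lesssim\min(|x_d|,|y_d|)\,2^{-k}$. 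The place where $k\ge k_0$ enters is the elementary inequality $\min(|x_d|,|y_d|)\,2^{-k}\le\min(|x_d|,|y_d|)\,2^{-k_0}\le(1+|x_d-y_d|)^{1/2}$, which follows at once from~\eqref{k_0} after noting $2^{2k_0}\ge(1+\min(|x_d|,|y_d|))^2/(1+|x_d-y_d|)$. Hence, once $|x_d-y_d|$ exceeds a suitable absolute constant $C_1$, this error is $\le\tfrac12\|A^{-1}\|_\infty^{-1}|x_d-y_d|$, so $H(\zeta,w)$ is a small relative perturbation of the nondegenerate matrix $(x_d-y_d)A(\zeta)$ (or $(x_d-y_d)A(\zeta-w)$): $|\det H|\asymp|x_d-y_d|^{d-1}$ and $\|H\|\lesssim|x_d-y_d|$ uniformly in $\zeta\in U,w$. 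By the same Taylor expansion in $w$ all higher $\zeta$-derivatives of $\Phi_w$ are $O(|x_d-y_d|)$, so $\Phi_w=|x_d-y_d|\,\widetilde\Phi_w$ with $\widetilde\Phi_w$ of bounded $C^N(U)$ norms and nondegenerate Hessian; the multidimensional stationary/non-stationary phase bound (the Van der Corput lemma already used in this section) gives $\big|\int_U e^{2\pi i\Phi_w}\psi\overline{\psi}\,d\zeta\big|\lesssim|x_d-y_d|^{-\frac{d-1}{2}}$, uniformly in $w$ since the amplitude has $C^N$ norms bounded independently of $w$. When instead $|x_d-y_d|\le C_1$, the target $(1+|x_d-y_d|)^{-\frac{d-1}{2}}$ is bounded below by a constant while the $\zeta$-integral is trivially $O(1)$, so the estimate is free. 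Integrating over $|w|\asymp 2^{-k}$ and restoring the prefactors yields Lemma~\ref{SecondEstimate}.

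\emph{Main obstacle.} The one genuinely delicate point is the uniform nondegeneracy of $H(\zeta,w)$: for $k$ barely above $k_0$ one cannot bound $|x_d|2^{-k}$ and $|y_d|2^{-k}$ simultaneously by $|x_d-y_d|$, so it is essential to play the two symmetric decompositions of $H$ against each other (controlling only $\min(|x_d|,|y_d|)2^{-k}$) and to dispose of the small range $|x_d-y_d|\lesssim 1$ separately by the trivial bound. The change of variables, the Taylor expansion of $h(\zeta-w)$, and the Van der Corput application are all routine; and together with Lemma~\ref{FirstEstimate} (used on the range $k\le k_0$, where $d-1-2\gamma>0$ makes the dyadic sum converge at $k\approx k_0$) this will sum to the pointwise kernel bound~\eqref{PointwiseBound} of Proposition~\ref{Pointwise}.
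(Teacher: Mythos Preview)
Your proposal is correct and follows essentially the same route as the paper: isolate the difference variable so that the cutoff contributes the factor $2^{-k(d-1)}$, then show the remaining $(d-1)$-dimensional integral has phase $(x_d-y_d)\widetilde\Phi$ with $\widetilde\Phi$ a small perturbation of $h$, and apply a Van der Corput/Littman type bound. The paper makes the symmetric change $(\theta,\eta)=(2^k(\zeta-\eta),\eta)$ and assumes $|y_d|\ge|x_d|$ at the outset, writing $\Phi=h(\eta)+\frac{x_d}{x_d-y_d}\big(h(\eta+2^{-k}\theta)-h(\eta)\big)$ and bounding $\frac{2^{-k}|x_d|}{|x_d-y_d|}\lesssim(1+|x_d-y_d|)^{-1/2}$ from \eqref{k_0}; your decomposition of the Hessian and the inequality $\min(|x_d|,|y_d|)2^{-k_0}\le(1+|x_d-y_d|)^{1/2}$ encode exactly the same computation.
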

\begin{proof}
Let~$|y_d| \geq |x_d|$.
It suffices to prove the estimate
\begin{equation*}
\bigg|\iint\limits_{U\times U}e^{2\pi i(\scalprod{\zeta}{x_{\bar{d}}}-\scalprod{\eta}{y_{\bar{d}}} + x_d h(\zeta)- y_d h(\eta))}\psi(\zeta)\overline{\psi(\eta)}\varphi(2^k|\zeta - \eta|)\,d\zeta\,d\eta\bigg| \lesssim 2^{-k(d-1)}(1+|x_d - y_d|)^{-\frac{d-1}{2}}.
\end{equation*}
We introduce new variables~$(\theta,\eta) = (2^{k}(\zeta - \eta),\eta)$ and disregard oscillations in the~$\theta$ variable:
\begin{multline*}
\bigg|\iint\limits_{U\times U}e^{2\pi i(\scalprod{\zeta}{x_{\bar{d}}}-\scalprod{\eta}{y_{\bar{d}}} + x_d h(\zeta)- y_d h(\eta))}\psi(\zeta)\overline{\psi(\eta)}\varphi(2^k|\zeta - \eta|)\,d\zeta\,d\eta\bigg| = \\
2^{-k(d-1)}\bigg|\iint\limits_{\mathbb{R}^{2d-2}}e^{2\pi i(\scalprod{\eta + 2^{-k}\theta}{x_{\bar{d}}}-\scalprod{\eta}{y_{\bar{d}}} + x_d h(\eta + 2^{-k}\theta)- y_d h(\eta))}\psi(\eta + 2^{-k}\theta)\overline{\psi(\eta)}\varphi(|\theta|)\,d\theta\,d\eta\bigg| \lesssim \\
2^{-k(d-1)}\sup_{|\theta| \lesssim 1}\bigg|\int\limits_{\mathbb{R}^{d-1}}e^{2\pi i(\scalprod{\eta}{x_{\bar{d}}-y_{\bar{d}}} + x_d h(\eta + 2^{-k}\theta)- y_d h(\eta))}\psi(\eta + 2^{-k}\theta)\overline{\psi(\eta)}\,d\eta\bigg|.
\end{multline*}
It remains to prove
\begin{equation*}
\bigg|\int\limits_{\mathbb{R}^{d-1}}e^{2\pi i(\scalprod{\eta}{x_{\bar{d}}-y_{\bar{d}}} + x_d h(\eta + 2^{-k}\theta)- y_d h(\eta))}\psi(\eta + 2^{-k}\theta)\overline{\psi(\eta)}\,d\eta\bigg| \lesssim (1+|x_d - y_d|)^{-\frac{d-1}{2}},\quad |\theta| \lesssim 1.
\end{equation*}
The function~$\psi(\cdot + 2^{-k}\theta)$ is uniformly (with respect to~$k$ and~$\theta$) bounded in any Schwartz norm, so its Fourier transform is an~$L_1$-function whose norm is bounded independently of~$k$ and~$\theta$. Thus, it suffices to prove
\begin{equation}\label{Pre-Littman}
\bigg|\int\limits_{\mathbb{R}^{d-1}}e^{2\pi i(\scalprod{\eta}{x_{\bar{d}}-y_{\bar{d}}} + x_d h(\eta + 2^{-k}\theta)- y_d h(\eta))}\overline{\psi(\eta)}\,d\eta\bigg| \lesssim (1+|x_d - y_d|)^{-\frac{d-1}{2}}
\end{equation}
uniformly in~$x_{\bar{d}}, y_{\bar{d}}$.
This inequality is trivial if~$|x_d - y_d|\lesssim 1$, so we assume the quantity~$|x_d - y_d|$ is sufficiently large. We represent the non-linear part of the phase function as
\begin{equation*}
x_d h(\eta + 2^{-k}\theta)- y_d h(\eta) = (x_d-y_d) \Phi_{\theta,x_d,y_d}(\eta),
\end{equation*}
where
\begin{equation*}
\Phi_{\theta,x_d,y_d}(\eta) = h(\eta) + \frac{x_d}{x_d - y_d}\big(h(\eta + 2^{-k}\theta) - h(\eta)\big) = h(\eta) + O\Big(\frac{2^{-k}|x_d|}{|x_d - y_d|}\Big)
\end{equation*}
since~$|\theta|\lesssim 1$.
Note that
\begin{equation*}
\frac{2^{-k}|x_d|}{|x_d - y_d|} \leq \frac{2^{-k_0}|x_d|}{|x_d - y_d|}  \stackrel{\scriptscriptstyle\eqref{k_0}}{=} \frac{|x_d|\sqrt{1+|x_d-y_d|}}{|x_d - y_d|\sqrt{1+|x_d|}\sqrt{1+|y_d|}} \lesssim \frac{1}{\sqrt{1+|x_d - y_d|}} \ll 1,
\end{equation*} 
when~$|y_d| \geq |x_d|$,~$|x_d - y_d|$ is sufficiently large, and~$k \geq k_0$. In particular, the Hessians of the functions in the family~$\{\Phi_{\theta,x_d,y_d}\}_{\theta,x_d,y_d}$ take the form~$\frac{\partial^2 h}{\partial \eta^2}(\eta) + O(\frac{2^{-k}|x_d|}{|x_d-y_d|})$ and are uniformly invertible. By similar reasons, the functions in the family~$\{\Phi_{\theta,x_d,y_d}\}_{\theta,x_d,y_d}$ are uniformly bounded in any Schwartz norm. 
The version of Littman's lemma from~\cite{Domar} leads to~\eqref{Pre-Littman}.
\end{proof}
\begin{proof}[Proof of Proposition~\textup{\ref{Pointwise}}.]
We use Lemmas~\ref{FirstEstimate} and~\ref{SecondEstimate} (the case~$\gamma=0$  has already been considered, so we assume~$\gamma > 0$ here):
\begin{multline*}
|\K(x,y)| \lesssim C_{d,\gamma}\sum\limits_{k \geq 0}|I_k(x,y)| \lesssim\\ \sum\limits_{k \leq k_0} 2^{(d-1-2\gamma)k}(1+|x_d|)^{\alpha-\frac{d-1}{2}}(1+|y_d|)^{\alpha-\frac{d-1}{2}}|x_d-y_d|^{\beta} + \sum\limits_{k \geq k_0} 2^{-2k\gamma}(1+|x_d|)^{\alpha}(1+|y_d|)^{\alpha}(1+|x_d-y_d|)^{\beta-\frac{d-1}{2}}\lesssim\\
2^{(d-1-2\gamma)k_0}(1+|x_d|)^{\alpha-\frac{d-1}{2}}(1+|y_d|)^{\alpha-\frac{d-1}{2}}|x_d-y_d|^{\beta} +  2^{-2k_0\gamma}(1+|x_d|)^{\alpha}(1+|y_d|)^{\alpha}(1+|x_d-y_d|)^{\beta-\frac{d-1}{2}}\lesssim\\
(1+|x_d|)^{\alpha - \gamma}(1+|y_d|)^{\alpha - \gamma}(1+|x_d-y_d|)^{\beta + \gamma - \frac{d-1}{2}}.
\end{multline*}
\end{proof}

\subsection{Interpolation}\label{s32}
To prove the ``if'' part of Theorem~\ref{StubbornTheorem} for the case~$p > 1$, we will have to work with ``slices'' of the kernel~$\K$. For any~$x_d$ and~$y_d$, define the kernel~$\K_{x_d,y_d}\colon \mathbb{R}^{2d-2}\to \mathbb{C}$ by the formula
\begin{equation*}
\K_{x_d,y_d}(x_{\bar{d}},y_{\bar{d}}) = \K(x,y),\quad x = (x_{\bar{d}},x_d), \ y = (y_{\bar{d}},y_d).
\end{equation*}
Define the bilinear forms~$\B_{x_d,y_d}$ accordingly
\begin{equation*}
\B_{x_d,y_d}[f,g] = \iint\limits_{\mathbb{R}^{2d-2}}f(x_{\bar{d}})g(y_{\bar{d}})\K_{x_d,y_d}(x_{\bar{d}},y_{\bar{d}})\,d x_{\bar{d}}\,d y_{\bar{d}},
\end{equation*}
here~$f$ and~$g$ are functions on~$\mathbb{R}^{d-1}$.
Proposition~\ref{Pointwise} now may be restated as
\begin{equation}\label{Pointwise2}
\big\|\B_{x_d,y_d}\big\|_{L_1\times L_1} \lesssim (1+|x_d|)^{\alpha - \gamma}(1+|y_d|)^{\alpha - \gamma}(1+|x_d-y_d|)^{\beta + \gamma - \frac{d-1}{2}}, \quad \gamma\in \Big[0,\frac{d-1}{2}\Big). 
\end{equation}
\begin{Le}\label{TheOtherEndpoint}
For any~$\gamma\in [0,\frac{d-1}{2})$,
\begin{equation*}
\big\|\B_{x_d,y_d}\big\|_{L_{\frac{2d-2}{d-1+2\gamma}}\times L_{\frac{2d-2}{d-1+2\gamma}}} \lesssim (1+|x_d|)^{\alpha - \gamma}(1+|y_d|)^{\alpha - \gamma}(1+|x_d-y_d|)^{\beta}.
\end{equation*}
\end{Le}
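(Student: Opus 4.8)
<br>

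The plan is to prove Lemma~\ref{TheOtherEndpoint} by reducing the boundedness of $\B_{x_d,y_d}$ on $L_{q}\times L_{q}$, with $q = \frac{2d-2}{d-1+2\gamma}$, to the Hardy--Littlewood--Sobolev inequality on $\mathbb{R}^{d-1}$. Writing out the kernel from~\eqref{FormulaForKernel}, one sees that
\begin{equation*}
\K_{x_d,y_d}(x_{\bar d},y_{\bar d}) = C\, x_d^\alpha y_d^\alpha (x_d-y_d)^\beta \iint_{U\times U} e^{2\pi i(\scalprod{\zeta}{x_{\bar d}} - \scalprod{\eta}{y_{\bar d}})}\, e^{2\pi i(x_d h(\zeta) - y_d h(\eta))}\psi(\zeta)\overline{\psi(\eta)}\,|\zeta-\eta|^{2\gamma-d+1}\,d\zeta\,d\eta.
\end{equation*}
Thus $\B_{x_d,y_d}[f,g]$ equals $C\,x_d^\alpha y_d^\alpha (x_d-y_d)^\beta$ times the pairing
\begin{equation*}
\iint_{U\times U} \check f(\zeta)\, e^{2\pi i x_d h(\zeta)}\psi(\zeta)\; \overline{\check g(\eta)\, e^{2\pi i y_d h(\eta)}\psi(\eta)}\; |\zeta-\eta|^{2\gamma-d+1}\,d\zeta\,d\eta,
\end{equation*}
where $\check f,\check g$ denote inverse Fourier transforms in $\mathbb{R}^{d-1}$. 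The modulation by $e^{2\pi i x_d h(\zeta)}$ and the cutoff $\psi$ are harmless: they are bounded multipliers that do not change $L_q$ norms (the modulation is unimodular, and multiplication by $\psi$ is $L_q\to L_q$ bounded). So it suffices to bound $\iint |\zeta-\eta|^{2\gamma-d+1} F(\zeta)\overline{G(\eta)}\,d\zeta\,d\eta$ where $F = \check f\, e^{2\pi i x_d h}\psi$ and $G = \check g\, e^{2\pi i y_d h}\psi$; by Hausdorff--Young the map $f\mapsto F$ is bounded $L_q \to L_{q'}$ with $q' = \frac{2d-2}{d-1-2\gamma+2(d-1)}$... more usefully, since $q \le 2$, we have $\check f \in L_{q'}$ and hence $F\in L_{q'}(U)$ with $\|F\|_{L_{q'}} \lesssim \|f\|_{L_q}$.

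Next I would invoke the Hardy--Littlewood--Sobolev inequality in dimension $d-1$: the bilinear form $(F,G)\mapsto \iint |\zeta-\eta|^{-\lambda} F(\zeta)\overline{G(\eta)}\,d\zeta\,d\eta$ is bounded on $L_{r}\times L_{r}$ precisely when $\lambda = 2(d-1)/r'$, i.e. when $\frac{2}{r} + \frac{\lambda}{d-1} = 2$. Here $\lambda = d-1-2\gamma$, so the requirement is $\frac{2}{r} = 2 - \frac{d-1-2\gamma}{d-1} = \frac{d-1+2\gamma}{d-1}$, giving exactly $r = \frac{2d-2}{d-1+2\gamma} = q$. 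And our functions $F,G$ do live in $L_{q'}$, which is the \emph{dual} exponent to $q$, so we are in precisely the right space: HLS gives
\begin{equation*}
\Big|\iint |\zeta-\eta|^{2\gamma-d+1}F(\zeta)\overline{G(\eta)}\,d\zeta\,d\eta\Big| \lesssim \|F\|_{L_{q'}}\|G\|_{L_{q'}} \lesssim \|f\|_{L_q}\|g\|_{L_q}.
\end{equation*}
(The restriction $\gamma \in [0,\frac{d-1}{2})$ is exactly what keeps $2\gamma - d + 1 \in (-(d-1),\,0\,]$, the admissible range for HLS; when $\gamma=0$ the form degenerates and the estimate is trivial, matching the case already disposed of.) Collecting the prefactor $|x_d^\alpha y_d^\alpha (x_d-y_d)^\beta| \le (1+|x_d|)^\alpha(1+|y_d|)^\alpha(1+|x_d-y_d|)^\beta$, and combining with the $\gamma$-loss absorbed... actually note the claimed bound has $(1+|x_d|)^{\alpha-\gamma}(1+|y_d|)^{\alpha-\gamma}$: this extra decay $(1+|x_d|)^{-\gamma}(1+|y_d|)^{-\gamma}$ must come from the modulation-plus-curvature, not be thrown away.

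So the real content, and the step I expect to be the main obstacle, is getting the gain $(1+|x_d|)^{-\gamma}(1+|y_d|)^{-\gamma}$. The naive argument above only yields $(1+|x_d|)^\alpha(1+|y_d|)^\alpha(1+|x_d-y_d|)^\beta$. To recover the decay I would not discard the oscillation $e^{2\pi i x_d h(\zeta)}$ but instead exploit it: the operator $f \mapsto \big(e^{2\pi i x_d h(\zeta)}\psi(\zeta)\check f(\zeta)\big)$, viewed $L_q(\mathbb{R}^{d-1}) \to L_{q'}(\mathbb{R}^{d-1})$, is essentially an adjoint-restriction/extension-type operator attached to the hypersurface $\{(\zeta,h(\zeta))\}$ with the frequency variable $x_d$ playing the role of the distance to the surface; the curvature hypothesis on $h$ gives a gain of $(1+|x_d|)^{-\gamma}$ in its norm via a $TT^*$ computation or, more directly, by interpolating the $L_1\to L_\infty$ bound (with the $(1+|x_d|)^{-\frac{d-1}{2}}$ Van der Corput decay, exactly as in Lemma~\ref{FirstEstimate}) against the trivial $L_2 \to L_2$ bound. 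Concretely, I would phrase the whole form $\B_{x_d,y_d}$ as $\langle \Lambda_{x_d} f, \Lambda_{y_d} g\rangle$ for an intermediate operator $\Lambda_t$, combine the pointwise kernel bound of Proposition~\ref{Pointwise} (which already encodes the $L_1\times L_1$ endpoint with the sharp weights) with the $L_2\times L_2$ endpoint $\|\B_{x_d,y_d}\|_{L_2\times L_2}\lesssim (1+|x_d|)^{\alpha}(1+|y_d|)^{\alpha}$ obtained by Plancherel (here $\gamma$ does not appear since $\dot H^{-\gamma}$ with the surface flattened is being compared to $L_2$ after an $L_2$-bounded Fourier-integral change), and then bilinearly interpolate in the exponent pair $(1,1)\leftrightarrow(2,2)$ using a multilinear Stein interpolation with the weights $(1+|x_d|)$ and $(1+|y_d|)$ varying analytically. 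The bookkeeping of how the three weight exponents $\alpha-\gamma,\ \alpha-\gamma,\ \beta+\gamma-\frac{d-1}{2}$ at $q=1$ transform to $\alpha-\gamma,\ \alpha-\gamma,\ \beta$ at $q = \frac{2d-2}{d-1+2\gamma}$ — i.e. that the $x_d,y_d$ weights are unchanged while the $x_d-y_d$ weight improves by exactly $\frac{d-1}{2}-\gamma$ — is the delicate part and is where one must be careful to set up the interpolation family so that the $|x_d-y_d|$-weight (and not the individual ones) absorbs all the change; this is the analog of the weighted Stein--Weiss step the authors announce they will prove in Subsection~\ref{s62}, and I would simply cite that once it is available rather than reprove it here.
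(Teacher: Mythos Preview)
Your initial reduction is correct and coincides with the paper's formula~\eqref{RepresentationForB}: up to the scalar prefactor $x_d^\alpha y_d^\alpha(x_d-y_d)^\beta$, the form $\B_{x_d,y_d}[f,g]$ is the $\dot H^{-\gamma}$ pairing of $F = \psi\,e^{2\pi i x_d h}\,\check f$ with $G = \psi\,e^{2\pi i y_d h}\,\check g$. You also correctly see that the decay $(1+|x_d|)^{-\gamma}$ must come from the oscillation $e^{2\pi i x_d h}$ via Van der Corput, interpolated against a trivial $L_2$ bound. Up to this point you and the paper agree.

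The gap is in your ``concrete'' plan. You propose to interpolate the \emph{bilinear} form between the $L_1\times L_1$ bound of Proposition~\ref{Pointwise} and the $L_2\times L_2$ bound $(1+|x_d|)^\alpha(1+|y_d|)^\alpha(1+|x_d-y_d|)^\beta$. At $q=\frac{2d-2}{d-1+2\gamma}$ the interpolation parameter between $p=1$ and $p=2$ satisfies $1-\theta=\frac{2\gamma}{d-1}$, and the resulting $(1+|x_d|)$-exponent is
\[
(1-\theta)(\alpha-\gamma)+\theta\,\alpha \;=\; \alpha-\tfrac{2\gamma^2}{d-1},
\]
which is strictly larger than $\alpha-\gamma$ for $0<\gamma<\frac{d-1}{2}$. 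So bilinear interpolation between those two endpoints gives a strictly weaker bound than the lemma claims; the lemma is itself an \emph{endpoint}, not a consequence of that interpolation. (Indeed, immediately after the proof the paper interpolates the lemma with~\eqref{Pointwise2} to obtain~\eqref{Pre-FinalInequality}.) Your reference to the Stein--Weiss step is likewise misplaced: Theorem~\ref{SteinWeissModified} is applied only \emph{after} this lemma, to perform the integration in $x_d,y_d$; it plays no role for fixed $x_d,y_d$.

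What the paper actually does is stay with the \emph{linear} operator you first mentioned. Writing $\CONV_{x_d}[f]=f*\mathcal F\big[e^{2\pi i x_d h}\psi\big]$, the form becomes (on the spatial side)
\[
\int_{\mathbb R^{d-1}}\CONV_{x_d}[f](z)\,\overline{\CONV_{y_d}[g](z)}\,|z|^{-2\gamma}\,dz.
\]
Real interpolation between $\|\CONV_{x_d}\|_{L_2\to L_2}\lesssim1$ (Plancherel) and $\|\CONV_{x_d}\|_{L_1\to L_\infty}\lesssim(1+|x_d|)^{-\frac{d-1}{2}}$ (Van der Corput) yields
\[
\|\CONV_{x_d}\|_{L_q\to L_{q',2}}\lesssim(1+|x_d|)^{-\gamma},
\]
the target space being the \emph{Lorentz} space $L_{q',2}$ rather than $L_{q'}$. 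This refinement is not cosmetic: the weight $|z|^{-2\gamma}$ lies only in $L_{\frac{d-1}{2\gamma},\infty}$, so one needs the H\"older inequality in Lorentz spaces,
\[
L_{q',2}\cdot L_{q',2}\cdot L_{\frac{d-1}{2\gamma},\infty}\hookrightarrow L_1,
\]
to close the estimate. Your HLS-on-the-$\zeta$-side framing is the Fourier dual of this computation, but without the Lorentz sharpening it would also fall short at the endpoint exponent.
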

\begin{proof}
Let~$\lambda_{x_d}$ and~$\lambda_{y_d}$ be the Lebesgue measures on the hyperplanes
\begin{equation*}
\{w \in \mathbb{R}^d\mid w_d = x_d\} \quad \hbox{and}\quad \{w \in \mathbb{R}^d\mid w_d = y_d\}. 
\end{equation*}
Then,
\begin{equation*}
\B_{x_d,y_d}[f,g] = \B[f\,d\lambda_{x_d},g\, d\lambda_{y_d}]
\end{equation*}
if we interpret~$f$ and~$g$ as functions of~$d$ variables that do not depend on~$x_d$.
With this formula in hand, we may re-express~$\B_{x_d,y_d}$:
\begin{multline}\label{RepresentationForB}
\B_{x_d,y_d}[f,g] = \Big(\frac{\partial}{\partial r}\Big)^{\beta}\Scalprod{\frac{\partial^{\alpha}[\hat{f}(\xi)e^{2\pi i x_d\xi_d}]}{\partial \xi_d^{\alpha}}\psi(\xi)}{\frac{\partial^{\alpha}[\hat{g}(\xi)e^{2\pi i y_d\xi_d}]}{\partial \xi_d^{\alpha}}\psi(\xi)}_{\dot{H}^{-\gamma}(\Sigma_r)}\bigg|_{r=0} = \\
(2\pi i x_d)^{\alpha}(-2\pi i y_d)^{\alpha}(2\pi i (x_d-y_d))^{\beta}\scalprod{\hat{f}e^{2\pi i x_dh(\cdot)}\psi}{\hat{g}e^{2\pi i y_dh(\cdot)}\psi}_{\dot{H}^{-\gamma}(\Sigma_0)} = \\(-1)^{\alpha}(2\pi i)^{2\alpha + \beta}x_d^{\alpha}y_d^{\alpha}(x_d-y_d)^{\beta}\int\limits_{\mathbb{R}^{d-1}}\Big[f*\mathcal{F}_{\zeta\mapsto z}\big[e^{2\pi i x_dh(\zeta)}\psi(\zeta)\big]\Big](z)\cdot\overline{\Big[g*\mathcal{F}_{\zeta\mapsto z}\big[e^{2\pi i y_dh(\zeta)}\psi(\zeta)\big]\Big](z)}\cdot|z|^{-2\gamma}\,dz.
\end{multline}
Therefore, it suffices to prove the bound
\begin{multline}\label{AlmostDone}
\bigg|\int\limits_{\mathbb{R}^{d-1}}\Big[f*\mathcal{F}_{\zeta\mapsto z}\big[e^{2\pi i x_dh(\zeta)}\psi(\zeta)\big]\Big](z)\cdot\overline{\Big[g*\mathcal{F}_{\zeta\mapsto z}\big[e^{2\pi i y_dh(\zeta)}\psi(\zeta)\big]\Big](z)}\cdot|z|^{-2\gamma}\,dz\bigg| \lesssim\\ (1+|x_d|)^{-\gamma}(1+|y_d|)^{-\gamma}\|f\|_{L_{\frac{2d-2}{d-1+2\gamma}}}\|g\|_{L_{\frac{2d-2}{d-1+2\gamma}}}.
\end{multline}
We postulate the inequality
\begin{equation}\label{LorentzBound}
\Big\|f*\mathcal{F}_{\zeta\mapsto z}\big[e^{2\pi i x_dh(\zeta)}\psi(\zeta)\big]\Big\|_{L_{\frac{2d-2}{d-1-2\gamma},2}} \lesssim (1+|x_d|)^{-\gamma}\|f\|_{L_{\frac{2d-2}{d-1+2\gamma}}}.
\end{equation}
The space on the left is the Lorentz space, see~\cite{BerghLofstrom} for definitions.
Inequality~\eqref{LorentzBound} immediately leads to~\eqref{AlmostDone}:
\begin{multline*}
\bigg|\int\limits_{\mathbb{R}^{d-1}}\Big[f*\mathcal{F}_{\zeta\mapsto z}\big[e^{2\pi i x_dh(\zeta)}\psi(\zeta)\big]\Big](z)\cdot\overline{\Big[g*\mathcal{F}_{\zeta\mapsto z}\big[e^{2\pi i y_dh(\zeta)}\psi(\zeta)\big]\Big](z)}\cdot|z|^{-2\gamma}\,dz\bigg| \lesssim \\
\Big\|f*\mathcal{F}_{\zeta\mapsto z}\big[e^{2\pi i x_dh(\zeta)}\psi(\zeta)\big]\Big\|_{L_{\frac{2d-2}{d-1-2\gamma},2}}\cdot\Big\|g*\mathcal{F}_{\zeta\mapsto z}\big[e^{2\pi i y_dh(\zeta)}\psi(\zeta)\big]\Big\|_{L_{\frac{2d-2}{d-1-2\gamma},2}}\cdot \||z|^{-2\gamma}\|_{L_{\frac{d-1}{2\gamma},\infty}} \stackrel{\scriptscriptstyle\eqref{LorentzBound}}{\lesssim}\\
(1+|x_d|)^{-\gamma}(1+|y_d|)^{-\gamma}\|f\|_{L_{\frac{2d-2}{d-1+2\gamma}}}\|g\|_{L_{\frac{2d-2}{d-1+2\gamma}}}.
\end{multline*}
We are required to prove~\eqref{LorentzBound}. Let us denote the operator we want to estimate by~$\CONV_{x_d}$:
\begin{equation*}
\CONV_{x_d}[f] = f*\mathcal{F}_{\zeta\mapsto z}\big[e^{2\pi i x_dh(\zeta)}\psi(\zeta)\big].
\end{equation*}
By the Plancherel theorem,
\begin{equation*}\label{Plancherel}
\|\CONV_{x_d}\|_{L_2\to L_2} \lesssim 1.
\end{equation*}
By the Van der Corput lemma,
\begin{equation*}\label{VdC}
\|\CONV_{x_d}\|_{L_1\to L_{\infty}} \lesssim (1+|x_d|)^{-\frac{d-1}{2}}.
\end{equation*}
The real interpolation formulas (see~\cite{BerghLofstrom}, \S 5.3)
\begin{equation*}
[L_1,L_2]_{\frac{2\gamma}{d-1},\frac{2d-2}{d-1+2\gamma}} = L_{\frac{2d-2}{d-1+2\gamma}}; \quad [L_{\infty}, L_2]_{\frac{2\gamma}{d-1},\frac{2d-2}{d-1+2\gamma}} = L_{\frac{2d-2}{d-1-2\gamma},\frac{2d-2}{d-1+2\gamma}} 
\end{equation*}
lead to the inequality
\begin{equation*}
\|\CONV_{x_d}\|_{L_{\frac{2d-2}{d-1+2\gamma}}\to L_{\frac{2d-2}{d-1-2\gamma},2}} \lesssim \|\CONV_{x_d}\|_{L_{\frac{2d-2}{d-1+2\gamma}}\to L_{\frac{2d-2}{d-1-2\gamma},\frac{2d-2}{d-1+2\gamma}}} \lesssim (1+|x_d|)^{-\gamma},
\end{equation*}
which is exactly~\eqref{LorentzBound}.
\end{proof}

Interpolation between~\eqref{Pointwise2} and Lemma~\ref{TheOtherEndpoint} leads to the inequality
\begin{equation}\label{Pre-FinalInequality}
\|\B_{x_d,y_d}\|_{L_p\times L_p} \lesssim (1+|x_d|)^{\alpha-\gamma}(1+|y_d|)^{\alpha-\gamma}(1+|x_d-y_d|)^{\beta + \gamma - \frac{d-1}{p} + \frac{d-1}{2}}
\end{equation}
for~$p \in [1,\frac{2d-2}{d-1+2\gamma}]$. Let us restrict our attention to this case for a while. To finish the proof of Theorem~\ref{StubbornTheorem}, we invoke a version of the Stein--Weiss inequality (Theorem~\ref{SteinWeissModified} in Subsection~\ref{s62} below, see Remark~\ref{Addition} there as well):
\begin{multline*}
\big|\B[f,g]\big| = \bigg|\iint\limits_{\mathbb{R}\times \mathbb{R}} \B_{x_d,y_d}\big[f(\cdot,x_d),g(\cdot,y_d)\big]\,dx_d\,dy_d\bigg| \stackrel{\scriptscriptstyle\eqref{Pre-FinalInequality}}{\lesssim}\\ \iint\limits_{\mathbb{R}\times\mathbb{R}} (1+|x_d|)^{\alpha-\gamma}(1+|y_d|)^{\alpha-\gamma}(1+|x_d-y_d|)^{\beta + \gamma - \frac{d-1}{p} + \frac{d-1}{2}} \|f(\cdot,x_d)\|_{L_p(\mathbb{R}^{d-1})}\|g(\cdot,y_d)\|_{L_p(\mathbb{R}^{d-1})}\,dx_d\,dy_d \stackrel{\hbox{\tiny Th.}\scriptscriptstyle\ref{SteinWeissModified}}{\lesssim}\\ \|f\|_{L_p}\|g\|_{L_p}
\end{multline*}
provided~$a = \gamma-\alpha$ and~$b = -\beta - \gamma - \frac{d-1}{2} + \frac{d-1}{p}$ satisfy the requirements of Theorem~\ref{SteinWeissModified}. The inequality~\eqref{SISpectralShift} leads to~$a \geq 0$, the requirement~\eqref{SIGaussian} leads to~$a+b \geq 1-\frac{1}{p}$ (with the same exclusion of the endpoint case if~$p> 1$), and~\eqref{SIKnapp} gives~$2a+b \geq 2-\frac{2}{p}$. The case~$b=1$ and~$p=2$ is impossible ($\beta+\gamma$ is negative in this case). The ``if'' part of Theorem~\ref{StubbornTheorem} is proved in the case~$p \in [1,\frac{2d-2}{d-1+2\gamma}]$.

To deal with the remaining case, we start from the estimate
\begin{equation*}
\big\|\B_{x_d,y_d}\big\|_{L_{2}\times L_{2}} \lesssim (1+|x_d|)^{\alpha}(1+|y_d|)^{\alpha}(1+|x_d-y_d|)^{\beta},
\end{equation*}
which follows from the representation~\eqref{RepresentationForB}; we use the trivial inequality
\begin{equation*}
\Big\|f*\mathcal{F}_{\zeta\mapsto z}\big[e^{2\pi i x_dh(\zeta)}a(\zeta)\big]\Big\|_{L_{\infty}} \lesssim \|f\|_{L_2}\big\|e^{2\pi i x_dh(\zeta)}\psi(\zeta)\big\|_{L_2}\lesssim \|f\|_{L_2}.
\end{equation*}
We interpolate this bound with Lemma~\ref{TheOtherEndpoint}:
\begin{equation*}
\big\|\B_{x_d,y_d}\big\|_{L_{p}\times L_{p}} \lesssim \Big(1+|x_d|\Big)^{\alpha - (d-1)(\frac{1}{p} - \frac12)}\Big(1+|y_d|\Big)^{\alpha - (d-1)(\frac{1}{p} - \frac12)}(1+|x_d-y_d|)^\beta,\quad p \in \Big[\frac{2d-2}{d-1+2\gamma},2\Big].
\end{equation*}
We invoke Theorem~\ref{SteinWeissModified} with~$a = (d-1)(\frac{1}{p} - \frac12) - \alpha$ and~$b = -\beta$. Since~$b \leq 0$, the condition~$a+b >1-\frac{1}{p}$ is stronger than~$2a+b \geq 2-\frac{2}{p}$. The condition~$a+b > 1-\frac{1}{p}$ is exactly~\eqref{SIGaussian}. The condition~$a > 0$ also follows from it:
\begin{equation*}
\alpha \leq \alpha + \beta < \frac{d}{p} - \frac{d+1}{2} = \frac{d-1}{p} - \frac{d-1}{2} + \frac{1}{p} - 1 \leq \frac{d-1}{p} - \frac{d-1}{2}.
\end{equation*}
The ``if'' part of Theorem~\ref{StubbornTheorem} is now proved.

\begin{Rem}
Note that we did not use that~$\alpha$ or~$\beta$ are integer provided we define our bilinear form by~\eqref{FormulaForKernel}.
\end{Rem}

\subsection{Strichartz estimates}\label{s33}
With the same method as in the previous section, we can get a collection of sharp (up to the endpoint) Strichartz estimates. For that we need the mixed norm spaces~$L_r(L_p)$:
\begin{equation}\label{MixedNorm}
\big\|g\big\|_{L_r(L_p)} = \big\|\big\|g\big\|_{L_p(x)}\big\|_{L_r(t)} = \bigg(\int\limits_{\mathbb{R}}\Big(\int\limits_{\mathbb{R}^{d-1}}|g(x,t)|^{p}\,dx\Big)^{\frac{r}{p}}\,dt\bigg)^{\frac{1}{r}},\quad g\colon \mathbb{R}^d \to \mathbb{R}.
\end{equation} 
We also use Theorem~\ref{SteinWeissp>2} here in order to work with the cases~$r > 2$ as well. This provides some new information even in the case~$\alpha = \beta = 0$ considered in~\cite{ChoGuoLee}. The cases~$r > 2$ were excluded in that paper and it is not clear whether the methods of~\cite{ChoGuoLee} work in this situation. 
\begin{Th}\label{StrichartzTheorem}
The inequality
\begin{equation*}
\Big|\B(f,g)\Big|\lesssim \|f\|_{L_r(L_p)}\|g\|_{L_r(L_p)}
\end{equation*}
holds true if
\begin{enumerate}
\item $r \in [1,2]$ and
\begin{itemize}
\item $p \le 2$\textup;
\item $\gamma\geq \alpha$\textup;
\item $\alpha + \beta < \frac{d-1}{p} + \frac{1}{r} - \frac{d+1}{2}$, with equality permitted if $r=1$\textup;
\item $2\alpha +\beta -\gamma <  \frac{d-1}{p} + \frac{2}{r} - \frac{d+3}{2}$, with equality also permitted if $r<2$ or $\gamma > \alpha$\textup;
\end{itemize}
\item $r \in (2,\infty]$ and
\begin{itemize}
\item $p \le 2$\textup;
\item $\gamma-\alpha > \frac12 - \frac{1}{r}$\textup;
\item $\alpha + \beta < \frac{d-1}{p} + \frac{1}{r} - \frac{d+1}{2}$\textup;
\item $2\alpha +\beta -\gamma < \frac{d-1}{p} + \frac{2}{r} - \frac{d+3}{2}$\textup.
\end{itemize}
\end{enumerate}
\end{Th}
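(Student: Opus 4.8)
The plan is to rerun the machinery from the proof of Theorem~\ref{StubbornTheorem} in Subsection~\ref{s32} with essentially no changes, the one new feature being that the outer one‑dimensional integration (over the last coordinate $x_d$) is carried out in $L_r$ rather than in $L_p$. The point is that the ``slice'' bounds for the forms $\B_{x_d,y_d}$ produced in that proof are completely insensitive to the exponent one later places on the outer variable: once $\B$ is expanded as an integral of $\B_{x_d,y_d}$ over $(x_d,y_d)$, the estimate collapses to a one‑dimensional weighted bilinear inequality, into which we are free to feed whatever exponent $r$ we wish, provided we supply the matching Stein--Weiss-type bound. The new ingredient allowing $r>2$ is Theorem~\ref{SteinWeissp>2}, which handles the outer integration in that regime.

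First I would recall, from Subsection~\ref{s32}, the two slice estimates already established (neither of which depends on $r$): for $p\in[1,\frac{2d-2}{d-1+2\gamma}]$ inequality~\eqref{Pre-FinalInequality}, and for $p\in[\frac{2d-2}{d-1+2\gamma},2]$ the bound
\begin{equation*}
\big\|\B_{x_d,y_d}\big\|_{L_p\times L_p}\lesssim(1+|x_d|)^{\alpha-(d-1)(\frac1p-\frac12)}(1+|y_d|)^{\alpha-(d-1)(\frac1p-\frac12)}(1+|x_d-y_d|)^{\beta},
\end{equation*}
obtained there by interpolating Lemma~\ref{TheOtherEndpoint} against the trivial $L_2\times L_2$ estimate that follows from~\eqref{RepresentationForB}. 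Then, exactly as in Subsection~\ref{s32}, write $x=(x_{\bar d},x_d)$, regard $f(\cdot,x_d)$ as a function on $\mathbb R^{d-1}$, use $\B(f,g)=\iint_{\mathbb R\times\mathbb R}\B_{x_d,y_d}[f(\cdot,x_d),g(\cdot,y_d)]\,dx_d\,dy_d$, and apply the relevant slice bound to get
\begin{equation*}
|\B(f,g)|\lesssim\iint_{\mathbb R\times\mathbb R}(1+|x_d|)^{-a}(1+|y_d|)^{-a}(1+|x_d-y_d|)^{-b}\,\phi(x_d)\,\psi(y_d)\,dx_d\,dy_d,
\end{equation*}
where $\phi(t)=\|f(\cdot,t)\|_{L_p(\mathbb R^{d-1})}$, $\psi(t)=\|g(\cdot,t)\|_{L_p(\mathbb R^{d-1})}$, and $(a,b)$ are the very pairs appearing in the proof of Theorem~\ref{StubbornTheorem} (namely $a=\gamma-\alpha$, $b=-\beta-\gamma+\frac{d-1}{p}-\frac{d-1}{2}$ in the first regime, and $a=(d-1)(\frac1p-\frac12)-\alpha$, $b=-\beta$ in the second). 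Since $\|\phi\|_{L_r(\mathbb R)}=\|f\|_{L_r(L_p)}$ and $\|\psi\|_{L_r(\mathbb R)}=\|g\|_{L_r(L_p)}$ by definition~\eqref{MixedNorm}, the theorem reduces to bounding the displayed one‑dimensional weighted bilinear form by $\|\phi\|_{L_r}\|\psi\|_{L_r}$.

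For $r\in[1,2]$ that one‑dimensional inequality is supplied by Theorem~\ref{SteinWeissModified}, used now with exponent $r$ in place of $p$. Its hypotheses $a\ge0$, $a+b\ge1-\frac1r$, $2a+b\ge2-\frac2r$ (with the same endpoint provisos as in Subsection~\ref{s32}: equality in the middle condition only when $r=1$, strict in the last condition only in the degenerate corner where $b$ would become positive) translate, by exactly the computation of Subsection~\ref{s32} with $r$ for $p$, into the listed conditions $\gamma\ge\alpha$, $\alpha+\beta\le\frac{d-1}{p}+\frac1r-\frac{d+1}{2}$, $2\alpha+\beta-\gamma\le\frac{d-1}{p}+\frac2r-\frac{d+3}{2}$ in the first regime; in the second regime $b=-\beta\le0$, so $2a+b=2(a+b)-b\ge2-\frac2r$ is automatic, $a\ge0$ follows from $\alpha\le\alpha+\beta<\frac{d-1}{p}+\frac1r-\frac{d+1}{2}\le\frac{d-1}{p}-\frac{d-1}{2}$, and only the middle condition survives, again matching the statement. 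For $r\in(2,\infty]$ the convolution no longer maps $L_r$ into $L_{r'}$ for free, the $L_1$--$L_2$ interpolation that sufficed above no longer gives a one‑dimensional bound, and one must instead invoke Theorem~\ref{SteinWeissp>2}, whose extra local‑integrability hypothesis is precisely $a>\frac12-\frac1r$, i.e.\ $\gamma-\alpha>\frac12-\frac1r$, while the two remaining conditions become the strict inequalities $\alpha+\beta<\frac{d-1}{p}+\frac1r-\frac{d+1}{2}$ and $2\alpha+\beta-\gamma<\frac{d-1}{p}+\frac2r-\frac{d+3}{2}$.

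The main obstacle is exactly the range $r>2$: there is genuine content in Theorem~\ref{SteinWeissp>2} beyond the classical Stein--Weiss inequality — one needs the weight $(1+|x_d|)^{-a}$ to be locally integrable against the appropriate exponent, which is why the hypothesis upgrades from $\gamma\ge\alpha$ to $\gamma-\alpha>\frac12-\frac1r$ and why these cases are absent from~\cite{ChoGuoLee}. Apart from supplying that inequality, the whole proof is a mechanical transcription of Subsection~\ref{s32}; the only small bookkeeping points are to check that the weight exponent $a$ stays nonnegative in both regimes (as above) and that the $\alpha+\beta$ condition is common to the two subranges of $p$, so that the hypotheses in the theorem can be stated uniformly in $p\in[1,2]$.
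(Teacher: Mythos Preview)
Your proposal is correct and follows essentially the same approach as the paper's own proof: reuse the slice bounds $\|\B_{x_d,y_d}\|_{L_p\times L_p}$ from Subsection~\ref{s32}, reduce $\B(f,g)$ to a one-dimensional weighted bilinear form in the outer variable, and then feed that form into Theorem~\ref{SteinWeissModified} (for $r\in[1,2]$) or Theorem~\ref{SteinWeissp>2} (for $r\in(2,\infty]$) with the exponent $r$ in place of~$p$. The translation of the Stein--Weiss hypotheses into the listed conditions on $\alpha,\beta,\gamma,p,r$ is exactly the bookkeeping the paper carries out.
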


The proof is a direct application of Theorems~\ref{SteinWeissModified} and~\ref{SteinWeissp>2}. Consider the case~$p \in [1,\frac{2d-2}{d-1+2\gamma}].$  Set~$a = \gamma - \alpha$, $b = -\beta - \gamma -\frac{d-1}{2} + \frac{d-1}{p}$, and $p = r$ (that is, the value of $p$ in those theorems is replaced by $r$). We note that the conditions of Theorem~\ref{SteinWeissModified} can be summarized as $a \geq 0$, $a + b > 1-\frac{1}{p}$ and $2a+b \geq 2 - \frac{2}{p}$.  When $p=1$, combinations with $a+b \geq 0 $ are also accepted, and when $p=2$ the case $a=0$, $b=1$ is excluded.

The three conditions stated in the case $r \in [1,2]$ are equivalent to $a \geq 0$, $a+ b > 1- \frac{1}{r}$, and $2a+b > 2- \frac{2}{r}$, respectively.   The three conditions stated in the case $r \in (2,\infty]$ are equivalent to the conditions in Theorem~\ref{SteinWeissp>2}, namely $a > \frac12 - \frac1r$, $a+b > 1-\frac{1}{r}$, and $2a+b > 2 - \frac{2}{r}$. 

Consider the case~$p \in [\frac{2d-2}{d-1+2\gamma},2]$ and set~$a=(d-1)(\frac{1}{p} - \frac12) - \alpha$,~$b = -\beta$, and~$p=r$ in the same sense as above. Since~$b \leq 0$, the condition~$a+b > \frac1p$ is stronger than~$2a+b \geq 2 - \frac{2}{p}$. The condition~$a+b > \frac{1}{p}$ is equivalent to~$\alpha + \beta < \frac{d-1}{p} + \frac{1}{r} - \frac{d+1}{2}$ with equality permitted if~$r=1$. In the case~$r\leq 2$, the requirement~$a \geq 0$ is rewritten as~$\alpha < \frac{d-1}{p} - \frac{d-1}{2}$. It also follows from~$\alpha + \beta \leq \frac{d-1}{p} + \frac{1}{r} - \frac{d+1}{2}$. The condition~$a > \frac12 - \frac{1}{r}$ arising in the case~$r \geq 2$ follows from the same inequality.

\section{Robust estimates}\label{S4}

\subsection{Introduction to ``numerology''}\label{s41}
\begin{Rem}
We are mostly interested in the case~$k > s$ in~\eqref{HDRinequality}. We claim that in the \textup{``}subcritical\textup{''} case~$k \leq s$, the second term on the right hand-side of this inequality is unnecessary. If~$k \leq s$ and~$\HDR(\Sigma, k,s, \ell,p)$ is true, then a simpler inequality
\begin{equation}\label{HDRWithOneTerm}
\|\phi \nabla^k \hat{f}\|_{H^{-s}(\Sigma)} \lesssim\|f\|_{L_p(\mathbb{R}^d)}
\end{equation}
also holds true. Indeed, if~$\HDR(\Sigma, k,s, \ell,p)$ is true, then~\eqref{HDRSurfaceCondition} and~\eqref{HDRKnapp} are valid. However, in this case, these conditions are also sufficient for~\eqref{HDRWithOneTerm} to be true \textup(see Theorem~\textup{\ref{StubbornTheorem}} and Figure~\textup{\ref{fig:Klessgamma}} below\textup).
\end{Rem}
\begin{figure}[h]
\includegraphics[height=9.5cm]{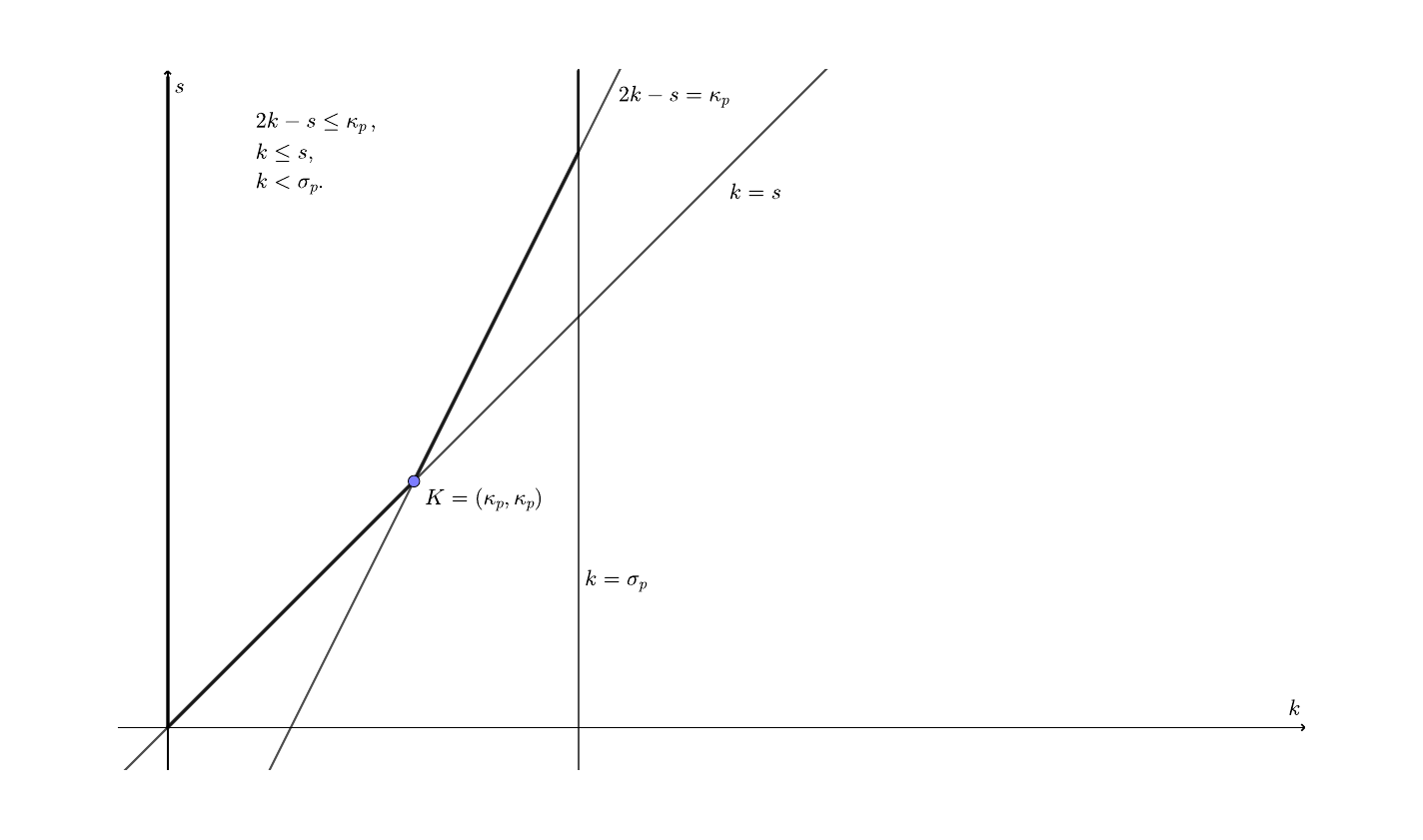}
\caption{Diagram for the case~$k\leq s$.}
\label{fig:Klessgamma}
\end{figure}
\begin{Rem}
In the \textup{``}supercritical\textup{''} case~$k > s$, the condition~\eqref{HDRSurfaceCondition} follows from~\eqref{HDRKnapp} since~$\kappa_p \leq \sigma_p$ in this case \textup(see Figure~\textup{\ref{fig:Klessgamma}} as well\textup). Note also that~\eqref{HDRShiftedKnapp} is equivalent to
\begin{equation}\label{ModifiedShiftedKnapp}
s \geq \Big(1+\frac{\ell}{\kappa_p}\Big)k - \ell.
\end{equation}
This inequality, in its turn, leads to~\eqref{HDRShiftCondition} provided~$\kappa_p \geq 0$ \textup(which is true by~\eqref{HDRKnapp}\textup). Thus, in the case~$k> s$, the conditions in Proposition~\textup{\ref{SufficientConditionsHDR}} are reduced to~\eqref{HDRSigmaLpShiftCondition}, \eqref{ModifiedShiftedKnapp}, and~\eqref{HDRKnapp}.
\end{Rem}
In our proof, the parameters~$k$ and~$s$ will be varied, however,~$\ell$ and~$p$ will be steady. It appears convenient to draw diagrams of admissible pairs~$(k,s)$. We have already drawn such a diagram for the case~$k \leq s$ (Figure~\ref{fig:Klessgamma}). For our first attempt to the ``numerology'', we neglect the integer nature of~$k$ and imagine this parameter is real positive. We have three inequalities in the subcritical case: $k\leq s$, \eqref{HDRSurfaceCondition}, and~\eqref{HDRKnapp}. The cases of equality correspond to lines on the diagram, and all three inequalities are satisfied inside the domain bounded by the bold broken line. We also note that the lines~$2k-s = \kappa_p$ and~$k=s$ intersect at the point~$(\kappa_p,\kappa_p)$, which we denote by~$K$.

Now we pass to the ``supercritical'' case~$k > s$. We need to draw two additional lines~$k=s+1$ and
\begin{equation}\label{LineKL}
s = \Big(1+\frac{\ell}{\kappa_p}\Big)k - \ell,
\end{equation}
which correspond to~\eqref{HDRSigmaLpShiftCondition} and~\eqref{HDRShiftedKnapp} respectively.
The structure of the domain of admissible parameters will depend on the mutual disposition of the these two lines and the line~$s = 2k-\kappa_p$. Before we classify the cases of disposition, we note that the line~\eqref{LineKL} passes through~$K$. There is one more nice point lying on it: the point~$L = (0,-\ell)$. We will consider the cases~$\kappa_p < 2$ and~$\kappa_p \geq 2$ separately.

\paragraph{\bf Case~$\kappa_p < 2$.} In this case, the condition~\eqref{HDRSigmaLpShiftCondition} is unnecessary, it follows from~\eqref{HDRKnapp} and~$s \geq 0$. This case, in its turn, is naturally split into subcases~$\ell \leq \kappa_p$ (see Figure~\ref{fig:Basic_KappaLessTwo_Ell_Small}, note that the broken line has a non-trivial angle at~$K$) and~$\ell > \kappa_p$ (see Figure~\ref{fig:Basic_KappaLessTwo_Ell_Large}), note that~\eqref{HDRShiftedKnapp} follows from~\eqref{HDRKnapp} when~$\ell \geq \kappa_p$.
\begin{figure}[h]
\includegraphics[height=9.5cm]{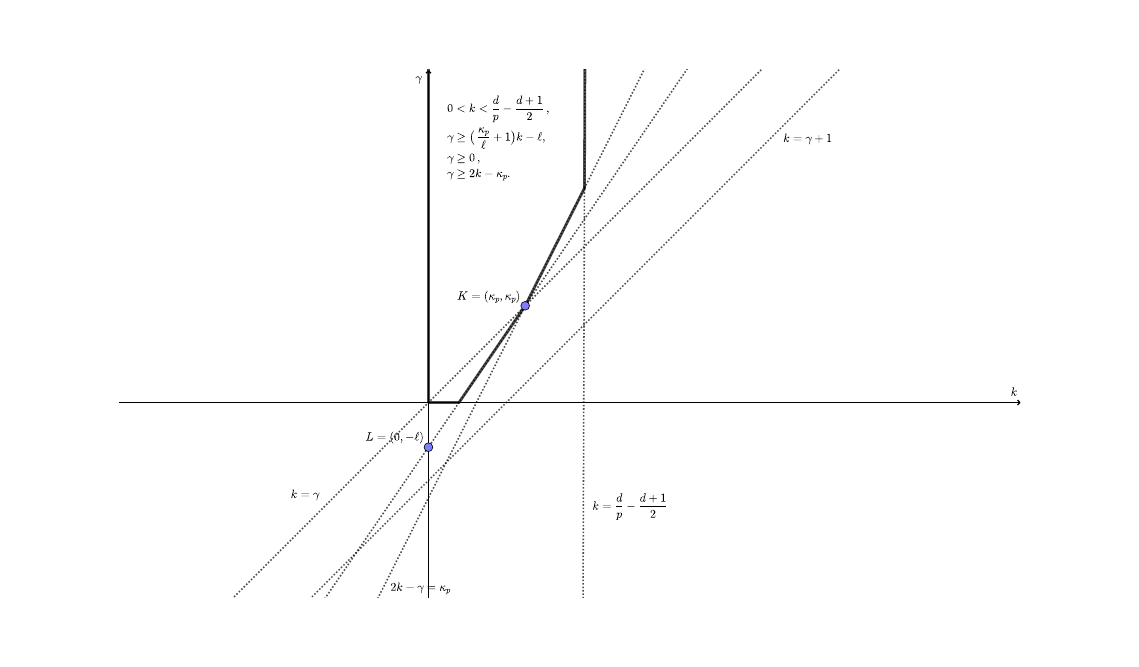}
\caption{Diagram for the case~$\ell \leq \kappa_p < 2$.}
\label{fig:Basic_KappaLessTwo_Ell_Small}
\end{figure}
\begin{figure}[h]
\includegraphics[height=9.5cm]{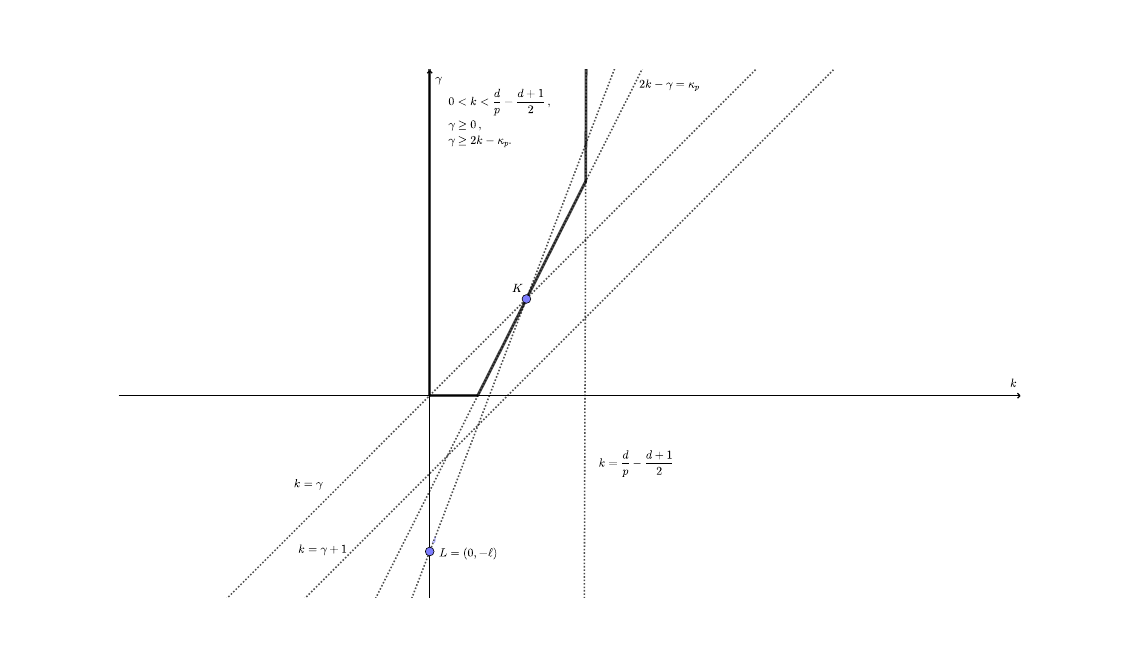}
\caption{Diagram for the case~$\kappa_p < 2$ and~$\ell > \kappa_p$.}
\label{fig:Basic_KappaLessTwo_Ell_Large}
\end{figure}

\paragraph{\bf Case~$\kappa_p \geq 2$.} In this case, there will be three subcases:~$\ell \leq \frac{\kappa_p}{\kappa_p - 1}$ (if this inequality turns into equality, then~$KL$ passes through the point~$(1,0)$),~$\frac{\kappa_p}{\kappa_p - 1} < \ell \leq \kappa_p$, and~$\ell > \kappa_p$. In the first case, the condition~\eqref{HDRSigmaLpShiftCondition} is unnecessary (see Figure~\eqref{fig:Basic_L_Small}). In the second case, all the conditions are required (see Figure~\ref{fig:Basic_L_inbetween}). In the third case, the condition~\eqref{HDRShiftedKnapp} is unnecessary (see Figure~\ref{fig:Basic_L_Large}).
\begin{figure}[h]
\includegraphics[height=9.5cm]{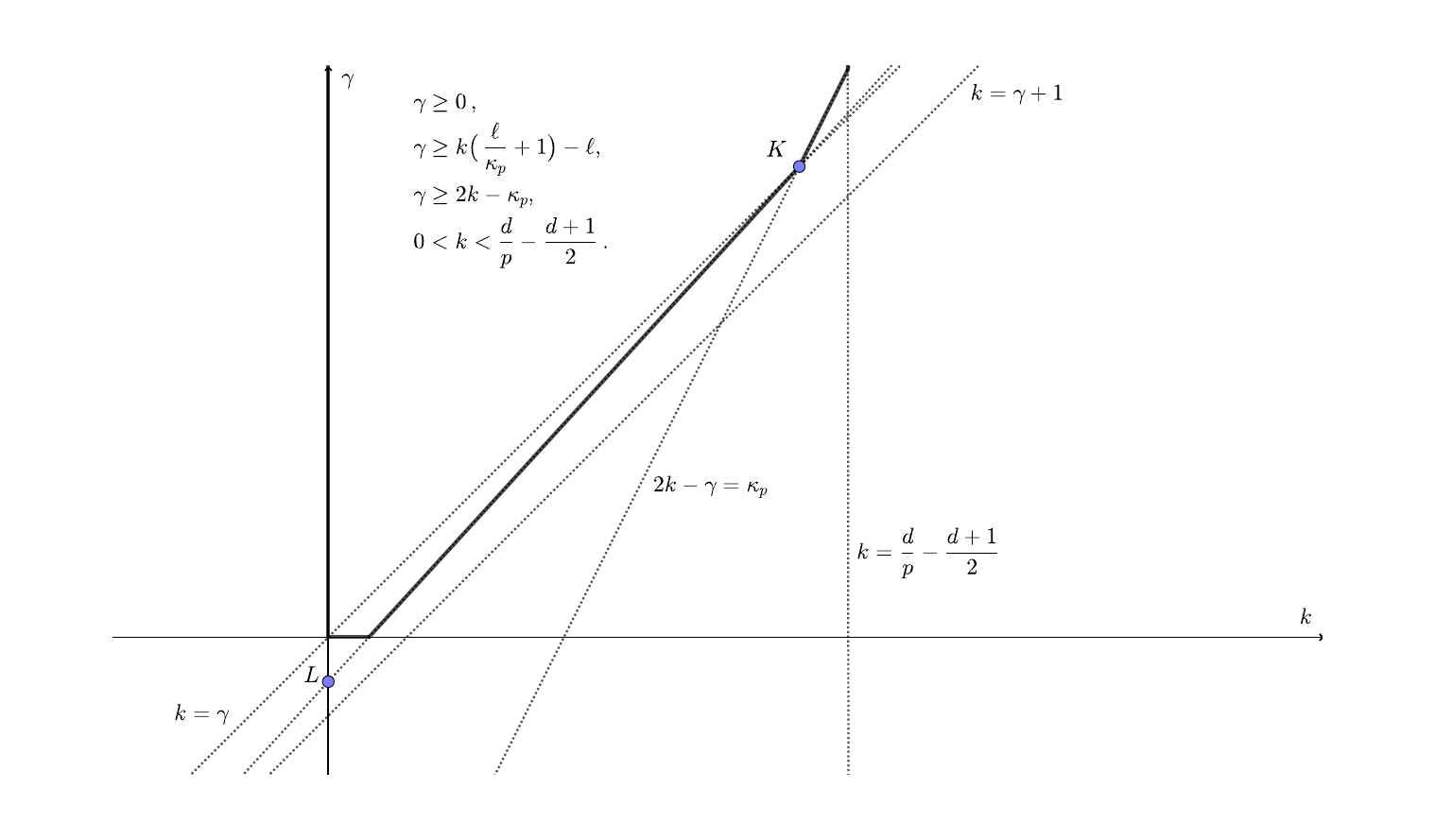}
\caption{Diagram for the case~$\kappa_p \geq 2$ and~$\ell \leq \frac{\kappa_p}{\kappa_p - 1}$.}
\label{fig:Basic_L_Small}
\end{figure}
\begin{figure}[h]
\includegraphics[height=9.5cm]{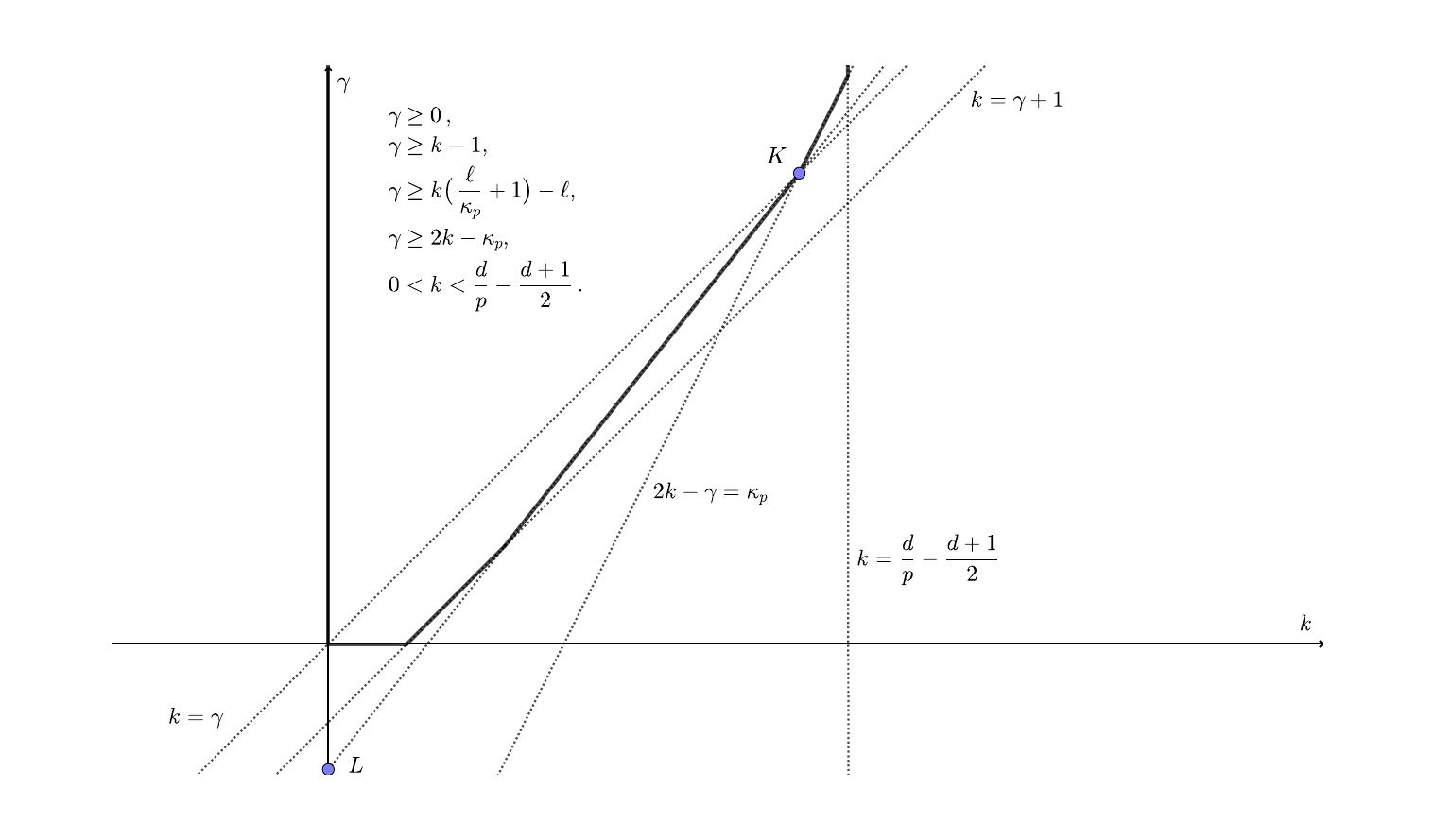}
\caption{Diagram for the case~$\kappa_p \geq 2$ and~$\frac{\kappa_p}{\kappa_p - 1} < \ell \leq \kappa_p$.}
\label{fig:Basic_L_inbetween}
\end{figure}
\begin{figure}[h!]
\includegraphics[height=9.5cm]{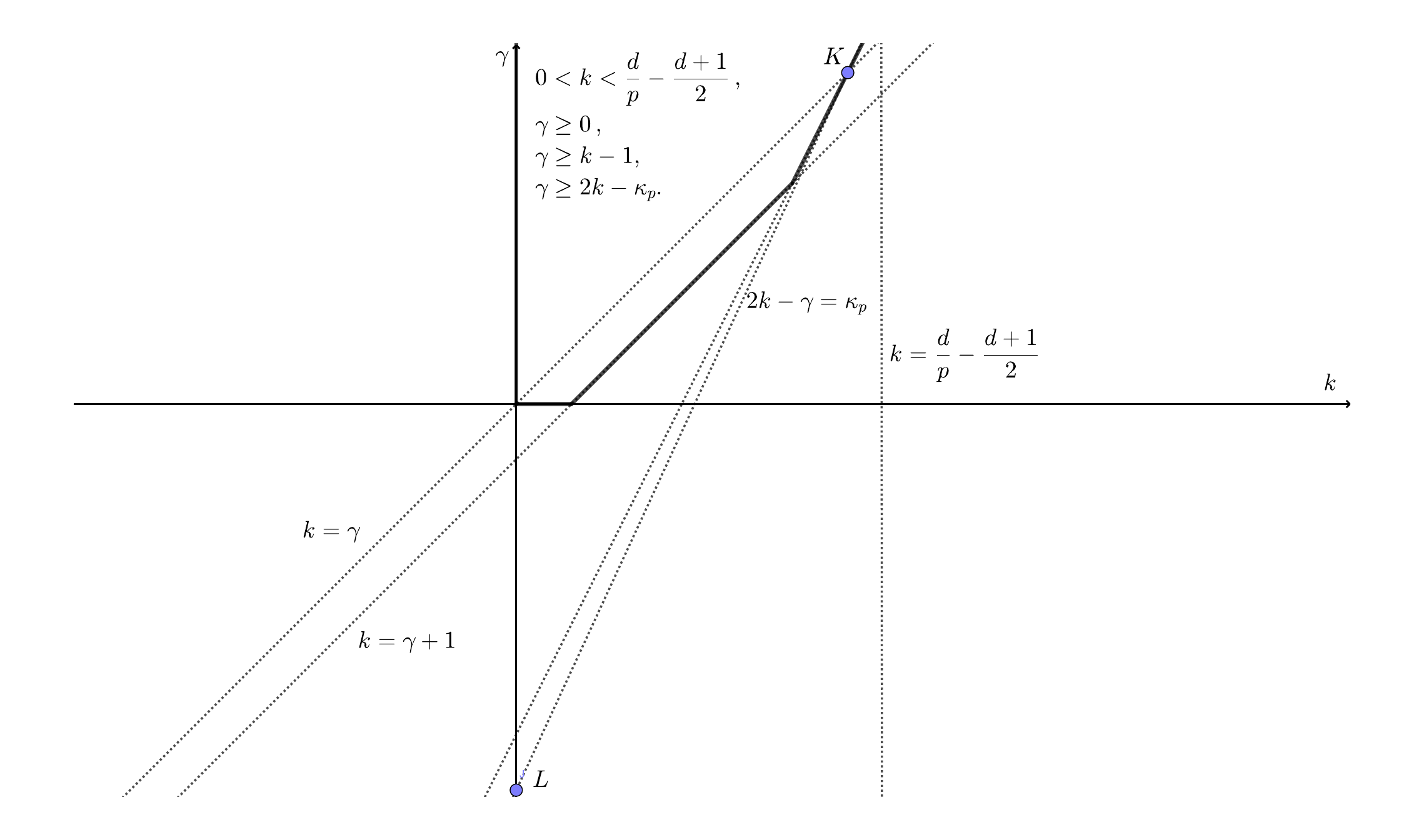}
\caption{Diagram for the case~$2 \leq \kappa_p \leq \ell$.}
\label{fig:Basic_L_Large}
\end{figure}

\subsection{Two simple examples}\label{s42}
As it was explained in Lemma~\ref{ApproximationForHDR}, we may assume that~$f$ is a Schwartz function. From now on,~$f$ is Schwartz.

\paragraph{\bf Case~$p=1$,~$k=1$,~$s = 0$, and~$\ell = 2$.}
Note that~$d \geq 5$ here and this choice of parameters corresponds to the case illustrated by Figure~\ref{fig:Basic_L_inbetween}. Let us first prove the inequality
\begin{equation*}
\Big\|\psi(\cdot)\frac{\partial\hat{f}}{\partial \xi_d}(\cdot,h(\cdot))\Big\|_{L_2} \lesssim \|f\|_{L_1} + \big\| \psi(\cdot) \hat{f}(\cdot,h(\cdot))\big\|_{H^2}
\end{equation*}
when~$d > 5$. In other words, we want to prove~$\HDRL(h,1,0,2,1)$ (see Subsection~\ref{s61} below). We start with the Newton--Leibniz formula (recall that~$\psi$ is independent of~$\xi_d$):
\begin{equation}\label{NewtonLeibniz}
\Big\|\psi(\cdot)\frac{\partial\hat{f}}{\partial\xi_d}(\cdot,h(\cdot))\Big\|_{L_2}^2  = \frac12\frac{\partial^2}{\partial r^2}\Big\|\psi(\cdot)\hat{f}(\cdot,h(\cdot)+r)\Big\|_{L_2}^2 - \Scalprod{\psi(\cdot)\frac{\partial^2\hat{f}}{\partial \xi_d^2}(\cdot,h(\cdot))}{\psi(\cdot)\hat{f}(\cdot,h(\cdot))}_{L_2}.
\end{equation}
The absolute value of the first summand on the right is bounded by~$\|f\|_{L_1}^2$ by~$\SI(h,0,2,0,1)$, which holds true provided~$d > 5$, by Theorem~\ref{StubbornTheorem}. To estimate the absolute value of the second summand, we use the Cauchy--Schwarz inequality:
\begin{equation*}
\bigg|\Scalprod{\psi(\cdot)\frac{\partial^2\hat{f}}{\partial \xi_d^2}(\cdot,h(\cdot))}{\psi(\cdot)\hat{f}(\cdot,h(\cdot))}_{L_2}\bigg| \leq \big\|\psi(\cdot)\hat{f}(\cdot,h(\cdot))\big\|_{\dot{H}^2}\Big\|\psi(\cdot)\frac{\partial^2\hat{f}}{\partial \xi_d^2}(\cdot,h(\cdot))\Big\|_{\dot{H}^{-2}}.
\end{equation*}
Note that the second multiple is bounded by~$\|f\|_{L_1}$ by~$\SI(h,2,0,2,1)$ by Theorem~\ref{StubbornTheorem} since~$d > 5$. So, we have proved
\begin{equation*}
\Big\|\psi(\cdot)\frac{\partial\hat{f}}{\partial \xi_d}(\cdot,h(\cdot))\Big\|_{L_2}^2 \lesssim \|f\|_{L_1}\Big(\|f\|_{L_1} + \big\|\psi(\cdot) \hat{f}(\cdot,h(\cdot))\big\|_{\dot{H}^2}\Big).
\end{equation*}
Clearly, one may pass to inhomogeneous Bessel space on the right.

A brief inspection of the conditions in Theorem~\ref{StubbornTheorem} shows that we have indeed proved 
\begin{equation*}
\Big\|\psi(\cdot)\frac{\partial\hat{f}}{\partial \xi_d}(\cdot,h(\cdot))\Big\|_{L_2} \lesssim \|f\|_{L_p} + \big\| \psi(\cdot)\hat{f}(\cdot,h(\cdot))\big\|_{H^2}
\end{equation*}
if~$p \leq \frac{2d+2}{d+7}$ and~$d > 5$ (the first inequality is~$\kappa_p \geq 2$, which means that~$(1,0)$ lies above the line~$KL$ on Figure~\ref{fig:Basic_L_inbetween}). In the case~$d=5$ we have the endpoint weak-type bound
\begin{equation*}
\Big\|\psi(\cdot)\frac{\partial\hat{f}}{\partial \xi_d}(\cdot,h(\cdot))\Big\|_{L_2} \lesssim \|f\|_{L_1} + \big\| \psi(\cdot) \hat{f}(\cdot,h(\cdot))\big\|_{B^{2,1}_2},
\end{equation*}
where~$B_2^{2,1}$ is the Besov space (see~\cite{BerghLofstrom} for definition and basic properties). To see this, we note that~$\SI(h,0,2,0,1)$ is true when~$d=5$, and Corollary~\ref{EndpointCaseWithoutWeight} says that
\begin{equation*}
\Big\|\psi(\cdot)\frac{\partial^2\hat{f}}{\partial \xi_d^2}(\cdot,h(\cdot))\Big\|_{B^{-2,\infty}_2} \lesssim \|f\|_{L_1}, \quad d \geq 5.
\end{equation*}
Note that the exponent~$\ell=2$ is sharp in this inequality when~$d=5$ by~\eqref{HDRShiftedKnapp}.

\paragraph{\bf Case~$p=1$,~$k=1$,~$s = 0$, and~$\ell = \frac32$.}
Note that~$d \geq 7$ here and this choice of parameters corresponds to the case illustrated by Figure~\ref{fig:Basic_L_inbetween}. We apply the Newton--Leibniz formula,~$\SI(h,0,2,0,1)$ (which is true in our case), and the Cauchy-Schwarz inequality again:
\begin{equation*}
\Big\|\psi(\cdot)\frac{\partial\hat{f}}{\partial \xi_d}(\cdot,h(\cdot))\Big\|_{L_2}^2  \lesssim  \|f\|_{L_1}^2 + \big\|\psi(\cdot)\hat{f}(\cdot,h(\cdot))\big\|_{\dot{H}^{\frac32}}\Big\|\psi(\cdot)\frac{\partial^2\hat{f}}{\partial \xi_d^2}(\cdot,h(\cdot))\Big\|_{\dot{H}^{-\frac32}}.
\end{equation*}
To estimate the second summand, we use yet another the Newton--Leibniz formula:
\begin{equation*}
\Big\|\psi(\cdot)\frac{\partial^2\hat{f}}{\partial \xi_d^2}(\cdot,h(\cdot))\Big\|^2_{\dot{H}^{-\frac32}} = \frac12\frac{\partial^2}{\partial r^2}\Big\|\psi(\cdot)\frac{\partial \hat{f}}{\partial \xi_d}(\cdot,h(\cdot))\Big\|_{\dot{H}^{-\frac32}}^2 - \Scalprod{\psi(\cdot)\frac{\partial^3\hat{f}}{\partial \xi_d^3}(\cdot,h(\cdot))}{\psi(\cdot)\frac{\partial\hat{f}}{\partial \xi_d}(\cdot,h(\cdot))}_{\dot{H}^{-\frac32}}.
\end{equation*}
The first summand may be estimated by~$\|f\|_{L_1}^2$ if~$\SI(h,1,2,\frac32,1)$ holds true. We estimate the second summand with the help of the Cauchy--Schwarz inequality:
\begin{equation*}
\bigg|\Scalprod{\psi(\cdot)\frac{\partial^3\hat{f}}{\partial \xi_d^3}(\cdot,h(\cdot))}{\psi(\cdot)\frac{\partial\hat{f}}{\partial \xi_d}(\cdot,h(\cdot))}_{\dot{H}^{-\frac32}}\bigg| \leq \Big\|\psi(\cdot)\frac{\partial^3\hat{f}}{\partial \xi_d}(\cdot,h(\cdot))\Big\|_{\dot{H}^{-3}}\Big\|\psi(\cdot)\frac{\partial\hat{f}}{\partial \xi_d}(\cdot,h(\cdot))\Big\|_{L_2}.
\end{equation*}
This is bounded by~$\|f\|_{L_1}\Big\|\psi\frac{\partial\hat{f}}{\partial \xi_d}\Big\|_{L_2}$ provided we have~$\SI(h,3,0,3,1)$ (which we have if~$d > 7$ due to Theorem~\ref{StubbornTheorem}). So, if we have~$\SI(h,1,2,\frac32,1)$, then
\begin{equation*}
\Big\|\psi(\cdot)\frac{\partial\hat{f}}{\partial \xi_d}(\cdot,h(\cdot))\Big\|_{L_2}^2 \lesssim \|f\|_{L_1}^2 + \big\|\psi(\cdot)\hat{f}(\cdot,h(\cdot))\big\|_{\dot{H}^{\frac32}}\Big(\|f\|_{L_p} + \big\|\psi(\cdot)\hat{f}(\cdot,h(\cdot))\big\|_{\dot{H}^{\frac32}}\Big\|\psi(\cdot)\frac{\partial\hat{f}}{\partial \xi_d}(\cdot,h(\cdot))\Big\|_{L_2}\Big).
\end{equation*}
Since~$\SI(h,1,2,\frac32,1)$ holds true when~$d > 7$, this leads to the inequality
\begin{equation*}
\Big\|\psi(\cdot)\frac{\partial\hat{f}}{\partial \xi_d}(\cdot,h(\cdot))\Big\|_{L_2} \lesssim \|f\|_{L_1} + \big\|\psi(\cdot)\hat{f}(\cdot,h(\cdot))\big\|_{H^{\frac32}}.
\end{equation*}
Similar to the previous case, there is a result
\begin{equation*}
\Big\|\psi(\cdot)\frac{\partial\hat{f}}{\partial \xi_d}(\cdot,h(\cdot))\Big\|_{L_2} \lesssim \|f\|_{L_1} + \big\|\psi(\cdot)\hat{f}(\cdot,h(\cdot))\big\|_{H^{\ell}}
\end{equation*}
for~$d=7$ and~$\ell > \frac32$, which is sharp up to the endpoint with respect to~\eqref{HDRShiftedKnapp}.

\subsection{Convexity properties of the function $N$}\label{s43}
As we have seen in the examples, it is useful to consider the expression
\begin{equation*}
N(k,s) = \Big\|\psi(\cdot)\frac{\partial^k\hat{f}}{\partial \xi_d^k}(\cdot,h(\cdot))\Big\|_{\dot{H}^{-s}}^2
\end{equation*}
as a function of the parameters~$k$ and~$s$. We always assume~$k$ is a non-negative integer and~$s$ is a non-negative real. Since we will be working with points in the~$(k,s)$-plane, we will give names to some regions there.
\begin{Def}
Let~$d,p,\ell$, and~$h$ be fixed. The domain
\begin{equation*}
\Big\{(k,s) \in \mathbb{R}^2\;\Big|\, k\in (0,\sigma_p), s \geq 0, s \geq k-1, 2k-s \leq \kappa_p\Big\}
\end{equation*}
is called the friendly region. The domain where~$s \geq k$ is called the subcritical region. The set of all points~$(k,s)$ such that~$\HDRL(h,k,s,\ell,p)$ holds true is called the~$\HDR$-domain.
\end{Def}

If~$X$ is an arbitrary point in the~$(k,s)$ plane,~$k_X$ will usually denote its~$k$-coordinate, and~$s_X$ will denote its~$s$-coordinate.
\begin{Rem}
The~$\HDR$-domain lies inside the friendly domain \textup(by Proposition~\ref{SufficientConditionsHDR}\textup).
\end{Rem}
\begin{Le}\label{Convexity}
For any~$k$ and~$s$, there exists a constant~$C$ such that the inequality
\begin{equation*}
N(k,s) \leq C\|f\|_{L_p}^2 + \sqrt{N(k_1,s_1)N(k_2,s_2)},\quad 2k = k_1+k_2, 2s = s_1 + s_2, 
\end{equation*}
is true provided~$(k,s)$ lies in the friendly region and~$0 \leq k_j<\sigma_p$ for~$j=1,2$.
\end{Le}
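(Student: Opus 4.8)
The plan is to prove the inequality in its local form (after the partition‑of‑unity reductions of Subsection~\ref{s61}, in the spirit of the examples of Subsection~\ref{s42}), by induction on the spread $m:=\tfrac12|k_2-k_1|$ (an integer, since $k_1+k_2=2k$ forces $k_1\equiv k_2\bmod 2$); without loss of generality $k_1\le k_2$, so $k_1=k-m$, $k_2=k+m$. By Lemma~\ref{ApproximationForHDR} we may assume $f$ Schwartz, so that $r\mapsto v^{(j)}(r):=\psi(\cdot)\,\partial_d^{\,j}\hat f(\cdot,h(\cdot)+r)$ is smooth into each $\dot{H}^{-\gamma}$, with $\partial_r v^{(j)}(r)=v^{(j+1)}(r)$ and $\|v^{(j)}(0)\|_{\dot{H}^{-\gamma}}^2=N(j,\gamma)$. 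It is convenient to carry through the induction a slightly stronger statement that also bounds the off‑diagonal pairings $\Scalprod{v^{(a)}(0)}{v^{(b)}(0)}_{\dot{H}^{-\gamma}}$ with $a+b=2k$ and admissible splittings $\gamma_a+\gamma_b=2\gamma$; this follows from the diagonal ($N$) version by the same manipulation used below. The base case $m=0$ is just log‑convexity of homogeneous Bessel norms in the smoothness index: writing $|z|^{-2s}=|z|^{-s_1}|z|^{-s_2}$ on the Fourier side and applying Cauchy--Schwarz gives $N(k,s)\le\sqrt{N(k,s_1)N(k,s_2)}$, with no $\|f\|_{L_p}^2$ term.

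For the inductive step I apply the generalized Leibniz rule to $\Phi(r):=\|v^{(k_1)}(r)\|_{\dot{H}^{-s}}^2$:
\begin{equation*}
\frac{\partial^{2m}\Phi}{\partial r^{2m}}(0)=\sum_{j=0}^{2m}\binom{2m}{j}\Scalprod{v^{(k_1+j)}(0)}{v^{(k_1+2m-j)}(0)}_{\dot{H}^{-s}} .
\end{equation*}
The term $j=m$ equals $\binom{2m}{m}N(k,s)$, so solving for it expresses $\binom{2m}{m}N(k,s)$ through $|\partial_r^{2m}\Phi(0)|$, the two extreme terms ($j\in\{0,2m\}$), and the intermediate terms ($0<j<2m$, $j\ne m$). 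Now $\partial_r^{2m}\Phi(0)$ is precisely the quantity controlled by $\SI(h,k_1,2m,s,p)$, and its hypotheses in Theorem~\ref{StubbornTheorem} --- $k_1\le s$ (true since $k_1=k-m\le k-1\le s$ once $m\ge 1$), $k_1+2m=k_2<\sigma_p$, and $2k_1-s+2m=2k-s\le\kappa_p$ --- are exactly the defining inequalities of the friendly region together with $k_2<\sigma_p$; hence $|\partial_r^{2m}\Phi(0)|\lesssim\|f\|_{L_p}^2$. The extreme terms sum to $2\operatorname{Re}\Scalprod{v^{(k_1)}(0)}{v^{(k_2)}(0)}_{\dot{H}^{-s}}$, which the splitting $|z|^{-2s}=|z|^{-s_1}|z|^{-s_2}$ and Cauchy--Schwarz bound by $2\sqrt{N(k_1,s_1)N(k_2,s_2)}$ --- the main term on the right of the conclusion.

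The remaining intermediate terms are pairings $\Scalprod{v^{(k-i)}(0)}{v^{(k+i)}(0)}_{\dot{H}^{-s}}$ for $1\le i\le m-1$, i.e.\ pairings at two indices symmetric about $k$ of spread $2i<2m$; these are the crux. When one can choose a splitting $\gamma_1+\gamma_2=2s$ with $\gamma_1\ge\max(k-i,2(k-i)-\kappa_p)$ and $\gamma_2\ge\max(k+i,2(k+i)-\kappa_p)$ --- possible, using $s\ge 2k-\kappa_p$ and (in the subcritical range) $s\ge k$, for a large collection of indices --- Cauchy--Schwarz together with the $\beta=0$ case of Theorem~\ref{StubbornTheorem} disposes of both factors $N(k-i,\gamma_1)$ and $N(k+i,\gamma_2)$ directly against $\|f\|_{L_p}^2$. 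In the remaining cases, in particular throughout the supercritical range $k>s\ge k-1$, this is no longer available: one feeds each such pairing into the induction hypothesis and then pushes the resulting geometric means of $N$ at interior indices back toward $(k_1,s_1)$ and $(k_2,s_2)$ by repeatedly applying the induction hypothesis at ever smaller spread (alternately towards one endpoint or the other so as never to leave the range $[k_1,k_2]$), finally absorbing a controlled multiple of $N(k,s)$ into the left‑hand side.

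The main obstacle is exactly this last point: the intermediate terms have no reason to be dominated by $\|f\|_{L_p}^2$ alone once $k>s$, so the two–step manipulation of Subsection~\ref{s42} does not suffice and one is forced into the full induction on the spread, together with a bookkeeping of precisely which instances $\SI(h,\cdot,\cdot,\cdot,p)$ are licensed by the friendly‑region inequalities and a verification that the absorption closes with the combinatorial constants produced by iterating the product rule. This integer‑parameter bookkeeping is the same ``integer arithmetic'' difficulty mentioned after Proposition~\ref{SufficientConditionsHDR}.
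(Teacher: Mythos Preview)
Your approach diverges from the paper's and leaves a real gap in the handling of the intermediate terms for $m\ge 2$. The paper does not induct on the spread at all. Instead it isolates an exact algebraic identity (Lemma~\ref{AlgebraicLemma}): there exist constants $c_1,\ldots,c_{|k-k_1|}$ so that
\[
(-1)^{|k-k_1|}N(k,s)-\Re\Scalprod{\psi\,\partial_d^{k_1}\hat f}{\psi\,\partial_d^{k_2}\hat f}_{\dot H^{-s}}
=\sum_{j=1}^{|k-k_1|}c_j\,\frac{\partial^{2j}}{\partial r^{2j}}\Big\|\psi\,\partial_d^{k-j}\hat f\Big\|_{\dot H^{-s}}^2\Big|_{r=0}.
\]
The point is that by taking a suitable \emph{linear combination} of Leibniz expansions of $\partial_r^{2j}\|\psi\,\partial_d^{k-j}\hat f\|^2$ over all $j=1,\ldots,m$ (not just $j=m$ as you do), the intermediate pairings $\langle v^{(k-i)},v^{(k+i)}\rangle$ for $0<i<m$ cancel identically; the proof is a one-line trigonometric computation. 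What remains on the right is a sum of $\SI(h,k-j,2j,s,p)$ quantities, each bounded by $\|f\|_{L_p}^2$ under exactly the friendly-region constraints and $k_1,k_2<\sigma_p$, and the single cross term on the left is handled by the Cauchy--Schwarz split $|z|^{-2s}=|z|^{-s_1}|z|^{-s_2}$.

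Your scheme instead tries to bound each intermediate pairing by the induction hypothesis at shifted centres and then absorb a multiple of $N(k,s)$. This is not completed: applying the hypothesis at centre $k\pm i$ requires the corresponding point $(k\pm i,s')$ to lie in the friendly region for the particular $s'$ forced by your splittings, which you have not checked (and which is not automatic, since the lemma places no constraint on $s_1,s_2$ individually); and you have not verified that the recursion closes with a coefficient of $N(k,s)$ strictly below $\binom{2m}{m}$ so that the absorption is legitimate. The remark about ``integer arithmetic'' after Proposition~\ref{SufficientConditionsHDR} concerns the restriction $k\in\mathbb Z$ in the $\HDR$ problem, not the bookkeeping you describe here. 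The clean fix is to replace the induction by the algebraic cancellation of Lemma~\ref{AlgebraicLemma}.
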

We will need an ``algebraic'' lemma that will link the quantities~$N(k,s)$,~$N(k_1,s_1)$, and~$N(k_2,s_2)$ together. 
\begin{Le}\label{AlgebraicLemma}
For any~$k, k_1, k_2\in\mathbb{Z}_+$ such that~$2k = k_1+k_2$, there exist coefficients~$c_1,c_2,\ldots,c_{|k_1-k|}$ such that
\begin{multline}\label{AlgebraOne}
(-1)^{|k-k_1|}\Big\|\psi(\cdot)\frac{\partial^k\hat{f}}{\partial \xi_d^k}(\cdot,h(\cdot))\Big\|_{\dot{H}^{-s}}^2 - \Re\Scalprod{\psi(\cdot)\frac{\partial^{k_1}\hat{f}}{\partial \xi_d^{k_1}}(\cdot,h(\cdot))}{\psi(\cdot)\frac{\partial^{k_2}\hat{f}}{\partial\xi_d^{k_2}}(\cdot,h(\cdot))}_{\dot{H}^{-s}} =\\ \sum\limits_{j=1}^{|k - k_1|}c_j\frac{\partial^{2j}}{\partial r^{2j}}\Big\|\psi(\cdot)\frac{\partial^{k-j}\hat{f}}{\partial\xi_d^{k-j}}(\cdot,h(\cdot)+r)\Big\|_{\dot{H}^{-s}}^2
\end{multline}
for any function~$f$ and any~$s$.
\end{Le}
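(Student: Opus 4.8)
The plan is to obtain~\eqref{AlgebraOne} as the image, under the symbolic substitution $x\mapsto\partial_{r_1}$, $y\mapsto\partial_{r_2}$, of a single algebraic identity between symmetric polynomials in two variables. Write $m:=|k-k_1|$; by the symmetry of~\eqref{AlgebraOne} under $k_1\leftrightarrow k_2$ I may assume $k_1=k+m$ and $k_2=k-m$, and note $m\le k$ because $k_2\ge 0$ (if $m=0$ the identity is trivial, both sides being $0$, so assume $m\ge1$). I would first set, for a Schwartz $f$,
\[
u_n(r)(\zeta):=\psi(\zeta)\,\frac{\partial^n\hat f}{\partial\xi_d^n}\bigl(\zeta,h(\zeta)+r\bigr),
\]
so that $\partial_r u_n(r)=u_{n+1}(r)$ (here $\psi$ does not depend on $r$), and let $B(\phi,\chi)$ be the continuous sesquilinear form for which $\|\phi\|_{\dot H^{-s}}^2=B(\phi,\phi)$, given by the integral in~\eqref{OurSobolev} for $s\in(0,\frac{d-1}{2})$ and by the $L_2$ inner product for $s=0$; it satisfies $\overline{B(\phi,\chi)}=B(\chi,\phi)$. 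Because $f$ is Schwartz and $\psi$ is compactly supported, the two-variable function $G(r_1,r_2):=B\bigl(u_0(r_1),u_0(r_2)\bigr)$ is $C^\infty$ on $\mathbb R^2$, differentiation under the integral is justified, $\partial_{r_1}^{a}\partial_{r_2}^{b}G=B\bigl(u_a(r_1),u_b(r_2)\bigr)$, and $\overline{G(r_1,r_2)}=G(r_2,r_1)$.

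The next step uses the elementary chain rule $\frac{d^{2j}}{dr^{2j}}\bigl[(\partial_{r_1}^{n}\partial_{r_2}^{n}G)(r,r)\bigr]=\bigl[(\partial_{r_1}+\partial_{r_2})^{2j}\partial_{r_1}^{n}\partial_{r_2}^{n}G\bigr](r,r)$. Taking $n=k-j$ and evaluating at $r=0$, the right-hand side of~\eqref{AlgebraOne} becomes $\bigl[Q(\partial_{r_1},\partial_{r_2})G\bigr](0,0)$ with $Q(x,y)=\sum_{j=1}^{m}c_j(x+y)^{2j}(xy)^{k-j}$, while the left-hand side — after using $\overline{G(r_1,r_2)}=G(r_2,r_1)$ to rewrite $\Re\langle u_{k_1}(0),u_{k_2}(0)\rangle_{\dot H^{-s}}=\tfrac12\bigl(\partial_{r_1}^{k+m}\partial_{r_2}^{k-m}G+\partial_{r_1}^{k-m}\partial_{r_2}^{k+m}G\bigr)(0,0)$ — becomes $\bigl[R(\partial_{r_1},\partial_{r_2})G\bigr](0,0)$ with $R(x,y)=(-1)^m(xy)^k-\tfrac12\bigl(x^{k+m}y^{k-m}+x^{k-m}y^{k+m}\bigr)$. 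Thus it suffices to exhibit real numbers $c_1,\dots,c_m$ with the polynomial identity $Q=R$ in $\mathbb R[x,y]$.

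To solve $Q=R$, I would cancel the common factor $(xy)^{k-m}$, which divides every monomial on both sides (here $j\le m$ is used), reducing the claim to
\[
\sum_{j=1}^m c_j(x+y)^{2j}(xy)^{m-j}=(-1)^m(xy)^m-\tfrac12\bigl(x^{2m}+y^{2m}\bigr).
\]
Writing the power sum in the elementary symmetric functions $e_1=x+y$, $e_2=xy$, say $x^{2m}+y^{2m}=\sum_{i=0}^m a_i(x+y)^{2(m-i)}(xy)^i$ with integer coefficients $a_i$ (Newton's identities), and observing $a_m=2(-1)^m$ (specialize $x=-y$), the choice $c_j:=-\tfrac12 a_{m-j}$ — in particular $c_m=-\tfrac12$ and $c_{m-1}=m$, in agreement with~\eqref{NewtonLeibniz} and the worked cases of Subsection~\ref{s42} — turns the left side into $-\tfrac12\sum_{i=0}^{m-1}a_i(x+y)^{2(m-i)}(xy)^i=-\tfrac12\bigl((x^{2m}+y^{2m})-a_m(xy)^m\bigr)$, which equals the right side. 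This completes the argument.

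The computation is almost entirely mechanical. The only places requiring care are the bookkeeping of complex conjugates (which is where the real part in~\eqref{AlgebraOne} enters) and the justification of differentiating $G$ under the integral sign when the kernel $|\zeta-\eta|^{2s-d+1}$ is merely locally integrable, $0<s<\frac{d-1}{2}$. A minor structural point, which I expect is the only thing worth flagging as a potential pitfall, is that the $(xy)^m$-coefficient of the reduced identity imposes no equation on the $c_j$ — both sides equal $2(-1)^m$ there automatically — so the \emph{a priori} over-determined linear system for $c_1,\dots,c_m$ is in fact consistent; the symmetric-function viewpoint is precisely what makes this transparent, and I anticipate no genuine obstacle beyond it.
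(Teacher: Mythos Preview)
Your proof is correct and follows essentially the same strategy as the paper's: both reduce the lemma to a purely algebraic identity obtained by expanding each $\partial_r^{2j}\|\cdot\|^2$ via the product rule. The only difference is in how that identity is verified --- the paper observes that the relevant binomial coefficients match those in the Chebyshev-type relation $(-1)^{m}-\cos(2m\theta)=\sum_{j=1}^{m}c_j(2\cos\theta)^{2j}$ and evaluates at $\theta=\pi/2$, whereas you write the same identity in the elementary symmetric functions $e_1=x+y$, $e_2=xy$ and invoke Newton's formula for $x^{2m}+y^{2m}$; setting $x=e^{i\theta}$, $y=e^{-i\theta}$ shows these two verifications are literally the same computation. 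Your packaging via the two-variable function $G(r_1,r_2)$ is a bit more explicit (it yields the closed form $c_j=-\tfrac12 a_{m-j}$), but there is no substantive difference in method.
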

\begin{proof}
First, by the Newton--Leibniz formula,
\begin{multline*}
\frac{\partial^{2j}}{\partial r^{2j}}\Big\|\psi(\cdot)\frac{\partial^{k-j}\hat{f}}{\partial \xi_d^{k-j}}(\cdot,h(\cdot)+r)\Big\|_{\dot{H}^{-s}}^2 = C_{2j}^j\Scalprod{\psi(\cdot)\frac{\partial^{k}\hat{f}}{\partial \xi_d^k}(\cdot,h(\cdot))}{\psi(\cdot)\frac{\partial^{k}\hat{f}}{\partial\xi_d^k}(\cdot,h(\cdot))}_{\dot{H}^{-s}} +\\ 2\sum\limits_{i=0}^{j-1} C_{2j}^i \Scalprod{\psi(\cdot)\frac{\partial^{k+j-i}\hat{f}}{\partial\xi_d^{k+j-i}}(\cdot,h(\cdot))}{\psi(\cdot)\frac{\partial^{k+i-j}\hat{f}}{\partial\xi_d^{k+i-j}}(\cdot,h(\cdot))}_{\dot{H}^{-s}}.
\end{multline*}
Thus, it is clear that
\begin{equation*}
\Re\Scalprod{\psi(\cdot)\frac{\partial^{k_1}\hat{f}}{\partial \xi_d^{k_1}}(\cdot,h(\cdot))}{\psi(\cdot)\frac{\partial^{k_2}\hat{f}}{\partial\xi_d^{k_2}}(\cdot,h(\cdot))}_{\dot{H}^{-s}}
\end{equation*}
is a linear combination of all the other terms in the identity~\eqref{AlgebraOne}. The only non-trivial question is why does the term
\begin{equation*}
\Big\|\psi(\cdot)\frac{\partial^k\hat{f}}{\partial\xi_d}(\cdot,h(\cdot))\Big\|_{\dot{H}^{-s}}^2
\end{equation*}
have coefficient~$(-1)^{|k-k_1|}$. 
For this we observe that the binomial coefficients that appear in~\eqref{AlgebraOne} once the Newton--Leibniz formula is applied are the same ones that arise in the trigonometric identity
\begin{equation*}
(-1)^{|k-k_1|} - \cos(2|k-k_1|\theta) = \sum_{j=1}^{|k-k_1|} c_j [2\cos \theta]^{2j}.
\end{equation*}
The result follows by evaluating the trigonometric sum at $\theta = \frac{\pi}{2}$.
\end{proof}
\begin{proof}[Proof of Lemma~\textup{\ref{Convexity}}.]
Lemma~\ref{AlgebraicLemma} says that
\begin{multline*}
\Big\|\psi(\cdot)\frac{\partial^k\hat{f}}{\partial\xi_d^k}(\cdot,h(\cdot))\Big\|_{\dot{H}^{-s}}^2 \leq\\  \bigg|\Scalprod{\psi(\cdot)\frac{\partial^{k_1}\hat{f}}{\partial\xi_d^{k_1}}(\cdot,h(\cdot))}{\psi(\cdot)\frac{\partial^{k_2}\hat{f}}{\partial\xi_d^{k_2}}(\cdot,h(\cdot))}_{\dot{H}^{-s}}\bigg| + C\sum\limits_{j=1}^{|k - k_1|}\bigg|\frac{\partial^{2j}}{\partial r^{2j}}\Big\|\psi(\cdot)\frac{\partial^{k-j}\hat{f}}{\partial\xi_d^{k-j}}(\cdot,h(\cdot)+r)\Big\|_{\dot{H}^{-s}}^2\bigg|.
\end{multline*}
By the Cauchy--Schwarz inequality, the first summand on the right can be estimated by~$\sqrt{N(k_1,s_1)N(k_2,s_2)}$. All the remaining terms are bounded by~$C\|f\|_{L_p}^2$, provided~$\SI(h,k-j,2j,s,p)$ holds true for any~$j=1,2,\ldots, |k-k_1|$. By Theorem~\ref{StubbornTheorem}, this holds exactly when
\begin{equation*}
\begin{aligned}
k-j \leq s;\\
k+j < \sigma_p;\\
2k - s\leq \kappa_p,
\end{aligned}
\qquad\qquad j = 1,2,\ldots, |k-k_1|.
\end{equation*}
The first list of conditions turns into~$k-1\leq s$. So, the first and the third conditions are fulfilled inside the friendly region. The second list is reduced to~$k_j < \sigma_p$,~$j=1,2$. 
\end{proof}
\begin{Cor}\label{UsualConvexity}
For any~$k$ and~$s$, there exists a constant~$C$ such that the inequality
\begin{equation*}
N(k,s) \leq C\|f\|_{L_p}^2 + \frac{N(k_1,s_1)+N(k_2,s_2)}{2},\quad 2k = k_1+k_2,\ 2s = s_1 + s_2, 
\end{equation*}
is true provided~$(k,s)$ lies in the friendly region and~$0 \leq k_j<\sigma_p$ for~$j=1,2$.
\end{Cor}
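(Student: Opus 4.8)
The plan is to deduce Corollary~\ref{UsualConvexity} directly from Lemma~\ref{Convexity} by replacing the geometric mean on the right-hand side with the arithmetic mean, using the elementary inequality $\sqrt{ab}\le\frac{a+b}{2}$ valid for all non-negative reals $a,b$.

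First I would invoke Lemma~\ref{Convexity}. Under exactly the hypotheses we are given here --- namely $(k,s)$ lies in the friendly region, $2k=k_1+k_2$, $2s=s_1+s_2$, and $0\le k_j<\sigma_p$ for $j=1,2$ --- that lemma supplies a constant $C$ (depending on $d,p,h$ and the chosen splitting, but not on $f$) such that
\[
N(k,s)\le C\|f\|_{L_p}^2+\sqrt{N(k_1,s_1)N(k_2,s_2)}.
\]
Since $N(k_1,s_1)$ and $N(k_2,s_2)$ are squared homogeneous Sobolev norms, they are non-negative, so the arithmetic--geometric mean inequality yields $\sqrt{N(k_1,s_1)N(k_2,s_2)}\le\frac{N(k_1,s_1)+N(k_2,s_2)}{2}$. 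Substituting this bound into the previous display gives
\[
N(k,s)\le C\|f\|_{L_p}^2+\frac{N(k_1,s_1)+N(k_2,s_2)}{2},
\]
which is the claimed inequality, and with the same constant $C$.

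There is no real obstacle here; the corollary is purely a cosmetic reformulation of Lemma~\ref{Convexity}. The only point worth recording is that no new constant is introduced, and that the additive form is the version actually used in the subsequent convexity/linear-programming analysis of the $\HDR$-domain in the $(k,s)$-plane, since a sum interacts cleanly with iteration and with taking suprema over admissible splittings, whereas a product does not.
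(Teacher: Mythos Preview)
Your argument is correct and is exactly the intended one: the paper states the corollary immediately after Lemma~\ref{Convexity} without proof precisely because it follows by a single application of the arithmetic--geometric mean inequality to the term $\sqrt{N(k_1,s_1)N(k_2,s_2)}$.
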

\begin{Le}\label{ConvexSequence}
Let~$a\colon \{0\}\cup [M..N]\to \mathbb{R}$ be a finite sequence and let~$2M \leq N$. Assume that
\begin{equation*}
\forall k \in [M+1..N-1]\qquad a_k\leq 1 + \frac{a_{k+1} + a_{k-1}}{2},
\end{equation*}
$a_M \leq 1 + \frac{a_{0} + a_{2M}}{2}$, and~$a_0 \leq 1$,~$a_N \leq 1$. Then,~$a_M \lesssim 1$.
\end{Le}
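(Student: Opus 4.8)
The plan is to \emph{convexify} the sequence and then invoke the elementary fact that a finite convex sequence attains its maximum at an endpoint. The hypothesis $a_k \le 1 + \tfrac12(a_{k+1}+a_{k-1})$ says precisely that the discrete second difference satisfies $a_{k+1}-2a_k+a_{k-1}\ge -2$ for $k\in[M+1..N-1]$. So I would introduce
\[
c_k = a_k + k^2, \qquad k\in[M..N],
\]
for which $c_{k+1}-2c_k+c_{k-1} = (a_{k+1}-2a_k+a_{k-1}) + 2 \ge 0$ on the same range; thus $(c_k)_{k=M}^{N}$ is genuinely convex (its consecutive differences $c_{k+1}-c_k$ are nondecreasing there, so the sequence is ``valley-shaped''), and therefore $c_k \le \max(c_M, c_N)$ for every $k\in[M..N]$. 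The case $M=0$ is trivial since then $a_M=a_0\le 1$, so I assume $M\ge 1$.

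Next I would use the hypothesis $2M\le N$, which guarantees $2M\in[M..N]$, and apply the endpoint bound at $k=2M$. Together with $a_N\le 1$ this gives
\[
a_{2M}+4M^2 = c_{2M} \le \max\bigl(a_M + M^2,\; 1 + N^2\bigr).
\]
Feeding the resulting bound on $a_{2M}$ into the ``bridge'' hypothesis $a_M \le 1 + \tfrac12(a_0 + a_{2M})$ and using $a_0\le 1$, I split into two cases. If the maximum is the first term, then $a_{2M}\le a_M - 3M^2$; since $a_M$ reappears on the right with coefficient $\tfrac12$ it can be absorbed, yielding $a_M\le 3(1-M^2)\le 0$. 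If the maximum is the second term, then $a_{2M}\le 1+N^2-4M^2$ and directly $a_M \le 2 + \tfrac12 N^2 - 2M^2$. In both cases $a_M$ is bounded by a constant depending only on $N$, which is the assertion $a_M\lesssim 1$.

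I do not expect a genuine obstacle inside the argument; the only points requiring a little care are the trivial edge case $M=0$ and the passage from $\Delta^2 c_k\ge 0$ to the endpoint maximum principle. The substantive issue is really the bookkeeping at the point of application: in the proof of Theorem~\ref{BigTableTheorem} the sequence $a_k$ will be $N(k,s_k)/\|f\|_{L_p}^2$ for a carefully chosen admissible sequence of exponents $s_k$, the interior midpoint inequalities come from Corollary~\ref{UsualConvexity} (which itself rests on Theorem~\ref{StubbornTheorem} via Lemma~\ref{Convexity}), and the bridge inequality at $k=M$ is produced by a separate use of Lemma~\ref{Convexity} pairing the indices $0$ and $2M$. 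The implicit constant in $a_M\lesssim 1$ then depends on $d$, $p$, and $\ell$ through $M$ and $N$, but not on $f$.
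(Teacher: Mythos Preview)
Your proof is correct and follows essentially the same approach as the paper's: both convexify by setting $b_k=a_k+k^2$ and then exploit convexity on $[M..N]$ together with the bridge inequality at $M$. The only cosmetic difference is in the endgame---the paper subtracts the linear interpolant through $(0,b_0)$ and $(N,b_N)$ and argues by contradiction that the resulting sequence is $\le 0$ at $M$, whereas you invoke the endpoint maximum principle directly and do a two-case split; both yield a bound depending only on $M$ and $N$.
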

\begin{proof}
Consider the sequence~$\{b_k\}_k$,~$b_k = a_k + k^2$. Its terms satisfy the inequalities
\begin{equation*}
\forall k \in [M+1..N-1]\qquad b_k\leq \frac{b_{k+1} + b_{k-1}}{2}
\end{equation*}
and~$b_M \leq \frac{b_0+b_{2M}}{2}$. In particular,~$\{b_k\}_k$ is convex on~$[M..N]$. We also subtract the linear function~$k\frac{b_N-b_0}{N}+b_0$ from it:
\begin{equation*}
c_k = b_k - \Big(k\frac{b_N-b_0}{N}+b_0\Big), \quad k \in \{0\}\cup [M..N].
\end{equation*}
The sequence~$\{c_k\}_k$ is convex on~$[M..N]$, equals zero at the endpoints~$0$ and~$N$, and also satisfies the inequality~$2c_{M} \leq c_{2M}$. Thus,~$c_{M} \leq 0$ (otherwise,~$c_{2M} \geq 2c_{M}\geq c_{M}$, which contradicts the convexity of~$c$ on the interval~$[M..N]$). Therefore,~$b_{M} \leq \frac{k}{N}(b_N - b_0)$, and finally,~$a_M \leq \frac{M(N^2+1)}{N}$.
\end{proof}
\begin{Rem}\label{OtherIndices}
In fact, we have proved that~$a_k \lesssim 1$ for any~$k \in [M..N]$. 
\end{Rem}
\begin{Rem}\label{HomogeneousConvexSequence}
Using the homogeneity, one can replace the assumptions of Lemma~\textup{\ref{ConvexSequence}} by 
\begin{equation*}
\forall k \in [M+1..N-1]\qquad a_k\leq C + \frac{a_{k+1} + a_{k-1}}{2},
\end{equation*}
$a_M \leq C + \frac{a_{0} + a_{2M}}{2}$, and~$a_0 \leq C$,~$a_N \leq C$ for some positive constant~$C$. Then,~$a_M \lesssim C$.
\end{Rem}
\begin{Cor}\label{ConvexHull}
The~$\HDR$-domain is convex in the sense that if~$(k,s)$ is a convex combination of~$(k_1,s_1)$ and~$(k_2,s_2)$ \textup(we assume~$k,k_1,k_2 \in \mathbb{Z}_+$\textup), and the latter two points belong to the~$\HDR$-domain, then the former point lies in it as well.
\end{Cor}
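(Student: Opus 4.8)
The plan is to read the Corollary as a one--dimensional discrete convexity statement: restrict attention to the segment joining $(k_1,s_1)$ to $(k_2,s_2)$, sample it at the integer values of the abscissa, and combine the three--point inequality of Corollary~\ref{UsualConvexity} with Lemma~\ref{ConvexSequence} and Remarks~\ref{OtherIndices} and~\ref{HomogeneousConvexSequence}.

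First I would dispose of the degenerate configurations. If $k_1\ne k_2$ and $k\in\{k_1,k_2\}$, then the requirement $k=\lambda k_1+(1-\lambda)k_2$ forces $\lambda\in\{0,1\}$, so $(k,s)$ is literally one of the two given points and there is nothing to prove. If $k_1=k_2$, then $k=k_1=k_2$ while $s$ lies between $s_1$ and $s_2$; here I would use that the $\HDR$-domain is upward closed in $s$ (replacing the Bessel norm $\dot H^{-s}$ on the left of~\eqref{HDRinequality} by a weaker one only makes the inequality easier), so $\HDRL(h,k,s,\ell,p)$ follows from $\HDRL(h,k,\min(s_1,s_2),\ell,p)$. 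From now on assume $k_1<k<k_2$, in particular $k_2-k_1\ge 2$.

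Now fix a Schwartz function $f$, set $R=\|f\|_{L_p}+\|\psi\hat f(\cdot,h(\cdot))\|_{H^\ell}$ (we may assume $R>0$), and for integers $j\in[k_1,k_2]$ put $s(j)=s_1+\frac{j-k_1}{k_2-k_1}(s_2-s_1)$ and $a_j=N(j,s(j))/R^2$, so that $(j,s(j))$ traverses the segment and $(k,s(k))=(k,s)$. The friendly region is the intersection of the slab $\{0<k<\sigma_p\}$ with three half--planes, hence convex, and the $\HDR$-domain lies inside it; therefore every $(j,s(j))$ lies in the friendly region, while $0\le j-1<j+1\le k_2<\sigma_p$ for $k_1<j<k_2$. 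Since $s(\cdot)$ is affine, $2s(j)=s(j-1)+s(j+1)$, so Corollary~\ref{UsualConvexity} applied to the triple $(j-1,j,j+1)$ gives
\[
N(j,s(j))\le C\|f\|_{L_p}^2+\tfrac12\big(N(j-1,s(j-1))+N(j+1,s(j+1))\big),\qquad k_1<j<k_2,
\]
that is $a_j\le C+\tfrac12(a_{j-1}+a_{j+1})$ for all interior $j$; at the endpoints, $\HDRL$ at $(k_1,s_1)$ and $(k_2,s_2)$ give $N(k_1,s_1)\lesssim R^2$ and $N(k_2,s_2)\lesssim R^2$, i.e. $a_{k_1},a_{k_2}\lesssim 1$.

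Finally I would shift the index by $k_1$ and apply Lemma~\ref{ConvexSequence} with $M=1$ and $N=k_2-k_1\ge 2$: with this choice the lemma's special inequality at $M$ (namely $a_1\le C+\tfrac12(a_0+a_2)$) is exactly the interior inequality at $j=k_1+1$, the hypotheses at $0$ and at $N$ are the two endpoint bounds, and the additive constant $C$ is handled by the homogeneity rescaling of Remark~\ref{HomogeneousConvexSequence}. Remark~\ref{OtherIndices} then yields $a_j\lesssim 1$ for every $j\in[k_1,k_2]$, in particular $N(k,s)\lesssim R^2\asymp\|f\|_{L_p}^2+\|\psi\hat f(\cdot,h(\cdot))\|_{H^\ell}^2$ with a constant depending only on $d,p,\ell,h,\psi$ and on the fixed integers $k_1,k_2$, hence not on $f$. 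Since $f$ was an arbitrary Schwartz function, this is precisely $\HDRL(h,k,s,\ell,p)$, so $(k,s)$ belongs to the $\HDR$-domain. I do not expect any genuine analytic obstacle here beyond what already went into Corollary~\ref{UsualConvexity}; the one point that needs care is the bookkeeping which checks, using convexity of the friendly region, that each intermediate point $(j,s(j))$ meets the hypotheses of Corollary~\ref{UsualConvexity} --- membership in the friendly region together with the side conditions $0\le j\pm1<\sigma_p$.
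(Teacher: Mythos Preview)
Your proof is correct and follows essentially the same route as the paper's: restrict to the segment, sample at integer abscissae, invoke Corollary~\ref{UsualConvexity} for the three--point inequality, and finish with Lemma~\ref{ConvexSequence} (together with Remarks~\ref{OtherIndices} and~\ref{HomogeneousConvexSequence}). The paper applies the lemma with $M=0$ rather than your $M=1$, and does not separately treat the degenerate cases $k\in\{k_1,k_2\}$ or $k_1=k_2$; your explicit handling of those, and your verification that each intermediate $(j,s(j))$ satisfies the hypotheses of Corollary~\ref{UsualConvexity} via convexity of the friendly region, make the argument slightly more careful but do not change its substance.
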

\begin{proof}
Consider the line passing through our three points. Let~$X_0,X_2,\ldots, X_N$ be all the points with integer first coordinates lying on the segment connecting~$(k_1,s_1)$ and~$(k_2,s_2)$ (we enumerate the points in such a way that the~$k$-coordinate increases with the index). Consider also the sequence
\begin{equation*}
a_j = N(k_{X_j},s_{X_j}),\quad j = 0,1,\ldots, N.
\end{equation*}
By Corollary~\ref{UsualConvexity}, this sequence satisfies the inequality
\begin{equation}\label{ConvexityRelation}
a_{j} \leq C\|f\|_{L_p}^2 + \frac{a_{j-1} + a_{j+1}}{2},\quad j = 1,2,\ldots, N-1.
\end{equation}
By the assumption,~$a_0, a_N \lesssim \|f\|_{L_p}^2 + \|\psi(\cdot)\hat{f}(\cdot,h(\cdot))\|^2_{H^{\ell}}$. Thus, by Lemma~\ref{ConvexSequence} with~$M=0$ (in the light of Remark~\ref{HomogeneousConvexSequence}),~$a_j$ is bounded by~$C(\|f\|_{L_p}^2 + \|\psi\hat{f}\|_{H^{\ell}(\Sigma)}^2)$. In particular,~$(k,s)$ belongs to the~$\HDR$-domain.
\end{proof}
\begin{Cor}\label{ConnectingLWithSubcriticalDomain}
Let~$X$ be a point with natural~$k$-coordinate lying in the intersection of friendly and subcritical domains. Suppose that the point~$Y$ lies on the segment~$LX$, has natural first coordinate~$k_Y$, and lies in the friendly domain. If~$2k_Y \leq k_X$, then~$Y$ lies belongs to the~$\HDR$-domain.
\end{Cor}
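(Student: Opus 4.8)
The plan is to run the discrete convexity machinery of Subsection~\ref{s43} along the segment $LX$, using the endpoint $L=(0,-\ell)$ to feed in the hypothesis $\phi\hat f\in H^\ell(\Sigma)$ and the endpoint $X$ to feed in the plain $L_p$-bound. Concretely, let $P_j$ be the point with $k$-coordinate $j$ on the line through $L$ and $X$, for $j=0,1,\dots,k_X$; its $s$-coordinate is determined by collinearity, so $s$ is an affine function of $j$ along this family. Note $P_0=L$, $P_{k_Y}=Y$, and, because $2k_Y\le k_X$, the point $P_{2k_Y}$ is among the $P_j$. Put $a_j=N(k_{P_j},s_{P_j})$. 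I will show the hypotheses of Lemma~\ref{ConvexSequence} (in the rescaled form of Remark~\ref{HomogeneousConvexSequence}) hold with $M=k_Y$, $N=k_X$, the standing assumption $2k_Y\le k_X$ being exactly the requirement $2M\le N$.

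First the endpoints. Since $L=(0,-\ell)$ we have $a_0=\|\psi(\cdot)\hat f(\cdot,h(\cdot))\|_{\dot H^{\ell}}^2\le\|\psi(\cdot)\hat f(\cdot,h(\cdot))\|_{H^{\ell}}^2\lesssim\|\psi\hat f\|_{H^\ell(\Sigma)}^2$, the first inequality being the trivial domination of the homogeneous norm by the inhomogeneous one and the last step being the standard matching of the flattened norm~\eqref{SobolevDef} with the intrinsic surface norm (Subsection~\ref{s61}). For the other endpoint, $a_{k_X}=N(k_X,s_X)$ is precisely the left-hand side of $\SI(h,k_X,0,s_X,p)$; since $X$ lies in the intersection of the friendly and subcritical regions we have $k_X\le s_X$, $k_X<\sigma_p$, and $2k_X-s_X\le\kappa_p$, so conditions~\eqref{SISpectralShift}--\eqref{SIKnapp} of Theorem~\ref{StubbornTheorem} are met and $a_{k_X}\lesssim\|f\|_{L_p}^2$ (this is the remark around~\eqref{HDRWithOneTerm} that in the subcritical range the $H^\ell$-term is unnecessary).

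Now the convexity relations. For $j\in[k_Y+1..k_X-1]$, the point $P_j$ is the midpoint of $P_{j-1}$ and $P_{j+1}$ (in both the $k$- and, by affineness, the $s$-coordinate); their $k$-coordinates lie in $[0,\sigma_p)$ since $j-1\ge k_Y\ge 0$ and $j+1\le k_X<\sigma_p$; and $P_j$ itself lies on the segment $YX$, which is contained in the friendly region because that region is cut out by linear inequalities and $Y,X$ both lie in it. Hence Corollary~\ref{UsualConvexity} gives $a_j\le C\|f\|_{L_p}^2+\frac{1}{2}(a_{j-1}+a_{j+1})$. For the long step, $P_{k_Y}=Y$ is the midpoint of $P_0=L$ and $P_{2k_Y}$; here the $k$-coordinates are $0$ and $2k_Y$, and $2k_Y\le k_X<\sigma_p$, so again $0\le k_j<\sigma_p$ for both, while the midpoint $Y$ lies in the friendly region by hypothesis; thus Corollary~\ref{UsualConvexity} yields $a_{k_Y}\le C\|f\|_{L_p}^2+\frac{1}{2}(a_0+a_{2k_Y})$. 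These are exactly the hypotheses of Lemma~\ref{ConvexSequence}, so (after rescaling by $C(\|f\|_{L_p}^2+\|\psi\hat f\|_{H^\ell(\Sigma)}^2)$) we get $N(k_Y,s_Y)=a_{k_Y}\lesssim\|f\|_{L_p}^2+\|\psi\hat f\|_{H^\ell(\Sigma)}^2$, i.e. $\HDRL(h,k_Y,s_Y,\ell,p)$ holds, so $Y$ lies in the $\HDR$-domain.

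The one place that genuinely uses the standing assumption $2k_Y\le k_X$ is the long step at $k_Y$: it is the sole appeal to Corollary~\ref{UsualConvexity} that involves the endpoint $L$, and it needs $2k_Y<\sigma_p$, which is bought precisely by $2k_Y\le k_X<\sigma_p$. This is the crux; everything else (collinearity giving the midpoint identities for the $s$-coordinate, convexity of the friendly region, domination of $\dot H$ by $H$, the localization between the chart norm and $H^\bullet(\Sigma)$) is routine. Note also that we never invoke Corollary~\ref{UsualConvexity} at points of $LX$ with $k$-coordinate strictly between $0$ and $k_Y$, so the fact that $L$ itself has negative $s$-coordinate and is outside the friendly region causes no difficulty — consistent with Lemma~\ref{ConvexSequence} only requiring convexity relations on $\{0\}\cup[M..N]$.
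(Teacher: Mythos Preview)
Your proof is correct and follows essentially the same approach as the paper: run Lemma~\ref{ConvexSequence} (via Remark~\ref{HomogeneousConvexSequence}) along the integer-$k$ points of the segment $LX$, using Corollary~\ref{UsualConvexity} for the convexity relations and handling the endpoints via $N(0,-\ell)\le\|\psi\hat f\|_{H^\ell}^2$ and Theorem~\ref{StubbornTheorem} at $X$. The only cosmetic difference is that the paper sets $M$ to be the leftmost friendly integer point on $LX$ (and then invokes Remark~\ref{OtherIndices} to recover $Y$ among the $Y_j$), whereas you take $M=k_Y$ directly; both choices satisfy $2M\le N$ and yield the same conclusion.
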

\begin{proof}
The proof of this corollary is very much similar to the proof of the previous one. We consider all the points on the segment~$LX$ that have integer first coordinates and lie inside the friendly domain. Suppose the leftmost of them has first coordinate~$M$, let us call our points~$Y_M, Y_{M+1},\ldots,Y_N$ (so,~$Y_N = X$). We also add the point~$Y_0 = L$ to our sequence and consider the numbers
\begin{equation*}
a_j = N(k_{Y_j},s_{Y_j}), \quad j = 0, M, M+1, M+2,\ldots, N.
\end{equation*}
These numbers satisfy the inequality~\eqref{ConvexityRelation} for~$j \in [M+1..N-1]$. Moreover, Corollary~\ref{UsualConvexity} provides the inequality
\begin{equation*}
a_{M} \leq C\|f\|_{L_p}^2 + \frac{a_{0} + a_{2M}}{2}.
\end{equation*}
Note that~$2M \leq N$ since~$N = k_{X}$ and~$k_Y \in [M..N]$ since~$Y$ lies in the friendly domain and~$k_X \geq 2k_Y$. At the endpoint~$M$, we have the inequality
\begin{equation*}
N(k_X,s_X) \lesssim \|f\|_{L_p}^2
\end{equation*}
since~$X$ lies in the subcritical part of the friendly domain (this inequality is the case~$\beta = 0$ in Theorem~\ref{StubbornTheorem}). Thus,~$a_N \lesssim \|f\|_{L_p}^2$. Clearly,~$a_0 \leq \|\psi(\cdot)\hat{f}(\cdot,h(\cdot))\|^2_{H^{\ell}}$. So, Lemma~\ref{ConvexSequence} says all the points~$Y_M,Y_{M+1},\ldots, Y_N$ belong to the~$\HDR$-domain. In particular,~$Y$ does.
\end{proof}
To describe a convex set, it suffices to describe its extremal points. This will be the way to describe our results: we prove~$\HDRL$ for some points in the~$(k,s)$-plane, and then invoke Corollary~\ref{ConvexHull}. We return for a while to the two examples already considered and see what part of the~$\HDR$-domains we are able to access in these cases.

In the case~$p=1$,~$k=1$,~$s = 0$, and~$\ell = 2$, we applied Corollary~\ref{UsualConvexity} to estimate~$N(1,0)$ in terms of~$N(0,-2)$ and~$N(2,2)$. The first point coincides with~$L$ in this case, and the second lies in the subcritical region. If~$d > 5$, it also lies in the friendly region, so, both~$N(0,-2)$ and~$N(2,2)$ are bounded by the right hand-side of~\eqref{HDRinequality} and we get~$\HDRL(h,1,0,2,p)$. In other words, we applied Corollary~\ref{ConnectingLWithSubcriticalDomain} to~$X = (2,2)$ and~$Y = (1,0)$.

In the case~$p=1$,~$k=1$,~$s = 0$, and~$\ell = \frac32$, we consider the sequence of points~$(0,-\frac32)$,~$(1,0)$,~$(2,\frac32)$, and~$(3,3)$ lying on a line. All our points (except for~$L = (0,-\frac32)$) lie in the friendly domain if~$k_j < \frac{d-1}{2}$ provided~$d> 7$. Thus, we obtain~$\HDRL(h,1,0,\frac32,p)$ by applying Corollary~\ref{ConnectingLWithSubcriticalDomain} with~$X = (3,3)$ and~$Y = (1,0)$.

So, our general strategy will be to apply Corollary~\ref{ConnectingLWithSubcriticalDomain} to the points~$X$ close to the point~$K = (\kappa_p,\kappa_p)$. This will enable us to obtain ``almost extremal points'' of the $\HDR$ region, after that, we will apply Corollary~\ref{ConvexHull} to pass to convex hulls. Before we pass to the cases, we explain the obstructions that prevent us from proving the sufficiency of the conditions in Proposition~\ref{SufficientConditionsHDR}. They are of two types. First, we are able to work with points whose first coordinates are integers only. However, in the general case, the extremal points of the domain of admissible parameters  need not necessarily have integer first coordinates. So, we cannot prove (and even formulate)~$\HDR$ for them. This makes the convex hull we obtain smaller (we are able to reach only some ``integer'' points close to the extremal points) than it should be. The second obstruction is more severe. The problem comes from the inequality~$2k_Y \leq k_X$ in Corollary~\ref{ConnectingLWithSubcriticalDomain}. That restricts our ``extremal points'' from having too large~$k$-coordinate, roughly speaking, their~$k$-coordinates should satisfy~$2k<\sigma_p$, if we want to apply Corollary~\ref{ConnectingLWithSubcriticalDomain}. This will result in a considerable gap between our results and the conditions listed in Proposition~\ref{SufficientConditionsHDR} in the case when~$\ell \geq 2$.

Now we pass to the cases.

\subsection{Statement of results by cases}\label{s44}
\begin{figure}[h]
\includegraphics[height=9.5cm]{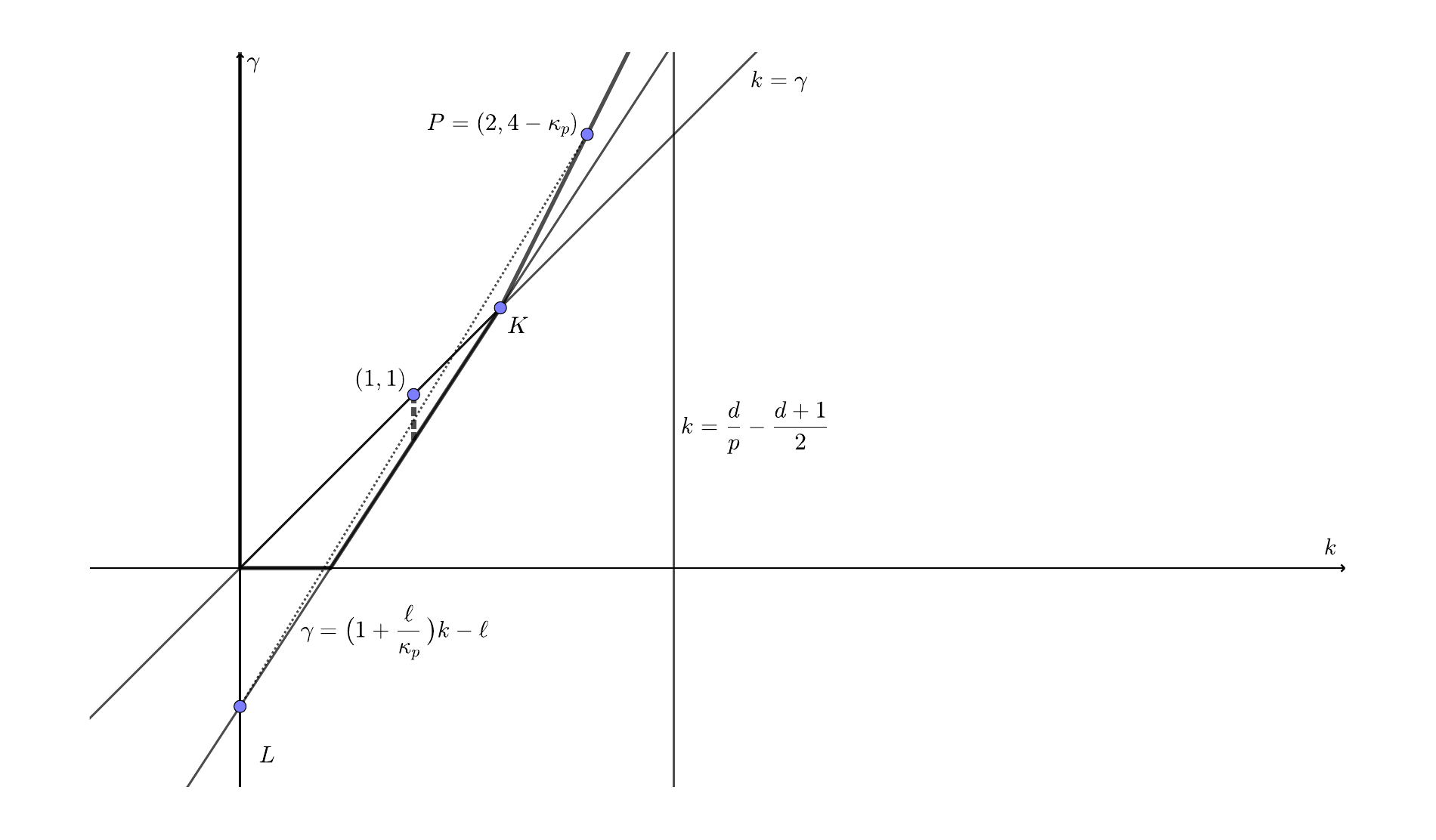}
\caption{What we can reach in the case~$\ell \leq \kappa_p < 2$.}
\label{fig:KappaLessTwoEllLessKappa}
\end{figure}
\paragraph{\bf Case~$\ell \leq \kappa_p < 2$.} Our reasonings are illustrated by Figure~\ref{fig:KappaLessTwoEllLessKappa}. Clearly, here we are interested in the case~$k=1$ only (because if~$k \geq 2$ and~$(k,s)$ lies in the~$\HDR$ domain, then~$k \leq s$ automatically). We consider the point~$P = (2,4-\kappa_p)$ and assume~$P$ lies in the friendly region, that is,~$2 < \sigma_p$. We draw a segment that connects~$P$ with~$L$ (it is the slant punctured segment on Figure~\ref{fig:KappaLessTwoEllLessKappa}).  It crosses the line~$k=1$ at the point~$(1,2-\frac{\kappa_p + \ell}{2})$. We apply Corollary~\ref{ConnectingLWithSubcriticalDomain} to the points~$P$ as~$X$ and~$(1,2-\frac{\kappa_p + \ell}{2})$ as~$Y$ and obtain the theorem below.
\begin{Th}
Let~$\ell \leq \kappa_p < 2$, let~$2 < \sigma_p$. Then,~$\HDRL(h,1,s,\ell,p)$ holds true provided
\begin{equation*}
s \geq \min (1,2 - \frac{\kappa_p + \ell}{2}).
\end{equation*}
\end{Th}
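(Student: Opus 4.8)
The plan is to realize the claimed $s$-range as the $s$-upward closure of the convex hull of two points in the $k=1$ slice of the $\HDR$-domain: the subcritical point $(1,1)$, reached directly by Theorem~\ref{StubbornTheorem}, and the point $Y=(1,\,2-\tfrac{\kappa_p+\ell}{2})$, reached by the bootstrap of Corollary~\ref{ConnectingLWithSubcriticalDomain} along the segment $LP$ with $L=(0,-\ell)$ and $P=(2,4-\kappa_p)$.

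First I would localize as in Subsection~\ref{s61}, so that it suffices to prove $\HDRL(h,1,s,\ell,p)$, and invoke Lemma~\ref{ApproximationForHDR} to restrict attention to Schwartz $f$. I would also record a consequence of the hypotheses used below: since $\sigma_p-\kappa_p=1-\tfrac1p$, the assumptions $\sigma_p>2$ and $\kappa_p<2$ force $p>1$, hence $\kappa_p>\sigma_p-1>1$, so $\kappa_p\in(1,2)$; in particular $\sigma_p\le\tfrac{d-1}{2}$ gives $d\ge6$, so every Bessel exponent occurring here lies well inside $[0,\tfrac{d-1}{2})$. For the point $(1,1)$, I apply Theorem~\ref{StubbornTheorem} with $\alpha=\gamma=1$, $\beta=0$: condition~\eqref{SISpectralShift} reads $1\le1$, condition~\eqref{SIGaussian} reads $1<\sigma_p$, and condition~\eqref{SIKnapp} reads $1\le\kappa_p$, all valid. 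This gives $\|\psi\,\partial_{\xi_d}\hat f\|_{\dot H^{-1}}\lesssim\|f\|_{L_p}$, i.e. $\HDRL(h,1,1,\ell,p)$ (only the $\|f\|_{L_p}$ term is needed; this is simply the subcritical case $k=s=1$).

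For the point $Y$ I follow the construction described just before the statement. One checks that $P=(2,4-\kappa_p)$ lies in the intersection of the friendly and subcritical regions: $k_P=2\in(0,\sigma_p)$ uses the hypothesis $2<\sigma_p$; $s_P=4-\kappa_p\ge2>0$ and $\ge k_P-1=1$ since $\kappa_p<2$; $2k_P-s_P=\kappa_p\le\kappa_p$; and $s_P\ge k_P$ since $\kappa_p\le2$. The midpoint of $LP$, which has $k$-coordinate $1$, is $Y=(1,\,\tfrac{(-\ell)+(4-\kappa_p)}{2})=(1,\,2-\tfrac{\kappa_p+\ell}{2})$. One then checks $Y$ lies in the friendly region: $k_Y=1\in(0,\sigma_p)$ since $\sigma_p>1$; $s_Y=2-\tfrac{\kappa_p+\ell}{2}\ge0$ since $\kappa_p+\ell<4$; $s_Y\ge k_Y-1=0$; and $2k_Y-s_Y=\tfrac{\kappa_p+\ell}{2}\le\kappa_p$ exactly because $\ell\le\kappa_p$ — this is the one place the hypothesis $\ell\le\kappa_p$ enters, and it is tight. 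Finally $2k_Y=2\le2=k_P$, so Corollary~\ref{ConnectingLWithSubcriticalDomain} with $X=P$ yields $\HDRL\bigl(h,1,\,2-\tfrac{\kappa_p+\ell}{2},\,\ell,\,p\bigr)$.

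To conclude, observe that for fixed $k$ the left side of~\eqref{HDRinequality} is non-increasing in $s\ge0$ (the weight $(1+|z|)^{-2s}$ in~\eqref{SobolevDef} is), so once $\HDRL(h,1,s_0,\ell,p)$ holds it holds for all $s\ge s_0$; alternatively one may invoke convexity of the $\HDR$-domain (Corollary~\ref{ConvexHull}) together with the subcritical bound at arbitrarily large $s$. Applying this with $s_0=1$ and with $s_0=2-\tfrac{\kappa_p+\ell}{2}$ gives $\HDRL(h,1,s,\ell,p)$ for every $s\ge\min\bigl(1,\,2-\tfrac{\kappa_p+\ell}{2}\bigr)$. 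The main obstacle — really the only delicate point — is the bookkeeping: verifying that $Y$ genuinely sits in the friendly region (the Knapp constraint there collapses to precisely $\ell\le\kappa_p$), and noticing that the $\min$ is forced because the bootstrap through $P$ descends only to $s=2-\tfrac{\kappa_p+\ell}{2}$, which exceeds $1$ exactly when $\kappa_p+\ell<2$; in that regime the value $s=1$ must be supplied separately by the direct estimate at $(1,1)$. Everything else is routine checking of the membership conditions in Corollaries~\ref{ConnectingLWithSubcriticalDomain} and~\ref{ConvexHull}.
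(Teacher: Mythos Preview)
Your proof is correct and follows essentially the same route as the paper: apply Corollary~\ref{ConnectingLWithSubcriticalDomain} with $X=P=(2,4-\kappa_p)$ and $Y=(1,2-\tfrac{\kappa_p+\ell}{2})$, and cover the remaining subcritical point $(1,1)$ directly via Theorem~\ref{StubbornTheorem}. You are simply more explicit than the paper, which leaves the $(1,1)$ case and the monotonicity in $s$ implicit (the subcritical part of the friendly region is already known to lie in the $\HDR$-domain from Section~\ref{s41}).
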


\begin{figure}[h]
\includegraphics[height=9.5cm]{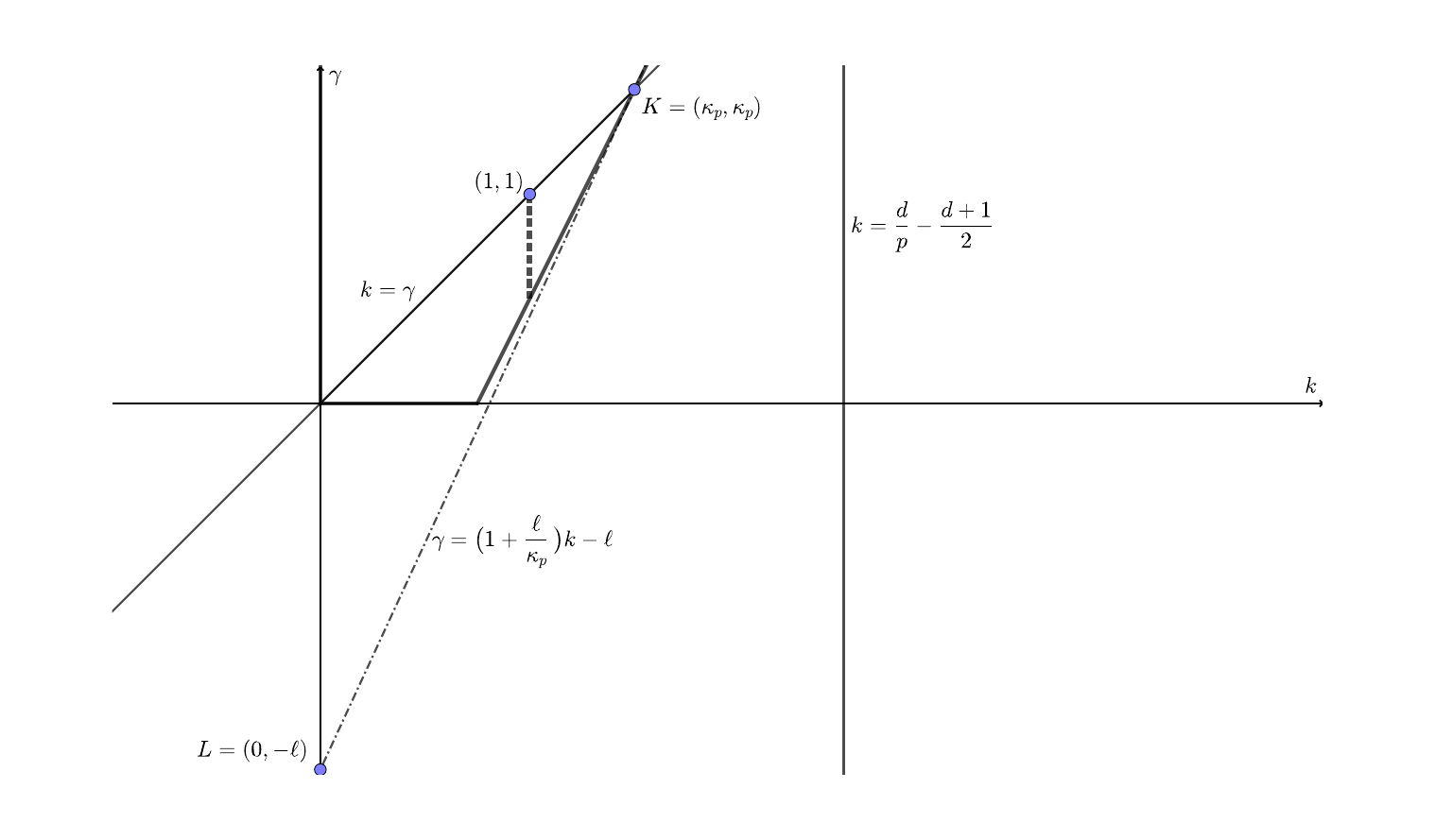}
\caption{What we can reach in the case~$\kappa_p < 2$ and~$\ell > \kappa_p$.}
\label{fig:KappaLessTwoEllGreaterKappa}
\end{figure}

\paragraph{\bf Case~$\kappa_p < 2$,~$\ell > \kappa_p$.} Our reasonings are illustrated by Figure~\ref{fig:KappaLessTwoEllGreaterKappa}. This case is simpler than the previous one. We only need~$2 < \sigma_p$ here. In this case, if~$(1,s)$ lies on the vertical punctured segment, then it is an average of~$L$ and a point inside the intersection of the friendly domain with the subcritical domain. Thus, Corollary~\ref{ConnectingLWithSubcriticalDomain} leads to the theorem below.
\begin{Th}
Let~$\kappa_p < 2$,~$\kappa_p < \ell$, let~$2 < \sigma_p$. Then,~$\HDRL(h,1,s,\ell,p)$ holds true provided
\begin{equation*}
s \geq \min (1,2 - \kappa_p).
\end{equation*}
\end{Th}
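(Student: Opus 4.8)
The plan is to exhibit the point $(1,s)$ of the $(k,s)$-plane as the midpoint of a segment joining the distinguished point $L=(0,-\ell)$ to a point $X$ that sits in the intersection of the friendly and subcritical regions, and then quote Corollary~\ref{ConnectingLWithSubcriticalDomain}. First I would record that the hypotheses are more rigid than they appear: since $\sigma_p-\kappa_p=\tfrac1{p'}\in(0,1)$ for $p>1$, the combination $\kappa_p<2<\sigma_p$ forces $1<\kappa_p<2$ (and in particular $p>1$ automatically), so $\min(1,2-\kappa_p)=2-\kappa_p$, and it is enough to treat $s\geq 2-\kappa_p$. I would also note that only $k=1$ needs attention here: if $k\geq2$ and $(k,s)$ lay in the $\HDR$-domain, condition~\eqref{HDRKnapp} of Proposition~\ref{SufficientConditionsHDR} would force $s\geq 2k-\kappa_p>k$, placing $(k,s)$ in the subcritical region, where by the Remark opening Subsection~\ref{s41} the $H^\ell(\Sigma)$ term of~\eqref{HDRinequality} is superfluous.

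Fix $s\geq 2-\kappa_p$ and set $Y=(1,s)$ and $X=(2,\,2s+\ell)$, so that $Y=\tfrac12(L+X)$ lies on the segment $LX$, both points have integer first coordinate, and $2k_Y=2=k_X$, meeting the arithmetic requirement $2k_Y\leq k_X$ of Corollary~\ref{ConnectingLWithSubcriticalDomain}. It remains to check membership in the relevant regions. For $Y$: $1\in(0,\sigma_p)$ because $2<\sigma_p$; $s\geq 2-\kappa_p>0=k_Y-1$; and $2k_Y-s=2-s\leq\kappa_p$ precisely because $s\geq 2-\kappa_p$; hence $Y$ is friendly. For $X$, the key computation is
\begin{equation*}
s_X=2s+\ell>2(2-\kappa_p)+\kappa_p=4-\kappa_p>2,
\end{equation*}
where the first inequality uses $s\geq 2-\kappa_p$ together with $\ell>\kappa_p$, and the last uses $\kappa_p<2$. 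Thus $s_X>k_X=2$, so $X$ lies (strictly) in the subcritical region; moreover $s_X>0$, $s_X>k_X-1$, $k_X=2<\sigma_p$, and $2k_X-s_X=4-(2s+\ell)<\kappa_p$, so $X$ is friendly as well. Corollary~\ref{ConnectingLWithSubcriticalDomain} then gives $Y=(1,s)\in\HDR$-domain, which by definition is exactly the assertion $\HDRL(h,1,s,\ell,p)$.

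I expect no genuine obstacle: all the analytic work is already packaged inside Corollary~\ref{ConnectingLWithSubcriticalDomain}, which chains the local estimates $\SI(h,k-j,2j,s,p)$ of Theorem~\ref{StubbornTheorem} through the algebraic identity of Lemma~\ref{AlgebraicLemma} and the convexity bookkeeping of Lemma~\ref{ConvexSequence}. The one place that wants a moment's care is verifying that the doubled point $X$ falls in the subcritical region — this is where the hypothesis $\ell>\kappa_p$ is actually used, and it is what separates this case from the preceding one (where $\ell\leq\kappa_p$ and the slant segment must be pushed to a point such as $P=(2,4-\kappa_p)$ instead). A final small sanity check I would include is that the segment $LX$ carries an admissible chain of lattice points for Corollary~\ref{ConnectingLWithSubcriticalDomain}: the only integer abscissae on it are $k=0$ (which is $L$, excluded from the friendly region), $k=1$ (which is $Y$), and $k=2$ (which is $X$), so the index $M$ in that corollary equals $1$ and $2M=2=k_X$, exactly as required.
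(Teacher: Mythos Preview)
Your proposal is correct and follows essentially the same route as the paper: exhibit $(1,s)$ as the midpoint of $L=(0,-\ell)$ and a point $X=(2,2s+\ell)$ in the intersection of the friendly and subcritical regions, then invoke Corollary~\ref{ConnectingLWithSubcriticalDomain}. Your added observation that the hypotheses force $1<\kappa_p<2$ (via $\sigma_p-\kappa_p=1-\tfrac1p\in(0,1)$), so that $\min(1,2-\kappa_p)=2-\kappa_p$, is a tidy simplification the paper leaves implicit.
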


\begin{figure}[h]
\includegraphics[height=9.5cm]{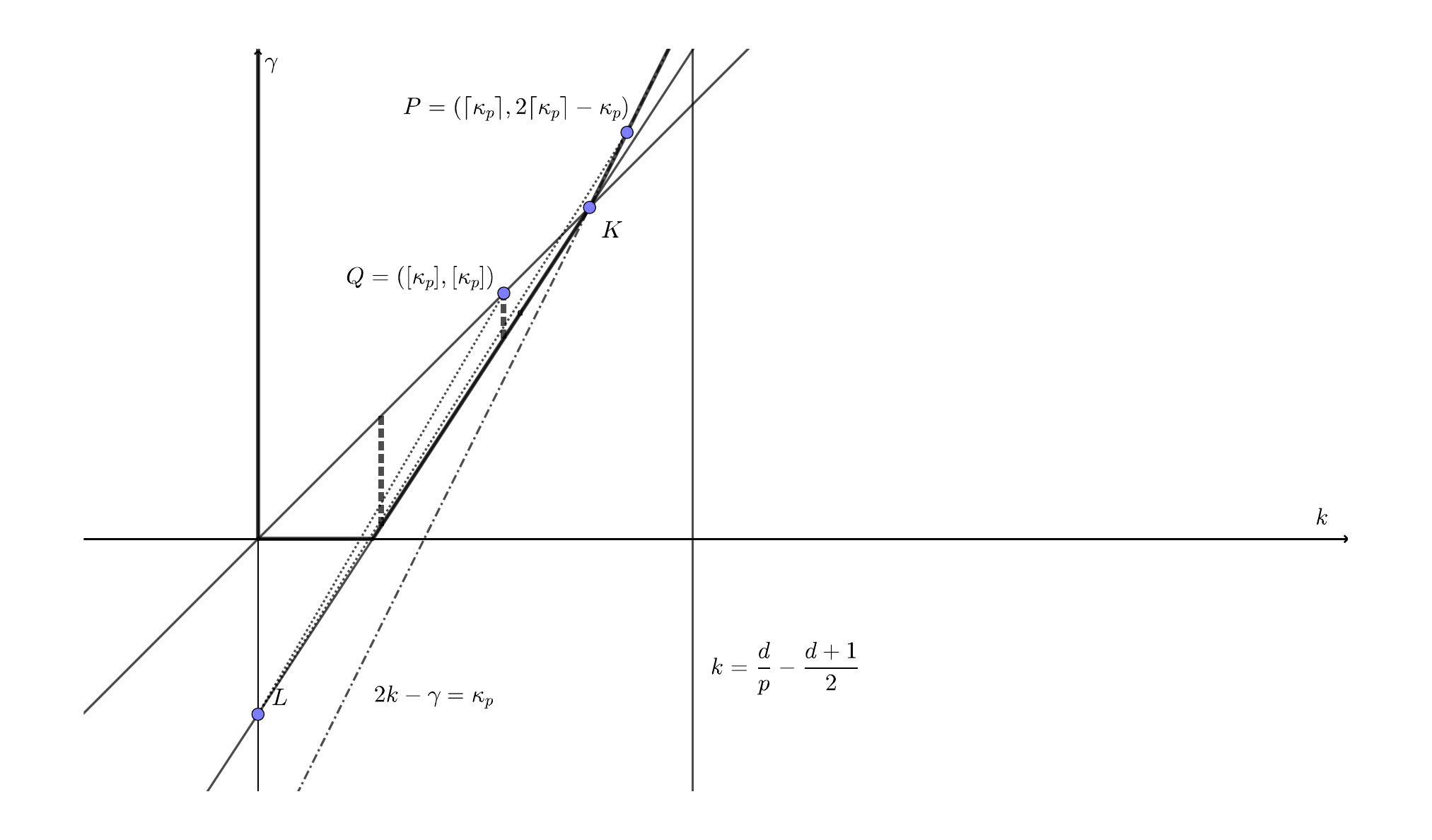}
\caption{What we can reach in the case~$2\leq \kappa_p$ and~$\ell \leq \frac{\kappa_p}{\kappa_p-1}$.}
\label{fig:Basic_L_Small_Proof}
\end{figure}
\paragraph{\bf Case~$2 \leq \kappa_p$ and~$\ell \leq \frac{\kappa_p}{\kappa_p-1}$.} Our reasonings are illustrated by Figure~\ref{fig:Basic_L_Small_Proof}. We introduce two auxiliary points~$P$ and~$Q$:
\begin{equation*}
P = (\lceil \kappa_p \rceil, 2\lceil\kappa_p\rceil - \kappa_p);\quad Q = ([\kappa_p],[\kappa_p]).
\end{equation*}
We have used two types of the notion ``integer part of a number", see formula~\eqref{IntegerPart}.

We connect the point~$L$ to~$P$ and~$Q$. Since the point~$Q$ lies in the intersection of friendly and subcritical regions, Corollary~\ref{ConnectingLWithSubcriticalDomain} applied to~$Q$ in the role of~$X$ says that~$\HDRL(h,k,s_k,\ell,p)$ is true for all pairs~$(k,s_k)$ such that~$(k,s_k) \in LQ$, in other words
\begin{equation*}
s_k = -\ell\frac{[\kappa_p] - k}{[\kappa_p]} + k.
\end{equation*}
Clearly, the same assertion is true for larger~$s$ when~$k$ is fixed. The situation with the point~$P$ is slightly more complicated: it may lie outside the friendly region if its~$k$-coordinate is too large. If it is not so (i.e. $\lceil \kappa_p \rceil < \sigma_p$), then we may apply Corollary~\ref{ConnectingLWithSubcriticalDomain} to the point~$P$ in the role of~$X$ and achieve~$\HDRL(h,k,s_k,\ell,p)$ is true for all pairs~$(k,s_k)$ such that~$(k,s_k) \in LP$, in other words
\begin{equation*}
s_k = -\ell\frac{\lceil\kappa_p\rceil - k}{\lceil\kappa_p\rceil} + 2k - \frac{\kappa_p k}{\lceil\kappa_p\rceil}.
\end{equation*}
We summarize our results. 
\begin{Th} \label{ellSmallTheorem}
Let~$2 \leq \kappa_p$ and let~$\ell \leq \frac{\kappa_p}{\kappa_p - 1}$. If~$\lceil \kappa_p\rceil \geq \sigma_p$, then~$\HDRL(h,k,s,\ell,p)$ holds true if
\begin{equation*}
s \geq -\ell\frac{[\kappa_p] - k}{[\kappa_p]} + k, \quad k \leq [\kappa_p].
\end{equation*}
If~$\lceil \kappa_p\rceil < \sigma_p$, then
~$\HDRL(h,k,s,\ell,p)$ holds true if
\begin{equation}\label{EquationForLQ}
s \geq \min\Big(-\ell\frac{[\kappa_p] - k}{[\kappa_p]} + k, -\ell\frac{\lceil\kappa_p\rceil - k}{\lceil\kappa_p\rceil} + 2k - \frac{\kappa_p k}{\lceil\kappa_p\rceil}\Big),\quad k \leq [\kappa_p].
\end{equation}
\end{Th}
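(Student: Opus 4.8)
The plan is to deduce the statement from the convexity of the $\HDRL$-domain (Corollaries~\ref{ConvexHull} and~\ref{ConnectingLWithSubcriticalDomain}), by pinning down two boundary vertices of the claimed region that already belong to that domain and then connecting them to the point $L=(0,-\ell)$. Recall that $L$ is in the $\HDRL$-domain for free: $N(0,-\ell)=\|\psi(\cdot)\hat f(\cdot,h(\cdot))\|_{\dot{H}^{\ell}}^2$ is exactly one of the two terms on the right of~\eqref{HDRinequality}. The vertices I would use are $Q=([\kappa_p],[\kappa_p])$, which sits on the critical line $s=k$, and --- in the second case of the theorem --- $P=(\lceil\kappa_p\rceil,\,2\lceil\kappa_p\rceil-\kappa_p)$, which sits on the Knapp line $s=2k-\kappa_p$ through $K=(\kappa_p,\kappa_p)$; the whole configuration is Figure~\ref{fig:Basic_L_Small_Proof}.

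First I would verify that $Q$ lies in the subcritical part of the friendly region. It is subcritical because $s_Q=k_Q$; it meets $s_Q\ge 0$, $s_Q\ge k_Q-1$ and $2k_Q-s_Q=[\kappa_p]\le\kappa_p$; and it meets $k_Q=[\kappa_p]<\sigma_p$ because $\sigma_p-[\kappa_p]=(1-\tfrac1p)+(\kappa_p-[\kappa_p])>0$, the only borderline situation being $p=1$, $\kappa_1=\sigma_1\in\mathbb N$ --- where $Q$ lands on the edge $k=\sigma_p$ but the $\beta=0$ bound below is still valid. So the $\beta=0$ case of Theorem~\ref{StubbornTheorem}, namely $\SI(h,[\kappa_p],0,[\kappa_p],p)$, yields $N(k_Q,s_Q)\lesssim\|f\|_{L_p}^2$, placing $Q$ in the $\HDRL$-domain. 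Next I would look at the segment $LQ$, whose point of abscissa $k$ has ordinate $s_k=-\ell\frac{[\kappa_p]-k}{[\kappa_p]}+k$, and invoke Corollary~\ref{ConnectingLWithSubcriticalDomain} with $X=Q$. The one thing to check is its hypothesis $2M\le k_X$, where $M$ is the smallest integer abscissa on $LQ$ for which $(k,s_k)$ stays in the friendly region; the binding friendly constraint there is $s_k\ge 0$, i.e. $k\ge\frac{\ell[\kappa_p]}{[\kappa_p]+\ell}$, and since $t\mapsto\frac{t}{t-1}$ decreases, the hypothesis $\ell\le\frac{\kappa_p}{\kappa_p-1}\le\frac{[\kappa_p]}{[\kappa_p]-1}$ makes this threshold $\le 1$, so $M\le 1$; combined with $k_X=[\kappa_p]\ge 2$ (from $\kappa_p\ge 2$) this gives $2M\le k_X$. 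Thus every $(k,s_k)$ with $1\le k\le[\kappa_p]$ joins the $\HDRL$-domain, and --- since $\|\cdot\|_{\dot{H}^{-s}}$ decreases in $s$ on compactly supported functions --- so does everything above it, which is the first assertion.

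For the second assertion I would repeat the argument with $P$ in place of $Q$, under the extra hypothesis $\lceil\kappa_p\rceil<\sigma_p$. The point $P$ is subcritical ($s_P-k_P=\lceil\kappa_p\rceil-\kappa_p\ge 0$), satisfies the remaining friendly constraints, and has $k_P=\lceil\kappa_p\rceil<\sigma_p$, so again $\SI(h,\lceil\kappa_p\rceil,0,2\lceil\kappa_p\rceil-\kappa_p,p)$ puts $P$ in the $\HDRL$-domain. On $LP$ the point of abscissa $k$ has ordinate $s_k=-\ell\frac{\lceil\kappa_p\rceil-k}{\lceil\kappa_p\rceil}+2k-\frac{\kappa_p k}{\lceil\kappa_p\rceil}$, and the same sort of computation --- again using $\ell\le\frac{\kappa_p}{\kappa_p-1}$ and $\kappa_p\ge 2$ --- yields $2M\le k_X=\lceil\kappa_p\rceil$. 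So Corollary~\ref{ConnectingLWithSubcriticalDomain} with $X=P$ puts each $(k,s_k)$ with $1\le k\le\lceil\kappa_p\rceil$ in the $\HDRL$-domain. Taking the convex hull of $\{L,Q,P\}$ (Corollary~\ref{ConvexHull}) and combining the two families gives $\HDRL(h,k,s,\ell,p)$ for $k\le[\kappa_p]$ and $s\ge\min\!\big(-\ell\frac{[\kappa_p]-k}{[\kappa_p]}+k,\ -\ell\frac{\lceil\kappa_p\rceil-k}{\lceil\kappa_p\rceil}+2k-\frac{\kappa_p k}{\lceil\kappa_p\rceil}\big)$, as desired.

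The main obstacle here is not analytic --- all the analysis is already in the $\beta=0$ case of Theorem~\ref{StubbornTheorem} and in the convexity lemmas --- but rather the bookkeeping that makes the combinatorial hypothesis $2M\le k_X$ of Corollary~\ref{ConnectingLWithSubcriticalDomain} come out exactly from the standing assumptions: one must see that $\ell\le\frac{\kappa_p}{\kappa_p-1}$ is precisely what forces the first admissible integer point of $LQ$ (and of $LP$) to have abscissa at most $1$, while $\kappa_p\ge 2$ is what keeps the target abscissae $[\kappa_p]$ and $\lceil\kappa_p\rceil$ at least $2$, so that the two "halves" overlap and the stated $\min$ is exactly the lower boundary of the reachable region.
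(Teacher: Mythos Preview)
Your argument is correct and follows exactly the paper's route: introduce $Q=([\kappa_p],[\kappa_p])$ and (in the second case) $P=(\lceil\kappa_p\rceil,2\lceil\kappa_p\rceil-\kappa_p)$, observe that each lies in the subcritical part of the friendly region, and apply Corollary~\ref{ConnectingLWithSubcriticalDomain} along the segments $LQ$ and $LP$. You even supply the verification that the paper leaves implicit, namely that the smallest friendly integer abscissa $M$ on each segment is at most~$1$ (this is precisely where the standing hypothesis $\ell\le\frac{\kappa_p}{\kappa_p-1}$ enters), so that the combinatorial condition of Lemma~\ref{ConvexSequence} is met and, via Remark~\ref{OtherIndices}, \emph{every} integer point on the segment up to $k_X$ lands in the $\HDRL$-domain; the paper applies the corollary in this strengthened form without comment.
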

\begin{Rem}
If~$k = \lceil\kappa_p\rceil$ and~$\HDRL(h,k,s,\ell,p)$ holds true, then~$s \geq k$.
\end{Rem}

\begin{figure}[h]
\includegraphics[height=9.5cm]{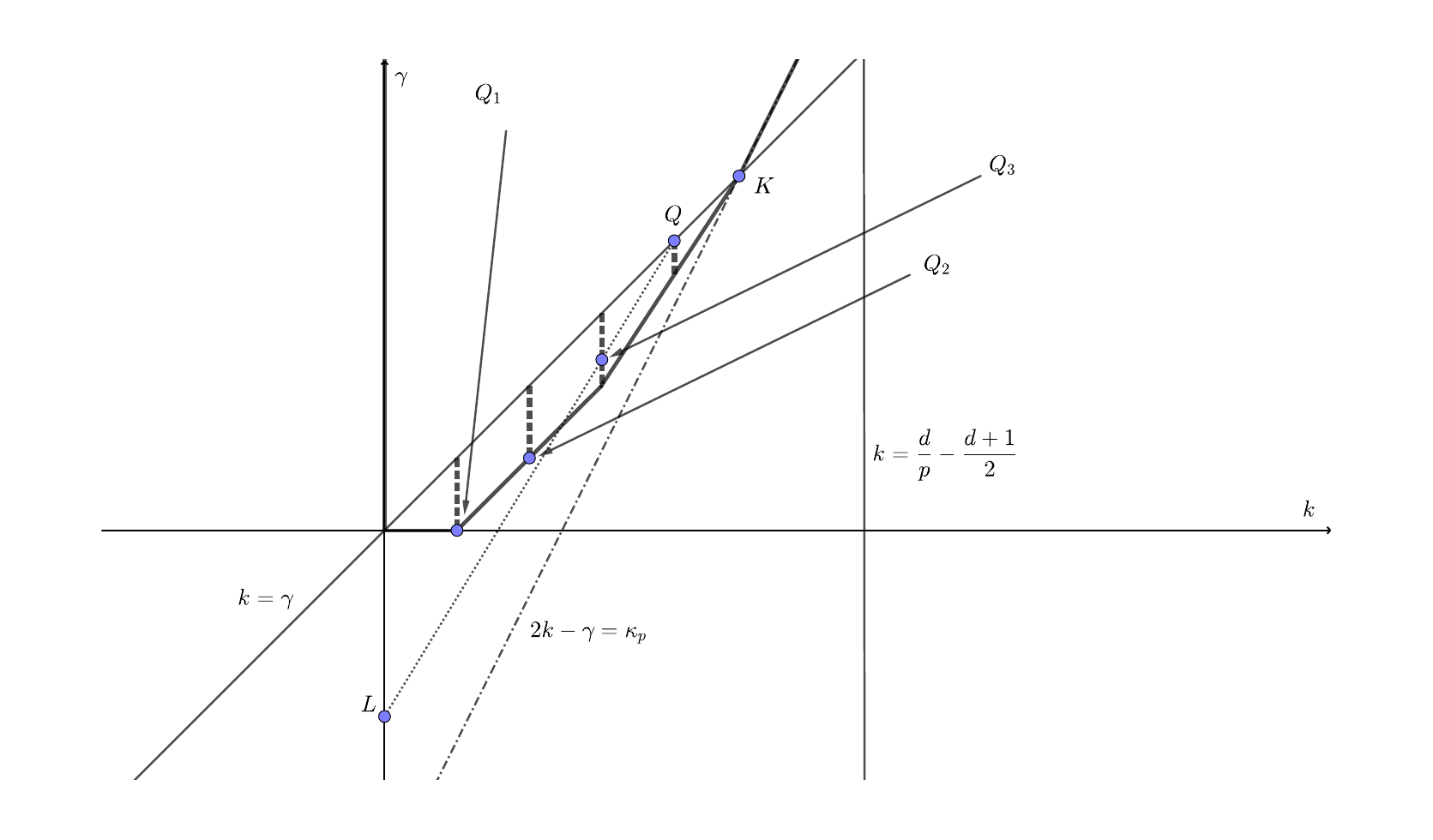}
\caption{Construction of the points~$Q_j$.}
\label{fig:Basic_L_inbetween_Proof}
\end{figure}

\paragraph{\bf Case~$2\leq \kappa_p$,~$\frac{\kappa_p}{\kappa_p - 1} \leq \ell$.}
This case will be split into many subcases. We will need to construct two sequences of points generated by~$P$ and~$Q$. 

The points~$Q_j$,~$j = 1,2,\ldots, [\kappa_p]$, are generated by~$Q$. Namely,
\begin{equation*}
Q_j = \begin{cases}
LQ \cap \{(j,s)\mid s \in \mathbb{R}\}, \quad \hbox{if this point lies above the line~$s = k-1$};\\
(j,j-1), \quad \hbox{in the other case.}
\end{cases}
\end{equation*}
The point~$Q_j$ may be described as the lowest possible point on the line~$\{(j,s)\mid s \in \mathbb{R}\}$ that lies above the segment~$LQ$ and belongs to the friendly domain. See Figure~\ref{fig:Basic_L_inbetween_Proof}.

\begin{Le}\label{StructureOfQ}
For any~$j < [\frac{\ell-1}{\ell}\kappa_p]$, we have~$Q_j = (j,j-1)$. For~$j \geq \lceil\frac{\ell-1}{\ell}\kappa_p\rceil$, all points~$Q_j$ lie on the line~$LQ$.
\end{Le}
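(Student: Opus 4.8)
The plan is to read $Q_j$ straight off the equation of the line $LQ$. First I would abbreviate $m:=[\kappa_p]$, so that $L=(0,-\ell)$ and $Q=(m,m)$; these are genuinely distinct points because $\ell\geq\frac{\kappa_p}{\kappa_p-1}>1$ in the case under consideration. The line through them is
\begin{equation*}
s=\frac{m+\ell}{m}\,k-\ell=k-\ell\cdot\frac{m-k}{m},
\end{equation*}
so the point of the segment $LQ$ lying over $k=j$ is $(j,\tau_j)$ with $\tau_j=j-\ell\,\frac{m-j}{m}$ (consistent with $\tau_0=-\ell$ and $\tau_m=m$). By the definition of $Q_j$, we have $Q_j=(j,\tau_j)$ as soon as $\tau_j\geq j-1$, i.e. as soon as $(j,\tau_j)$ lies on or above the line $s=k-1$, and $Q_j=(j,j-1)$ when $\tau_j\leq j-1$; the two prescriptions agree on the overlap $\tau_j=j-1$, where $(j,j-1)$ is itself the intersection of $\{k=j\}$ with $LQ$.

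The second step is to solve the inequality $\tau_j>j-1$. Clearing the positive denominator $m$, it is equivalent to $\ell(m-j)<m$, that is, to $j>\frac{\ell-1}{\ell}\,m$. Hence $Q_j=(j,j-1)$ exactly when $j\leq\frac{\ell-1}{\ell}m$, and $Q_j$ lies on $LQ$ exactly when $j>\frac{\ell-1}{\ell}m$ (including the borderline value $j=\frac{\ell-1}{\ell}m$, where $(j,j-1)\in LQ$). For an integer $j$ the condition $j\leq\frac{\ell-1}{\ell}m$ holds whenever $j<\lceil\frac{\ell-1}{\ell}m\rceil$ and fails whenever $j\geq\lceil\frac{\ell-1}{\ell}m\rceil$; recalling that $m=[\kappa_p]$, which equals $\kappa_p$ under the standing integrality assumption on $\kappa_p$, this yields both assertions of the lemma.

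I do not expect a genuine obstacle here: the whole argument is the intersection of one line with the constraint $s=k-1$. The only point that deserves a moment's care is the integer-part bookkeeping at the two ends, together with checking — for consistency with the alternative description of $Q_j$ as the lowest point of $\{k=j\}$ lying above $LQ$ and inside the friendly region — that the point $(j,\max(\tau_j,j-1))$ really satisfies $s\geq0$ and $2k-s\leq\kappa_p$ for $1\leq j\leq[\kappa_p]$. Both are immediate from the defining inequalities of the friendly region, once one notes that whether $Q_1$ sits on $LQ$ or at $(1,0)$ is governed by the same dichotomy $\ell\lessgtr\frac{m}{m-1}$ coming from the computation above.
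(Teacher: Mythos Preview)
Your computation of the line $LQ$ and the threshold $j\lessgtr\frac{\ell-1}{\ell}m$ with $m=[\kappa_p]$ is exactly what the paper does. The gap is in the last sentence: there is \emph{no} standing assumption that $\kappa_p\in\mathbb{N}$ at this point in the paper. Lemma~\ref{StructureOfQ} is proved in the general case $2\leq\kappa_p$, $\frac{\kappa_p}{\kappa_p-1}\leq\ell$ and feeds into Theorem~\ref{BigTableTheorem}, which allows non-integer $\kappa_p$; the integrality hypothesis appears only later, in Theorem~\ref{HDRExample}. So you cannot simply identify $m=[\kappa_p]$ with $\kappa_p$, and the threshold you obtain is $\frac{\ell-1}{\ell}[\kappa_p]$, while the lemma is stated with $\big[\frac{\ell-1}{\ell}\kappa_p\big]$ and $\big\lceil\frac{\ell-1}{\ell}\kappa_p\big\rceil$.

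The fix is short and is what the paper actually does. For the second assertion, $[\kappa_p]\leq\kappa_p$ gives $\frac{\ell-1}{\ell}[\kappa_p]\leq\frac{\ell-1}{\ell}\kappa_p\leq\big\lceil\frac{\ell-1}{\ell}\kappa_p\big\rceil\leq j$, so $Q_j$ lies on $LQ$. For the first assertion, if $j\leq\big[\frac{\ell-1}{\ell}\kappa_p\big]-1$ then, using $\kappa_p<[\kappa_p]+1$ and $\frac{\ell-1}{\ell}<1$,
\[
j\leq\frac{\ell-1}{\ell}\kappa_p-1<\frac{\ell-1}{\ell}\big([\kappa_p]+1\big)-1=\frac{\ell-1}{\ell}[\kappa_p]-\frac{1}{\ell}<\frac{\ell-1}{\ell}[\kappa_p],
\]
so $j\leq\frac{\ell-1}{\ell}[\kappa_p]$ and hence $Q_j=(j,j-1)$. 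With this bridging step inserted, your argument coincides with the paper's.
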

\begin{proof}
The equation of the line~$LQ$ is
\begin{equation*}
s = -\ell\frac{[\kappa_p] - k}{[\kappa_p]} + k.
\end{equation*}
To prove the first half of the lemma, it suffices to verify the inequality
\begin{equation*}
j-1 \geq -\ell\frac{[\kappa_p] - j}{[\kappa_p]} + j
\end{equation*}
when~$j \leq [\frac{\ell-1}{\ell}\kappa_p] - 1$. This may be rewritten as
\begin{equation*}
\ell j \leq (\ell - 1)[\kappa_p].
\end{equation*}
Clearly, it suffices to prove this inequality for the largest possible~$j = [\frac{\ell-1}{\ell}\kappa_p] - 1$. In this case, we arrive at
\begin{equation*}
\ell\Big[\frac{\ell-1}{\ell}\kappa_p\Big] \leq (\ell - 1)[\kappa_p] + \ell.
\end{equation*}
We estimate the left hand-side with~$(\ell - 1)\kappa_p$, which, in its turn, does not exceed~$(\ell - 1)[\kappa_p] + (\ell - 1)$. The first assertion of the lemma is proved.

Similar to the previous reasoning, it suffices to verify the inequality
\begin{equation*}
j-1 \leq -\ell\frac{[\kappa_p] - j}{[\kappa_p]} + j
\end{equation*}
when~$j \geq \lceil\frac{\ell-1}{\ell}\kappa_p\rceil$, to prove the second assertion of the lemma. This may be rewritten as
\begin{equation*}
 j \geq \frac{\ell - 1}{\ell}[\kappa_p],
\end{equation*}
which follows from~$j \geq \frac{\ell-1}{\ell}\kappa_p$, which is weaker than our assumption~$j \geq \lceil\frac{\ell-1}{\ell}\kappa_p\rceil$. So, we have proved the second half of the lemma.
\end{proof}

The lemma says that, among all the points~$Q_j$, only those with the indices~$[\frac{\ell-1}{\ell}\kappa_p]-1$,~$[\frac{\ell-1}{\ell}\kappa_p]$, and~$\lceil\frac{\ell-1}{\ell}\kappa_p\rceil$, may be the extremal points of the accessible domain.

The points~$P_j$,~$j = 1,2,\ldots, [\kappa_p]-1$, are generated by~$P$ in a similar manner:
\begin{equation*}
P_j = \begin{cases}
PL \cap \{(j,s)\mid s \in \mathbb{R}\}, \quad \hbox{if this point lies above the line~$s = k-1$};\\
(j,j-1), \quad \hbox{in the other case.}
\end{cases}
\end{equation*}
We also consider the point~$P_{[\kappa_p]}$ separately:
\begin{equation*}
P_{[\kappa_p]} = \begin{cases}
PL \cap \{(j,s)\mid s \in \mathbb{R}\}, \quad \hbox{if this point lies above the line~$s = 2k-\kappa_p$};\\
([\kappa_p],2[\kappa_p] - \kappa_p), \quad \hbox{in the other case.}
\end{cases}
\end{equation*}
\begin{Rem}
The point~$P_{[\kappa_p]}$ lies on~$LP$ if and only if~$\kappa_p \geq \ell$.
\end{Rem}
Unfortunately, there is no analog of the first assertion of Lemma~\ref{StructureOfQ} for the points~$P_j$. Here we can only say that for small~$j$ the points~$P_j$ lie on the line~$s = k-1$ and then at some moment they jump to the line~$LP$. However, this ``moment'' can happen much earlier than~$\frac{\ell-1}{\ell}\kappa_p$. We can only bound it from above.
\begin{Le}\label{StructureOfP}
For~$j \geq \lceil\frac{\ell-1}{\ell}\kappa_p\rceil$, all points~$P_j$ lie on the line~$LP$.
\end{Le}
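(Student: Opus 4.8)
\emph{Proof proposal.} The plan is to reduce, exactly as in the proof of Lemma~\ref{StructureOfQ}, the assertion to a single numerical inequality about the line $LP$, and then dispatch it by a short case analysis in which the regime $\ell>\kappa_p$ is handled by observing that the index range collapses.

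First I would record that, by the definition of $P_j$, the point $P_j$ lies on $LP$ precisely when $LP\cap\{k=j\}$ lies on or above the line $s=k-1$. Writing the equation of the line through $L=(0,-\ell)$ and $P=(\lceil\kappa_p\rceil,2\lceil\kappa_p\rceil-\kappa_p)$ as
\[
s=-\ell+\frac{2\lceil\kappa_p\rceil-\kappa_p+\ell}{\lceil\kappa_p\rceil}\,k ,
\]
the required condition at $k=j$ becomes $\frac{\lceil\kappa_p\rceil-\kappa_p+\ell}{\lceil\kappa_p\rceil}\,j\ge \ell-1$, i.e.
\[
j\ \ge\ \frac{(\ell-1)\lceil\kappa_p\rceil}{\lceil\kappa_p\rceil-\kappa_p+\ell}.
\]
The coefficient of $j$ on the left is strictly positive (as $\lceil\kappa_p\rceil\ge\kappa_p$ and $\ell>0$), so the left-hand side of the condition is increasing in $j$; hence it suffices to verify it at $j=\tfrac{\ell-1}{\ell}\kappa_p$, which is at most the smallest admissible value $\lceil\tfrac{\ell-1}{\ell}\kappa_p\rceil$.

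Substituting $j=\tfrac{\ell-1}{\ell}\kappa_p$, clearing the (positive) denominators, and cancelling the factor $\ell-1>0$ (recall $\ell\ge\frac{\kappa_p}{\kappa_p-1}>1$), the inequality collapses to
\[
(\kappa_p-\ell)(\lceil\kappa_p\rceil-\kappa_p)\ \ge\ 0 .
\]
This is clear whenever $\kappa_p\ge\ell$ (both factors nonnegative) and whenever $\kappa_p\in\mathbb Z$ (the second factor vanishes), which settles those cases. The only remaining possibility is $\ell>\kappa_p$ with $\kappa_p\notin\mathbb Z$, and there I would show that the statement is vacuous: $\tfrac{\ell-1}{\ell}\kappa_p=\kappa_p-\tfrac{\kappa_p}{\ell}>\kappa_p-1>[\kappa_p]-1$ (the first inequality since $\ell>\kappa_p$, the second since $\kappa_p$ is not an integer), so $\lceil\tfrac{\ell-1}{\ell}\kappa_p\rceil\ge[\kappa_p]$, and there is no integer $j$ with $\lceil\tfrac{\ell-1}{\ell}\kappa_p\rceil\le j\le[\kappa_p]-1$, which is the range over which the points $P_j$ in question are defined.

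The one genuinely delicate point is this last case split: for $\ell>\kappa_p$ the inequality $(\kappa_p-\ell)(\lceil\kappa_p\rceil-\kappa_p)\ge0$ actually fails for non-integer $\kappa_p$, and Lemma~\ref{StructureOfP} survives only because the admissible set of indices has shrunk to the empty set. One must also be careful that ``the points $P_j$'' here refers to the family $j=1,\dots,[\kappa_p]-1$ defined by the first formula and \emph{not} to the exceptional point $P_{[\kappa_p]}$, whose position on $LP$ is governed instead by the Remark preceding the lemma (for $\ell>\kappa_p$ that Remark says $P_{[\kappa_p]}$ is \emph{not} on $LP$). Everything else — the equation of $LP$, the monotonicity in $j$, and the algebra reducing to the product inequality — is routine.
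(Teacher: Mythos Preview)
Your proof is correct and follows the same approach as the paper: derive the threshold $j \geq \frac{(\ell-1)\lceil\kappa_p\rceil}{\lceil\kappa_p\rceil - \kappa_p + \ell}$, verify it at $j = \frac{\ell-1}{\ell}\kappa_p$, and reduce to the factored inequality $(\kappa_p - \ell)(\lceil\kappa_p\rceil - \kappa_p) \geq 0$, which is the paper's $\ell(\lceil\kappa_p\rceil - \kappa_p) \leq \kappa_p(\lceil\kappa_p\rceil - \kappa_p)$. Your treatment of the regime $\ell > \kappa_p$ is in fact slightly more careful than the paper's --- you correctly restrict the claim to the indices $j \leq [\kappa_p]-1$ and show the range is empty there (with $P_{[\kappa_p]}$ handled by the preceding Remark), whereas the paper's bald assertion $\lceil\frac{\ell-1}{\ell}\kappa_p\rceil > [\kappa_p]$ can fail (e.g.\ $\kappa_p = 2.5$, $\ell = 3$).
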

\begin{proof}
Consider the case~$\ell \leq \kappa_p$ first. The equation of the line~$LP$ is
\begin{equation*}
s = -\ell\frac{\lceil\kappa_p\rceil-k}{\lceil\kappa_p\rceil} + \frac{k}{\lceil\kappa_p\rceil}(2\lceil\kappa_p\rceil - \kappa_p).
\end{equation*}
So, we need to verify the inequality
\begin{equation*}
j-1 \leq -\ell\frac{\lceil\kappa_p\rceil-j}{\lceil\kappa_p\rceil} + \frac{j}{\lceil\kappa_p\rceil}(2\lceil\kappa_p\rceil - \kappa_p).
\end{equation*}
This may be rewritten as
\begin{equation*}
\frac{(\ell-1)\lceil\kappa_p\rceil}{\lceil\kappa_p\rceil-\kappa_p + \ell} \leq j.
\end{equation*}
So, it suffices to prove
\begin{equation*}
\frac{(\ell-1)\lceil\kappa_p\rceil}{\lceil\kappa_p\rceil-\kappa_p + \ell} \leq \Big\lceil\frac{\ell-1}{\ell}\kappa_p\Big\rceil.
\end{equation*}
We will prove a stronger inequality
\begin{equation*}
\frac{(\ell-1)\lceil\kappa_p\rceil}{\lceil\kappa_p\rceil-\kappa_p + \ell} \leq \frac{\ell-1}{\ell}\kappa_p,
\end{equation*}
which is equivalent to
\begin{equation*}
\ell \lceil\kappa_p\rceil \leq \kappa_p\Big(\lceil\kappa_p\rceil-\kappa_p + \ell\Big).
\end{equation*}
This may be restated as
\begin{equation*}
\ell(\lceil\kappa_p\rceil-\kappa_p) \leq \kappa_p(\lceil\kappa_p\rceil-\kappa_p),
\end{equation*}
which is true under our assumption~$\ell \leq \kappa_p$.

In the other case~$\ell > \kappa_p$, we have
\begin{equation*}
\Big\lceil\frac{\ell-1}{\ell}\kappa_p\Big\rceil \geq \lceil (\kappa_p-1)+ \rceil > [\kappa_p],
\end{equation*}
so the statement of the lemma is empty in this case (we consider the points~$P_j$ with~$j \leq [\kappa_p]$ only).
\end{proof}
\begin{Le}\label{AlmostTheorem}
If~$j$ is a number between~$1$ and~$\lceil\frac{\ell-1}{\ell}\kappa_p\rceil$ and~$2j \leq [\kappa_p]$, then~$Q_j$ belongs to the~$\HDR$-domain. If~$2j \leq \lceil \kappa_p \rceil$ and~$P$ belongs to the friendly region, then~$P_j$ belongs to the~$\HDR$ domain. 
\end{Le}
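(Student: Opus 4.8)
The plan is to derive both assertions from Corollary~\ref{ConnectingLWithSubcriticalDomain}, using Lemmas~\ref{StructureOfQ} and~\ref{StructureOfP} to locate the points $Q_j$ and $P_j$ on the relevant segments, together with the evident monotonicity of the $\HDR$-domain in the variable $s$: a weaker Bessel norm on the left-hand side of~\eqref{HDRinequality} only makes the inequality easier, so if $(k,s')$ lies in the $\HDR$-domain and $s\geq s'$ then so does $(k,s)$. First I would record that the anchor points are admissible choices of $X$. The point $Q=([\kappa_p],[\kappa_p])$ lies on the line $s=k$, hence in the subcritical region, and satisfies $2k_Q-s_Q=[\kappa_p]\leq\kappa_p$ together with $[\kappa_p]<\sigma_p$ (strict because $\sigma_p-\kappa_p=1-\frac1p>0$ when $p>1$; the borderline case $p=1$, $\kappa_1=\sigma_1\in\mathbb{Z}$ is the endpoint treated separately), so $Q$ lies in the friendly region as well. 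Likewise $P=(\lceil\kappa_p\rceil,2\lceil\kappa_p\rceil-\kappa_p)$ has $s_P\geq k_P$ and $2k_P-s_P=\kappa_p$, so it lies in the subcritical region, and under the standing hypothesis that $P$ belongs to the friendly region it is a legitimate anchor.

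For those $j$ at which $Q_j$ actually lies on the segment $LQ$ — by Lemma~\ref{StructureOfQ} this includes every $j\geq\lceil\frac{\ell-1}{\ell}\kappa_p\rceil$, in particular $j=\lceil\frac{\ell-1}{\ell}\kappa_p\rceil$ itself — the argument is immediate: apply Corollary~\ref{ConnectingLWithSubcriticalDomain} with $X=Q$ and $Y=Q_j$, noting that the hypothesis $2j\leq[\kappa_p]=k_Q$ is exactly the requirement $2k_Y\leq k_X$, and that $Q_j$ lies in the friendly region by its very construction. For the $P_j$ half I would argue identically, with $P$ in place of $Q$, Lemma~\ref{StructureOfP} in place of Lemma~\ref{StructureOfQ}, and the inequality $2j\leq\lceil\kappa_p\rceil=k_P$; the point $P_{[\kappa_p]}$ is already defined by a separate case distinction and is handled the same way once one observes it still lies above either $LP$ or the Knapp line.

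The remaining values are $1\leq j<[\frac{\ell-1}{\ell}\kappa_p]$, where Lemma~\ref{StructureOfQ} gives $Q_j=(j,j-1)$, a point lying on the lower boundary $s=k-1$ of the friendly region but strictly above $LQ$; now the corollary with $X=Q$ does not apply because $(j,j-1)\notin LQ$. Here I would introduce an auxiliary anchor: take $X$ to be the point with $k$-coordinate $m$ on the line through $L=(0,-\ell)$ and $(j,j-1)$, so that $(j,j-1)\in LX$ automatically, and choose $m$ large enough that $X$ becomes subcritical. Computing the $s$-coordinate of such an $X$ shows that $m=2j$ works when $\ell\geq2$, giving $X=(2j,\,2j+\ell-2)$, while $m=\lceil\frac{\ell j}{\ell-1}\rceil$ works when $\ell<2$ (in that regime $\frac{\ell}{\ell-1}>2$, so $m\geq 2j+1$). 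In both cases one checks, using $j\leq[\frac{\ell-1}{\ell}\kappa_p]-1$, that $m\leq\kappa_p$ and that $2m-s_X\leq m\leq\kappa_p$, and that $s_X\geq 0$; thus $X$ lies in the friendly $\cap$ subcritical region, and $2k_{Q_j}=2j\leq m=k_X$. Corollary~\ref{ConnectingLWithSubcriticalDomain} then places $Q_j=(j,j-1)$ in the $\HDR$-domain, and the $P_j$ version is identical.

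I expect the main obstacle to be exactly this last case. Forcing the auxiliary point $X$ to sit inside the friendly region while still obeying $2k_Y\leq k_X$ requires the split into $\ell\geq 2$ and $\ell<2$ and a careful use of the bound $j\leq[\frac{\ell-1}{\ell}\kappa_p]-1$ to keep $k_X\leq\kappa_p$; the rest is a routine unwinding of the definitions of $Q_j$, $P_j$, and Corollary~\ref{ConnectingLWithSubcriticalDomain}, combined with monotonicity of the $\HDR$-domain in $s$.
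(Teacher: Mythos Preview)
Your proof is correct, and the case where $Q_j$ (resp.\ $P_j$) already lies on $LQ$ (resp.\ $LP$) is handled exactly as in the paper.  The difference is in the second case, where $Q_j=(j,j-1)$ sits strictly above $LQ$.  You construct an auxiliary anchor $X$ on the line $LQ_j$ by choosing the \emph{smallest} integer $k$-coordinate $m$ at which the line enters the subcritical region; this forces the split into $\ell\geq 2$ (take $m=2j$) and $\ell<2$ (take $m=\lceil \ell j/(\ell-1)\rceil$) and the subsequent verifications that $m\leq\kappa_p$, $2m-s_X\leq\kappa_p$, etc.

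The paper chooses $X$ differently and avoids this case split entirely: it takes $X$ to be the intersection of the line $LQ_j$ with the vertical line $k=[\kappa_p]$ (respectively $k=\lceil\kappa_p\rceil$ for the $P$-case).  Since $Q_j$ lies above $LQ$, the line $LQ_j$ has larger slope than $LQ$, so at $k=[\kappa_p]$ it hits a point strictly above $Q$.  That point is therefore automatically in both the subcritical and friendly regions (these regions are upward-closed in $s$ at fixed $k$, and $Q$ already belongs to both), and the hypothesis $2j\leq[\kappa_p]$ gives $2k_Y\leq k_X$ directly.  No computation of slopes or separate treatment of small $\ell$ is needed.

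Both arguments ultimately rest on Corollary~\ref{ConnectingLWithSubcriticalDomain}; yours buys nothing extra here, but your instinct to look for the minimal admissible anchor would be the right one if one were trying to squeeze out a larger $\HDR$-domain from the same machinery.
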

Note that~$P$ belongs to the friendly region if and only if~$\lceil \kappa_p\rceil <\sigma_p$.
\begin{proof}
We prove the second assertion, the proof of the first one is completely similar. We consider two cases:~$P_j$ lies on~$LP$ and above~$LP$. In the first case, we may apply Corollary~\ref{ConnectingLWithSubcriticalDomain} with~$P$ in the role of~$X$ and~$P_j$ in the role of~$Y$. In the second case, we may apply the same corollary with~$P_j$ in the role of~$Y$ and the point of intersection of the lines~$LP_j$ and~$\{(\lceil \kappa_p\rceil,s)\mid s \in \mathbb{R}\}$ in the role of~$X$ (the latter point lies above~$P$ since~$P_j$ lies above the segment~$LP$, and thus belongs to the friendly domain).
\end{proof}
We finally summarize our results.
\begin{Th}\label{BigTableTheorem}
Assume~$2\leq \kappa_p$,~$\frac{\kappa_p}{\kappa_p - 1} \leq \ell$. The~$\HDR$ domain contains the convex hull of points specified below. We always include the points~$(0,0)$,~$(1,0)$, and~$Q$ in our list. The other points are specified in the following table\textup{:}

\medskip
\centerline{
\begin{tabular}{|l||c|c|}
\hline
&$\lceil \kappa_p \rceil < \sigma_p$& $\lceil \kappa_p \rceil  \geq \sigma_p$\\
\hline
\hline
$2\lceil\frac{\ell-1}{\ell}\kappa_p\rceil \leq [\kappa_p]$&$Q_{\lceil \frac{\ell - 1}{\ell}\kappa_p\rceil},Q_{[\frac{\ell - 1}{\ell}\kappa_p]}, Q_{[\frac{\ell - 1}{\ell}\kappa_p]-1}, P_{\lceil \frac{\ell - 1}{\ell}\kappa_p\rceil}, P_{[\frac{\ell - 1}{\ell}\kappa_p]}$&$Q_{\lceil \frac{\ell - 1}{\ell}\kappa_p\rceil},Q_{[\frac{\ell - 1}{\ell}\kappa_p]}, Q_{[\frac{\ell - 1}{\ell}\kappa_p]-1}$\\
\hline
$2[\frac{\ell-1}{\ell}\kappa_p]\leq [\kappa_p] < 2\lceil\frac{\ell-1}{\ell}\kappa_p\rceil \leq \lceil \kappa_p\rceil$ & $Q_{[\frac{\ell - 1}{\ell}\kappa_p]}, Q_{[\frac{\ell - 1}{\ell}\kappa_p]-1}, P_{\lceil \frac{\ell - 1}{\ell}\kappa_p\rceil}, P_{[\frac{\ell - 1}{\ell}\kappa_p]}$&$Q_{[\frac{\ell - 1}{\ell}\kappa_p]}, Q_{[\frac{\ell - 1}{\ell}\kappa_p]-1}$\\
\hline
$2[\frac{\ell-1}{\ell}\kappa_p]\leq [\kappa_p] < \lceil \kappa_p\rceil < 2\lceil\frac{\ell-1}{\ell}\kappa_p\rceil$&$Q_{[\frac{\ell - 1}{\ell}\kappa_p]}, Q_{[\frac{\ell - 1}{\ell}\kappa_p]-1}, P_{[\frac{\ell - 1}{\ell}\kappa_p]}$&$Q_{[\frac{\ell - 1}{\ell}\kappa_p]}, Q_{[\frac{\ell - 1}{\ell}\kappa_p]-1}$\\
\hline
$[\kappa_p] < 2[\frac{\ell-1}{\ell}\kappa_p] \leq \lceil \kappa_p\rceil < 2\lceil\frac{\ell-1}{\ell}\kappa_p\rceil$&$Q_{[\frac{\ell - 1}{\ell}\kappa_p]-1}, P_{[\frac{\ell - 1}{\ell}\kappa_p]}$&$Q_{[\frac{\ell - 1}{\ell}\kappa_p]-1}$\\
\hline
$\lceil \kappa_p\rceil < 2[\frac{\ell-1}{\ell}\kappa_p]$&$Q_{[\frac{[\kappa_p]}{2}]},P_{[\frac{\lceil \kappa_p\rceil}{2}]}$&$Q_{[\frac{[\kappa_p]}{2}]}$\\
\hline
\end{tabular}\ .
}
\end{Th}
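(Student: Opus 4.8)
The plan is to prove Theorem~\ref{BigTableTheorem} as an assembly argument: in each of the five rows of the table we exhibit a finite list of points that provably lie in the $\HDR$-domain, and then close up under the convexity statement of Corollary~\ref{ConvexHull}. All the analytic content is already packaged in Theorem~\ref{StubbornTheorem} (via Lemma~\ref{Convexity} and its corollaries, and via Lemma~\ref{AlmostTheorem}); what remains is a lattice-point bookkeeping argument organized by Lemmas~\ref{StructureOfQ} and~\ref{StructureOfP}, and illustrated by the diagrams such as Figure~\ref{fig:Basic_L_inbetween_Proof}.

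First I would dispatch the three base points $(0,0)$, $(1,0)$, $Q$. The point $(0,0)$ is in the $\HDR$-domain (even without the $H^\ell(\Sigma)$ term) because $\SI(h,0,0,0,p)$ is the $\alpha=\beta=\gamma=0$ instance of Theorem~\ref{StubbornTheorem}, legitimate since $\sigma_p>\kappa_p\geq 2>0$. The point $Q=([\kappa_p],[\kappa_p])$ lies in the intersection of the friendly and subcritical regions, so the $\beta=0$ case of Theorem~\ref{StubbornTheorem} bounds $N([\kappa_p],[\kappa_p])$ by $\|f\|_{L_p}^2$. The point $(1,0)$ is the midpoint of $L=(0,-\ell)$ and the subcritical point $(2,\ell)$; the latter lies in the friendly region because $\kappa_p\geq 2$ forces $4-\kappa_p\leq\ell$, $\ell\geq\kappa_p/(\kappa_p-1)\geq 2$ gives $\ell\geq 1$, and $2<\sigma_p$. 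Hence Corollary~\ref{ConnectingLWithSubcriticalDomain} with $X=(2,\ell)$ and $Y=(1,0)$ (note $2k_Y=2=k_X$) places $(1,0)$ in the $\HDR$-domain.

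Next I would run through the points $Q_j$ and $P_j$. Lemma~\ref{AlmostTheorem} puts $Q_j$ in the $\HDR$-domain whenever $1\leq j\leq\lceil\tfrac{\ell-1}{\ell}\kappa_p\rceil$ and $2j\leq[\kappa_p]$, and $P_j$ whenever $2j\leq\lceil\kappa_p\rceil$ and $\lceil\kappa_p\rceil<\sigma_p$ (this last being exactly ``$P$ lies in the friendly region''). Lemmas~\ref{StructureOfQ} and~\ref{StructureOfP} then pin down which of these can be genuine vertices of the resulting polygon: $Q_j$ sits on the line $s=k-1$ for $j<[\tfrac{\ell-1}{\ell}\kappa_p]$ and on the line $LQ$ for $j\geq\lceil\tfrac{\ell-1}{\ell}\kappa_p\rceil$, so only the indices $[\tfrac{\ell-1}{\ell}\kappa_p]-1$, $[\tfrac{\ell-1}{\ell}\kappa_p]$, $\lceil\tfrac{\ell-1}{\ell}\kappa_p\rceil$ can produce new corners, and likewise $P_j$ lies on $LP$ once $j\geq\lceil\tfrac{\ell-1}{\ell}\kappa_p\rceil$. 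Consequently, for each row one simply keeps those candidate corners for which the doubling inequality ($2j\leq[\kappa_p]$ for $Q_j$, $2j\leq\lceil\kappa_p\rceil$ for $P_j$) holds; the five rows are exactly the five mutually exclusive positions of $[\kappa_p]$ and $\lceil\kappa_p\rceil$ relative to $2[\tfrac{\ell-1}{\ell}\kappa_p]$ and $2\lceil\tfrac{\ell-1}{\ell}\kappa_p\rceil$, and the two columns record whether the $P_j$ are available at all. When a doubling inequality fails for all three candidate indices one falls back, as in the last row, to $Q_{[[\kappa_p]/2]}$ and $P_{[\lceil\kappa_p\rceil/2]}$, the largest indices that the constraint permits, and these still satisfy the hypothesis $j\leq\lceil\tfrac{\ell-1}{\ell}\kappa_p\rceil$ precisely because of the row condition $\lceil\kappa_p\rceil<2[\tfrac{\ell-1}{\ell}\kappa_p]$. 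Collecting the surviving points together with $(0,0)$, $(1,0)$, $Q$ and invoking Corollary~\ref{ConvexHull} yields the claimed convex hull.

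The main obstacle is the exhaustiveness of the case analysis: one must verify, row by row, that every point listed in the table meets the hypotheses of Lemma~\ref{AlmostTheorem} --- a fussy but routine comparison of upper and lower integer parts, of the same flavor as the inequalities proved inside Lemmas~\ref{StructureOfQ} and~\ref{StructureOfP} --- and, for the statement to be the best obtainable by this method, that no further reachable corner has been dropped. The genuine limitation, showing up both in the integer-index restriction and in the constraint $2k_Y\leq k_X$ of Corollary~\ref{ConnectingLWithSubcriticalDomain}, is that corners with $k$-coordinate much larger than $\tfrac12\sigma_p$ are out of reach; this is exactly why, for large $\ell$, the hull produced here is strictly smaller than the region cut out by the necessary conditions \eqref{HDRShiftCondition}--\eqref{HDRKnapp} of Proposition~\ref{SufficientConditionsHDR}.
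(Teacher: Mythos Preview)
Your approach is essentially the paper's: the theorem is assembled from Lemma~\ref{AlmostTheorem} (which supplies the $Q_j$ and $P_j$), organized via Lemmas~\ref{StructureOfQ} and~\ref{StructureOfP} into the five-row case structure, and closed under Corollary~\ref{ConvexHull}. Your identification of the doubling constraints $2j\leq[\kappa_p]$ (for $Q_j$) and $2j\leq\lceil\kappa_p\rceil$ (for $P_j$) as the source of the row distinctions is exactly right, as is the fallback to $Q_{[[\kappa_p]/2]}$, $P_{[\lceil\kappa_p\rceil/2]}$ in the last row.

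There is, however, an explicit error in your handling of the base point $(1,0)$. You write ``$\ell\geq\kappa_p/(\kappa_p-1)\geq 2$'', but $\kappa_p/(\kappa_p-1)\geq 2$ is false for every $\kappa_p>2$ (the map $x\mapsto x/(x-1)$ is decreasing on $(1,\infty)$ and equals $2$ only at $x=2$). Consequently $(2,\ell)$ need not be subcritical, and Corollary~\ref{ConnectingLWithSubcriticalDomain} does not apply with $X=(2,\ell)$. The natural repair is to push $X$ further along the line through $L$ and $(1,0)$ (namely $s=\ell(k-1)$) to an integer $k_X\geq\lceil\ell/(\ell-1)\rceil$ where the line has entered the subcritical region; this succeeds whenever such a $k_X$ also satisfies $k_X<\sigma_p$, but can fail when $\sigma_p-\kappa_p$ is small and $\ell$ sits only slightly above $\kappa_p/(\kappa_p-1)$. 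The paper's own one-paragraph proof does not treat $(1,0)$ separately either: it simply invokes Lemma~\ref{AlmostTheorem}, which delivers $Q_1$ (and $P_1$ in the left column), and these coincide with $(1,0)$ in most but not literally all parameter regimes. So your gap here mirrors a certain terseness in the original rather than a divergence from it.
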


This theorem is a straightforward consequence of Lemma~\ref{AlmostTheorem}. What might be surprising is that only a small number of the points~$P_j$ and~$Q_j$ are extremal for our convex hull. However, this phenomenon has very simple explanation. The points~$P_j$ and~$Q_j$ lie in the union of three lines:~$LP$,~$LQ$, and~$\{s = k-1\}$ (except, possibly, for~$P_{[\kappa_p]}$, which does not lie on these three lines only in the case described by the last row in the table; in this case,~$P_{[\kappa_p]}$ is not reached by our techniques). Moreover, Lemmas~\ref{StructureOfQ} and~\ref{StructureOfP} allow us to choose specific points from each of these lines.  We have not managed to choose the minimal possible list in each of the cases (and sometimes we even choose the same point twice with different names), however, our lists are bounded by at most five points. 

We note that the cases~$\ell < \kappa_p$ and~$\ell \geq \kappa_p$ are the same for our result (our answer in these cases are given by the last row in the table above, at least when~$\kappa_p \geq 3$). However, the forms of the~$\HDR$-domain suggested by Proposition~\ref{SufficientConditionsHDR} differ in these cases (see Figures~\ref{fig:Basic_L_inbetween} and~\ref{fig:Basic_L_Large}).

\begin{proof}[Proof of Theorem~\ref{HDRExample}]
Since~$\kappa_p$ is assumed to be a nonnegative integer, we have~$K=P=Q$. Therefore, all the points~$Q_j$ and~$P_j$ lie on the lines~$LK$ and~$s = k-1$. Since $p > 1$, we have~$\kappa_p = [\sigma_p] < \sigma_p$ as well.  

When $0 \leq \ell \leq \frac {\kappa_p}{\kappa_p - 1}$, Theorem~\ref{ellSmallTheorem} immediately implies that for points $(k,s)$ whose first coordinate is a positive integer,  $\HDR(\Sigma, k, s, \ell, p)$ holds  whenever conditions~\eqref{HDRSurfaceCondition} and~\eqref{HDRShiftedKnapp} are satisfied; \eqref{EquationForLQ} turns into~\eqref{HDRShiftedKnapp}.

When $\ell > \frac{\kappa_p}{\kappa_p - 1}$ it remains to apply Theorem~\ref{BigTableTheorem} and decode its results.

For the case~$2\lceil\frac{\ell-1}{\ell}\kappa_p\rceil \leq \kappa_p$, the points $Q_{[\frac{\ell - 1}{\ell}\kappa_p]-1}$, $Q_{[\frac{\ell - 1}{\ell}\kappa_p]}$, and $Q_{\lceil \frac{\ell - 1}{\ell}\kappa_p\rceil}$ straddle the intersection of the lines $s=k-1$ and $LK$.  The convex hull of these three $Q_j$ together with points $K$ and $(1,0)$ contains every point along $s = \max(k-1, k - \ell \frac{(\kappa_p - k)}{\kappa_p})$ with $k = 1, 2, \ldots , \kappa_p$.  Thus $\HDR(\Sigma, k, s, \ell, p)$ holds provided~\eqref{HDRSigmaLpShiftCondition},~\eqref{HDRSurfaceCondition} and~\eqref{HDRShiftedKnapp} are all satisfied.

In the case~$2\lceil\frac{\ell-1}{\ell}\kappa_p\rceil > \kappa_p$ we have~$2\lceil\frac{\ell-1}{\ell}\kappa_p\rceil > \lceil\kappa_p\rceil$. Thus, this case is described in the intersection of the last row and first column in the table above. We observe that the points $P_{[\frac{\kappa_p}{2}]}$ and $Q_{[\frac{\kappa_p}{2}]}$ coincide at the location $([\frac{\kappa_p}{2}],[\frac{\kappa_p}{2}]-1)$.  Then convex combinations of $P_{[\frac{\kappa_p}{2}]}$ and $(0,1)$ form a segment of the line $k= s+1$, and convex combinations of~$P_{[\frac{\kappa_p}{2}]}$ and $K$ form a segment of the line $s \geq k - \frac{\kappa_p -k}{\kappa_p - [\frac{\kappa_p}{2}]}$.  It follows that $\HDR(\Sigma, k, s, \ell, p)$ holds provided~\eqref{HDRSigmaLpShiftCondition},~\eqref{HDRSurfaceCondition}, and~\eqref{UglyCondition} are satisfied.
\end{proof}

\section{Sharpness}\label{S5}
In this section, we consider the case where~$\Sigma$ is the paraboloid~$\{\xi_d = |\xi_{\bar{d}}|^2\}$ as a representative example. We also assume that~$\varphi\in C_0^{\infty}(\mathbb{R}^{d-1})$ is non-negative.

\subsection{Surface measure conditions}\label{s51}
Let~$\chi$ be a smooth function of one variable supported in~$[1,2]$ such that~$\hat{\chi}^{(k)}(0) = 1$.  Consider the functions~$f_n$ defined as
\begin{equation*}
\hat{f}_n(\xi) = \varphi(\xi_{\bar{d}})\hat{\chi}\big(2^n(\xi_d - |\xi_{\bar{d}}|^2)\big).
\end{equation*}
The function~$f_n$ can be written explicitly:
\begin{equation*}
f_n(x) = 2^{-n}\mathcal{F}_{\xi\to x}\Big[\varphi d\mathbb{P}\Big]\chi(2^{-n}x_d ),
\end{equation*}
here~$d\mathbb{P}$ is the Lebesgue measure on the paraboloid~$\Sigma$.
It is easy to observe two formulas:
\begin{equation*}
\frac{\partial^k\hat{f}_n}{\partial \xi_d^k}(\xi) = 2^{nk} \varphi(\xi_{\bar{d}}), \quad \xi \in \Sigma;\qquad
\|f_n\|_{L_p} \asymp 2^{n\sigma_p}.
\end{equation*}
We will also need the functions
\begin{equation*}
f_{\le N} = \sum\limits_{n =0}^N (1+n)^{-1} 2^{-nk}f_n.
\end{equation*} 

\paragraph{\bf Sharpness of~\eqref{RestrictionWithSobolevSurface}.} Assume~$\phi = 1$ on the support of~$\varphi$. We plug~$f_{\le N}$ into~\eqref{RestrictionWithSobolev}. The left hand-side is bounded away from zero by
\begin{equation*}
\Big\|\frac{\partial^k \hat{f}_{\le N}}{\partial \xi_d^k}\Big\|_{H^{-s}(\Sigma)} = \|\varphi\|_{H^{-s}(\Sigma)}\sum\limits_{0 \le n \le N}(1+n)^{-1}  \asymp \log(N).
\end{equation*}
As for the~$L_p$-norm, we note that the functions~$f_n$ have disjoint supports, so,
\begin{equation*}
\|f_{\le N}\|_{L_p} = \Big(\sum\limits_{n\le N}\|f_{n}\|_{L_p}^p\Big)^{\frac{1}{p}} \asymp \Big(\sum\limits_{n \le N} (1+n)^{-p}\, 2^{n p(\sigma_p - k)}\Big)^{\frac{1}{p}}.
\end{equation*}
Since the left hand-side of~\eqref{RestrictionWithSobolev} tends to infinity as~$N\to \infty$, the right hand-side cannot be uniformly bounded. This means~\eqref{RestrictionWithSobolevSurface} holds true if~$p > 1$. In the case~$p=1$, we get~$k \leq \frac{d-1}{2}$ instead.

\paragraph{\bf Necessity of~$p < \frac{2d}{d+1+2k}$ in Theorem~\ref{First_restriction_for_higher_derivatives_Theorem}.} Add the requirements~$\hat{\chi}^{(j)}(0) = 0$ for all~$j < k$. Then,~$f_n$ and~$f_{\le N}$ belong to~$\Si L_p^k$ and the same reasoning gives the necessity of~$k < \sigma_p$ in~\eqref{Restriction_for_higher_derivatives} and~\eqref{Sobolev_Restriction_for_higher_derivatives}, which is exactly~$p < \frac{2d}{d+1+2k}$. As usual, the cases~$k = \frac{d-1}{2}$ are permitted if~$p=1$.

\paragraph{\bf Necessity of~\eqref{HDRSurfaceCondition}.} We plug exactly the same functions~$f_{\le N}$ into~\eqref{HDRinequality}. The~$H^{-s}$-norm on the left hand-side and the~$L_p$-norm on the right hand-side behave in the same manner as previously. Since we have assumed~$\hat{\chi}(0) =0$, there is no summand~$\|\hat{f}\|_{H^\ell(\Sigma)}$ on the right hand-side.

\paragraph{\bf Necessity of~\eqref{SIGaussian}.} As it was mentioned earlier, the quadratic inequality~\eqref{StubbornInequality} is equivalent to its bilinear version~\eqref{BilinearStubborn}. We work with the latter expression here. The functions~$f$ and~$g$ will be constructed from the functions~$f_n$ in a slightly different manner from before. To define~$g$, we take~$\chi$ that satisfies~$\hat{\chi}^{(\alpha)}(0) = 1$ and set~$g = f_0$. For the function~$f$, we require~$\hat{\chi}^{(j)}(0) = 0$ for all~$j < k = \alpha + \beta$ and~$\hat{\chi}^{(\alpha+\beta)}(0) = 1$, and set~$f = f_{\le N}$ (with~$k=\alpha+\beta$). We plug these functions~$f$ and~$g$ into~\eqref{BilinearStubborn} and use the Newton--Leibniz formula (we assume~$\psi=1$ on the support of~$\varphi$)
\begin{equation*}
\bigg|\Big(\frac{\partial}{\partial r}\Big)^{\beta}\Scalprod{ \frac{\partial^{\alpha}\hat{f}}{\partial \xi_d^{\alpha}}\psi}{ \frac{\partial^{\alpha}\hat{g}}{\partial \xi_d^{\alpha}}\psi}_{\dot{H}^{-\gamma}(\Sigma_r)}\bigg|_{r=0}\bigg| = \bigg|\Scalprod{ \frac{\partial^{\alpha+\beta}\hat{f}}{\partial \xi_d^{\alpha+\beta}}\psi}{ \frac{\partial^{\alpha}\hat{g}}{\partial \xi_d^{\alpha}}\psi}_{\dot{H}^{-\gamma}(\Sigma)}\bigg| = \|\varphi\|_{\dot{H}^{-\gamma}(\Sigma)}^2\sum\limits_{n=0}^N\frac{1}{1+n}\asymp \log N.
\end{equation*}
On the right hand-side, we have
\begin{equation*}
\|g\|_{L_p} \asymp 1, \quad \|f_{\le N}\|_{L_p} \asymp \Big(\sum\limits_{n \le N} (1+n)^{-p}\, 2^{n p(\sigma_p - \alpha - \beta)}\Big)^{\frac{1}{p}}.
\end{equation*}
So, the necessity of~\eqref{SIGaussian} is proved.

\paragraph{\bf Necessity of~$\alpha + \beta < \frac{d-1}{p} + \frac{1}{r} - \frac{d+1}{2}$ in Theorem~\ref{StrichartzTheorem}}  This is proved in the same manner as in the previous paragraph. One should only replace the formula for the~$L_p$-norm of~$f_n$ with
\begin{equation*}
\|f_n\|_{L_r(L_p)} \asymp 2^{n(\frac{d-1}{p} + \frac{1}{r} - \frac{d+1}{2})}.
\end{equation*}

\subsection{Knapp examples}\label{s52}
We start with a Schwartz function~$f$ with compactly supported Fourier transform and define the functions~$f_n$ by the formula
\begin{equation}\label{ParabolicRescaling}
f_n(x) = n^{-\frac{d-1}{2}}f\Big(\frac{x_1}{n},\frac{x_2}{n},\ldots,\frac{x_{d-1}}{n},\frac{x_d}{n^2}\Big).
\end{equation}
By homogeneity,
\begin{align*}
\Big\|\frac{\partial^k \hat{f}_n}{\partial \xi^k_d}\Big\|_{\dot{H}^{-s}(\Sigma)} = n^{2k-s}\Big\|\frac{\partial^k \hat{f}}{\partial \xi^k_d}\Big\|_{\dot{H}^{-s}(\Sigma)};\\
\|f_n\|_{L_p} = n^{\kappa_p}\|f\|_{L_p}.
\end{align*}
with the caveat that the homogeneous Sobolev norm may already be infinite if $s \geq \frac{d-1}{2}$.

\paragraph{\bf Necessity of~\eqref{RestrictionWithSobolevKnapp}} This can be obtained by simply plugging~$f_n$ into~\eqref{RestrictionWithSobolev} and assuming~$\phi = 1$ in a neighborhood of the origin.

\paragraph{\bf Necessity of the condition~$2k-s \leq \kappa_p$ in Theorem~\ref{First_restriction_for_higher_derivatives_Theorem}.} We take~$f \in\!\, \Si L_p^k$ and note that~$f_n \in\!\, \Si L_p^k$ as well (recall that~$\Sigma$ is the paraboloid). It remains to plug~$f_n$ into~\eqref{Sobolev_Restriction_for_higher_derivatives} with the same assumption about~$\phi$.

\paragraph{\bf Necessity of~\eqref{HDRKnapp}.} Here we plug~$f_n$ generated by~$f\in\!\, \Si L_p$ into~\eqref{HDRinequality} and note that~$\|\hat{f}_n\|_{H^\ell(\Sigma)}=0$.

\paragraph{\bf Necessity of~\eqref{SIKnapp}} This follows from the formula
\begin{equation*}
\frac{\partial^\beta}{\partial r^\beta}\Big\|\frac{\partial^\alpha \hat{f}_n}{\partial \xi^\alpha_d}\Big\|^2_{\dot{H}^{-\gamma}(\Sigma_r)}\Big|_{r=0}=n^{4\alpha + 2\beta-2\gamma}\frac{\partial^\beta}{\partial r^\beta}\Big\|\frac{\partial^\alpha \hat{f}}{\partial \xi^\alpha_d}\Big\|^2_{\dot{H}^{-\gamma}(\Sigma_r)}\Big|_{r=0}.
\end{equation*}

\paragraph{\bf Necessity of~$2\alpha + \beta -\gamma \leq \frac{d-1}{p} + \frac{2}{r} - \frac{d+3}{2}$ in Theorem~\ref{StrichartzTheorem}} One can prove this in the same manner as in the previous paragraph. One should only replace the formula for the~$L_p$-norm of~$f_n$ with
\begin{equation*}
\|f_n\|_{L_r(L_p)} \asymp 2^{n(\frac{d-1}{p} + \frac{2}{r} - \frac{d+3}{2})}.
\end{equation*}

\subsection{Pure shifts}\label{s53}
We start with a Schwartz function~$f$ and consider its shifts in the~$x_d$ direction:
\begin{equation}\label{ShiftFormula}
f_n(x) = f(x_1,x_2,\ldots,x_{d-1},x_d - n).
\end{equation}
We also assume
\begin{equation*}
\frac{\partial^j \hat{f}}{\partial \xi_{d}^j} = 0\quad \hbox{on $\Sigma$ for }\ j=1,2,\ldots,k
\end{equation*}
and~$\hat{f} = 1$ on the support of~$\phi$. Then~$\hat{f}_n(\xi) = f(\xi)e^{2\pi i n\xi_{d}}$ and
\begin{align*}
\Big\|\phi\frac{\partial^k \hat{f}_n}{\partial \xi_{d}^k}\Big\|_{H^{-s}(\Sigma)} = (2\pi n)^k\big\|\phi e^{2\pi i n|\cdot|^2}\big\|_{H^{-s}} \gtrsim  n^k \Big(n^{-(d-1)}\int\limits_{\mathbb{R}^{d-1}} \Big|\check{\phi}*e^{-2\pi i \frac{|z|^2}{4n}}\Big|^2(z)\cdot(1+|z|)^{-2s}\,dz\Big)^{\frac12}\\
 \gtrsim n^k \Big(n^{-(d-1)}\int\limits_{|z|\lesssim n} (1+|z|)^{-2s}\,dz\Big)^{\frac12} \gtrsim n^{k-s}.
\end{align*}

\paragraph{\bf Necessity of~\eqref{RestrictionWithSobolevShift}} This follows from the fact that~$\|f_n\|_{L_p}$ does not depend on~$n$.

\paragraph{\bf Necessity of~\eqref{HDRShiftCondition}.} Note that~$\|\hat{f}_n\|_{H^{\ell}(\Sigma)}$ does not exceed~$cn^{\ell}$ (this estimate reduces to the product rule in the case~$\ell \in \mathbb{Z}_+$; the general case follows from the case~$\ell \in \mathbb{Z}_+$ by the Cauchy--Schwarz inequality). 
Comparing the left and right parts of~\eqref{HDRinequality}, we get~\eqref{HDRShiftCondition}. 

\paragraph{\bf Necessity of~\eqref{HDRSigmaLpShiftCondition}.} We consider the functions~$f_n$ generated by the rule~\eqref{ShiftFormula} from a function~$f\in\,\! \Si L_p$ such that~$\frac{\partial \hat{f}}{\partial \xi_d} = \varphi$ and~$\varphi = 1$ on the support of~$\phi$, and all higher order (up to order~$k$) derivatives of~$\hat{f}$ vanish on~$\Sigma$. Then,~$f_n \in\,\! \Si L_p$ as well, so, there is no term~$\|\hat{f}_n\|_{H^\ell}$ on the right hand-side of~\eqref{HDRinequality}. However, on left hand-side, we cannot have~$n^{k-s}$, but only have growth~$n^{k-s-1}$ since
\begin{equation*}
\frac{\partial^{k}\hat{f}_n}{\partial \xi_d^k}\Big|_{\Sigma}(\xi_{\bar{d}}) = (2\pi i n)^{k-1}\varphi(\xi_{\bar{d}})e^{2\pi i n|\xi_{\bar{d}}|^2}.
\end{equation*}
Thus,~$n^{k-s-1}$ should be bounded if~\eqref{HDRinequality} holds, which is exactly~\eqref{HDRSigmaLpShiftCondition}.

\paragraph{\bf Necessity of~\eqref{SISpectralShift}.} Consider a Schwartz function~$f$ of~$d$ variables such that for any~$j \in [0..\alpha + \beta]$ we have
\begin{equation}\label{ConditionForSSExample}
\frac{\partial^j \hat{f}}{\partial \xi_d^j} = 1 \quad \hbox{on}\ \Sigma\cap V.
\end{equation} 
Let~$f_n$ be generated by~\eqref{ShiftFormula} from~$f$. We plug~$f_n$ into~\eqref{StubbornInequality}. We first compute the ``interior'' derivative:
\begin{equation*}
\frac{\partial^{\alpha} \hat{f}_n}{\partial \xi_d^{\alpha}}(\xi)\psi(\xi) = \frac{\partial^{\alpha} \big[\hat{f}e^{2\pi i n\xi_d}\big]}{\partial \xi_d^{\alpha}}(\xi)\psi(\xi) = e^{2\pi i n\xi_d}\psi(\xi)\sum\limits_{j=0}^{\alpha}C_{\alpha}^j\frac{\partial^j \hat{f}}{\partial \xi_d^j}(\xi)(2\pi i n)^{\alpha-j}.  
\end{equation*}
Therefore,
\begin{multline}\label{VeryVeryBigFormula}
\Big(\frac{\partial}{\partial r}\Big)^{\beta}\Big\|\frac{\partial^{\alpha} \hat{f}_n}{\partial \xi_d^{\alpha}}\psi\Big\|^2_{\dot{H}^{-\gamma}(\Sigma_r)}\bigg|_{r=0}=
\Big(\frac{\partial}{\partial r}\Big)^{\beta}\Big\|e^{2\pi i n(|\xi_{\bar{d}}|^2+r)}\psi(\xi_{\bar{d}})\sum\limits_{j=0}^{\alpha}C_{\alpha}^j\frac{\partial^j \hat{f}}{\partial \xi_d^j}(\xi_{\bar{d}},|\xi_{\bar{d}}|^2+r)(2\pi i n)^{\alpha-j}\Big\|^2_{\dot{H}^{-\gamma}}\bigg|_{r=0} = \\
\Big(\frac{\partial}{\partial r}\Big)^{\beta}\Big\|e^{2\pi i n|\xi_{\bar{d}}|^2}\psi(\xi_{\bar{d}})\sum\limits_{j=0}^{\alpha}C_{\alpha}^j\frac{\partial^j \hat{f}}{\partial \xi_d^j}(\xi_{\bar{d}},|\xi_{\bar{d}}|^2+r)(2\pi i n)^{\alpha-j}\Big\|^2_{\dot{H}^{-\gamma}}\bigg|_{r=0} = \\
\sum\limits_{k=0}^{\beta}C_{\beta}^k\bigg\langle e^{2\pi i n|\xi_{\bar{d}}|^2}\psi(\xi_{\bar{d}})\sum\limits_{j=0}^{\alpha}C_{\alpha}^j\frac{\partial^{j+k} \hat{f}}{\partial \xi_d^{j+k}}(\xi_{\bar{d}},|\xi_{\bar{d}}|^2)(2\pi i n)^{\alpha-j},\\
e^{2\pi i n|\xi_{\bar{d}}|^2}\psi(\xi_{\bar{d}})\sum\limits_{j=0}^{\alpha}C_{\alpha}^j\frac{\partial^{j+\beta-k} \hat{f}}{\partial \xi_d^{j+\beta-k}}(\xi_{\bar{d}},|\xi_{\bar{d}}|^2)(2\pi i n)^{\alpha-j}\bigg\rangle_{\dot{H}^{-\gamma}} \stackrel{\scriptscriptstyle\eqref{ConditionForSSExample}}{=}\\
\sum\limits_{k=0}^{\beta}C_{\beta}^k\bigg\langle e^{2\pi i n|\xi_{\bar{d}}|^2}\psi(\xi_{\bar{d}})\sum\limits_{j=0}^{\alpha}C_{\alpha}^j(2\pi i n)^{\alpha-j},e^{2\pi i n|\xi_{\bar{d}}|^2}\psi(\xi_{\bar{d}})\sum\limits_{j=0}^{\alpha}C_{\alpha}^j(2\pi i n)^{\alpha-j}\bigg\rangle_{\dot{H}^{-\gamma}} = \\
2^{\beta}(1+2\pi in)^{2\alpha} \Big\|e^{2\pi i n|\cdot|^2}\psi(\cdot)\Big\|^2_{\dot{H}^{-\gamma}}=
2^{\beta}(1+2\pi in)^{2\alpha}\int\limits_{\mathbb{R}^{d-1}} \Big|\big[\hat{\psi}*\big(n^{-\frac{d-1}{2}} e^{2\pi i \frac{|\zeta|^2}{4n}}\big)\big](z)\Big|^2|z|^{-2\gamma}\,dz \\ \gtrsim n^{2\alpha-d+1}\!\!\!\int\limits_{|z|\lesssim n}|z|^{-2\gamma}\,dz \gtrsim n^{2\alpha-2\gamma}.
\end{multline}
Thus, the left hand-side of~\eqref{StubbornInequality} grows at least as fast as~$n^{2\alpha-2\gamma}$, whereas the right hand-side does not change. This proves the necessity of the condition~\eqref{SISpectralShift}.

\paragraph{\bf Necessity of condition~$\gamma \geq \alpha$ in Theorem~\ref{StrichartzTheorem}} This is obtained by completely the same method in the case~$r \leq 2$. For the case~$r \geq 2$, we can only prove the necessity of the non-strict inequality~$\gamma - \alpha \geq \frac12 - \frac1r$. For that we slightly modify the construction above. We consider the function
\begin{equation*}
F_n = \sum\limits_{j=n}^{2n}\eps_jf_{Aj},
\end{equation*}
where the functions~$f_j$ are generated by~\eqref{ShiftFormula},~$A$ is a sufficiently large number, and~$\eps_j$ are randomly chosen signs. Then,
\begin{equation*}
\mathbb{E}\Big(\frac{\partial}{\partial r}\Big)^{\beta}\Big\|\frac{\partial^{\alpha} \hat{F}_n}{\partial \xi_d^{\alpha}}\psi\Big\|^2_{\dot{H}^{-\gamma}(\Sigma_r)}\bigg|_{r=0} = \sum\limits_{j=n}^{2n} \Big(\frac{\partial}{\partial r}\Big)^{\beta}\Big\|\frac{\partial^{\alpha} \hat{f}_{Aj}}{\partial \xi_d^{\alpha}}\psi\Big\|^2_{\dot{H}^{-\gamma}(\Sigma_r)}\bigg|_{r=0}\stackrel{\scriptscriptstyle{\eqref{VeryVeryBigFormula}}}{\gtrsim} n^{2\alpha - 2\gamma + 1}.
\end{equation*}
On the other hand, disregarding the choice of the signs~$\eps_j$,
\begin{equation*}
\|F_n\|_{L_r(L_p)} \asymp n^{\frac1r}
\end{equation*}
provided~$A$ is sufficiently large (this number is needed to diminish the influence of Schwartz tails on this almost orthogonality). It remains to choose~$\eps_j$ with the largest possible quantity on the right hand-side and compare the two sides.

\paragraph{\bf Necessity of condition~$p \leq 2$ in Theorem~\ref{StrichartzTheorem}} This can be obtained by a construction similar to the one described in the previous paragraph, except with functions $f_n$ shifted in the~$x_1$ direction instead of the~$x_d$ direction.

\subsection{Shifted Knapp example}\label{s54}
We need to modify the classical Knapp construction to get the necessity of~\eqref{HDRShiftedKnapp}. We take some sequence~$\{D_n\}_n$ and modify the functions~$f_n$ generated by~\eqref{ParabolicRescaling}. Now we also shift them:
\begin{equation*}
f_n = n^{-d-1}f\Big(\frac{x_1}{n},\frac{x_2}{n},\ldots,\frac{x_{d-1}}{n},\frac{x_d - D_n}{n^2}\Big).
\end{equation*}
We require~$D_n \gg n^2$ and do not require the vanishing~$f\in \!\,\Si L_p$. The~$L_p$-norms are influenced by scaling but do not depend on the size of the shifts:
\begin{equation*}
\|f_n\|_{L_p} \asymp n^{(d+1)(\frac{1}{p} - 1)}.
\end{equation*} 
Let~$\hat{f}(\zeta,|\zeta|^2)$ be~$g(\zeta)$, here~$g$ is a smooth function, let us assume it is compactly supported and has non-zero integral. Then,
\begin{equation*}
\|\hat{f}_n\|_{\dot{H}^\ell(\Sigma)} = \Big\|g(n\cdot)e^{2\pi i D_n|\cdot|^2}\Big\|_{\dot{H}^\ell} = n^{\ell - \frac{d-1}{2}}\Big\|g(\cdot)e^{2\pi i \frac{D_n}{n^2}|\cdot|^2}\Big\|_{\dot{H}^\ell} \lesssim n^{\ell - \frac{d-1}{2}}\Big(\frac{D_n}{n^2}\Big)^\ell.
\end{equation*}
The latter estimate can be proved via the product rule for the case~$\ell \in \mathbb{Z}_+$ and reduced to this case with the help of the Cauchy--Schwarz inequality. Similarly,
\begin{multline*}
\Big\|\frac{\partial^k\hat{f}_n}{\partial \xi_d^k}\Big\|_{\dot{H}^{-s}(\Sigma)} \asymp D_n^k \Big\|g(n\cdot)e^{2\pi i D_n|\cdot|^2}\Big\|_{\dot{H}^{-s}} = D_n^k n^{-s - \frac{d-1}{2}}\Big\|g(\cdot)e^{2\pi i \frac{D_n}{n^2}|\cdot|^2}\Big\|_{\dot{H}^{-s}} \gtrsim\\ D_n^k n^{-s - \frac{d-1}{2}}\Big(\frac{D_n}{n}\Big)^{-\frac{d-1}{2}} \Big(\int\limits_{|z| \lesssim \frac{D_n}{n^2}}|z|^{-2s}\Big)^\frac12 \gtrsim 
D_n^k n^{-s - \frac{d-1}{2}} \Big(\frac{D_n}{n^2}\Big)^{-s}.
\end{multline*}
So, if~\eqref{HDRinequality} is true, then
\begin{equation}\label{InequalityForNumbers}
D_n^{k-s} n^{s -\frac{d-1}{2}} \lesssim D_n^{\ell} n^{-\ell - \frac{d-1}{2}} + n^{(d+1)(\frac{1}{p}-1)}
\end{equation}
whenever~$D_n \gg n^2$. We recall~$k -s \leq \ell$ by~\eqref{HDRShiftCondition} (the necessity of which is already proved), so, the first term on the right dominates the left hand-side when~$D_n$ is sufficiently large. We want to make~$D_n$ as small as possible in such a way that the left hand-side is still greater than the second summand on the right. Let
\begin{equation*}
D_n = n^{\frac{\kappa_p-s}{k-s}}\log n.
\end{equation*}
Note that such a choice of~$D_n$ guarantees~$D_n \gg n^2$ by~\eqref{HDRKnapp} and the assumption~$k > s$. Plugging it back to~\eqref{InequalityForNumbers}, we get
\begin{equation*}
n^{\kappa_p - s}n^{s - \frac{d-1}{2}}(\log n)^{k-s} \lesssim n^{\frac{(\kappa_p -s)\ell}{k-s} - \ell -\frac{d-1}{2}}(\log n)^{\ell},
\end{equation*}
which, after a tiny portion of algebra and~\eqref{HDRShiftCondition}, leads to
\begin{equation*}
\frac{k \ell}{s + \ell - k}\leq \kappa_p,
\end{equation*}
which is~\eqref{HDRShiftedKnapp}.

\section{Additional lemmas and supplementary material}\label{S6}

\subsection{Localization argument}\label{s61}
We need to localize the~$\HDR$ inequalities and also replace the gradient with a single directional derivative. Namely, we want to reduce~$\HDR(\Sigma,k,s,\ell,p)$ to a collection of statements~$\HDRL(h,k,s,\ell,p)$ defined below. A similar principle works for inequalities of the type~\eqref{RestrictionWithSobolev},~\eqref{Restriction_for_higher_derivatives},~\eqref{Sobolev_Restriction_for_higher_derivatives} and the proof is completely identical. 
\begin{Def} \label{def6.1}
Let the numbers~$k,s,\ell,p$ be of the same nature as in Definition~\textup{\ref{HDR}}. Let~$U$ be a neighborhood of the origin in~$\mathbb{R}^{d-1}$, let~$h\colon U \to \mathbb{R}$ be a smooth function such that~$h(0) = 0$,~$\nabla h(0) = 0$, and the determinant of the Hessian of~$h$ at the origin does not vanish. Further, we assume~\eqref{HessianSmoothness}.
We say that the statement~$\HDRL(h,k,s,\ell,p)$ holds true if the inequality
\begin{equation*}
\Big\|\psi(\cdot)\frac{\partial^{k}\hat{f}}{\partial \xi_d^k}(\cdot,h(\cdot))\Big\|_{H^{-s}(\mathbb{R}^{d-1})}\lesssim_\psi \|f\|_{L_p} + \big\|\psi(\cdot)\hat{f}\big(\cdot,h(\cdot)\big)\big\|_{H^\ell(\mathbb{R}^{d-1})}
\end{equation*}
holds true for any smooth function~$\psi$ supported in~$U$.
\end{Def}

\begin{Le}\label{LocalizationLemma}
The statement~$\HDR(\Sigma,k,s,\ell,p)$ is true provided the statement~$\HDRL(h,k,s,\ell,p)$ is true for any~$h$ satisfying the conditions of Definition~\ref{def6.1}.
\end{Le}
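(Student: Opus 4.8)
The plan is to localize \eqref{HDRinequality} to small coordinate charts and then replace the full gradient $\nabla^k\hat f$ by finitely many pure transverse derivatives, one for each of several tilted frames, so that the hypothesis $\HDRL$ applies directly. Since $\HDR(\Sigma,k,s,\ell,p)$ concerns Schwartz $f$ and a fixed $\phi\in C_0^\infty(\mathbb R^d)$, the set $K=\supp\phi\cap\Sigma$ is compact. First I would cover $K$ by finitely many open sets $V_1,\dots,V_n$ in each of which, after an orthogonal change of variables on $\mathbb R^d$ followed by a translation, $\Sigma\cap V_j=\{(\zeta,h_j(\zeta))\mid\zeta\in U_j\}$ with $h_j(0)=0$, $\nabla h_j(0)=0$ and $\det\partial^2 h_j(0)\ne0$ (this uses the standing curvature hypothesis on $\Sigma$), and shrink the $V_j$ so that \eqref{HessianSmoothness} holds for every $h_j$. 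Taking a subordinate partition $\sum_j\chi_j\equiv1$ near $K$ with $\chi_j\in C_0^\infty(V_j)$ and setting $\phi_j=\chi_j\phi$, the remainder $\phi-\sum_j\phi_j$ vanishes on $\Sigma$ (because $\supp\phi\cap\Sigma=K$ and $\sum_j\chi_j\equiv1$ near $K$) and hence contributes nothing to either side of \eqref{HDRinequality}. Using the triangle inequality in $H^{-s}(\Sigma)$ componentwise and the boundedness of multiplication by $\chi_j|_\Sigma$ on $H^\ell(\Sigma)$, it then suffices to prove
\[
\|\phi_j\,\partial^\alpha\hat f\|_{H^{-s}(\Sigma)}\lesssim\|f\|_{L_p}+\|\phi\hat f\|_{H^\ell(\Sigma)}
\]
for each fixed $j$ and each multi-index $\alpha$ with $|\alpha|=k$.

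Fix such a chart and let $\xi_0\in\Sigma$ be its base point with unit normal $n_0$. For a unit vector $v$ one has $\partial_v^k=\sum_{|\beta|=k}\binom{k}{\beta}v^\beta\partial^\beta$, so the operators $\partial_v^k$, with $v$ ranging over any nonempty open subset of the sphere, span the space of all constant‑coefficient $k$‑th order operators, because the degree‑$k$ monomials in $d$ variables are linearly independent polynomials. Hence I can choose $M=\binom{d+k-1}{k}$ directions $v^{(1)},\dots,v^{(M)}$, all lying in the open set $\{v:\langle v,n_0\rangle\ne0\}$, for which the matrix $\big((v^{(m)})^\beta\big)_{m,\,|\beta|=k}$ is invertible; then each $\partial^\alpha$ with $|\alpha|=k$ is a fixed linear combination of $\partial_{v^{(1)}}^k,\dots,\partial_{v^{(M)}}^k$. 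Each $v^{(m)}$ is transverse to $\Sigma$ at $\xi_0$, so after finitely many further shrinkings of $V_j$ the surface $\Sigma\cap V_j$ is also the graph $\{(\zeta,h_j^{(m)}(\zeta))\}$ of a function over the hyperplane $(v^{(m)})^\perp$ meeting all the requirements of Definition~\ref{def6.1}, including \eqref{HessianSmoothness}.

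For each $m$ I would rotate $v^{(m)}$ to the $\xi_d$‑axis: on the spectral side this replaces $\hat f$ by $\hat g_0$ for a Schwartz $g_0$ with $\|g_0\|_{L_p}=\|f\|_{L_p}$ (the rotation has unit Jacobian, and the translation of $\Sigma$ becomes a modulation of $f$), it turns $\partial_{v^{(m)}}^k$ into $\partial_{\xi_d}^k$, and it turns the pullback of $\phi_j\,\partial_{v^{(m)}}^k\hat f|_\Sigma$ under the graph map into $\psi_j^{(m)}(\zeta)\,\partial_{\xi_d}^k\hat g_0(\zeta,h_j^{(m)}(\zeta))$ with $\psi_j^{(m)}\in C_0^\infty(U_j^{(m)})$. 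Applying the hypothesis $\HDRL(h_j^{(m)},k,s,\ell,p)$ bounds this in $H^{-s}(\mathbb R^{d-1})$ by $\|g_0\|_{L_p}+\|\psi_j^{(m)}\hat g_0(\cdot,h_j^{(m)}(\cdot))\|_{H^\ell(\mathbb R^{d-1})}$. Finally I would use the standard fact that, for distributions supported in a single chart, the intrinsic Bessel norm $H^{\pm\gamma}(\Sigma)$ is equivalent to the $H^{\pm\gamma}(\mathbb R^{d-1})$ norm of the pullback under the graph parametrization (the surface measure pulls back to $\sqrt{1+|\nabla h|^2}\,d\zeta$, whose density is a smooth positive Sobolev multiplier, and Bessel spaces are diffeomorphism invariant on compactly supported functions); this rewrites the last estimate as $\|\phi_j\,\partial_{v^{(m)}}^k\hat f\|_{H^{-s}(\Sigma)}\lesssim\|f\|_{L_p}+\|\phi\hat f\|_{H^\ell(\Sigma)}$. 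Taking the linear combination over $m$ to recover $\partial^\alpha$, then summing over $\alpha$ with $|\alpha|=k$ and over the charts $j$, gives $\HDR(\Sigma,k,s,\ell,p)$.

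The main obstacle is the middle step: arranging enough directions $v^{(m)}$ that are simultaneously transverse to $\Sigma$ in a common neighborhood of $\xi_0$ and whose $k$‑th directional derivatives span all order‑$k$ partials, and checking that each resulting tilted graph still satisfies the Hessian nondegeneracy and smallness conditions of Definition~\ref{def6.1}. Everything else — the partition of unity, the modulation/rotation bookkeeping, and the equivalence of manifold and Euclidean Bessel norms on a chart — is routine.
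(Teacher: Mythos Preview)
Your proposal is correct and follows essentially the same strategy as the paper's proof: partition of unity, spanning all order-$k$ partials by finitely many pure directional derivatives $\partial_{v^{(m)}}^k$ in directions near the normal, rewriting each as $\partial_{\xi_d}^k$ in a tilted graph chart, and invoking $\HDRL$ there together with the equivalence of intrinsic and pullback Sobolev norms.

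One refinement is worth noting. You choose the $v^{(m)}$ generically in the half-sphere $\{v:\langle v,n_0\rangle\ne0\}$, but Definition~\ref{def6.1} requires $\nabla h_j^{(m)}(0)=0$, which forces $v^{(m)}$ to be the \emph{actual} normal to $\Sigma$ at whatever point you translate to the origin. The paper resolves exactly the ``main obstacle'' you flag by invoking the curvature hypothesis a second time: since $\det\partial^2 h_n(0)\ne0$, the Gauss map is a local diffeomorphism, so the normals $\mathrm n_\zeta$ for $\zeta$ near $\xi_0$ cover an open neighbourhood of $n_0$ on the sphere. Choosing the $v^{(m)}$ from this open set (equivalently, choosing nearby base points $\zeta_j$ and taking $v^{(m)}=\mathrm n_{\zeta_j}$) gives both the spanning property and, after translating $\zeta_j$ to the origin, the condition $\nabla h_j^{(m)}(0)=0$. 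With this adjustment your argument and the paper's coincide.
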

\begin{proof}
We need to prove~\eqref{HDRinequality} with a fixed compactly supported smooth function~$\phi$. We find a smooth partition of unity~$\{\Phi_n\}_n$ on~$\Sigma$, each function~$\Phi_n$ supported in a small ball~$V_n$ and each~$V_n$ lies in a chart neighborhood of a certain point~$\xi_n \in \Sigma$. For each~$n$ fixed, we identify~$\xi_n$ with the origin of~$\mathbb{R}^d$, the tangent plane~$T_{\xi_n}\Sigma$ with~$\mathbb{R}^{d-1}$, and get a graph representation for~$\Sigma\cap V_n$:
\begin{equation*}
\Sigma \cap V_n = \{(\zeta,h_n(\zeta))\mid \zeta \in U_n\},
\end{equation*}
where~$U_n$ is a neighborhood of the origin in~$\mathbb{R}^{d-1}$. If the partition of unity is sufficiently fine, then the function~$h_n$ satisfies~\eqref{HessianSmoothness}. We estimate the left hand-side of~\eqref{HDRinequality} by the triangle inequality
\begin{equation*}
\|\phi \nabla^k \hat{f}\|_{H^{-s}(\Sigma)} \leq \sum_n \|\phi\Phi_n \nabla^k \hat{f}\|_{H^{-s}(\Sigma)}.
\end{equation*} 
Note that the sum on the right is, in fact, finite. We fix~$n$. We are going to use the following algebraic fact: there exists a finite collection of vectors~$v_n$ in~$\mathbb{R}^d$ such that any homogeneous polynomial of degree~$k$ is a linear combination of the monomials~$\scalprod{\cdot}{v_n}^k$; moreover, such vectors~$v_n$ may be chosen arbitrarily close to any fixed vector. Since the determinant of the Hessian of~$h_n$ is non-zero, the normals~$\boldsymbol{\normal}_\zeta$ to~$\Sigma$ at the points~$(\zeta,h_n(\zeta))$ cover a neighborhood of the vector~$(0,0,\ldots,1)$ in~$S^{d-1}$ (the unit sphere in~$\mathbb{R}^d$). Thus, we may choose finitely many points~$\zeta_j$ in a sufficiently small neighborhood of the origin such that
\begin{align}
\forall \alpha \in \mathbb{Z}^d_+ \hbox{ such that } |\alpha| = k \qquad \frac{\partial^\alpha}{\partial \xi^\alpha} \hbox{ is a linear combination of }  \Big\{\frac{\partial^k}{\partial \normal_{\zeta_j}^k}\Big\}_j;\\
\label{TransversalityAssumption} \normal_{\zeta_j}\nparallel T_{\zeta}\Sigma \hbox{ for any~$j$ and any } \zeta \in U_n.
\end{align}
This allows us to write the estimate
\begin{equation}\label{SumOverj}
\|\phi\Phi_n \nabla^k \hat{f}\|_{H^{-s}(\Sigma)} \lesssim \sum_j\Big\|\phi\Phi_n \frac{\partial^k\hat{f}}{\partial \normal_{\zeta_j}^k}\Big\|_{H^{-s}(\Sigma)}.
\end{equation}
Now we restrict our attention to each point~$\zeta_j$ individually. We adjust our coordinates to this point: now~$\zeta_j$ is the origin, we also identify~$T_{\zeta_j}\Sigma$ with~$\mathbb{R}^{d-1}$. The summand corresponding to~$j$ on the right hand-side of the previous inequality   transforms into
\begin{equation*}
\Big\|\Psi \frac{\partial^k \hat{f}}{\partial \xi_d^k}\Big\|_{H^{-s}(\Sigma)},
\end{equation*}
where~$\Psi$ is a certain smooth function supported in~$V_n$. By the assumption~\eqref{TransversalityAssumption},
\begin{equation*}
\Sigma \cap V_n = \Big\{\big(\zeta,h_{n,j}(\zeta)\big)\,\Big|\; \zeta \in U_{n,j}\Big\},
\end{equation*}
where~$U_{n,j}$ is a neighborhood of the origin in~$\mathbb{R}^{d-1}$, and~$h_{n,j}$ satisfies~\eqref{HessianSmoothness} (with the constant~$\frac14$ instead of~$\frac{1}{10}$ possibly). Take a smooth non-negative function~$\psi$ that is supported in~$U_{n,j}$ and is bounded away from zero on the projection of the support of~$\Psi$ to~$\mathbb{R}^{d-1}$. Then, clearly,
\begin{equation*}
\Big\|\Psi \frac{\partial^k \hat{f}}{\partial \xi_d^k}\Big\|_{H^{-s}(\Sigma)} \lesssim \Big\|\psi(\cdot) \frac{\partial^k \hat{f}}{\partial \xi_d^k}(\cdot,h(\cdot))\Big\|_{H^{-s}(\mathbb R^{d-1})}.
\end{equation*}
We also note that the norms
\begin{equation*}
\|g\|_{H^{-s}(\Sigma)}\quad \hbox{and}\quad \|g\big(\cdot,h_{n,j}(\cdot)\big)\|_{H^{-s}(\mathbb{R}^{d-1})}
\end{equation*}
are comparable for functions~$g$ supported on~$\Sigma\cap V_n$. Thus, by~$\HDRL(h_{n,j},k,s,\ell,p)$, we may bound each summand in~\eqref{SumOverj} by
\begin{equation*}
\|f\|_{L_p} + \|\psi(\cdot) \hat{f}\psi(\cdot,h(\cdot))\|_{H^\ell(\mathbb{R}^{d-1})} \lesssim\|f\|_{L_p} + \|\Psi \hat{f}\|_{H^\ell(\Sigma)}.
\end{equation*}
It remains to note that we have a finite number of summands both over~$j$ and~$n$.
\end{proof}
\begin{Rem}\label{GeneralLocalization}
Consider Banach spaces~$X_1, X_2,\ldots, X_m$ of functions on~$\Sigma$ such that multiplication operators
\begin{equation*}
\varphi \mapsto \psi\varphi, \quad \varphi \in X_m,
\end{equation*}
are bounded on~$X_m$ whenever~$\psi \in C_0^{\infty}(\Sigma)$. The inequality
\begin{equation*}
\big\|\phi (\nabla^k \hat{f})\big|_{\Sigma}\big\|_{H^{-s}(\Sigma)} \lesssim_{\phi}\Big(\|f\|_{L_p(\mathbb{R}^d)} + \sum\limits_{j=1}^m\|\Phi \hat{f}\|_{X_j}\Big)
\end{equation*}
may be reduced to local form
\begin{equation*}
\Big\|\psi(\cdot)\frac{\partial^{k}\hat{f}}{\partial \xi_d^k}(\cdot,h(\cdot))\Big\|_{H^{-s}(\mathbb{R}^{d-1})}\lesssim_\psi \|f\|_{L_p} + \sum\limits_{j=1}^m\|\hat{f}\|_{X_j}, \quad\supp\psi \subset U,
\end{equation*}
and~$U$ satisfies the usual assumptions, with the same argument as in the proof of Lemma~\ref{LocalizationLemma}. In particular, the case~$X_j = \{0\}$ allows to reduce~$\Rw^k(\Sigma,p,s)$ to~$\R^k(\Sigma,p,s)$ \textup(see Definitions~\ref{StrongRestriction} and~\ref{WeakRestriction}\textup).
\end{Rem}

\subsection{A version of the Stein--Weiss inequality}\label{s62}

\subsubsection{Case~$p \in [1,2]$}
Let~$L_p(w)$ be the weighted Lebesgue space:
\begin{equation*}
f \in L_p(w)\quad \Leftrightarrow\quad fw \in L_p.
\end{equation*}
Let also~$\Conv_b$ be the operator of convolution with the function~$(1+|x|)^{-b}$. In this section, we work with functions on~$\mathbb{R}$.
\begin{Th}\label{SteinWeissModified}
Let~$a\geq 0$, let~$p \in [1,2]$. The operator~$\Conv_b$ maps the space~$L_p((1+|x|)^a)$ to its dual space~$L_{p'}((1+|x|)^{-a})$ if 
\textup{\begin{enumerate}
\item $b\leq 0$ \emph{and} \begin{itemize}
\item $p=1$ \emph{and} $a+b \geq 0$\textup;
\item $p > 1$ \emph{and}~$a + b > 1 - \frac{1}{p}$\textup;
\end{itemize}
\item $b \in (0,1)$ \emph{and}~$2a+b \geq 2 - \frac{2}{p}$\textup;
\item $b=1$ \emph{and} \begin{itemize}
\item~$p < 2$\textup;
\item~$p=2$ \emph{and}~$a > 0$\textup;
\end{itemize}
\item $b > 1$.
\end{enumerate}}
\end{Th}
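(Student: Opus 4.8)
The plan is to prove Theorem~\ref{SteinWeissModified} by splitting the operator~$\Conv_b$ into a ``local'' piece where $|x-y|\lesssim 1$ and a ``global'' piece where $|x-y|\gtrsim 1$, and handling each with a classical tool. The local piece has kernel essentially supported near the diagonal, so it is comparable to a truncated Riesz-type operator; the global piece has kernel $\sim|x-y|^{-b}$ with the convenient feature that on that region $(1+|x|)$, $(1+|y|)$, $(1+|x-y|)$ are all mutually comparable up to the weights, so one can reduce to a genuine homogeneous Stein--Weiss inequality. Concretely, I would first dispose of the easy cases $b>1$ and $b=1$: when $b>1$ the kernel $(1+|x|)^{-b}$ is integrable, so $\Conv_b$ is bounded $L_q\to L_q$ for all $q$ by Young's inequality, and the weights only help since $a\ge0$; the case $b=1$, $p<2$ is a logarithmically-critical endpoint that I would obtain by real interpolation (or directly via the weighted Hardy--Littlewood maximal bound) between nearby values of $b$, while $b=1$, $p=2$, $a>0$ follows because the weight provides the missing decay to kill the logarithmic divergence.

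For the main range $b\in(0,1)$, I would reduce to the sharp one-dimensional Stein--Weiss inequality: for $0<\lambda<1$,
\begin{equation*}
\bigg|\iint_{\mathbb{R}\times\mathbb{R}} \frac{F(x)G(y)}{|x-y|^{\lambda}}\,|x|^{-\mu}|y|^{-\nu}\,dx\,dy\bigg|\lesssim \|F\|_{L_p}\|G\|_{L_p}
\end{equation*}
holds precisely when $\mu+\nu+\lambda = 2-\tfrac2p$, $\mu,\nu<\tfrac1p$ (or $\le$ at the appropriate endpoints) and $\mu+\nu\ge0$. Here one takes $\lambda=b$, $\mu=\nu=a$, so the balance condition is exactly $2a+b=2-\tfrac2p$, which matches the hypothesis; the strict inequality $2a+b>2-\tfrac2p$ is then obtained by summing a geometric series in dyadic scales, or by interpolating with the trivial $L_1\to L_\infty$ bound. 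The inhomogeneous kernel $(1+|x-y|)^{-b}$ is dominated on $|x-y|\gtrsim 1$ by $|x-y|^{-b}$ and on $|x-y|\lesssim 1$ by $|x-y|^{-b}\mathbf{1}_{|x-y|\le 1}$, and in the latter region the inhomogeneous weights $(1+|x|)^a(1+|y|)^{-a}$ are bounded above and below by a constant depending only on which of $x,y$ is large, so the local contribution is controlled by the Hardy--Littlewood--Sobolev inequality on $\mathbb{R}$ (no weights needed) in the range $b<1$. The case $b\le 0$, $p>1$ is similar but now $(1+|x-y|)^{-b}=(1+|x-y|)^{|b|}$ grows, and the condition $a+b>1-\tfrac1p$ is what makes the weighted integral converge; one rewrites $(1+|x-y|)^{|b|}\lesssim (1+|x|)^{|b|}+(1+|y|)^{|b|}$ and reduces to a pair of one-weight estimates, each amounting to $\|(1+|x|)^{a+b}\Conv_0 \big((1+|x|)^{-a}\cdot\big)\|_{L_{p'}\leftarrow L_p}$, i.e. a fractional-integration-free boundedness that follows from Hölder once $p(a-(a+b))' $-type exponents are in the right range — this is where the sharp threshold $a+b>1-\tfrac1p$ enters. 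Finally $b\le0$, $p=1$, $a+b\ge0$ is the crude estimate $\sup_y\int (1+|x-y|)^{|b|}(1+|x|)^a(1+|y|)^{-a}\,dy<\infty$, valid exactly at $a+b\le 0$... wait, rather $a+|b|\le a$, so one instead checks $\|\,(1+|x|)^a\,\Conv_{b}\|_{L_\infty\leftarrow L_1((1+|x|)^a)}$, which reduces to uniform boundedness of $\int(1+|x-y|)^{|b|}(1+|x|)^{-a}dx$ when $y$ ranges over $\mathbb{R}$ — finite iff $a\ge |b|+1$...

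I expect the main obstacle to be pinning down the \emph{sharp} endpoint behavior in each regime — in particular distinguishing where one needs strict versus non-strict inequalities in $a$, $b$, $p$, and correctly handling the three borderline cases ($b=1$ with $p=2$, $a=0$; $b\in(0,1)$ with $2a+b=2-\tfrac2p$; $b\le0$ with $p=1$). For these I would lean on the known sharp form of the weighted Hardy--Littlewood--Sobolev / Stein--Weiss inequality on $\mathbb{R}$ (as in the references~\cite{Kerman} cited in the excerpt, adapted to one dimension and to the two-weight formulation $L_p(w)\to L_{p'}(w^{-1})$), together with a careful Schur-test argument for the boundary cases where the weight exactly compensates a logarithmic divergence. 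The dyadic decomposition $\mathbb{R}=\bigcup_j\{2^{j}\le|x|<2^{j+1}\}$ and tracking how the three factors $(1+|x|)$, $(1+|y|)$, $(1+|x-y|)$ compare across the resulting blocks is the routine but bookkeeping-heavy core of the argument; the only genuinely delicate point is verifying that at the stated endpoints the sum over scales converges (or diverges borderline-slowly, to be absorbed), which is precisely the content of conditions (1)--(4) in the statement.
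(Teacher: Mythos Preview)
Your approach differs substantially from the paper's. The paper does not split by the value of~$b$ at all: it proves the two endpoint cases $p=1$ and $p=2$ directly---the first by testing on delta measures, which reduces to the uniform bound $\sup_{x,y}K_{a,b}(x,y)<\infty$, the second by Schur's test with auxiliary weight $\rho(x)=(1+|x|)^c$---and then obtains every intermediate $p\in(1,2)$ by Stein's complex interpolation applied to the analytic family $t\mapsto \T_{a(t),b(t)}$ with $(a(t),b(t))=(\tfrac{a(1-t)}{1-\theta},\tfrac{b(1-t)}{1-\theta})$. This handles all values of~$b$ in one stroke and never invokes the homogeneous Stein--Weiss inequality.

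Your case-by-$b$ strategy is a plausible alternative, but the sketch has concrete gaps. Most seriously, the $p=1$, $b\le 0$ computation is wrong: boundedness $L_1((1+|x|)^a)\to L_\infty((1+|x|)^{-a})$ is equivalent to the \emph{pointwise} kernel bound $\sup_{x,y}(1+|x|)^{-a}(1+|y|)^{-a}(1+|x-y|)^{-b}<\infty$, not an integral condition; this gives exactly $a+b\ge 0$, not the $a\ge|b|+1$ you arrive at. Second, for $b\in(0,1)$ the homogeneous Stein--Weiss inequality you invoke requires the side condition $a<1-1/p$, which is not part of the hypothesis; for $a\ge 1-1/p$ the inhomogeneous weight $(1+|x|)^{-a}$ lies in $L_{p'}$ and a direct H\"older argument disposes of the estimate, but this separate case needs to be stated. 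Third, in the $b\le 0$, $p>1$ step your ``$\Conv_0$'' notation is misleading (convolution with a constant is unbounded); what actually happens after the splitting $(1+|x-y|)^{|b|}\lesssim (1+|x|)^{|b|}+(1+|y|)^{|b|}$ is that the bilinear form factors as a product of two one-variable integrals, each controlled by H\"older provided $(1+|x|)^{-a-b}$ and $(1+|x|)^{-a}$ both lie in $L_{p'}$---and the second membership follows from the first since $b\le 0$. So that part is salvageable once written correctly.

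In short, the paper's endpoint-plus-interpolation route is shorter and gets the sharp conditions uniformly; your route can be made to work but requires patching each regime by hand and fixing the $p=1$ endpoint argument.
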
		
Theorem~\ref{SteinWeissModified} is a variation on the classical Stein--Weiss inequality from~\cite{SteinWeiss}. In the classical setting, the convolutional kernel and weights are homogeneous.
\begin{Rem}
The conditions listed in Theorem~\textup{\ref{SteinWeissModified}} are also necessary.
\end{Rem}
\begin{Rem}\label{Addition}
The boundedness of~$\Conv_b$ as an operator between~$L_p((1+|x|)^a)$ and~$L_{p'}((1+|x|)^{-a})$ is equivalent to the~$L_p\to L_{p'}$ boundedness of the integral operator~$\T_{a,b}$ with the kernel
\begin{equation*}
K_{a,b}(x,y) = (1+|x|)^{-a}(1+|y|)^{-a}(1+|x-y|)^{-b}.
\end{equation*} 
\end{Rem}
\begin{Rem}
One can restate Theorem~\textup{\ref{SteinWeissModified}} like this. The operator~$\T_{a,b}$ maps~$L_p((1+|x|)^a)$ to~$L_{p'}((1+|x|)^{-a})$ if~$a+b > 1-\frac1p$ and~$2a+b \geq 2-\frac2p$ with two exceptional cases. If~$p=1$, then the first inequality might turn into equality, and if~$a=0$,~$b=1$, and~$p=2$, then~$\T_{a,b}$ is not continuous.
\end{Rem}

\begin{proof}[Proof of Theorem~\textup{\ref{SteinWeissModified}}]

We will study the cases~$p=1$ and~$p=2$ and then use interpolation (note that we can plug complex~$a$ and~$b$ in the formula for~$K_{a,b}$ and passing to complex parameters~$a$ and~$b$ does not make the kernel worse since for real~$a$ and~$b$ it is positive).

\paragraph{\bf Case~$p=1$.}
In the space~$L_1$, each element of the unit ball is a convex combination of point masses. Thus, it suffices to prove the uniform boundedness of~$\T_{a,b}$ on measures~$\delta_{z}$,~$z\in \mathbb{R}$. Clearly,
\begin{equation*}
\|\delta_z\|_{L_1((1+|x|)^{a})} = (1+|z|)^{a}
\end{equation*}
(formally, a~$\delta$-measure does not belong to~$L_1$, however, we may work with the larger space of measures instead). Moreover,
\begin{equation*}
\big[\delta_z*(1+|\cdot|)^{-b}\big](x) = (1+|z-x|)^{-b}
\end{equation*}
and
\begin{equation*}
\big\|\delta_z*(1+|\cdot|)^{-b}\big\|_{L_{\infty}((1+|x|)^{-a})} = \sup_x \;(1+|z-x|)^{-b}(1+|x|)^{-a} \stackrel{\scriptscriptstyle{a+b \geq 0}}{\lesssim} \max\Big((1+|z|)^{-b},(1+|z|)^{-a}\Big).
\end{equation*}
Thus, the operator~$\Conv_b$ maps~$L_1((1+|x|)^a)$ to its dual if and only if~$a+b \geq 0$ (we have assumed~$a \geq 0$).

\paragraph{\bf Case~$p=2$.}
We will be applying Schur's test with the function~$\rho(x) = (1+|x|)^c$, where~$c$ is a parameter to be chosen later. Since our kernel~$K_{a,b}$ is symmetric, it suffices to verify
\begin{equation*}
\int K_{a,b}(x,y)\rho(x)\,dx \lesssim \rho(y),
\end{equation*}
which, in our case is rewritten as
\begin{equation*}
\int(1+|x|)^{c-a}(1+|x-y|)^{-b}\,dx \lesssim (1+|y|)^{c+a}.
\end{equation*}
We estimate the integral on the left by splitting it into three parts (around~$x=0$, around~$x=y$, and around infinity):
\begin{multline}\label{ThreeIntegrals}
\int(1+|x|)^{c-a}(1+|x-y|)^{-b}\,dx \lesssim \\\int\limits_{|x|\geq 2|y|}(1+|x|)^{c-a-b}\,dx + \int\limits_{|x|\leq |y|}(1+|x|)^{c-a}\,dx\cdot(1+|y|)^{-b} + \int\limits_{|x|\leq |y|}(1+|x|)^{-b}\,dx\cdot(1+|y|)^{c-a}.
\end{multline}
We restrict our choice of~$c$ to the region~$c < a+b - 1$ to ensure that the first integral converges. Assume for a while that~$b\ne 1$ and~$c-a \ne -1$. Then,
\begin{equation*}
\begin{aligned}
\int\limits_{|x|\geq 2|y|}(1+|x|)^{c-a-b}\,dx &\asymp (1+|y|)^{c-a-b+1};\\
\int\limits_{|x|\leq |y|}(1+|x|)^{c-a}\,dx &\asymp (1+|y|)^{\max(c-a+1,0)};\\
\int\limits_{|x|\leq |y|}(1+|x|)^{-b}\,dx &\asymp (1+|y|)^{\max(-b+1,0)}.
\end{aligned}
\end{equation*}
Thus, we need to prove the inequalities
\begin{equation*}
\begin{aligned}
c-a-b+1 &\leq c+a;\\
\max(c-a+1,0) - b &\leq c+a;\\
\max(-b+1,0) + c- a &\leq c+a.
\end{aligned}
\end{equation*}
The first and the third inequalities do not depend on~$c$ and follow from~$2a+b \geq 1$ and~$a \geq 0$. As for the second one, we take~$c = a+b - 1 -\eps$, where~$\eps$ is sufficiently small number, and the second inequality becomes
\begin{equation}\label{SecondINequality}
\max(b-\eps,0)\leq 2(a+b) - 1-\eps.
\end{equation}

If~$b > 0$, then this inequality follows from~$2a+b \geq 1$, and the case~$b > 0$ is proved except for~$b=1$. 

In the case~$b=1$, the third integral in~\eqref{ThreeIntegrals} is not bounded by a constant, but grows logarithmically at infinity. If~$a> 0$, then~$2a+b > 1$, and the Schur's test is still applicable. If~$a=0$, then the operator is not continuous.

If~$b \leq0$, the inequality~\eqref{SecondINequality} follows from~$a+b > \frac12$.

\paragraph{\bf Interpolation.}
First, we exclude the case~$a=0$, which reduces to the classical Hardy--Littlewood--Sobolev inequality. Assume~$a\ne 0$ and~$p \in (1,2)$ in what follows.

We choose~$\theta$ such that~$\frac{1}{p} = \theta + \frac{1-\theta}{2}$, in other words,~$\theta = \frac{2}{p}-1$, and introduce an analytic operator-valued function
\begin{equation*}
t\mapsto \T_{a(t),b(t)}, \quad \hbox{where}\quad (a(t),b(t)) = \Big(\frac{a(1-t)}{1-\theta},\frac{b(1-t)}{1-\theta}\Big).
\end{equation*}
Here~$\Re t \in [0,1]$. Note that
\begin{equation}\label{Borders}
\begin{aligned}
\big\|\T_{a(t),b(t)}\big\|_{L_1\to L_{\infty}} \lesssim 1, \quad &\Re t = 1;\\
\big\|\T_{a(t),b(t)}\big\|_{L_2\to L_2} \lesssim 1, \quad &\Re t = 0,
\end{aligned}
\end{equation}
since the absolute value of the kernel does not depend on the imaginary part of~$t$ (and moreover,~$\frac{a+b}{1-\theta} > \frac12$ since~$a+b > 1 - \frac{1}{p}$ and~$\frac{2a+b}{1-\theta} \geq 1$ since~$2a+b \geq 2-\frac2p$; note that these inequalities are sufficient for~\eqref{Borders} since we have excluded the case~$a=0$). Thus, by interpolation of analytic families of operators (see~\cite{SteinBook}, Ch. 9, \S 1.2.5),~$\T_{a,b}$ maps~$L_p$ to~$L_{p'}$.
 \end{proof}

\subsubsection{Case~$p > 2$}
We start from the endpoint case.
\begin{Le}
The operator~$\Conv_b$ maps~$L_{\infty}((1+|x|)^a)$ to its dual~$L_1((1+|x|)^{-a})$ if 
\begin{enumerate}
\item $a+b > 1$;
\item $2a+b > 2$;
\item $a > \frac12$.
\end{enumerate}
\end{Le}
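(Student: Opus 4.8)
The statement to prove is the endpoint $p=\infty$ version of the weighted Stein--Weiss inequality: that $\Conv_b$ maps $L_\infty((1+|x|)^a)$ to $L_1((1+|x|)^{-a})$ under the three conditions $a+b>1$, $2a+b>2$, $a>\frac12$. By Remark~\ref{Addition} this is equivalent to asking that the integral operator $\T_{a,b}$ with kernel $K_{a,b}(x,y) = (1+|x|)^{-a}(1+|y|)^{-a}(1+|x-y|)^{-b}$ be bounded from $L_\infty(\mathbb{R})$ to $L_1(\mathbb{R})$. Since $L_\infty\to L_1$ boundedness of an integral operator with nonnegative kernel is equivalent to the finiteness of the double integral $\iint K_{a,b}(x,y)\,dx\,dy$, the plan is simply to estimate that double integral and show it converges precisely under the stated conditions.

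The plan is to compute $\iint_{\mathbb{R}^2}(1+|x|)^{-a}(1+|y|)^{-a}(1+|x-y|)^{-b}\,dx\,dy$ by fixing $y$ and integrating in $x$. The inner integral $I(y) = \int (1+|x|)^{-a}(1+|x-y|)^{-b}\,dx$ is a convolution of two polynomially decaying functions; splitting the $x$-line into the three regions $\{|x|\le|y|/2\}$ (where $|x-y|\asymp|y|$), $\{|x-y|\le|y|/2\}$ (where $|x|\asymp|y|$), and the complementary far region (where $|x|\asymp|x-y|$, both large), one gets the standard bound
\begin{equation*}
I(y) \lesssim (1+|y|)^{-\min(a,b,a+b-1)}\cdot
\begin{cases}
1 & \text{if } \min(a,b)>1,\\
\log(2+|y|) & \text{if } \min(a,b)=1,
\end{cases}
\end{equation*}
valid as long as $a+b>1$ so that the far region converges; more precisely, if $a>1$ and $b>1$ then $I(y)\lesssim (1+|y|)^{-b}$ (since then $a+b-1>b$ and $\min(a,b,a+b-1) = b$ is not quite right in general — one must keep track: the dominant of the three pieces is $(1+|y|)^{\max(1-a,0)-b} + (1+|y|)^{\max(1-b,0)-a} + (1+|y|)^{1-a-b}$). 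The point is that under $a+b>1$ and $a>\frac12$ one has $I(y)\lesssim (1+|y|)^{-c}$ for some $c>\frac12$, and then $\int (1+|y|)^{-a}I(y)\,dy \lesssim \int(1+|y|)^{-a-c}\,dy$ converges iff $a+c>1$. Checking that $a+c>1$ is exactly equivalent to the conjunction $a+b>1$, $2a+b>2$, $a>\frac12$ (with the borderline logarithmic cases absorbed by the strict inequalities) completes the proof.

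Concretely, I would carry this out as follows. First, reduce via Remark~\ref{Addition} to the claim that $\iint K_{a,b}<\infty$ and note the equivalence of that finiteness with $L_\infty\to L_1$ boundedness. Second, perform the three-region split of the inner $x$-integral and record $I(y)\lesssim (1+|y|)^{1-a-b} + (1+|y|)^{\max(1-b,0)-a} + (1+|y|)^{\max(1-a,0)-b}$ (dropping logarithms by strictness of the hypotheses, or tracking them and observing they cost an arbitrarily small power). Third, multiply by $(1+|y|)^{-a}$ and integrate in $y$: the three resulting exponents are $1-2a-b$, $\max(1-b,0)-2a$, and $\max(1-a,0)-a-b$, and each integral converges iff the respective exponent is $<-1$, which reads $2a+b>2$; $\min(2a+b,2a)>2$ i.e.\ ($2a+b>2$ and $a>1$), but using $a>\frac12$ and $a+b>1$ one sees $2a>2a+b-1>1$ is not automatic — here one uses that $\max(1-b,0)-2a<-1$ follows from $2a+b>2$ when $b\le1$ and from $a>\frac12$ when $b\ge 1$; and $\max(1-a,0)-a-b<-1$ follows from $a+b>1$ when $a\le 1$ and from $a>\frac12$ plus $a+b>1$ when $a\ge 1$. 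I expect the main obstacle to be nothing deep but rather the bookkeeping: one must carefully verify that each of the three tail exponents is strictly less than $-1$ using exactly the right subset of the three hypotheses, handling the $\max(\cdot,0)$ branches ($b\lessgtr 1$, $a\lessgtr 1$) separately, and confirming that the logarithmic borderline cases ($a=1$ or $b=1$) are genuinely controlled by the strict inequalities in the statement. This is the same style of region-splitting estimate already used in the $p=2$ Schur-test argument of Theorem~\ref{SteinWeissModified}, so no new ideas are needed.
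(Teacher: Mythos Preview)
Your approach is essentially identical to the paper's: both reduce to bounding $\int (1+|z|)^{-a}\int (1+|x|)^{-a}(1+|x-z|)^{-b}\,dx\,dz$, split the inner integral into the same three regions to get $I(z)\lesssim (1+|z|)^{1-a-b}+(1+|z|)^{-a+\max(1-b,0)}+(1+|z|)^{-b+\max(1-a,0)}$, and then check integrability of each term against $(1+|z|)^{-a}$. One bookkeeping slip: in your third term, when $a\le 1$ the exponent is $1-2a-b$, so integrability requires $2a+b>2$, not $a+b>1$ as you wrote---but since $2a+b>2$ is among the hypotheses this does not affect the validity of the argument.
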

\begin{proof}
Let~$\|f\|_{L_{\infty}((1+|x|)^\alpha)}\leq 1$, in other words,
\begin{equation*}
\forall x\in \mathbb{R}\qquad |f(x)|\leq (1+|x|)^{-a},
\end{equation*}
and, thus,
\begin{equation*}
|\Conv_b[f]|(z)\leq \int(1+|x|)^{-a}(1+|z-x|)^{-b}\,dx\stackrel{\scriptscriptstyle a+b > 1}{\lesssim} (1+|z|)^{1-a-b} + (1+|z|)^{-a + \max(1-b,0)} + (1+|z|)^{-b + \max(1-a,0)}.
\end{equation*}
We have used the same principle as in~\eqref{ThreeIntegrals}. Consequently, the conditions
\begin{align*}
&2a+b > 2;\\
&2a+b > 2 \quad \hbox{and}\quad 2a > 1;\\
&2a+b > 2 \quad \hbox{and}\quad a+b > 1
\end{align*}
are sufficient for the boundedness of 
\begin{equation*}
\Big\|\Conv_b[f]\Big\|_{L_1((1+|x|)^{-a})} \lesssim \int (1+|z|)^{1-2a - b}\,dz + \int(1+|z|)^{-2a + \max(1-b,0)}\,dz + \int(1+|z|)^{-a-b + \max(1-a,0)}\,dz.
\end{equation*}
\end{proof}
\begin{Rem}
Following the same lines, we get the endpoint case inequalities
\begin{equation*}
\Conv_b\colon L_{\infty}((1+|x|)^\alpha) \to L_{1,\infty}((1+|x|)^{-\alpha})
\end{equation*}
when~$a=\frac12$ or~$2a+b = 2$. In the case~$a+b=1$, seemingly, there is no limiting inequality.
\end{Rem}
\begin{Th}\label{SteinWeissp>2}
Let~$p \in (2,\infty]$, and let
\begin{enumerate}
\item $a+b > 1 - \frac{1}{p}$;
\item $2a+b > 2-\frac{2}{p}$;
\item $a > \frac12 -\frac{1}{p}$.
\end{enumerate}
Then the operator~$\Conv_b$ maps~$L_{p}((1+|x|)^a)$ to~$L_{p'}((1+|x|)^{-a})$.
\end{Th}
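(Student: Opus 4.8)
The plan is to reduce the statement, via Remark~\ref{Addition}, to the $L_p\to L_{p'}$ boundedness of the integral operator $\T_{a,b}$ with kernel $K_{a,b}(x,y)=(1+|x|)^{-a}(1+|y|)^{-a}(1+|x-y|)^{-b}$, to treat the endpoint $p=\infty$ by the Lemma immediately above (which is exactly the case $p=\infty$), and to obtain the range $p\in(2,\infty)$ by analytic interpolation between that Lemma and the $p=2$ case of Theorem~\ref{SteinWeissModified}, following the same scheme as in the proof of Theorem~\ref{SteinWeissModified}.

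Concretely, for fixed $p\in(2,\infty)$ I would set $\theta=1-\tfrac2p\in(0,1)$, so that $\tfrac1p=\tfrac{1-\theta}{2}$ and $\tfrac{1}{p'}=1-\tfrac1p=\tfrac{1-\theta}{2}+\theta$, and introduce the analytic family $t\mapsto\T_{a(t),b}$ with $a(t)=a+\tfrac{t-\theta}{2}$ and $b$ held fixed. Only the $a$-coordinate is perturbed, which is the natural choice because the admissible region for the $p=2$ endpoint and the admissible region for the $p=\infty$ endpoint are translates of one another by $\tfrac12$ along the $a$-axis. On the line $\Re t=0$ the modulus of the kernel is $K_{a_0,b}$ with $a_0=a-\tfrac{\theta}{2}$; using $\tfrac{\theta}{2}=\tfrac12-\tfrac1p$ the hypotheses $a+b>1-\tfrac1p$, $2a+b>2-\tfrac2p$, $a>\tfrac12-\tfrac1p$ become $a_0+b>\tfrac12$, $2a_0+b>1$, $a_0>0$, so $\T_{a_0,b}$ is bounded on $L_2$ by Theorem~\ref{SteinWeissModified} (the degenerate configuration $a_0=0$, $b=1$ is excluded because $a_0>0$), uniformly in $\Im t$. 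On the line $\Re t=1$ the modulus of the kernel is $K_{a_1,b}$ with $a_1=a+\tfrac{1-\theta}{2}$, and since $\tfrac{1-\theta}{2}=\tfrac1p$ the same hypotheses become $a_1+b>1$, $2a_1+b>2$, $a_1>\tfrac12$, whence $\T_{a_1,b}\colon L_\infty\to L_1$ by the preceding Lemma, again uniformly in $\Im t$.

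For the interpolation step I would observe that $\Re a(t)$ increases from $a_0$ to $a_1$ as $\Re t$ ranges over $[0,1]$, and since $K_{a,b}$ is pointwise nonincreasing in $a$ we get $|K_{a(t),b}|\le K_{a_0,b}$ on the whole strip; hence $t\mapsto\langle\T_{a(t),b}f,g\rangle$ is analytic and uniformly bounded for simple $f,g$, so Stein's theorem on interpolation of analytic families (\cite{SteinBook}, Ch.~9) applies. It yields $\T_{a(\theta),b}=\T_{a,b}\colon L_{p_\theta}\to L_{q_\theta}$ with $\tfrac{1}{p_\theta}=\tfrac{1-\theta}{2}=\tfrac1p$ and $\tfrac{1}{q_\theta}=\tfrac{1-\theta}{2}+\theta=\tfrac1{p'}$, which is the assertion.

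There is no serious obstacle here; the argument is bookkeeping once the correct perturbation direction is identified. The two points that need care are verifying that the one-parameter family remains inside both admissible regions at the two ends — this works precisely because those regions differ by a translation in $a$, so a pure $a$-shift is enough — and confirming that the excluded configuration $(a,b)=(0,1)$, $p=2$ of Theorem~\ref{SteinWeissModified} is never reached, which is automatic since the strict hypothesis $a>\tfrac12-\tfrac1p$ forces $a_0>0$.
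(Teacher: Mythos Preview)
Your proposal is correct and follows essentially the same approach as the paper: an analytic interpolation in the $a$-parameter alone (with $b$ fixed) between the $L_2\to L_2$ bound of Theorem~\ref{SteinWeissModified} and the $L_\infty\to L_1$ bound of the preceding Lemma. The only difference is cosmetic---the paper uses $\theta=\tfrac{2}{p}$ and $a(t)=a+\tfrac{\theta-t}{2}$, which swaps the roles of the two endpoints relative to your parametrization---and your added remark that $|K_{a(t),b}|\le K_{a_0,b}$ on the strip is a clean way to verify the admissible-growth hypothesis that the paper leaves implicit.
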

\begin{proof}
We write~$\frac{1}{p} = \frac{\theta}{2}$, i.e.~$\theta = \frac{2}{p}$ and consider the analytic family of operators
\begin{equation*}
t \mapsto \T_{a(t),b},\quad a(t) = a + \frac{\theta-t}{2}.
\end{equation*}
In the case~$\Re t = 1$, the parameters~$(\Re a(t),b)$ satisfy the hypothesis of Theorem~\ref{SteinWeissModified}, so, we have the~$L_2\to L_2$ boundedness there. On the line~$\Re t=0$, we have the~$L_{\infty}\to L_1$ boundedness. The application of the interpolation lemma finishes the proof.
\end{proof}
\begin{Rem}
The endpoint cases are more intriguing here. Using real interpolation, one can prove the bound
\begin{equation*}
\Conv_b\colon L_{p,q}((1+|x|)^a) \to L_{p',q}((1+|x|)^{-a}),\quad\hbox{for any } q\in [1,\infty], p\in [2,\infty)
\end{equation*}
provided~$a+b >1-\frac{1}{p}$,~$2a+b \geq 2-\frac{2}{p}$, and~$a \geq \frac12 -\frac{1}{p}$. Note, however, that the Lorentz spaces here should be defined via the formula
\begin{equation*}
f\in L_{p,q}(w)\quad \Leftrightarrow \quad fw\in L_{p,q},
\end{equation*}
otherwise, the natural interpolation formulas for weighted spaces do not work \textup(see~\cite{Ferreyra}\textup).
\end{Rem}

\subsection{Some endpoint estimates}\label{s63}
To formulate the endpoint version of inequality~\eqref{RestrictionWithSobolev}, we need some Besov spaces (see~\cite{BerghLofstrom}). Given a function~$f$, we define the Besov~$B_2^{-\frac{d-1}{2},\infty}$ norm by the formula
\begin{equation*}
\|f\|_{B_{2}^{-\frac{d-1}{2},\infty}} = \sup_{k \geq 0}\ 2^{-\frac{d-1}{2}k}\|P_k f\|_{L_2},
\end{equation*}
where the~$P_k$,~$k\geq 1$, are the Littlewood--Paley projectors on the annuli~$B_{2^k}(0) \setminus B_{2^{k-1}}(0)$ and~$P_0$ is the spectral projector on the unit ball~$B_1(0)$ (the symbol~$B_r(x)$ denotes the~$(d-1)$-dimensional Euclidean ball of radius~$r$ centered at~$x$). Using the standard properties of Besov spaces, one may then define Besov spaces on smooth submanifolds of~$\mathbb{R}^d$ as well as on their reasonable subdomains. 
\begin{St}\label{WeightedBesov}
The inequality
\begin{equation*}
 \big\|\hat{g}(\cdot,h(\cdot))\psi(\cdot)\big\|_{B_{2}^{-\frac{d-1}{2},\infty}(\mathbb{R}^{d-1})} \lesssim_{\psi} \|g\|_{L_1((1+|x_d|)^{-\frac{d-1}{2}})}
\end{equation*}
is true for any~$h$ and~$\psi$ satisfying the standard requirements.
\end{St}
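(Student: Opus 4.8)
The plan is to test the Besov norm against dyadically frequency‑localized $L_2$ functions and transfer the pairing to physical space by Parseval. Assume first that $g$ is Schwartz; the general case will follow by density, since the estimate exhibits $g\mapsto \psi(\cdot)\hat g(\cdot,h(\cdot))$ as a bounded operator, and $\mathcal S(\mathbb R^d)$ is dense in $L_1((1+|x_d|)^{-\frac{d-1}{2}})$. By the definition of the $B_2^{-\frac{d-1}{2},\infty}$ norm it suffices to prove, for every integer $k\ge 0$,
\begin{equation*}
\big\|P_k\big[\psi(\cdot)\hat g(\cdot,h(\cdot))\big]\big\|_{L_2(\mathbb R^{d-1})}\lesssim 2^{\frac{(d-1)k}{2}}\,\|g\|_{L_1((1+|x_d|)^{-\frac{d-1}{2}})},
\end{equation*}
where for $k\ge 1$ the multiplier $m_k$ of $P_k$ is supported in $\{|z|\sim 2^k\}$ and $m_0$ in $\{|z|\lesssim 1\}$. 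By Plancherel and duality, the left side equals $\sup\big|\langle \psi\,\hat g(\cdot,h(\cdot)),\phi\rangle_{L_2(\mathbb R^{d-1})}\big|$ over functions $\phi=\mathcal F^{-1}[m_k\eta]$ with $\|\eta\|_{L_2}\le 1$; such $\phi$ satisfy $\|\phi\|_{L_2}\le 1$ and have $\hat\phi$ supported in $\{|z|\sim 2^k\}$ (resp. $\{|z|\lesssim1\}$ for $k=0$).

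Next I would recognize the pairing as $g$ tested against a Fourier extension. Writing $\Psi=\psi\bar\phi$ and letting $\Psi\,d\sigma$ denote the measure on the graph $\{(\zeta,h(\zeta))\mid\zeta\in U\}$ with density $\Psi$, Parseval gives $\langle \psi\,\hat g(\cdot,h(\cdot)),\phi\rangle=\int_{\mathbb R^d} g(x)\,\overline{\mathcal F^{-1}[\Psi\,d\sigma](x)}\,dx$, so
\begin{equation*}
\big|\langle \psi\,\hat g(\cdot,h(\cdot)),\phi\rangle\big|\le \|g\|_{L_1((1+|x_d|)^{-\frac{d-1}{2}})}\cdot\sup_{x\in\mathbb R^d}(1+|x_d|)^{\frac{d-1}{2}}\big|\mathcal F^{-1}[\psi\bar\phi\,d\sigma](x)\big|.
\end{equation*}
Everything thus reduces to the pointwise bound $(1+|x_d|)^{\frac{d-1}{2}}\big|\mathcal F^{-1}[\psi\bar\phi\,d\sigma](x)\big|\lesssim 2^{\frac{(d-1)k}{2}}$.

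To obtain this, expand $\bar\phi$ by its Fourier transform to get the identity $\mathcal F^{-1}[\psi\bar\phi\,d\sigma](x_{\bar d},x_d)=\int \overline{\hat\phi(z)}\,\mathcal F^{-1}[\psi\,d\sigma](x_{\bar d}+z,x_d)\,dz$, an integral of $\hat\phi$ against translates (in the $\mathbb R^{d-1}$ variable) of the fixed surface‑measure transform. Since the graph has non‑vanishing Gaussian curvature (the Hessian of $h$ is non‑degenerate by \eqref{HessianSmoothness}), the classical stationary‑phase/Van der Corput estimate yields the uniform decay $\big|\mathcal F^{-1}[\psi\,d\sigma](w_{\bar d},w_d)\big|\lesssim (1+|w_d|)^{-\frac{d-1}{2}}$ for all $w\in\mathbb R^d$. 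Restricting the $z$‑integral to the support of $\hat\phi$ and applying Cauchy--Schwarz,
\begin{equation*}
\big|\mathcal F^{-1}[\psi\bar\phi\,d\sigma](x)\big|\le \|\hat\phi\|_{L_2}\Big(\int_{|z|\sim 2^k}\big|\mathcal F^{-1}[\psi\,d\sigma](x_{\bar d}+z,x_d)\big|^2\,dz\Big)^{1/2}\lesssim \|\phi\|_{L_2}\,2^{\frac{(d-1)k}{2}}(1+|x_d|)^{-\frac{d-1}{2}},
\end{equation*}
which is exactly the bound needed (for $k=0$ the annulus is replaced by the unit ball, making the factor $2^{(d-1)k/2}$ equal to $1$; when $|x_d|\lesssim 1$ the curvature decay degenerates to the trivial bound $\|\psi\|_{L_1}\lesssim 1$, which is still precisely what is required). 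Combining the three displays completes the proof.

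The only genuine analytic ingredient is the $O(|w_d|^{-\frac{d-1}{2}})$ decay of $\mathcal F^{-1}[\psi\,d\sigma]$, which is standard under the non‑degenerate Hessian hypothesis; everything else is bookkeeping with Parseval and a frequency‑support Cauchy--Schwarz. Accordingly there is no serious obstacle here — the one place to be slightly careful is to book‑keep that the annulus has measure $\asymp 2^{(d-1)k}$ so that its contribution combines with $(1+|x_d|)^{-(d-1)}$ to reproduce precisely the $B_2^{-\frac{d-1}{2},\infty}$ weight. This is consistent with the remark in the text that the bound is essentially the dispersive estimate for the Schr\"odinger equation.
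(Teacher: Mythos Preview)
Your proof is correct and follows essentially the same approach as the paper: both arguments reduce to the Van der Corput decay $|\mathcal{F}^{-1}[\psi\,d\sigma](w)|\lesssim(1+|w_d|)^{-\frac{d-1}{2}}$ and then pick up the factor $2^{\frac{(d-1)k}{2}}$ from the volume of the frequency annulus. The only organizational difference is that the paper reduces to delta measures $g=\delta_x$ at the outset (using that they are extremal in the $L_1$ ball) and then bounds the Besov norm of $\psi e^{2\pi i(\langle x_{\bar d},\cdot\rangle+x_d h(\cdot))}$ directly, whereas you dualize the $P_k$-projection first and arrive at the same oscillatory integral; these are two presentations of the same computation.
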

The norm in the weighted space on the right hand-side is given by the formula
\begin{equation*}
\|g\|_{L_1((1+|x_d|)^{-\frac{d-1}{2}})} = \int\limits_{\mathbb{R}^d}|g(x)|(1+|x_d|)^{-\frac{d-1}{2}}\,dx.
\end{equation*}
Similarly,~$f\in L_p((1+|x_d|)^{-\alpha})$  whenever~$f(x)(1+|x_d|)^{-\alpha} \in L_p$.
\begin{proof}
Since the delta measures are the extremal points of the unit ball in the space of measures, it suffices to prove the proposition for the case where~$g$ is a delta measure:
\begin{equation*}
\Big\|e^{2\pi i(\scalprod{x_{\bar{d}}}{\cdot} + x_d h(\cdot))}\psi(\cdot)\Big\|_{B_2^{-\frac{d-1}{2},\infty}} \lesssim (1+|x_d|)^{-\frac{d-1}{2}}.
\end{equation*}
By the Van der Corput Lemma (for $h(\cdot) = |\cdot|^2$, this is also the Schr\"odinger dispersive bound),
\begin{equation*}
\Big\|\mathcal{F}_{\zeta}\Big[e^{2\pi i(\scalprod{x_{\bar{d}}}{\zeta} + x_d h(\zeta))}\psi(\zeta)\Big]\Big\|_{L_{\infty}}\lesssim
(1+|x_d|)^{-\frac{d-1}{2}}.
\end{equation*}
Thus, we need to prove the inequality
\begin{equation*}
\sup_{k\geq 0}\bigg(2^{-k(d-1)}\!\!\!\!\!\!\!\!\!\!\!\!\int\limits_{B_{2^k}(0)\setminus B_{2^{k-1}}(0)}\!\!\!\!\!\! (1+|x_d|)^{-(d-1)}\,dy\bigg)^{\frac12}\lesssim (1+|x_d|)^{-\frac{d-1}{2}},
\end{equation*}
which is obvious.
\end{proof}
\begin{Cor}\label{EndpointCaseWithoutWeight}
Let~$d$ be odd. If we apply Proposition~\ref{WeightedBesov} with the function~$g(x) = (-2\pi ix_d)^{\frac{d-1}{2}}f(x)$, we get the local form of the endpoint case in~\eqref{RestrictionWithSobolev}:
\begin{equation*}
 \bigg\|\frac{\partial^{\frac{d-1}{2}}\hat{f}}{\partial \xi_d^{\frac{d-1}{2}}}(\cdot,h(\cdot))\psi(\cdot)\bigg\|_{B_{2}^{-\frac{d-1}{2},\infty}(\mathbb{R}^{d-1})} \lesssim_{\psi} \|f\|_{L_1}.
\end{equation*}
Using Remark~\ref{GeneralLocalization}, we may pass to the global form:
\begin{equation*}
\big\|(\phi \nabla^{\frac{d-1}{2}}\hat{f})\big|_{\Sigma}\big\|_{B^{-\frac{d-1}{2},\infty}_2(\Sigma)} \lesssim_\phi \|f\|_{L_1(\mathbb{R}^d)}.
\end{equation*}
Since~$B^{-\frac{d-1}{2},\infty}_2 \hookrightarrow H^{-s}$ for~$s > \frac{d-1}{2}$, we also have
\begin{equation*}
\big\|(\phi \nabla^{\frac{d-1}{2}}\hat{f})\big|_{\Sigma}\big\|_{H^{-s}(\Sigma)} \lesssim_\phi \|f\|_{L_1(\mathbb{R}^d)}
\end{equation*}
for~$s > \frac{d-1}{2}$.
\end{Cor}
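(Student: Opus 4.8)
The plan is to deduce Corollary~\ref{EndpointCaseWithoutWeight} from Proposition~\ref{WeightedBesov} by a single algebraic substitution on the physical side, then globalize using the localization machinery of Remark~\ref{GeneralLocalization}, and finish with an elementary Besov--Bessel embedding. First I would fix $d$ odd, so that $m:=\frac{d-1}{2}$ is a nonnegative integer, and work in the local model $\Sigma\cap V=\{(\zeta,h(\zeta))\mid\zeta\in U\}$ with $\psi\in C_0^\infty(U)$ satisfying the standing assumptions. Given a Schwartz function $f$, set $g(x)=(-2\pi i x_d)^{m}f(x)$. Since multiplication by $-2\pi i x_d$ on the physical side corresponds to $\partial_{\xi_d}$ on the spectral side, one has $\hat g=\frac{\partial^{m}\hat f}{\partial\xi_d^{m}}$. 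Applying Proposition~\ref{WeightedBesov} to $g$ yields
\[
\Big\|\tfrac{\partial^{m}\hat f}{\partial\xi_d^{m}}(\cdot,h(\cdot))\psi(\cdot)\Big\|_{B_2^{-m,\infty}(\mathbb R^{d-1})}\lesssim_\psi\|g\|_{L_1((1+|x_d|)^{-m})},
\]
and the right-hand side equals $\int_{\mathbb R^d}|2\pi x_d|^{m}(1+|x_d|)^{-m}|f(x)|\,dx\le(2\pi)^{m}\|f\|_{L_1}$ because $|x_d|^{m}(1+|x_d|)^{-m}\le1$. This is precisely the local endpoint estimate, i.e. the first displayed inequality of the corollary.

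Next I would pass to the global statement by observing that the localization argument of Lemma~\ref{LocalizationLemma}, as extended in Remark~\ref{GeneralLocalization}, is not tied to the scale $H^{-s}$: it works for any Banach space of functions on $\Sigma$ on which multiplication by $C_0^\infty$ cutoffs is bounded, and $B_2^{-m,\infty}$ is such a space. Concretely, one takes a sufficiently fine partition of unity $\{\Phi_n\}$ subordinate to chart neighborhoods, writes $\nabla^{m}$ as a finite linear combination of pure normal derivatives $\frac{\partial^{m}}{\partial\normal_{\zeta_j}^{m}}$ at directions $\zeta_j$ clustered near each base point (possible since the Hessian of $h$ is non-degenerate, so the normals cover a neighborhood of $(0,\dots,0,1)$), rotates coordinates so that each such derivative becomes $\frac{\partial^{m}}{\partial\xi_d^{m}}$, applies the local bound just established in the $X_j=\{0\}$ case (so that no $\|\hat f\|_{X_j}$ term arises), and sums the finitely many pieces over $n$ and $j$. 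The only routine verification here is that $\|g\|_{B_2^{-m,\infty}(\Sigma)}$ and $\|g(\cdot,h_{n,j}(\cdot))\|_{B_2^{-m,\infty}(\mathbb R^{d-1})}$ are comparable for $g$ supported in a single chart, which is the standard invariance of Besov spaces under the smooth flattening maps $S$ of~\eqref{Exponent}. This produces $\big\|(\phi\nabla^{m}\hat f)\big|_\Sigma\big\|_{B_2^{-m,\infty}(\Sigma)}\lesssim_\phi\|f\|_{L_1(\mathbb R^d)}$.

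Finally, for $s>m=\frac{d-1}{2}$ I would invoke the embedding $B_2^{-m,\infty}\hookrightarrow H^{-s}$: writing the Bessel norm in Littlewood--Paley form, $\|g\|_{H^{-s}}^2\asymp\sum_{k\ge0}2^{-2sk}\|P_kg\|_{L_2}^2\le\big(\sup_k 2^{-2mk}\|P_kg\|_{L_2}^2\big)\sum_{k\ge0}2^{(2m-2s)k}\lesssim\|g\|_{B_2^{-m,\infty}}^2$, the geometric series converging exactly because $2s>2m$. Combining this with the previous display gives the last inequality of the corollary. I do not expect any genuine obstacle: all the analytic weight is carried by Proposition~\ref{WeightedBesov} (which is the Van der Corput / Schr\"odinger dispersive bound together with a trivial weighted-$L_1$ estimate on delta masses), and the only point requiring a little attention is confirming that the localization scheme of Lemma~\ref{LocalizationLemma} transfers verbatim to the Besov scale and that the normal-derivative decomposition of $\nabla^m$ can be arranged with directions close enough to $(0,\dots,0,1)$ to keep each piece a graph over $\mathbb R^{d-1}$.
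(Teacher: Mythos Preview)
Your proposal is correct and follows precisely the approach the paper itself takes: the corollary is proved by the substitution $g(x)=(-2\pi i x_d)^{\frac{d-1}{2}}f(x)$ in Proposition~\ref{WeightedBesov}, globalization via Remark~\ref{GeneralLocalization}, and the standard embedding $B_2^{-\frac{d-1}{2},\infty}\hookrightarrow H^{-s}$ for $s>\frac{d-1}{2}$. You have simply spelled out the details (the pointwise bound $|x_d|^m(1+|x_d|)^{-m}\le 1$, the Littlewood--Paley justification of the embedding, and the Besov-space compatibility of the localization scheme) that the paper leaves implicit in the statement of the corollary itself.
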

Now we will show how to derive Theorem~\ref{CorollaryOfChoGuoLee} from the case~$k=0$ considered in~\cite{ChoGuoLee} and Proposition~\ref{WeightedBesov}. We consider the inequality
\begin{equation*}
 \big\|\hat{g}(\cdot,h(\cdot))\psi(\cdot)\big\|_{H^{-s}(\mathbb{R}^{d-1})} \lesssim_{\psi} \|g\|_{L_p((1+|x_d|)^{-k})},
\end{equation*}
which, as we have seen, is stronger than~\eqref{RestrictionWithSobolev}. Note that in such a formulation,~$k$ might be real. We know the inequality holds true in the case~$k=0$ (from~\cite{ChoGuoLee}) and is almost true when~$k=s=\frac{d-1}{2}$, $p=1$ (from Proposition~\ref{WeightedBesov}). We claim that any triple~$(k ,s,\frac{1}{p})$ that satisfies the necessary conditions of Theorem~\ref{CorollaryOfChoGuoLee} might be represented as a convex combination of the said cases:
\begin{equation*}
\Big(k, s, \frac{1}{p}\Big) = \theta_+\Big(0,s_+, \frac{1}{p_+}\Big) + \theta_-\Big(\frac{d-1}{2},\frac{d-1}{2},1\Big).
\end{equation*} 
Solving several elementary equations, we see
\begin{equation*}
\theta_- = \frac{2k}{d-1},\quad \theta_+ = \frac{d-1-2k}{d-1}, \quad p_+ = p\frac{d-1-2k}{d-1-2k p}, \quad s_+ = \frac{(s-k)(d-1)}{d-1-2k}.
\end{equation*}
We leave to the reader the verification of the conditions
\begin{equation*}
0 < \sigma_{p+}, \quad \hbox{and}\quad -s_{+}\leq \kappa_{p_+}
\end{equation*}
(the easiest way to do this is to sketch the~$3D$-domain of admissible~$(k,s,\frac{1}{p})$) and explain how we interpolate the inequality. First, we note that linear operator that maps~$g$ to~$\psi\hat{g}|_{\Sigma}$ does not depend on the varying parameters, so we may use the classical interpolation theory, specifically, the real interpolation method (see~\cite{BerghLofstrom}). For the image of our operator, we use the formula
\begin{equation*}
(H^{-s_+},B_{2}^{-\frac{d-1}{2},\infty})_{\theta_-,2} = H^{-s},
\end{equation*} 
see~\cite{BerghLofstrom}. For the domain, we need to show that
\begin{equation*}
(L_{p_+}, L_{1}((1+|x_d|)^{-\frac{d-1}{2}}))_{\theta_-,2} \supset L_p((1+|x_d|)^{-k}).
\end{equation*}
In fact,
\begin{equation*}
(L_{p_+}, L_{1}((1+|x_d|)^{-\frac{d-1}{2}}))_{\theta_-,2} = L_{p,2}((1+|x_d|)^{-k}),
\end{equation*}
where the latter space is the space of all functions~$f$ such that~$f(1+|x_d|)^{-k} \in L_{p,2}$ (see~\cite{Freitag}). It is clear that~$L_{p} = L_{p,p} \hookrightarrow L_{p,2}$ since~$p \leq 2$.

\end{document}